\subjclass[2010]{Primary~11R23, Secondary~11F33, 11R34, 11F80}
\keywords{Iwasawa theory;  Selmer group; 
higher Fitting ideals; Euler system; Hida theory}
\title[Galois deformation 
and the pseudo-isomorphism class]{
Euler systems for Galois deformations and 
the pseudo-isomorphism class of the dual 
of fine Selmer groups}
\author{Tatsuya Ohshita}
\address{Science Department, Ehime University 
2--5, Bunkyo-cho,  Matsuyama-shi, 
Ehime  790--8577, Japan}
\email{ohshita.tatsuya.nz@ehime-u.ac.jp}
\date{\today}
\newtheorem{thm}{Theorem}[section]
\newtheorem{prop}[thm]{Proposition}
\newtheorem{cor}[thm]{Corollary}
\newtheorem{lem}[thm]{Lemma}
\theoremstyle{definition}
\newtheorem{dfn}[thm]{Definition}
\newtheorem{exa}[thm]{Example}
\newtheorem{rem}[thm]{Remark}
\def\Gal{\mathop{\mathrm{Gal}}\nolimits}
\def\cha{\mathop{\mathrm{char}}\nolimits}
\def\Fitt{\mathop{\mathrm{Fitt}}\nolimits}
\def\det{\mathop{\mathrm{det}}\nolimits}
\def\Im{\mathop{\mathrm{Im}}\nolimits}
\def\Ker{\mathop{\mathrm{Ker}}\nolimits}
\def\Coker{\mathop{\mathrm{Coker}}\nolimits}
\def\Hom{\mathop{\mathrm{Hom}}\nolimits}
\def\ann{\mathop{\mathrm{ann}}\nolimits}
\def\Frob{\text{\rm Frob}}
\def\Het[#1](#2){H_{\text{\'et}}^{#1}(#2)}
\def\Tor{\text{\rm Tor}}
\newcommand{\mf}[1]{{\mathfrak{#1}}}
\newcommand{\bb}[1]{{\mathbb{#1}}}
\newcommand{\mca}[1]{{\mathcal{#1}}}
\newcommand{\mrm}[1]{{\mathrm{#1}}}
\begin{document}

\begin{abstract}
In this article, 
we study the pseudo-isomorphism class of 
the dual fine Selmer group $X$
attached to a $p$-adic Galois deformation
whose deformation ring $\Lambda$ is isomorphic to 
the ring of formal power series.
By using the ``Kolyvagin system" arising from
a given Euler system $\boldsymbol{c}$,
we shall construct a collection 
$\{ \mf{C}_i(\boldsymbol{c}) \}_{i \ge 0}$
of ideals of $\Lambda$, and prove that
the ideals $\mf{C}_i(\boldsymbol{c})$ approximate 
the higher Fitting ideals of $X$ 
under suitable hypothesis.
In particular, we shall prove that 
the ideals $\mf{C}_i(\boldsymbol{c})$ 
arising from the Euler system of Beilinson--Kato elements
determine the pseudo-isomorphism classes of
the dual fine Selmer groups 
attached to ordinary and nearly ordinary 
Hida deformations satisfying certain conditions.

\end{abstract}

\maketitle

\section{Introduction}\label{secintro}

\subsection{Setting and main results}\label{ssintro}

Let $p$ be an odd prime number.
For any $n \in \bb{Z}_{> 0}$, 
we denote by $\mu_{n}$
the group roots of unity in $\overline{\bb{Q}}$,
and put $\mu_{p^\infty}:=\bigcup_{m \ge 0} \mu_{p^m}$.
We  fix a finite set $\Sigma$ 
of prime numbers containing $p$. 
We denote  by $\bb{Q}_\Sigma$ 
the maximal Galois extension field of $\bb{Q}$ 
unramified outside $\Sigma$,
and put $G_{\bb{Q},\Sigma}:=\Gal(\bb{Q}_{\Sigma}/\bb{Q})$.
For a topological $G_{\bb{Q},\Sigma}$-module $M$ 
and any $i \in \bb{Z}_{\ge 0}$, let
\begin{align*}
H^i(M):&=H^i(\bb{Q},M), \\
H^i_{\Sigma}(M):&=H^i(\bb{Q}_{\Sigma}/\bb{Q},M)
\end{align*}
be the $i$-th continuous Galois cohomology groups.
We denote the inertia subgroup of 
$G_{\bb{Q}_\ell}:= 
\Gal(\overline{\bb{Q}}_\ell/\bb{Q}_\ell) $ by $I_\ell$
for any prime number $\ell$.

Let $F$ be a finite extension field of $\bb{Q}_p$,
and $\mca{O}:=\mca{O}_F$ the ring of integers of $F$.
We fix a uniformizer $\varpi$ of $\mca{O}$, and put
$k:=\mca{O}/\varpi\mca{O}$.
Let $r \in \bb{Z}_{>0}$ be a positive integer, and 
$\Lambda:=\Lambda^{(r)}:=\mca{O}[[x_1,\dots , x_{r}]]$
the ring of formal power series.
(We write $\Lambda^{(0)}:=\mca{O}$.)
We denote the maximal ideal of $\Lambda$ by
$\mf{m}:=\mf{m}_{\Lambda}$.

We consider a free $\Lambda$-module
of finite rank $d$ equipped with 
a continuous Galois action  
\[
\rho_{\bb{T}} \colon \Gal(\bb{Q}_{\Sigma}/\bb{Q})
\longrightarrow \mathrm{Aut}_{\Lambda}(\bb{T}) 
\simeq \mathrm{GL}_d(\Lambda),
\]
and put 
$\mca{A}^*:=\Hom_{\mathrm{cont}}(\bb{T},\mu_{p^\infty})$.
(In this article, our main interest is the case 
when $\bb{T}$ is an ordinary or nearly ordinary Hida deformation 
of elliptic modular forms. 
For details, see \S \ref{ssHida}.)
For any ring homomorphism
$f\colon \Lambda \longrightarrow A$,
we define $f^* \bb{T}:=A \otimes_{\Lambda,f} \bb{T}$.
We put $\bar{T}:=\bb{T}/\mf{m}\bb{T}$, and 
denote the maximal $\mf{m}$-torsion $\Lambda$-submodule 
of $\mca{A}$ by $\mca{A}^*[\mf{m}]$. 
In this article, we assume the following conditions.
\begin{itemize}
\item[(A1)] The representation $\bar{T}$ 
of $G_{\bb{Q},\Sigma}$ is absolutely irreducible over $k$.
\item[(A2)] There exists an element 
$\tau \in G_{\bb{Q}(\mu_{p^\infty})}
:=\Gal(\overline{\bb{Q}}/\bb{Q}(\mu_{p^\infty}))$ which makes
the $\Lambda$-module $\bb{T}/(\tau-1)\bb{T}$ free of rank one.
\item[(A3)] We have $H^0(\bb{Q},\bar{T})
=H^0(\bb{Q},\mca{A}^*[\mf{m}])=0$.
\item[(A4)] At least one of the following is satisfied:
\begin{itemize}
\item[(a)] $p \ge 5$, or
\item[(b)] $\Hom_{\bb{F}_p[G_{\bb{Q},\Sigma}]}
(\bar{T},\mca{A}^*[\mf{m}])=0$.
\end{itemize}
\item[(A5)] Let $\Omega$ be the maximal subfield of $\bb{Q}$ 
which is fixed by 
$\ker(\rho_{\bb{T}} \vert_{G_{\bb{Q}(\mu_{p^\infty})}})$. 
Then, we have 
\[
H^1(\Omega/\bb{Q},\bar{T})=H^1(\Omega/\bb{Q},\mca{A}^*[\mf{m}])=0.
\]
\item[(A6)] For any $\ell \in \Sigma \setminus \{ p \}$, we have 
$H^0(I_\ell,\mca{A}^*[\mf{m}])=0$.
\item[(A7)] We have $H^0(\bb{Q}_p,\mca{A}^*[\mf{m}])=0$.
\item[(A8)] Let $\bb{T}^-$ be the maximal 
$\Lambda$-submodule of $\bb{T}$
on which the complex conjugate acts via $-1$.
Then, the $\Lambda$-module $\bb{T}^-$ is free of rank one.
\end{itemize}
Note that we need 
the assumptions (A1)--(A5) and (A8) 
in order to apply the theory of 
Kolyvagin systems established by 
Mazur and Rubin in \cite{MR}.
The assumptions (A6) and (A7)
are technical ones which simplify
the local conditions at bad primes.

We define a $\Lambda$-module $\mathrm{Sel}_p(\bb{Q},\mca{A}^*)$ by
\[
\mathrm{Sel}_p(\bb{Q},\mca{A}^*):= 
\ker \left(
H_{\Sigma}^1(\mca{A}^*)
\longrightarrow H^1(\bb{Q}_p,\mca{A}^*) \times
\prod_{p \ne \ell \in \Sigma}
H^1(I_\ell,\mca{A}^*)
\right),
\]
and define a $\Lambda$-module $X:=X(\bb{T})$ by
\[
X:=\Hom_{\mathrm{cont}}(\mathrm{Sel}_p(\bb{Q},\mca{A}^*), 
\bb{Q}_p/\bb{Z}_p).
\]
For any $i \in \bb{Z}_{\ge 0}$,
the $\Lambda$-module $H_{\Sigma}^i(\mca{A}^*)$ 
(resp.\ $H_{\Sigma}^i(\bb{T})$) is cofinitely generated
(resp.\ finitely generated).
In particular, 
$X$ is a finitely generated $\Lambda$-module.
In this article, we are interested in the pseudo-isomorphism class
of the $\Lambda$-module $X$.
Note that the pseudo-isomorphism class of 
a finitely generated $\Lambda$-module $M$ is 
determined by higher Fitting ideals 
$\{ \Fitt_{\Lambda_{\mf{p}},i}(M_{\mf{p}})\}_{i \in \bb{Z}_{\ge 0}}$
of localizations $M_{\mf{p}}$
at height one primes $\mf{p}$.
We shall study the higher Fitting ideals 
of (localizations of) $X$.

In order to study higher Fitting ideals, 
we assume the existence of an Euler system 
\[
\boldsymbol{c}:=\{c(n) \in  
H^1(\bb{Q}_{\Sigma}/\bb{Q}(\mu_n),\bb{T}) \}_n
\]
of $\bb{T}$. 
Moreover, we assume that the Euler system  
$\boldsymbol{c}$
can be extended to cyclotomic direction. 
(For details, see \S \ref{SSec_defES}.)
Note that practical Euler systems (like 
circular units and Beilinson--Kato elements)
can be extended to cyclotomic direction. 
(See Lemma \ref{lempropcycext}.)
We define an ideal $\mathrm{Ind}_{\Lambda}(\boldsymbol{c})$ 
of $\Lambda$ defined by
\[
\mathrm{Ind}_{\Lambda}(\boldsymbol{c}):= 
\left\{ f(c(1))\mathrel{\vert} 
f \in \Hom_{\Lambda} \left(
H_{\Sigma}^1(\bb{T}),\Lambda \right)
\right\}.
\]
Here, we assume the following ``non-vanishing" conditions on $\boldsymbol{c}$ .
\begin{itemize}
\item[(NV)] \textit{The element $c(1) \in H^1_{\Sigma}(\bb{T})$ is not 
$\Lambda$-torsion.}
\end{itemize}
Note that if $\boldsymbol{c}$
is the Euler system of circular units or Beilinson--Kato elements 
corresponding to a certain $p$-adic $L$-function via 
the Coleman map, 
then the property (NV) for $\boldsymbol{c}$ follows from 
the non-vanishing of the $p$-adic $L$-function.
In \cite{Oc2}, 
Ochiai proved that under the assumption (NV), 
the $\Lambda$-module $X$ is torsion, and satisfies 
\[
\cha_{\Lambda}(X) \supseteq 
\mathrm{Ind}_{\mca{R}}(\boldsymbol{c}),
\]
where $\cha_{\Lambda}(X)$ is the characteristic 
ideal of the $\Lambda$-module $X$.
In order to state some pieces of our results, we also consider  
the following  condition (MC) on the pair $(\mathbb{T},\boldsymbol{c})$.
\begin{itemize}
\item[(MC)] \textit{The Euler system $\boldsymbol{c}$ satisfies} (NV), 
\textit{and it holds that} 
\(
\cha_{\Lambda}(X) =
\mathrm{Ind}_{\Lambda}(\boldsymbol{c})
\).
\end{itemize}
If $\boldsymbol{c}$
is the Euler system of circular units or Beilinson--Kato elements 
corresponding to a certain $p$-adic $L$-function via 
the Coleman map, 
then the property (MC) for $\boldsymbol{c}$
is equivalent to the Iwasawa main conjecture for $\bb{T}$.

In \S \ref{ssconstC_i},
by using the ``universal Kolyvagin system" corresponding to 
$\boldsymbol{c}$, we shall construct an ideal $\mf{C}_i(c)$
of $\Lambda$, 
which is an analogue of Kurihara's 
higher Stickelberger ideal $\Theta_i^{\delta}$ in \cite{Ku}, 
for any $i \in \bb{Z}_{\ge 0}$.

The following is our first main theorem
which does not need the assumption (MC).

\begin{thm}\label{thmuncondresults}
Let $\bb{T}$ be a free $\Lambda$-module of finite rank 
with a continuous $\Lambda$-linear action of $\Gal(\bb{Q}_{\Sigma}/\bb{Q})$.
We assume that $\bb{T}$ satisfies 
the assumptions {\rm (A1)--(A8)}. 
Let $\boldsymbol{c}$ be an Euler system for $\bb{T}$ 
satisfying the condition {\rm (NV)}
which can be extended to cyclotomic direction.
Then, for any height one prime ideal $\mf{p}$ of $\Lambda$ 
and for any $i \in \bb{Z}_{\ge 0}$,
we have 
\[
\Fitt_{\Lambda_\mf{p},i}
(X_\mf{p}) \supseteq 
\mf{C}_i(\boldsymbol{c})\Lambda_\mf{p}. 
\]
\end{thm}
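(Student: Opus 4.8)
The plan is to reduce the statement to a comparison, prime-by-prime at height one, between the ideal $\mf{C}_i(\boldsymbol{c})$ built from the universal Kolyvagin system and the $i$-th Fitting ideal of the localized dual Selmer module $X_{\mf p}$. First I would recall that, since $\Lambda$ is regular and $\mf p$ has height one, the localization $\Lambda_{\mf p}$ is a discrete valuation ring; hence $X_{\mf p}$ is a finitely generated torsion $\Lambda_{\mf p}$-module (by Ochiai's result quoted in the excerpt, (NV) forces $X$ to be $\Lambda$-torsion), and its higher Fitting ideals are governed by the elementary divisors of $X_{\mf p}$. So the target inequality $\Fitt_{\Lambda_{\mf p},i}(X_{\mf p})\supseteq \mf{C}_i(\boldsymbol{c})\Lambda_{\mf p}$ is a statement about lengths: writing $v_{\mf p}$ for the valuation on $\Lambda_{\mf p}$, I must show that $v_{\mf p}$ of a generator of $\mf{C}_i(\boldsymbol{c})\Lambda_{\mf p}$ is at least the sum of all but the $i$ largest elementary-divisor valuations of $X_{\mf p}$.

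Next I would invoke the machinery of Mazur–Rubin: the hypotheses (A1)--(A5) and (A8) are exactly what is needed to apply their theory of Kolyvagin systems to $\bb{T}$, and (A6)--(A7) simplify the local conditions at $\Sigma\setminus\{p\}$ and at $p$ so that the relevant Selmer structure is the one defining $\mathrm{Sel}_p(\bb{Q},\mca{A}^*)$ and its dual $X$. From the Euler system $\boldsymbol{c}$ — which by assumption extends in the cyclotomic direction — one obtains, via the Euler-system-to-Kolyvagin-system map, a ``universal Kolyvagin system'' $\boldsymbol{\kappa}=\{\kappa_n\}$ with values in the appropriate $\Lambda$-modules; the ideal $\mf{C}_i(\boldsymbol{c})$ is by construction generated by (images of) the Kolyvagin derivative classes $\kappa_n$ for $n$ a squarefree product of $i$ admissible Kolyvagin primes, after pairing into $\Lambda$. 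The key input here is (A2): it guarantees the core rank is one, so that the Mazur–Rubin stepwise divisibility arguments apply and the Kolyvagin classes $\kappa_n$ bound the successive quotients $X/(\text{relaxed-to-strict at }n)$ correctly.

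Then the main body of the argument is the induction on $i$, carried out after localizing at $\mf p$. The base case $i=0$ is essentially the divisibility $\cha_\Lambda(X)\supseteq \mathrm{Ind}_\Lambda(\boldsymbol{c})$ of Ochiai, localized: $\mf{C}_0(\boldsymbol{c})$ is generated by the image of $c(1)$, and $v_{\mf p}$ of that image bounds $\mathrm{length}_{\Lambda_{\mf p}}(X_{\mf p})=v_{\mf p}(\cha_\Lambda(X)\Lambda_{\mf p})$ from above (equivalently $\Fitt_{\Lambda_{\mf p},0}(X_{\mf p})\supseteq\mf{C}_0(\boldsymbol{c})\Lambda_{\mf p}$). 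For the inductive step, I would choose a Kolyvagin prime $\ell$ (admissible in the Mazur–Rubin sense, and meeting the Chebotarev condition coming from (A5) so that it detects a chosen quotient of $X_{\mf p}$) such that passing from $n$ to $n\ell$ decreases the ``strict'' Selmer module's $\mf p$-length by a controlled amount while the Kolyvagin class $\kappa_{n\ell}$ picks up exactly that drop; iterating $i$ times and feeding the result into the definition of $\mf{C}_i$ yields the claimed containment. The Chebotarev argument needs the images $H^1(\Omega/\bb{Q},\bar T)=H^1(\Omega/\bb{Q},\mca{A}^*[\mf m])=0$ from (A5) together with (A3), so that the localization-at-$\ell$ maps on the relevant finite quotients are surjective.

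The main obstacle I anticipate is controlling the discrepancy between the ideal $\mf{C}_i(\boldsymbol{c})$ as \emph{defined} (via the universal Kolyvagin system over $\Lambda$, before localization) and what the localized Mazur–Rubin bound naturally produces (elementary divisors of $X_{\mf p}$ over the DVR $\Lambda_{\mf p}$): one must show that localization commutes with the formation of the Kolyvagin-derived ideals and that no spurious factors are lost, which is where the freeness statements in (A2) and (A8) and the cyclotomic extendability of $\boldsymbol{c}$ do the real work. A secondary difficulty is ensuring that the technical local simplifications (A6), (A7) genuinely reduce the Selmer structure at bad primes to the ``everywhere strict away from the Euler system'' structure for which Kolyvagin systems compute Fitting ideals — I would verify this by checking that the local conditions at $\ell\in\Sigma\setminus\{p\}$ and at $p$ become trivial (or unramified) on $\mca{A}^*[\mf m]$, so that they do not contribute to $X_{\mf p}$ at height one primes $\mf p$.
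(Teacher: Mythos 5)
Your proposal does not follow the paper's route and, as written, has a genuine gap. The paper proves Theorem~\ref{thmuncondresults} by induction on the number $r$ of variables in $\Lambda=\Lambda^{(r)}$, not by an induction on $i$ carried out over the localization $\Lambda_{\mf p}$. For $r \ge 2$, one chooses a linear element $g$ avoiding a finite set of bad height-two primes (Lemma~\ref{lemOc3.51}, Lemma~\ref{lemOc3.52}) and reduces to the $(r-1)$-variable case via the control theorem (Corollary~\ref{corcontthm}) together with the weak specialization compatibility $\pi_g(\mf{C}_i(\boldsymbol{c})) \subseteq \mf{C}_i(\pi_g^*\boldsymbol{c})$ of Proposition~\ref{propredonevar}. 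For $r=1$, one specializes along a sequence of height-one primes $\mf{p}_N$ converging to $\mf p$, landing in honest DVRs $\mca{O}_N$ with \emph{finite} residue field, applies the Mazur--Rubin structure theorem (Theorem~\ref{thmMRstrthm}) there, and recovers the Fitting ideals of $X_{\mf p}$ through the asymptotic comparison of Corollary~\ref{corkeyobs}; the strong specialization compatibility (Theorem~\ref{thmonevarcompletered}) is what controls $\mf{C}_i$ under these reductions.

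The step that fails in your outline is ``the induction on $i$, carried out after localizing at $\mf p$.'' Although $\Lambda_{\mf p}$ is a discrete valuation ring, its residue field is infinite, and the Mazur--Rubin theory of Kolyvagin systems is developed for coefficient rings with finite residue field: the finiteness enters the Chebotarev argument producing admissible Kolyvagin primes, the finiteness of the relevant Galois cohomology groups with coefficients in the quotients $T/\mca{I}_n T$, and the Poitou--Tate duality computations. None of these are available after merely localizing at $\mf p$; there is no Mazur--Rubin bound over $\Lambda_{\mf p}$ to compare against. You noticed at the end that one must ``show that localization commutes with the formation of the Kolyvagin-derived ideals,'' but this understates the obstacle: it is not a commutation lemma that is missing but the theory itself over $\Lambda_{\mf p}$. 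The paper's workaround is precisely to avoid localizing for this purpose, specializing instead to the $\mca{O}_N$ and passing to a limit, and this is where Theorem~\ref{thmonevarcompletered}, Corollary~\ref{corkeyobs}, and Proposition~\ref{propredonevar} carry the real weight. A smaller point: the core rank one input to Theorem~\ref{thmMRstrthm} comes from (A8) together with (A7), via the formula $\chi(T)=\mathrm{rank}\,T^-+\mathrm{corank}\,H^0(\bb{Q}_p,A^*)$; condition (A2) instead supplies the element $\tau$ used to manufacture admissible Kolyvagin primes, not the core rank.
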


We shall state stronger results under the assumption (MC).
Here, we treat the cases when $\bb{T}$ is 
a one variable deformation, or the cases when 
$\bb{T}$ is the cyclotomic deformation of 
a one variable deformation.
In such cases, we can deduce finer results than 
Theorem \ref{thmcondresults1} and Theorem \ref{thmcondresults2}
under the assumption (MC). 

The results
for one variable cases are as follows.

\begin{thm}\label{thmcondresults1}
Suppose $\Lambda=\mca{O}[[x_1]]$.
Let $(\bb{T},\boldsymbol{c})$ be as in Theorem \ref{thmuncondresults}.
We assume that the Euler system $\boldsymbol{c}$ for $\bb{T}$ 
satisfies the condition {\rm (MC)}.
Let  $\mf{p}$ be a height one prime ideal of $\Lambda$.
Then, for any $i \in \bb{Z}_{\ge 0}$,
we have 
\[
\Fitt_{\Lambda_\mf{p},i}
(X_\mf{p}) =
\mf{C}_i(\boldsymbol{c})\Lambda_\mf{p}. 
\]
In particular, the pseudo-isomorphism class 
of the $\Lambda$-module
$X$ is determined by the collection
$\{ \mf{C}_i(\boldsymbol{c}) \}_{i \in \bb{Z}_{\ge 0}}$
of ideals of $\Lambda$.
\end{thm}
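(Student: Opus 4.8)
The plan is to deduce Theorem~\ref{thmcondresults1} from Theorem~\ref{thmuncondresults} by establishing the reverse inclusion $\Fitt_{\Lambda_\mf{p},i}(X_\mf{p}) \subseteq \mf{C}_i(\boldsymbol{c})\Lambda_\mf{p}$ for every height one prime $\mf{p}$ and every $i \ge 0$. Combined with the containment $\Fitt_{\Lambda_\mf{p},i}(X_\mf{p}) \supseteq \mf{C}_i(\boldsymbol{c})\Lambda_\mf{p}$ from Theorem~\ref{thmuncondresults}, this gives equality. Since $\Lambda = \mca{O}[[x_1]]$, the localization $\Lambda_\mf{p}$ is a discrete valuation ring, so $X_\mf{p}$ is a finitely presented torsion $\Lambda_\mf{p}$-module (torsion because the condition (MC), in particular (NV), forces $X$ to be $\Lambda$-torsion by Ochiai's result), hence decomposes as a direct sum of cyclic modules $\bigoplus_j \Lambda_\mf{p}/\mf{p}^{e_j}\Lambda_\mf{p}$ with $e_1 \ge e_2 \ge \cdots$. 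In this situation the $i$-th Fitting ideal over $\Lambda_\mf{p}$ is explicitly $\mf{p}^{e_{i+1}+e_{i+2}+\cdots}\Lambda_\mf{p}$, so what must be shown is that $\mf{C}_i(\boldsymbol{c})\Lambda_\mf{p}$ is generated by an element of exactly that valuation.

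The key input is the structure theory of Kolyvagin systems of Mazur--Rubin, available here because of hypotheses (A1)--(A5) and (A8). First I would recall from \S\ref{ssconstC_i} the precise definition of $\mf{C}_i(\boldsymbol{c})$ in terms of the universal Kolyvagin system $\boldsymbol{\kappa} = \{\kappa_n\}$ attached to $\boldsymbol{c}$: roughly, $\mf{C}_i(\boldsymbol{c})$ is generated by the images of the derivative classes $\kappa_n$ for Kolyvagin indices $n$ with $\nu(n) \le i$ (where $\nu(n)$ counts the number of prime factors) under the finite-layer "evaluation" maps, mirroring Kurihara's higher Stickelberger ideals $\Theta_i^\delta$. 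The crucial point is a local-to-global principle: after localizing at a height one prime $\mf{p}$, one works over the DVR $\Lambda_\mf{p}$, and the Mazur--Rubin machinery (as in \cite{MR}, Chapter~4--5) shows that the order of vanishing of a Kolyvagin system along the "core rank one" Selmer module governs precisely the elementary divisors of the dual Selmer group. Under (MC), the leading Kolyvagin class $\kappa_1$ generates (up to the correct index computed by $\mathrm{Ind}_\Lambda(\boldsymbol{c})$) the characteristic ideal, and then a descent/induction on $i$ — successively "peeling off" one Kolyvagin prime at a time to produce classes witnessing the next elementary divisor — yields that the $e_{i+1}+e_{i+2}+\cdots$ is matched exactly by the $\mf{p}$-adic valuation of a generator of $\mf{C}_i(\boldsymbol{c})\Lambda_\mf{p}$.

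The steps, in order: (1) reduce to a statement about a single height one prime $\mf{p}$ and pass to the DVR $\Lambda_\mf{p}$; (2) invoke the classification of finitely generated torsion modules over a DVR and the explicit formula for Fitting ideals in terms of elementary divisors; (3) recall the construction of $\mf{C}_i(\boldsymbol{c})$ and reinterpret its generators, after localization, as products of a unit and a power of a uniformizer determined by Kolyvagin derivative classes; (4) apply the Mazur--Rubin structure theorem for Kolyvagin systems over the DVR $\Lambda_\mf{p}$ to identify the sequence of valuations of these generators with the partial sums of the elementary divisors of $X_\mf{p}$, using (MC) to pin down the $i=0$ case (the characteristic ideal); (5) run the induction on $i$ to obtain the remaining equalities; (6) assemble the statement that the full collection $\{\mf{C}_i(\boldsymbol{c})\}_i$ recovers all higher Fitting ideals of all localizations $X_\mf{p}$, hence the pseudo-isomorphism class of $X$ by the remark on higher Fitting ideals in \S\ref{ssintro}.

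The main obstacle will be step~(4)--(5): making the Mazur--Rubin Kolyvagin-system calculus produce the \emph{exact} elementary divisors rather than mere divisibilities, uniformly over all height one primes $\mf{p}$ including those above $p$ and those containing the residual ideal. Two technical points need care here. One is that the Kolyvagin primes $\ell$ used to build $\mf{C}_i(\boldsymbol{c})$ must be chosen Chebotarev-generically so that the finite-layer cohomology behaves well after localization at $\mf{p}$; this is where (A6), (A7), (A2) (the $\tau$-freeness hypothesis) and the "cyclotomic extendability" of $\boldsymbol{c}$ enter, guaranteeing enough Kolyvagin classes with controlled local behavior. The second is the passage from the universal $\Lambda$-adic Kolyvagin system to its image over $\Lambda_\mf{p}$: one must check that localization does not lose the "core rank" and that the leading class remains nonzero, which again follows from (NV)/(MC) together with Ochiai's divisibility $\cha_\Lambda(X) \supseteq \mathrm{Ind}_\Lambda(\boldsymbol{c})$ upgraded to equality under (MC). Once these are in place, the equality of Fitting ideals is a formal consequence of the DVR structure theory.
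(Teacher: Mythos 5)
Your step (4)---``apply the Mazur--Rubin structure theorem for Kolyvagin systems over the DVR $\Lambda_\mf{p}$''---is the place where the proposal breaks down, and it is not a technical wrinkle but a genuine obstruction. Mazur--Rubin's theory (Theorems \ref{thmMRuniqueness} and \ref{thmMRstrthm}) is set up over the integer ring of a finite extension of $\bb{Q}_p$: the arguments require a finite residue field to run the Chebotarev density and core-rank computations that underlie the whole Kolyvagin-system calculus. The localization $\Lambda_\mf{p}$ at a height one prime such as $(x_1-a)$ has residue field a finite extension of $\bb{Q}_p$, i.e.\ an infinite field, and none of that machinery applies directly. The paper's entire strategy in \S\ref{ssprf1var} is designed to circumvent exactly this: instead of working over $\Lambda_\mf{p}$, one perturbs the prime $f$ to $f_N=f+\varpi^N$ (or $\varpi+x_1^N$), obtains genuine $p$-adic DVRs $\mca{O}_N=\Lambda/\mf{p}_N$ where Mazur--Rubin does apply, defines $\bar\alpha_i(N)$ and $\bar\beta_i(N)$ by specializing, and then recovers the $\Lambda_\mf{p}$-statement by the asymptotic comparison $\bar\alpha_i(N)\sim\alpha_iN$, $\bar\beta_i(N)\sim\beta_iN$ of Corollary \ref{corkeyobs}. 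Your proposal never invokes this perturbation, so the crucial passage from the universal $\Lambda$-adic data to an honest Mazur--Rubin structure theorem is missing.

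Two further pieces of the argument are glossed over or absent. First, you say (MC) ``pins down the $i=0$ case,'' but (MC) only gives $\cha_\Lambda(X)=\mathrm{Ind}_\Lambda(\boldsymbol c)$, and relating $\mathrm{Ind}_\Lambda(\boldsymbol c)$ to $\mf{C}_0(\boldsymbol c)$ is not automatic: the paper proves Lemma \ref{LemC0Ind}, namely $\mathrm{ann}_\Lambda(X_{\mathrm{pn}})\,\mathrm{Ind}(\boldsymbol c)\subseteq\mf{C}_0(\boldsymbol c)$, via a nontrivial Tor computation with the control spectral sequence, and then uses that the annihilator of the pseudo-null part has height $\ge 2$ so its valuation at the perturbed primes stays bounded. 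Without this, the (MC) hypothesis never actually produces the reverse inclusion $\bar\beta_i(N)\prec\bar\alpha_i(N)$. Second, and equally essential, the perturbation argument requires the \emph{strong} specialization compatibility (Theorem \ref{thmonevarcompletered}): the image of $\mf{C}_i(\boldsymbol c)$ modulo $\mf{p}_N$ must \emph{equal} $\mf{C}_i(\pi_N^*\boldsymbol c)$, not merely be contained in it. This is a delicate point (the paper devotes all of \S\ref{ssrmkononevarcase} to it and calls it the most technical part), because the Kolyvagin primes admissible for $\bb{T}$ over $\Lambda$ form a smaller set than those admissible for $\pi_N^*\bb{T}$ over $\mca{O}_N$; the vague ``Chebotarev-generic choice'' you gesture at does not address this, since the issue is not existence of primes but a potential strict inclusion $\mca{N}(\bb{T},I)\subsetneq\mca{N}(\pi_N^*\bb{T},\pi_N(I))$. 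Your induction-on-$i$ ``peeling off Kolyvagin primes'' sketch does not appear in the paper and it is not clear it could substitute for Theorem \ref{thmMRstrthm} applied at each finite level.
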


Now, let us consider the case when 
$\bb{T}$ is the cyclotomic deformation of 
a one variable deformation.
We put $\Gamma:=\Gal(\bb{Q}_\infty/\bb{Q})$. 
Let $\chi_{\mathrm{cyc}}\colon \Gamma \longrightarrow \bb{Z}_p^\times$
be the cyclotomic character, 
and $\gamma \in \Gamma$ the topological generator 
given by $\chi_{\mathrm{cyc}}(\gamma)=1+p$.
We put $\Lambda_0=\Lambda^{(1)}$.
From now on, we assume that $r=2$, and that we have 
\[
\Lambda=\Lambda_0[[\Gamma]]= \Lambda^{(2)}, 
\]
where the variable $x_2$ is identified with
$\gamma-1$.
Let $\bb{T}_0$ be a free $\Lambda_0$-module
of finite rank $d$ equipped with 
a continuous $\Lambda_0$-linear action  
$\rho_{\bb{T}_0}$ of $\Gal(\bb{Q}_{\Sigma}/\bb{Q})$
satisfying the conditions (A1)--(A8).
Here, we assume 
\[
(\bb{T},\rho_{\bb{T}})=(\bb{T}_0^{\mathrm{cyc}},\rho_{\bb{T}}^{\mathrm{cyc}})
:=(\bb{T}\otimes_{\Lambda_0}\Lambda,
\rho_{\bb{T}}\otimes \chi_{\mathrm{taut}}),
\]
where $\chi_{\mathrm{taut}} 
\colon \Gal(\bb{Q}_{\Sigma}/\bb{Q}) 
\longrightarrow \Gamma \subseteq \Lambda^\times$ 
is the tautological character.
Note that the assumptions (A1)--(A8) for $\bb{T}_0$
imply that $\bb{T}$ satisfies (A1)--(A8).
(Moreover, we can easily show the converse:  
the conditions (A1)--(A8) for $\bb{T}_0$
are valid if its cyclotomic deformation satisfies them.)
Let $\boldsymbol{c}$ be an Euler system of $\bb{T}$.
Note that $\boldsymbol{c}$
can be extended to cyclotomic direction automatically.
(See Lemma \ref{lempropcycext}.)

\begin{thm}\label{thmcondresults2}
Let $(\bb{T},\boldsymbol{c})$ be as above.
Namely, we set
\(
\Lambda=\Lambda_0[[\Gamma]]= \Lambda^{(2)}
\)
and $(\bb{T},\rho_{\bb{T}})=
(\bb{T}_0^{\mathrm{cyc}},\rho_{\bb{T}}^{\mathrm{cyc}})$.
We assume that the Euler system $\boldsymbol{c}$ for $\bb{T}$ 
satisfies the condition {\rm (MC)}.
Let  $\mf{p}$ be a height one prime ideal of $\Lambda$.
Then, for any $i \in \bb{Z}_{\ge 0}$,
we have 
\[
\Fitt_{\Lambda_\mf{p},i}
(X_\mf{p}) =
\mf{C}_i(\boldsymbol{c})\Lambda_\mf{p}. 
\]
In particular, the pseudo-isomorphism class 
of the $\Lambda$-module 
$X$ is determined by the collection
$\{ \mf{C}_i(\boldsymbol{c}) \}_{i \in \bb{Z}_{\ge 0}}$
of ideals of $\Lambda$.
\end{thm}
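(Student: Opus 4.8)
The inclusion $\Fitt_{\Lambda_\mf{p},i}(X_\mf{p})\supseteq\mf{C}_i(\boldsymbol{c})\Lambda_\mf{p}$ is Theorem~\ref{thmuncondresults}, which does not use (MC); so the whole task is the reverse inclusion under (MC), and I would organise it in parallel with the proof of Theorem~\ref{thmcondresults1}. Recall from \S\ref{ssconstC_i} that, since $\bb{T}$ satisfies (A1)--(A5) and (A8), the theory of Mazur--Rubin~\cite{MR} furnishes a free rank-one $\Lambda$-module of Kolyvagin systems for $\bb{T}$; fixing a generator $\boldsymbol{\kappa}^{\mathrm{univ}}$ (the universal Kolyvagin system), the Kolyvagin system $\boldsymbol{\kappa}^{\boldsymbol{c}}$ attached to $\boldsymbol{c}$ satisfies $\boldsymbol{\kappa}^{\boldsymbol{c}}=\delta(\boldsymbol{c})\,\boldsymbol{\kappa}^{\mathrm{univ}}$ for a unique $\delta(\boldsymbol{c})\in\Lambda$, whence $\mf{C}_i(\boldsymbol{c})=\delta(\boldsymbol{c})\,\mf{C}_i(\boldsymbol{\kappa}^{\mathrm{univ}})$ for every $i$, with $\mf{C}_0(\boldsymbol{c})=\mathrm{Ind}_\Lambda(\boldsymbol{c})$. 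By Ochiai~\cite{Oc2} the condition (NV) (part of (MC)) makes $X$ a torsion $\Lambda$-module, so $\cha_\Lambda(X)\neq 0$; the Kolyvagin bound applied to $\boldsymbol{\kappa}^{\mathrm{univ}}$ gives $\cha_\Lambda(X)\supseteq\mf{C}_0(\boldsymbol{\kappa}^{\mathrm{univ}})$, hence $\delta(\boldsymbol{c})\cha_\Lambda(X)\supseteq\delta(\boldsymbol{c})\mf{C}_0(\boldsymbol{\kappa}^{\mathrm{univ}})=\mf{C}_0(\boldsymbol{c})=\mathrm{Ind}_\Lambda(\boldsymbol{c})=\cha_\Lambda(X)$ by (MC), while trivially $\delta(\boldsymbol{c})\cha_\Lambda(X)\subseteq\cha_\Lambda(X)$. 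Thus $\delta(\boldsymbol{c})\cha_\Lambda(X)=\cha_\Lambda(X)$ with $\cha_\Lambda(X)\neq 0$, which (since $\Lambda$ is a domain) forces $\delta(\boldsymbol{c})\in\Lambda^{\times}$. Consequently $\mf{C}_i(\boldsymbol{c})=\mf{C}_i(\boldsymbol{\kappa}^{\mathrm{univ}})$ as ideals, and the theorem is reduced to the assertion that the universal Kolyvagin system computes the higher Fitting ideals of $X$ exactly at each height one prime:
\[
\mf{C}_i(\boldsymbol{\kappa}^{\mathrm{univ}})\Lambda_\mf{p}=\Fitt_{\Lambda_\mf{p},i}(X_\mf{p})
\qquad\text{for every height one prime }\mf{p}\text{ and every }i.
\]

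For this I would localise everything at $\mf{p}$. Since $\Lambda$ is regular of dimension $r+1=3$ and $\mf{p}$ has height one, $\Lambda_\mf{p}$ is a discrete valuation ring, and $X_\mf{p}$ has finite length. The tool is the sharp form of the Mazur--Rubin structure theorem over a principal ideal ring: over $\Lambda_\mf{p}$ it is a theorem, not merely a divisibility, that a primitive Kolyvagin system computes all the higher Fitting ideals of the dual Selmer module. The localisation of $\boldsymbol{\kappa}^{\mathrm{univ}}$ is again a generator of the (localised, still free rank-one) module of Kolyvagin systems, hence primitive, so the only point to check is that the hypotheses required to run the Kolyvagin machinery survive the localisation and the passage to the residue field $\kappa(\mf{p})$. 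For $\mf{p}=(\varpi)$ this is immediate: $\kappa(\mf{p})=\mathrm{Frac}(k[[x_1,x_2]])$ receives the base change of $\bar{T}=\bb{T}/\mf{m}\bb{T}$, and absolute irreducibility of $\bar{T}$ (A1), the $\tau$-condition (A2), the freeness of $\bb{T}^{-}$ (A8), and the cohomological vanishings (A3)--(A7) are all preserved under field extensions of $k$, so the structure theorem applies over $\Lambda_{(\varpi)}$ verbatim.

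The delicate case, and where I expect the main obstacle to be, is the height one primes $\mf{p}$ with $\kappa(\mf{p})$ of characteristic zero: there the residual representation $\bb{T}\otimes_\Lambda\kappa(\mf{p})$ need not be irreducible, so one cannot apply the DVR structure theorem over $\Lambda_\mf{p}$ naively. This is precisely where the cyclotomic structure $\bb{T}=\bb{T}_0^{\mathrm{cyc}}=\bb{T}_0\otimes_{\Lambda_0}\Lambda$ over $\Lambda=\Lambda_0[[\Gamma]]$ enters, exactly as in Theorem~\ref{thmcondresults1} one uses the one-variable structure. I would use that (A1)--(A8) hold for $\bb{T}$ if and only if they hold for $\bb{T}_0$, and run a cyclotomic descent: twisting by finite-order characters of $\Gamma$ and enlarging $\mathcal{O}$, the computation of $\Fitt_{\Lambda_\mf{p},i}(X_\mf{p})$ at such a prime $\mf{p}$ is reduced, through the control sequence for $X=X(\bb{T}_0^{\mathrm{cyc}})$ — in which (A7) prevents the local condition at $p$ from contributing at the exceptional primes $\mf{p}\supseteq(\gamma-\zeta)$ with $\zeta\in\mu_{p^\infty}$ — to the one-variable situation of Theorem~\ref{thmcondresults1} for a suitable twist of $\bb{T}_0$ over $\Lambda_0$, where the statement is already available (and where (MC) is inherited, since $\delta(\boldsymbol{c})$ is a unit and the universal Kolyvagin system is compatible with twisting). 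The main difficulties in carrying this out are the bookkeeping in the control sequence at the exceptional primes, and checking that each height one prime of $\Lambda$ of characteristic zero is reached by some such twist so that no Fitting-ideal information is lost; once this is done, assembling the contributions of all height one primes yields $\Fitt_{\Lambda_\mf{p},i}(X_\mf{p})=\mf{C}_i(\boldsymbol{c})\Lambda_\mf{p}$ for all $\mf{p}$ and $i$, and hence that $\{\mf{C}_i(\boldsymbol{c})\}_i$ determines the pseudo-isomorphism class of $X$.
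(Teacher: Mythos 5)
Your proposal rests on machinery that is not actually available in the setting of the paper, and this makes the first half of the argument collapse before the genuinely hard part even begins.

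The load-bearing claim is that ``the theory of Mazur--Rubin~\cite{MR} furnishes a free rank-one $\Lambda$-module of Kolyvagin systems for $\bb{T}$'' and that the Euler system's Kolyvagin system therefore differs from a generator $\boldsymbol{\kappa}^{\mathrm{univ}}$ by some $\delta(\boldsymbol{c})\in\Lambda$. Theorem \ref{thmMRuniqueness} (= \cite{MR} Theorem 5.2.10) asserts freeness of rank one only when the base ring $R$ is a \emph{discrete valuation ring with finite residue field} (the ring of integers of a finite extension of $\bb{Q}_p$), not over $\Lambda=\Lambda^{(2)}$. The paper explicitly disclaims proving that the ``universal Kolyvagin system'' of \S\ref{ssunivKS} satisfies the axioms of \cite{Bu} — that remark exists precisely to warn against the shortcut you are taking. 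Consequently $\delta(\boldsymbol{c})$ does not exist, the identity $\mf{C}_i(\boldsymbol{c})=\delta(\boldsymbol{c})\mf{C}_i(\boldsymbol{\kappa}^{\mathrm{univ}})$ is unfounded, and the deduction that $\delta(\boldsymbol{c})\in\Lambda^\times$ is vacuous. You also invoke $\mf{C}_0(\boldsymbol{c})=\mathrm{Ind}_\Lambda(\boldsymbol{c})$; the paper only proves the one-sided containment $\mathrm{ann}_\Lambda(X_{\mathrm{pn}})\mathrm{Ind}_\Lambda(\boldsymbol{c})\subseteq\mf{C}_0(\boldsymbol{c})$ in Lemma \ref{LemC0Ind}, not the equality.

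The second half has the same defect. You propose to localise at a height one prime $\mf{p}$ and apply ``the sharp form of the Mazur--Rubin structure theorem over a principal ideal ring''; but $\Lambda_{\mf{p}}$ is not a ring of the type covered by \cite{MR} — for $\mf{p}\neq(\varpi)$ the residue field has characteristic zero and is infinitely generated over $\bb{Q}_p$, and even $\Lambda_{(\varpi)}$ has residue field $k(x_1,x_2)$, not a finite field. The Mazur--Rubin axioms (H1)--(H6) and the Chebotarev input behind them are formulated over finite residue fields, and there is no verbatim extension. The paper works around exactly this by never running Kolyvagin theory at $\Lambda_{\mf{p}}$: it specialises along a well-chosen linear ideal $g\Lambda$ to drop to the one-variable ring $\Lambda_0\simeq\Lambda/g\Lambda$, uses the \emph{strong specialization compatibility} $\pi_g(\mf{C}_i(\boldsymbol{c}))=\mf{C}_i(\pi_g^*\boldsymbol{c})$ (Theorem \ref{thmcyclotcompletered}, proved via the cyclotomic structure $\bb{T}=\bb{T}_0^{\mathrm{cyc}}$ and the congruence $\chi_{\mathrm{taut}}(\Frob_\ell)\equiv 1$ at sufficiently $p$-close primes), and then invokes the already-established one-variable equality (Theorem \ref{thmcondresults1}). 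The new condition (LE4), that $\pi_g$ transports $\mathrm{pInd}_\Lambda(\boldsymbol{c})$ correctly and hence transports (MC), is an essential ingredient your argument does not supply. Your closing gesture towards ``cyclotomic descent via twisting'' points in the right direction but leaves out precisely this strong compatibility and the (LE4) verification — which are the technical heart of the paper's proof — and so the proposal does not constitute a proof.
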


\subsection{Strategy}
Here, we introduce the strategy of 
the proof of our main results.

Theorem \ref{thmuncondresults} is proved 
by the induction on the number $r$
of variables in $\Lambda=\Lambda^{(r)}$.
When $r=0$, namely the case when $\Lambda$ is a DVR,
the assertion like Theorem \ref{thmuncondresults} follows 
from the theory of Kolyvagin systems
established by Mazur and Rubin in \cite{MR}.
When $r=1$, 
by using the method developed in \cite{MR} \S 5.3, 
we can reduce the proof to 
the non-variable cases.
Namely, for each hight one prime ideal $\mf{p}$ of $\Lambda$,
we take a sequence $\{ \mf{p}_n \}_{n \ge 0}$
which is a ``perturbation" of the prime ideal $\mf{p}$,
and observe asymptotic behavior of 
the reduction by $\mf{p}_n$.
For $r \ge 2$, we use the method developed by Ochiai in \cite{Oc2}.
We take a suitable ``linear element" $g \in \Lambda$ of $\Lambda$,
and reduce the proof of Theorem \ref{thmuncondresults} for 
the pair $(\bb{T},\boldsymbol{c})$ over $\Lambda^{(r)}$
to the proof of that for $(\pi_g^*\bb{T},
\pi_g^*\boldsymbol{c})$
over $\Lambda^{(r-1)}$, where 
$\pi_g \colon \Lambda \longrightarrow 
\Lambda/g\Lambda=\Lambda^{(r-1)}$
denotes the reduction map.
In the induction arguments, 
a property called ``a weak specialization compatibility"
which says that the image of the ideal
$\mf{C}_i(\boldsymbol{c})$ 
for $(\bb{T},\boldsymbol{c})$
by the reduction map $\pi_g$
is contained in the ideal 
$\mf{C}_i(\pi_g^*\boldsymbol{c})$
for $(\pi_g^*\bb{T},
\pi^*\boldsymbol{c})$
becomes a key.
(For the weak specialization compatibility,
see Proposition \ref{propredonevar}.)

Theorem \ref{thmcondresults1}
and 
Theorem \ref{thmcondresults2}
are also proved by 
reduction arguments like 
\cite{MR} \S 5.3 and \cite{Oc2},
but we need more careful arguments.
In order to prove
these theorems, we need to show
a property on 
$\mf{C}_i(\boldsymbol{c})$
called a strong specialization compatibility
which says that 
that the image of the ideal
$\mf{C}_i(\boldsymbol{c})$ 
by the reduction map $\pi_g$
coincides with the ideal 
$\mf{C}_i(\pi_g^*\boldsymbol{c})$.
(See Theorem \ref{thmonevarcompletered} and 
Theorem \ref{thmcyclotcompletered}.)
The proof of 
the strong specialization compatibility
is the most technical part in our article.
The difficulty to prove  
the strong specialization compatibility
is as follows.
For a positive integer $n$,
we denote by $\mathrm{Prime}(n)$
the set of prime divisors of $n$.
Roughly speaking, 
the ideal
$\mf{C}_i(\pi_g^*\boldsymbol{c})$
is a projective limit of 
certain ideals of quotient rings $\Lambda/I$
generated by the images of
(modified) Kolyvagin derivatives
$\kappa^{\mathrm{univ}}_n(\boldsymbol{c})_{I}$
where $n$ runs through square-free positive integer 
satisfying $\# \mathrm{Prime}(n) \le i$
and contained in a certain set $\mca{N}(\bb{T},I)$.
(For the definition of 
$\mf{C}_i(\pi_g^*\boldsymbol{c})$, see 
Definition \ref{dfnC_i}, and  
for the definition of $\mca{N}(\bb{T},I)$, 
see \S \ref{ssKD}.)
By the definition of $\mca{N}(\bb{T},I)$, 
a prime divisor $\ell$ of an element 
$n \in \mca{N}(\bb{T},I)$
makes 
$(\bb{T}/(\Frob_\ell -1)\bb{T})\otimes_{\Lambda} \Lambda/I$
be a free $\Lambda/I$-module of rank one.
This implies that the set 
$\mca{N}(\bb{T},I)$ is smaller than 
$\mca{N}(\pi_g^*\bb{T},\pi_g(I))$.
So, the ideal
$\pi_g(\mf{C}_i(\boldsymbol{c}))$
may be smaller than 
$\mf{C}_i(\pi_g^*\boldsymbol{c})$.
We can overcome this difficulty 
in the situations of 
Theorem \ref{thmcondresults1}
and 
Theorem \ref{thmcondresults2} as follows.
\begin{itemize}
\item When $\Lambda=\Lambda^{(1)}$, 
we can embed $\Lambda/I$ in to 
a certain quotient ring of DVR.
By using the theory of Kolyvagin systems over DVR,
we can deduce that 
$\pi_g(\mf{C}_i(\boldsymbol{c}))$
is not small.
(For details, see the proof of 
See Theorem \ref{thmonevarcompletered} 
in \S \ref{ssrmkononevarcase}.)
\item When $\bb{T}$ is the cyclotomic deformation of 
a Galois deformation $\bb{T}_0$ over 
$\Lambda^{(1)}$,
the proof of  the strong specialization compatibility
for $\bb{T}$ can be reduced to that
for $\bb{T}_0$, 
namely Theorem \ref{thmcondresults1}.
(For details, see the proof of 
Theorem \ref{thmcondresults2}
in \S \ref{ssrmkoncyclotcase}.)
\end{itemize}

The contents of our article is as follows.
In \S \ref{secprel}, we introduce 
some basic notion and
preliminary results.
In \S \ref{secES}, we review 
the theory of 
Euler systems for Galois deformations
and Kolyvagin systems over DVR.
In \S \ref{secCi}, we define the ideal  
$\mf{C}_i(\boldsymbol{c})$, 
and prove their basic properties, 
that is, independence of the choice of 
a certain system of parameters of $\Lambda$
(Proposition \ref{propindepofparam})
and the stability under 
scalar extensions (Proposition \ref{corOO'}).
In \ref{secred}, 
we prove 
the weak/strong  specialization compatibility
of $\mf{C}_i(\boldsymbol{c})$.
In \S \ref{secpfmr}, 
we prove our main results.
In \S \ref{ssHida}, 
we apply our results to 
ordinary and nearly ordinary 
Hida deformation.

\section*{Notation}

We put $\overline{\bb{N}}:= \bb{Z}_{\ge 0}\cup \{ \infty \}$
and $\overline{\bb{N}}_{>0 }:= \bb{Z}_{> 0}\cup \{ \infty \}$.

Let $n \in \bb{Z}_{>0}$ be any positive integer. 
We put $H_n:= \Gal(\bb{Q}(\mu_n)/\bb{Q})$. 
We define
\(
\Lambda_{/I,[n]}:=\Lambda/I[H_n],
\)
and denote by 
\(
\pi_{I,[n]} \colon \Lambda \longrightarrow 
\Lambda_{/I,[n]}
\)
the natural ring homomorphism.
For simplicity, we write 
$\pi_I:=\pi_{I,[1]}\colon \Lambda \longrightarrow 
\Lambda/I$. We also write  
$\Lambda_{[n]}:=\Lambda_{(0),[n]}=\Lambda[H_n]$ and 
$\pi_{[n]}:=\pi_{[n]}$.
The tautological action of $H_n$ 
on $\Lambda_{/I,n}$ induces the action of $H_n$ 
on $H^1(\pi_{I,[n]}^*\bb{T})$.
By Shapiro's lemma, we have a natural 
$H_n$-equivariant homomorphism
\[
H^1(\bb{Q}(\mu_n),\pi_I^* \bb{T})
\simeq H^1(\pi_{I,[n]}^*\bb{T})
\]
if $n$ is prime to $\Sigma$.

Let $R$ be a DVR,
and $v_R\colon R \longrightarrow \bb{Z} \cup \{ \infty \}$ 
the additive valuation on $R$.
Then, for any ideal $I$ of $R$
generated by an element $a \in R$,
we define $v_R(I):=v_R(a)$.

Let $S$ be a commutative ring, and $I$ an ideal of $S$.
Let $a$ be an element of $S$, 
and $C$ a subset of $\Lambda$.
Then, we denote by $a_I$ (resp.\ $C_I$) 
the image of $a$ (resp.\ C) in $S/I$.

Let $A$ be a set, and 
$\boldsymbol{a}:=(a_0, \dots, a_{r}) \in A^{r+1}$ any element.
For any $i \in \bb{Z}$ with $0 \le i \le r$, 
we define truncated systems $\boldsymbol{a}_{\le i}$ 
and $\boldsymbol{a}_{\ge i}$ by
\begin{align*}
\boldsymbol{h}_{\le i}:&=(a_0, \dots , a_{i}) \in A^{i+1}, \\
\boldsymbol{h}_{\ge i}:&=(a_i, \dots , a_{r}) \in A^{r-i+1}.
\end{align*}

\section*{Acknowledgment}

The author would like to thank 
Tadashi Ochiai for his helpful advices.
This work is supported by JSPS
KAKENHI Grant Number 26800011.

\section{Preliminaries}\label{secprel}

Here, we recall some basic notion and preliminary results.
In \S \ref{ssStr}, we recall structure theorem 
of $\Lambda$-modules, and we also recall  
the definition and basic properties of Fitting ideals.
In \S \ref{ssMPS}, we define the notion of 
monic parameter systems which is a system of 
parameters of $\Lambda$ consisting of 
``monic polynomials" in certain sense.
The quotient of $\Lambda$ by 
ideals generated by powers of elements in  
a fixed monic parameter system
have some useful properties. 
For instance, such rings become 
$0$-dimensional Gorenstein rings.
(See Lemma \ref{leminjmod}.)
In \S \ref{ssCT}, we recall some control theorems
for Galois cohomology groups.

We keep the notation introduced in \S \ref{ssintro}.
In particular, we fix an odd prime number $p$, and 
we put $\Lambda:=\mca{O}[[x_1, \dots, x_{r}]]$. 
Let $\bb{T}$ be a free $\Lambda$-module 
of finite rank with a continuous $\Lambda$-linear action of 
$G_{\bb{Q},\Sigma}$
satisfies the conditions (A1)--(A8).

\subsection{Higher Fitting ideals and structure theorem of $\Lambda$-modules}\label{ssStr}

First, let us recall the definition and basic properties 
of higher Fitting ideals.

\begin{dfn}\label{defFittid}
Let $R$ be a commutative ring, and
$M$ a finitely presented $R$-module.
Suppose that we have an exact sequence
\begin{equation}\label{finpresM}
R^m \xrightarrow{\ A \ } R^n \longrightarrow M
\longrightarrow 0
\end{equation}
of $R$-modules.
Then, for any $i \in \bb{Z}$, 
we denote by $\Fitt_{R,i}(M)$ 
the ideal of $R$ generated by 
all $(n-i)\times (n-i)$-minors of $A$.
Note that if $n-i > m$ (resp.\ $n-i \le 0$), 
then we define $\Fitt_{R,i}(M):=\{ 0\}$
(resp.\ $\Fitt_{R,i}(M)=R$). 
We call $\Fitt_{R,i}(M)$ 
\textit{the $i$-th Fitting ideal of $R$}.
Not that the ideals $\Fitt_{R,i}(M)$ is 
independent of the choice of the exact sequence 
{\rm (\ref{finpresM})}.
\end{dfn}

We shall review some basic properties 
on higher Fitting ideals briefly.
Let $R$ and $M$ be as in 
Definition \ref{defFittid}.
Then, by definition, we can verify 
the following properties easily.
\begin{enumerate}[(i)]
\item Higher Fitting ideals $\{ \Fitt_{R,i}(M) \}$
forms an ascending filtration of $R$.
\item For any ring homomorphism $f\colon R \longrightarrow R'$,
we have $\Fitt_{R',i}(f^*M)=f(\Fitt_{R,i}(M))R'$.
Namely, the higher Fitting ideals are 
{\it compatible with base change}.
\item Let $\mathrm{ann}_R(M)$ be the annihilator ideal of 
the $R$-module $M$. Then, we have 
$\Fitt_{R,i}(M) \subseteq \mathrm{ann}_R(M)$.
(This is a remarkable property of Fitting ideals
though we do not use it in this article.)
\end{enumerate}

Now let us introduce 
some important examples for higher Fitting ideals.

\begin{exa}
Let $R$ be a PID,
and $M$ a finitely generated $R$-module.
Then, by the structure theorem, 
we have an isomorphism
\[
M \simeq \bigoplus_{i=1}^s R/d_iR,
\]
where $\{d_i \}_i=1^s$ are sequence contained in 
$R \setminus R^\times$ satisfying $d_i \mid d_{i+1}$
for any $i$.
Hence by the definition of higher Fitting ideals,
we have 
\[
\Fitt_{R,i}(M)=\begin{cases}
\left(
\displaystyle \prod_{j=1}^{s-i}d_j
\right)R & (\text{if $i<s$}) \\
R & (\text{if $i \ge s$}).
\end{cases}
\]
This implies that the isomorphism class of
the $R$-module $M$ is determined by 
the higher Fitting ideals $\{ \Fitt_{R,i}(M) \}_{i \ge 0}$.
\end{exa}

\begin{exa}\label{exaFittlocring}
Let $R$ be a local ring 
with the maximal ideal $\mf{m}_R$, 
and $M$ the finitely generated $R$-module.
By the base change property of Fitting ideals, 
we have 
\[
\min\{i \in \bb{Z}_{\ge 0}
\mathrel{\vert} \Fitt_{R,i}(M)=R \}
= \dim_{R/\mf{m}_R} M \otimes_R R/\mf{m}_R.
\]
Namely, the minimal number of generators of $M$
is determined by higher Fitting ideals $\{ \Fitt_{R,i}(M) \}$.
In particular, we can easily verify that
the $R$-module $M$ is free of rank one 
if and only if $\Fitt_{R,i}(M)=0$ and $\Fitt_{R,1}(M)=R$.
\end{exa}

Now, let us consider 
the ring
$\Lambda:=\Lambda^{(r)}_{\mca{O}}=\mca{O}[[x_1, \dots, x_r]]$,
where $\mca{O}$ is the integer ring of
a finite extension field $F$ of $\bb{Q}_p$.
Note that the ring $\Lambda$ is Noetherian UFD.

First, we recall the notion of pseudo-null modules and 
pseudo-isomorphisms.
Let $f \colon M \longrightarrow N$ be 
a homomorphism of finitely generated $\Lambda$-modules.
We say that the $\Lambda$-module $M$ is pseudo-null
if and only if $M_{\mf{p}}=0$ for 
any height one prime  $\mf{p}$ of $\Lambda$.

Recall that we have the following 
structure theorem of 
finitely generated torsion
$\Lambda$-modules.

\begin{prop}\label{propstrthrm}
Let $M$ be finitely generated torsion 
$\Lambda$-module.
Then, we have a pseudo-isomorphism
\(
\iota_M \colon
M \longrightarrow \bigoplus_{i=1}^s \Lambda/d_i \Lambda
\)
of $\Lambda$-modules, where the following hold.
\begin{itemize}
\item We have $s \in \bb{Z}_{>0}$.
\item For each $ i \in \bb{Z}$ with $1 \le i \le s$, 
we have 
$d_i \in  \Lambda \setminus (\Lambda^\times \cup \{ 0 \})$.
\item For any $i,j \in \bb{Z}$ with $1 \le i < j \le s$, 
we have $d_j \in d_i \Lambda$.
\end{itemize}
\end{prop}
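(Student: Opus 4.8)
The plan is to pass to a suitable semilocal localization of $\Lambda$ that happens to be a PID, apply the classical structure theorem over a PID there, and then descend the resulting decomposition back to $\Lambda$ by producing an honest $\Lambda$-homomorphism whose localization at every height one prime is an isomorphism.

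First I would set up the localization. Since $\Lambda$ is a Noetherian UFD, it is an integrally closed Noetherian domain in which every height one prime is principal, generated by an irreducible element. As $M$ is finitely generated and torsion, $\mathrm{ann}_{\Lambda}(M)\neq 0$, so $\mathrm{Supp}_{\Lambda}(M)=V(\mathrm{ann}_{\Lambda}(M))$ contains only finitely many height one primes: any height one prime containing $\mathrm{ann}_{\Lambda}(M)$ is necessarily minimal over it, and there are only finitely many such minimal primes. List them as $\mf{p}_1=(f_1),\dots ,\mf{p}_n=(f_n)$ with $f_j$ irreducible. (If $M$ is pseudo-null then $n=0$ and $M$ is pseudo-isomorphic to $0$, so assume $n\geq 1$; the edge case $s=0$ corresponds to this situation.) Put $U:=\Lambda\setminus\bigcup_{j=1}^n\mf{p}_j$, a multiplicative set, and $\Lambda_U:=U^{-1}\Lambda$. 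The prime ideals of $\Lambda_U$ are the $\Lambda$-primes contained in some $\mf{p}_j$, so its maximal ideals are exactly $\mf{p}_1\Lambda_U,\dots ,\mf{p}_n\Lambda_U$, each of height one; hence $\dim\Lambda_U=1$. Being a localization of the integrally closed Noetherian domain $\Lambda$, the ring $\Lambda_U$ is integrally closed and Noetherian, hence a one-dimensional normal Noetherian domain, i.e.\ a Dedekind domain; a semilocal Dedekind domain is a PID.

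Next I would run the structure theorem downstairs and normalize. The module $M_U:=U^{-1}M$ is a finitely generated torsion $\Lambda_U$-module, so $M_U\cong\bigoplus_{i=1}^s\Lambda_U/(d_i)$ with nonzero nonunits $d_i\in\Lambda_U$ satisfying $d_i\mid d_{i+1}$. Multiplying each $d_i$ by a unit of $\Lambda_U$ I may arrange $d_i\in\Lambda$ and, discarding the factors of a representative that lie outside $\mf{p}_1,\dots ,\mf{p}_n$ (those factors are units of $\Lambda_U$), I may arrange that each $d_i$ is a product of the $f_j$ with nonnegative exponents. With these choices the relations $d_i\mid d_{i+1}$ in $\Lambda_U$ upgrade to $d_{i+1}\in d_i\Lambda$ in $\Lambda$: comparing $\mf{p}_j$-adic valuations one at a time shows the exponent of each $f_j$ in $d_i$ is at most that in $d_{i+1}$, so $d_{i+1}/d_i$ is again a genuine product of $f_j$'s in $\Lambda$.

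Finally I would descend. Set $E:=\bigoplus_{i=1}^s\Lambda/d_i\Lambda$; then $U^{-1}E\cong M_U$, and among height one primes $\mathrm{Supp}(E)\subseteq\{\mf{p}_1,\dots ,\mf{p}_n\}$. Fix an isomorphism $\Phi\colon M_U\xrightarrow{\ \sim\ }U^{-1}E$, choose generators $x_1,\dots ,x_m$ of the $\Lambda$-module $M$, and pick a common $u\in U$ with $u\,\Phi(x_\nu)\in E$ for all $\nu$; then $M\to M_U\xrightarrow{u\Phi}U^{-1}E$ factors through a $\Lambda$-homomorphism $\iota_M\colon M\to E$. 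For a height one prime $\mf{q}$: if $\mf{q}\notin\{\mf{p}_j\}$ then $M_{\mf{q}}=0=E_{\mf{q}}$; if $\mf{q}=\mf{p}_j$ then $\Lambda_{\mf{p}_j}$ is a further localization of $\Lambda_U$, in which $u$ is a unit, so $(\iota_M)_{\mf{p}_j}$ is the localization of the isomorphism $u\Phi$ and hence an isomorphism. Therefore $\iota_M$ has pseudo-null kernel and cokernel, i.e.\ it is a pseudo-isomorphism of the required shape, and the $d_i$ satisfy all the listed conditions. The only delicate point — and the real obstacle — is the normalization of the $d_i$ into $\Lambda$ so that divisibility is preserved, together with the verification that $\Lambda_U$ is genuinely a PID; the rest is formal. (This statement is also recorded in Bourbaki's \emph{Algèbre commutative}, Chapter VII.)
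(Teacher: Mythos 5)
Your argument is correct, and it is essentially the standard one (localize at the complement of the finitely many height-one primes in the support to land in a semilocal Dedekind domain, hence a PID; apply the PID structure theorem; normalize the elementary divisors to polynomials in the irreducible generators $f_j$ so that divisibility descends; clear denominators to produce an honest $\Lambda$-linear map which is then checked to be a pseudo-isomorphism prime by prime). The paper does not give a proof of this proposition: it is stated as a recalled classical fact, and indeed the reference you mention, Bourbaki, \emph{Alg\`ebre commutative}, Ch.\ VII, \S 4, no.\ 4, is the canonical source, where the theorem is proved for finitely generated torsion modules over a Krull domain by precisely this method. A few small points worth noting. First, the clearing-of-denominators step implicitly uses that the natural map $E \to U^{-1}E$ is injective, which holds because every associated prime of $E = \bigoplus_i \Lambda/d_i\Lambda$ is one of the $\mf{p}_j$ and hence meets $U$ trivially; it is worth saying this explicitly, since otherwise $u\Phi(M) \subseteq E$ would not pin down a well-defined map $M \to E$. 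Second, you correctly flag that the proposition as printed requires $s \in \bb{Z}_{>0}$ with each $d_i$ a nonzero nonunit, which cannot accommodate a nonzero pseudo-null $M$; in that degenerate case the correct target is the zero module ($s=0$), and your remark to that effect is the honest reading of the statement. Finally, the prime-avoidance argument showing $\dim \Lambda_U = 1$ and the identification of the height-one primes in $\mathrm{Supp}(M)$ with the minimal primes of $\ann_\Lambda(M)$ are both sound.
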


For the finitely generated torsion $\Lambda$-module $M$
in Proposition \ref{propstrthrm},
we define the characteristic ideal $\cha_{\Lambda}(M)$
of the $\Lambda$-module $M$ by
\[
\cha_{\Lambda}(M)=
\left(
\prod_{i=1}^s d_i 
\right)
\Lambda.
\]
Note that the higher Fitting ideals are 
independent of the choice of 
the pseudo-isomorphism $\iota_M$
in Proposition \ref{propstrthrm}.

\begin{exa}\label{exaFitt}
Let $M$ be a finitely generated torsion $\Lambda$-module, 
and $\{ d_i \}_{i \ge 0}$ the sequence 
in $\Lambda \setminus \Lambda^{\times}$
as in Proposition \ref{propstrthrm}.
Then, by the base change property 
of higher Fitting ideals, for any
any $i \in \bb{Z}_{\ge 0}$ and 
for any prime ideal $\mf{p}$ of $\Lambda$, 
we have
\[
\Fitt_{\Lambda_{\mf{p}},i}(M_{\mf{p}})=
\begin{cases}
\left(
\displaystyle \prod_{j=1}^{s-i}f_j
\right) \Lambda_{\mf{p}} & (\text{if $i<s$}) \\
\Lambda_{\mf{p}} & (\text{if $i \ge s$}).
\end{cases}
\]
Namely,
for any $i \in \bb{Z}_{\ge 0}$, 
there exists an ideal $\mf{A}_i$ of height at least two
satisfying
\[
\Fitt_{\Lambda,i}(M)=
\begin{cases}
\left(
\displaystyle \prod_{j=1}^{s-i}f_j
\right) \mf{A}_i & (\text{if $i<s$}) \\
\mf{A}_i & (\text{if $i \ge s$}).
\end{cases}
\]
In particular, the characteristic ideal $\cha_{\Lambda}(M)$
is the minimal principal ideal of $\Lambda$
containing $\Fitt_{\Lambda,0}(M)$.
The pseudo-isomorphism class of $M$
is determined by the higher Fitting ideals 
$\{ \Fitt_{\Lambda,i}(M) \}_{i \in \bb{Z}_{\ge 0}}$.
\end{exa}

\if0

\begin{lem}\label{lemhigherFittforbound}

\end{lem}

\begin{lem}\label{lemWprepthm}

\end{lem}

\fi

\subsection{Monic parameter systems}\label{ssMPS}

Here, we use the notation introduced 
in \S Notation, namely, in the end of \S \ref{secintro}. 
For instance, 
for each $n \in \bb{Z}_{>0}$ and each ideal $I$ of $\Lambda$, 
we denote by 
\(
\pi_{I,[n]} \colon \Lambda \longrightarrow 
\Lambda_{/I,[n]}:=(\Lambda/I)[H_n]
\)
the natural map.

\begin{dfn}
Recall that we put $\overline{\bb{N}}:= \bb{Z}_{\ge 0}\cup \{ \infty \}$.
Let $\boldsymbol{h}:=(h_0, \dots , h_{r}) \in 
(\mf{m}_{\Lambda})^{r+1}$ and 
$\boldsymbol{m}:=(m_0, \dots , m_{r}), 
\boldsymbol{m}':=(m'_0, \dots , m'_{r}) \in 
(\overline{\bb{N}}_{>0})^{r+1}$
be arbitrary elements. 
\begin{enumerate}[(i)]
\item We write $\boldsymbol{m}' \ge \boldsymbol{m}$ if 
we have $m'_i \ge m_i$ for any $i$. 
\item We put 
\[
\boldsymbol{h}^{\boldsymbol{m}}:=(h_0^{m_0}, \dots h_{r}^{m_{r+1}})
\in (\mf{m}_{\Lambda})^{r}.
\]
(Here, we define $h_i^ \infty:=0$)
\item We regard $\overline{\bb{N}}_{>0}$ as a subset of
$(\overline{\bb{N}}_{>0})^{r+1}$ via the diagonal embedding.
In particular, for any integer $N$, we write 
$\boldsymbol{h}^N:=\boldsymbol{h}^{(N, \dots, N)}$.
Note that we write $N \ge \boldsymbol{m}$ if and only if 
$N \ge m_i$ for any $i$.
\item We denote by $I(\boldsymbol{h})$ 
the ideal of $\Lambda$ generated by 
$\{ h_i \mathrel{\vert} 0 \le i \le r \}$.
\end{enumerate}
\end{dfn}

Now let us introduce a notion 
\textit{monic parameter systems}, 
which plays an important role in our article.

\begin{dfn}
An element $\boldsymbol{h}:=(h_0, \dots , h_{r}) \in 
(\mf{m}_{\Lambda})^{r+1}$ is called 
\textit{a monic parameter system} of $\Lambda$
if it satisfies the following conditions (1)--(3):
\begin{enumerate}[(i)]
\item The $0$-th component $h_0$ is 
a non-zero element contained 
in $\mf{m}_{\mca{O}}=\varpi\mca{O}$.
\item For any $i \ge 1$, 
the $i$-th component $h_i$ 
is a monic polynomial in the variable $x_i$
whose coefficients of lower terms are contained in the maximal ideal of 
the ring
$\Lambda^{(i-1)}=\mca{O}[[x_1,\dots,x_{i-1}]]$.
\end{enumerate}
Note that $\boldsymbol{x}:=(\varpi, x_1, \dots , x_{r})$
forms a  monic parameter system  of $\Lambda$.
We call $\boldsymbol{x}$ \textit{the standard monic 
parameter system} of $\Lambda$.
\end{dfn}

Here, let us observe some basic properties of 
monic parameter systems.

\begin{lem}\label{lemmpr}
Let $\boldsymbol{h}$ be a monic parameter system of $\Lambda$.
Then, the following hold.
\begin{enumerate}[{\rm (i)}]
\item The $(r+1)$-ple 
$\boldsymbol{h}$ forms a system of parameters for 
the maximal ideal $\mf{m}_{\Lambda}$.
In particular, the order of 
$\Lambda/I(\boldsymbol{h}^{\boldsymbol{m}})$ is finite.
\item For any $\boldsymbol{m}:=(m_0, \dots, m_{r}) 
\in (\bb{Z}_{>0})^{r+1}$,
the $(r+1)$-ple $\boldsymbol{h}^{\boldsymbol{m}}$ 
is  a monic parameter system.
\item Let $i \in \bb{Z}$ be any integer satisfying $0 \le i \le r$, 
and put 
\[
\boldsymbol{h}(i):=(\boldsymbol{h}_{\le i-1},0, \boldsymbol{h}_{\ge i+1}).
\]
Then, there exists an exact sequence 
\[
0 \longrightarrow \pi_{I(\boldsymbol{h}(i))}^*\bb{T} 
\xrightarrow{\times h_i} \pi_{I(\boldsymbol{h}(i))}^*\bb{T} 
\longrightarrow \pi_{I(\boldsymbol{h})}^*\bb{T}
\longrightarrow 0
\]
of $\Lambda$-modules.
\end{enumerate}
\end{lem}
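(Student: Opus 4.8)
The plan is to verify the three assertions in order, each being a fairly direct consequence of the definition of a monic parameter system together with standard commutative algebra.

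For (i), I would argue that $\boldsymbol{h}$ is a system of parameters by induction on $r$. The key point is the Weierstrass preparation/division theorem: since $h_i$ (for $i\ge 1$) is, modulo the maximal ideal of $\Lambda^{(i-1)}$, a nonzero power of $x_i$, the quotient $\Lambda^{(i)}/h_i\Lambda^{(i)}$ is a finitely generated $\Lambda^{(i-1)}$-module (in fact free of finite rank). Iterating, $\Lambda/I(\boldsymbol{h})$ is a finitely generated $\Lambda^{(0)}=\mca{O}$-module, and since $h_0\in\varpi\mca{O}$ is nonzero it is killed by a power of $\varpi$, hence finite. This shows $\boldsymbol{h}$ generates an $\mf{m}_\Lambda$-primary ideal; as $\Lambda$ is regular local of dimension $r+1$, a system of $r+1$ generators of an $\mf{m}_\Lambda$-primary ideal is automatically a system of parameters (indeed a regular sequence). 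The finiteness of $\Lambda/I(\boldsymbol{h}^{\boldsymbol m})$ then follows from the same argument applied to $\boldsymbol{h}^{\boldsymbol m}$, or from the observation that $I(\boldsymbol{h}^{\boldsymbol m})$ is again $\mf{m}_\Lambda$-primary.

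For (ii), I would simply check the two defining conditions for $\boldsymbol{h}^{\boldsymbol m}=(h_0^{m_0},\dots,h_r^{m_r})$. The $0$-th component $h_0^{m_0}$ is a nonzero element of $\varpi\mca{O}$ because $\varpi\mca{O}$ is a prime (indeed the maximal) ideal of the domain $\mca{O}$ and $m_0\ge 1$. For $i\ge 1$, a power of a monic polynomial in $x_i$ is again monic in $x_i$, and if the lower-order coefficients of $h_i$ lie in $\mf{m}_{\Lambda^{(i-1)}}$ then expanding $h_i^{m_i}=(x_i^{e}+(\text{lower order}))^{m_i}$ shows all coefficients below the leading term still lie in $\mf{m}_{\Lambda^{(i-1)}}$, since that ideal is closed under addition and multiplication; so $\boldsymbol{h}^{\boldsymbol m}$ is a monic parameter system.

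For (iii), the content is that multiplication by $h_i$ is injective on $\pi_{I(\boldsymbol{h}(i))}^*\bb{T}$, with cokernel $\pi_{I(\boldsymbol{h})}^*\bb{T}$. Since $\bb{T}$ is free over $\Lambda$, the module $\pi_{I(\boldsymbol{h}(i))}^*\bb{T}$ is free over $\Lambda/I(\boldsymbol{h}(i))$, so it suffices to prove that $h_i$ is a nonzerodivisor on $\Lambda/I(\boldsymbol{h}(i))$ and that $(\Lambda/I(\boldsymbol{h}(i)))/h_i = \Lambda/I(\boldsymbol{h})$; the latter is immediate from $I(\boldsymbol{h})=I(\boldsymbol{h}(i))+(h_i)$. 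For the nonzerodivisor claim, since $\boldsymbol{h}$ is a system of parameters in the regular (hence Cohen--Macaulay) local ring $\Lambda$, it is a regular sequence, and a regular sequence may be permuted when the ring is local; thus $h_i$ is a nonzerodivisor modulo the ideal generated by the remaining $h_j$, which is exactly $I(\boldsymbol{h}(i))$. Tensoring the short exact sequence $0\to\Lambda/I(\boldsymbol{h}(i))\xrightarrow{\times h_i}\Lambda/I(\boldsymbol{h}(i))\to\Lambda/I(\boldsymbol{h})\to 0$ with the free module $\bb{T}$ over $\Lambda$ preserves exactness and yields the desired sequence.

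The main obstacle is really just bookkeeping in part (i): one must be careful to run the Weierstrass-preparation argument through the tower $\mca{O}\subset\Lambda^{(1)}\subset\dots\subset\Lambda^{(r)}$ so that the hypothesis ``lower-order coefficients lie in $\mf{m}_{\Lambda^{(i-1)}}$'' is used correctly at each stage to guarantee that $\Lambda^{(i)}/h_i\Lambda^{(i)}$ is module-finite over $\Lambda^{(i-1)}$; the rest is formal once one invokes that a system of parameters in a Cohen--Macaulay local ring is a permutable regular sequence.
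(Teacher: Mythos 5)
Your proof is correct, and for parts (i) and (ii) it matches the paper's (essentially definitional) reasoning, modulo the fact that you flesh out details the paper simply declares immediate. The interesting difference is in part (iii). The paper argues column-by-column: it observes that $\pi_{I(\boldsymbol{h}(i))}^*\bb{T}$ is a free module of finite rank over the subring $\Lambda^{(i)}/I(\boldsymbol{h}_{\le i-1})$ (using the Weierstrass-type structure of $h_{i+1},\dots,h_r$), and then shows directly that $h_i$, being a distinguished polynomial over the finite local ring $\Lambda^{(i-1)}/I(\boldsymbol{h}_{\le i-1})$, is a non-zero-divisor in $\Lambda^{(i)}/I(\boldsymbol{h}_{\le i-1})$; injectivity of $\times h_i$ on the free module follows. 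You instead invoke the global commutative-algebra facts that $\Lambda$ is regular hence Cohen--Macaulay, that a system of parameters in a CM local ring is a regular sequence, and that regular sequences in a Noetherian local ring may be permuted, so that $h_i$ is automatically a non-zero-divisor modulo the remaining $h_j$. Both routes are valid. Yours is conceptually cleaner and requires no case analysis, at the cost of importing the CM/permutation machinery; the paper's is more self-contained and stays entirely within the Weierstrass-preparation toolkit already in play, but is phrased rather tersely and implicitly uses the fact that a non-zero-divisor scalar on a free module acts injectively. Your observation that $I(\boldsymbol{h}) = I(\boldsymbol{h}(i)) + (h_i)$ makes the cokernel identification transparent, which the paper leaves to the reader.
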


\begin{proof}
The assertions (i) and (ii) immediately follows from 
the definition of monic parameter systems.
Let $i \in \bb{Z}_{\ge 0}$ be any element. 
Since $\boldsymbol{h}=(h_i)_i$ is a monic parameter system, 
the $\Lambda^{(i)}/I(\boldsymbol{h}_{\le i-1})$-module
$\pi_{I(\boldsymbol{h}(i))}\bb{T}$ 
is free of finite rank. 
Moreover, since $\boldsymbol{h}$ is a monic parameter system,
the image of $h_i$ in $\Lambda^{(i)}/I(\boldsymbol{h}_{\le i-1})$ 
is not a zero divisor.
This implies that the scalar multiplication map
\(
\times h_i \colon \pi_{I(\boldsymbol{h}(i))}^*\bb{T} 
\longrightarrow \pi_{I(\boldsymbol{h}(i))}^*\bb{T}
\)
is injective.
Hence the assertion (iii) follows.
\end{proof}

Let $\boldsymbol{h}$ be a monic parameter system of $\Lambda$, 
and $\boldsymbol{m}=(m_0, \dots m_{r}) 
\in (\bb{Z}_{>0})^{r+1}$ any element.
Take any element $n \in \mca{N}_{\Sigma}$.
Then, by definition, the ring 
$\Lambda_{/I(\boldsymbol{h}^{\boldsymbol{m}}),[n]}$
becomes locally complete intersection.
In particular,  the ring 
$\Lambda_{/I(\boldsymbol{h}^{\boldsymbol{m}}),[n]}$
is Gorenstein.
Since 
$\Lambda_{/I(\boldsymbol{h}^{\boldsymbol{m}}),[n]}$
is a finite direct product of 
$0$-dimensional local rings, we obtain 
the following lemma which
becomes a key of some ring theoretic  arguments
in our article.

\begin{lem}\label{leminjmod}
The $\Lambda_{/I(\boldsymbol{h}^{\boldsymbol{m}})
,[n]}$-module $\Lambda_{/I(\boldsymbol{h}^{\boldsymbol{m}}),[n]}$
is injective.
\end{lem}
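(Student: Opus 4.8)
Set $S:=\Lambda_{/I(\boldsymbol{h}^{\boldsymbol{m}}),[n]}$. As already indicated in the paragraph preceding the statement, $S$ is a finite direct product of $0$-dimensional Gorenstein local rings, and the plan is to deduce self-injectivity of $S$ from this structural fact together with two standard ring-theoretic inputs: (a) a $0$-dimensional (Artinian) Gorenstein local ring is injective as a module over itself; and (b) a finite direct product of self-injective rings is self-injective. So the real content is to organize the structural claim carefully, and I would proceed as follows.

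By Lemma~\ref{lemmpr}(ii) the tuple $\boldsymbol{h}^{\boldsymbol{m}}$ is again a monic parameter system, hence by Lemma~\ref{lemmpr}(i) a system of parameters of the regular (hence Cohen--Macaulay) local ring $\Lambda$ of dimension $r+1$; therefore $h_0^{m_0},\dots,h_r^{m_r}$ is a regular sequence and $\Lambda/I(\boldsymbol{h}^{\boldsymbol{m}})$ is a $0$-dimensional complete intersection, in particular Artinian, local with residue field $k$, and Gorenstein. Writing $H_n\cong\prod_{j=1}^{t}\bb{Z}/n_j\bb{Z}$, one has $S=(\Lambda/I(\boldsymbol{h}^{\boldsymbol{m}}))[H_n]\cong\Lambda[y_1,\dots,y_t]/(h_0^{m_0},\dots,h_r^{m_r},\,y_1^{n_1}-1,\dots,y_t^{n_t}-1)$. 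Since $\Lambda[y_1,\dots,y_t]$ is regular, localizing at any of the finitely many maximal ideals of $S$ exhibits the corresponding local factor as a regular local ring modulo a system of parameters, i.e.\ a complete intersection; thus $S$ is a locally complete intersection, hence Gorenstein, and being finite free of rank $\#H_n$ over the Artinian ring $\Lambda/I(\boldsymbol{h}^{\boldsymbol{m}})$ it is itself Artinian. Consequently $S\cong\prod_iR_i$ with each $R_i$ a $0$-dimensional Gorenstein local ring.

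For input (a) I would recall that a Noetherian local ring is Gorenstein if and only if it has finite self-injective dimension, which for an Artinian ring forces self-injective dimension $0$; equivalently, an Artinian local ring $R$ with residue field $k$ is Gorenstein if and only if $\dim_k\mathrm{soc}(R)=1$, in which case $R$ is isomorphic to the injective hull $E_R(k)$, an injective $R$-module. Applying this to each $R_i$, and then invoking input (b) (injectivity of $\prod_iR_i$ over itself follows from Baer's criterion, since any ideal of $\prod_iR_i$ and any $\prod_iR_i$-homomorphism from it into $\prod_iR_i$ decompose along the idempotent decomposition, reducing the verification to the factors $R_i$), one concludes that $S$ is injective as an $S$-module.

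The step I expect to demand the most care is checking that forming the group algebra over $H_n$ preserves the (locally) complete intersection, equivalently Gorenstein, property: the displayed presentation of $S$ uses the \emph{non-local} ring $\Lambda[y_1,\dots,y_t]$, so one must localize at each maximal ideal of $S$ before invoking ``regular modulo a system of parameters'' and confirm that the chosen generators still cut down to dimension $0$ there. An alternative route sidestepping this, which I would keep as a fallback, is to use that the group algebra $R[G]$ of a finite group $G$ over any commutative ring $R$ is a symmetric, hence Frobenius, $R$-algebra, so that $\Hom_R(R[G],R)\cong R[G]$ as $R[G]$-modules; combining this (with $R=\Lambda/I(\boldsymbol{h}^{\boldsymbol{m}})$ and $G=H_n$) with input (a) for $R$ and with the standard fact that $\Hom_R(A,J)$ is an injective $A$-module whenever $A$ is an $R$-algebra and $J$ an injective $R$-module yields directly that $S\cong\Hom_R(S,R)$ is injective over $S$.
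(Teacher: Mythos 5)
Your main line of argument is exactly the one the paper uses (in the paragraph immediately preceding the statement): $\boldsymbol{h}^{\boldsymbol{m}}$ is a regular sequence in the regular local ring $\Lambda$, so $\Lambda/I(\boldsymbol{h}^{\boldsymbol{m}})$ and then $\Lambda_{/I(\boldsymbol{h}^{\boldsymbol{m}}),[n]}$ are (locally) complete intersections, hence Gorenstein; being Artinian, the ring splits as a finite product of $0$-dimensional Gorenstein local rings, each of which is self-injective, and Baer's criterion glues the factors. You simply spell out the localization step that the paper compresses into ``by definition \dots is locally complete intersection,'' and that step is indeed the one that needs care, since the presentation $\Lambda[y_1,\dots,y_t]/(h_0^{m_0},\dots,h_r^{m_r},\,y_j^{n_j}-1)$ sits over a non-local regular ring and one must check at each maximal ideal that the images of the $r+1+t$ generators form a system of parameters (which holds because the quotient is $0$-dimensional and $\Lambda[y_1,\dots,y_t]$ is Cohen--Macaulay). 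Your fallback via the symmetric (Frobenius) algebra structure of group rings --- $\Hom_R(R[H_n],R)\cong R[H_n]$ as $R[H_n]$-modules together with ``$\Hom_R(A,J)$ is $A$-injective when $J$ is $R$-injective'' --- is genuinely different from and in fact cleaner than the paper's route: it reduces the whole lemma to the self-injectivity of the Artinian complete intersection $\Lambda/I(\boldsymbol{h}^{\boldsymbol{m}})$, avoids the localization bookkeeping entirely, and does not use commutativity or abelianness of $H_n$.
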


\begin{rem}
It is convenient to consider 
the notion of ``monic parameter systems"
to construct systems of parameters for the local ring $\Lambda$
systematically.
Note that 
Lemma \ref{lemmpr} (iii) and Lemma \ref{leminjmod}
are benefits of the fact that  
a monic parameter systems forms 
systems of parameters for $\Lambda$.
For instance, if we use a power of the maximal ideal of $\Lambda$
instead of $I(\boldsymbol{h}^{\boldsymbol{m}})$,
similar assertion to Lemma \ref{leminjmod} does not hold.
\end{rem}

\subsection{Control theorems of Galois cohomology groups}\label{ssCT}

Here, we introduce some preliminary results 
which are related to Iwasawa theoretic reduction arguments.
Let us consider the $\Lambda[\Gal(\bb{Q}_\Sigma/\bb{Q})]$-module $\bb{T}$ 
satisfying all the conditions in Theorem \ref{thmuncondresults}.
First, we give a description of our Selmer group $X=X(\bb{T})$
in terms of the Galois cohomology group of the second degree.

\begin{lem}\label{lemXH2}
We have a natural isomorphism
$X \simeq H_{\Sigma}^2(\bb{T})$
of $\Lambda$-modules.
\end{lem}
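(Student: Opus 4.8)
The plan is to identify $X$ with $H^2_\Sigma(\mathbb{T})$ via Poitou--Tate global duality applied to the defining exact sequence of the fine Selmer group. First I would recall that $\mathcal{A}^* = \Hom_{\mathrm{cont}}(\mathbb{T}, \mu_{p^\infty})$ is, up to a Tate twist, the Pontryagin dual of $\mathbb{T}$, so that the local Tate pairings put $H^1_\Sigma(\mathbb{T})$ and $H^1_\Sigma(\mathcal{A}^*)$ in perfect duality with one another. The key point is that the fine Selmer group $\mathrm{Sel}_p(\mathbb{Q}, \mathcal{A}^*)$ is cut out by the \emph{strict} (empty) local conditions at every place in $\Sigma$: at $p$ we kill all of $H^1(\mathbb{Q}_p, \mathcal{A}^*)$, and at the bad primes $\ell \ne p$ we kill the unramified part $H^1(I_\ell, \mathcal{A}^*)$. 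Under hypotheses (A6) and (A7), which force $H^0(I_\ell, \mathcal{A}^*[\mathfrak{m}]) = 0$ and $H^0(\mathbb{Q}_p, \mathcal{A}^*[\mathfrak{m}]) = 0$, a Nakayama argument shows these local cohomology groups have no $\mathfrak{m}$-torsion obstruction, so the local conditions defining $\mathrm{Sel}_p(\mathbb{Q}, \mathcal{A}^*)$ are the annihilators (under local Tate duality) of the \emph{full} local cohomology $H^1$ of $\mathbb{T}$ at each place of $\Sigma$.

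The main step is then the Poitou--Tate nine-term exact sequence (or rather the Greenberg--Wiles formula for the Selmer group attached to the dual local conditions): the Pontryagin dual of the Selmer group attached to a system of local conditions $\mathcal{L}$ on $\mathcal{A}^*$ sits in an exact sequence involving the Selmer group attached to the dual conditions $\mathcal{L}^\perp$ on $\mathbb{T}$ and the cohomology groups $H^2_\Sigma$. Concretely, taking $\mathcal{L}$ to be the strict conditions at $\Sigma$ on $\mathcal{A}^*$, the dual conditions $\mathcal{L}^\perp$ on $\mathbb{T}$ are the \emph{relaxed} (everything) conditions, so the corresponding Selmer group for $\mathbb{T}$ is simply $H^1_\Sigma(\mathbb{T})$ itself, and the Poitou--Tate sequence collapses to
\[
0 \longrightarrow H^1_\Sigma(\mathbb{T}) \longrightarrow \bigoplus_{v \in \Sigma} H^1(\mathbb{Q}_v, \mathbb{T}) \longrightarrow X \longrightarrow H^2_\Sigma(\mathbb{T}) \longrightarrow \bigoplus_{v \in \Sigma} H^2(\mathbb{Q}_v, \mathbb{T}),
\]
after dualizing the fine Selmer sequence. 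One checks that the map $H^1_\Sigma(\mathbb{T}) \to \bigoplus_v H^1(\mathbb{Q}_v, \mathbb{T})$ appearing here is exactly the localization map whose cokernel, dualized, gives the fine Selmer group, and that the terms $H^2(\mathbb{Q}_v, \mathbb{T})$ for $v \ne p$ vanish (local $H^2$ of $\mathbb{T}$ at $\ell \ne p$ is trivial since $\mathbb{T}$ is $p$-adic and $\ell \ne p$, after checking the relevant $H^0$ of the dual vanishes by (A6)), while at $p$ one uses (A7) and local duality to see $H^2(\mathbb{Q}_p, \mathbb{T})$ is dual to $H^0(\mathbb{Q}_p, \mathcal{A}^*)$, which vanishes. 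Hence the last map is zero and $X \simeq H^2_\Sigma(\mathbb{T})$; the isomorphism is natural because every arrow in the Poitou--Tate sequence is.

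The main obstacle I anticipate is bookkeeping the local conditions carefully enough to see that the \emph{fine} Selmer local conditions on $\mathcal{A}^*$ really are the \emph{relaxed} conditions on $\mathbb{T}$ under the perfect local Tate pairing, \emph{including} at the bad primes $\ell \ne p$ where one uses the inertia-invariant rather than unramified formulation — this is where (A6) and, implicitly, the torsion-freeness of the relevant local $H^1$'s enter, and it is easy to be off by the difference between $H^1(I_\ell, -)$ and $H^1_{\mathrm{ur}}(\mathbb{Q}_\ell, -)$. A secondary point is verifying the vanishing of the local $H^2(\mathbb{Q}_v, \mathbb{T})$ terms so that the cokernel map into $\bigoplus_v H^2(\mathbb{Q}_v, \mathbb{T})$ is zero rather than merely having controlled image; this is routine from local duality plus (A6)--(A7) but must be spelled out. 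Everything else is a formal diagram chase in the Poitou--Tate sequence.
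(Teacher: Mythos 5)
Your overall strategy matches the paper exactly: convert the fine Selmer local conditions to strict ones using (A6), then feed the strict Selmer group into Poitou--Tate and finally kill the local $H^2$ terms via local duality together with (A6), (A7). However, the exact sequence you display is not correct. In the Poitou--Tate nine-term sequence, the term sitting between $\bigoplus_v H^1(\bb{Q}_v, \bb{T})$ and $H^2_{\Sigma}(\bb{T})$ is $H^1_{\Sigma}(\mca{A}^*)^\vee$ --- not $X$ --- and $X$ is the \emph{cokernel} of $\bigoplus_v H^1(\bb{Q}_v,\bb{T}) \to H^1_{\Sigma}(\mca{A}^*)^\vee$ (the dual of the localization of $H^1_{\Sigma}(\mca{A}^*)$); exactness of the PT sequence then identifies that cokernel with $\ker\bigl(H^2_{\Sigma}(\bb{T}) \to \bigoplus_v H^2(\bb{Q}_v,\bb{T})\bigr)$, which is the statement to prove. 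There is also no five-term sequence of the shape you wrote: the leading injectivity $0 \to H^1_{\Sigma}(\bb{T}) \to \bigoplus_v H^1(\bb{Q}_v,\bb{T})$ is not automatic, and the claim that the cokernel of this localization map ``dualized, gives the fine Selmer group'' is backwards --- there is no direct Pontryagin duality between $H^1_{\Sigma}(\bb{T})$ and $H^1_{\Sigma}(\mca{A}^*)$; it is precisely Poitou--Tate that mediates between them.

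Two smaller points. First, the parenthetical ``local $H^2$ of $\bb{T}$ at $\ell\neq p$ is trivial since $\bb{T}$ is $p$-adic and $\ell\neq p$'' is false in general (e.g.\ $H^2(\bb{Q}_\ell,\bb{Z}_p(1))\neq 0$ for $\ell\neq p$); the correct argument is the one you give next, namely local duality $H^2(\bb{Q}_\ell,\bb{T})\simeq H^0(\bb{Q}_\ell,\mca{A}^*)^\vee$ combined with (A6). Second, when converting the condition ``trivial in $H^1(I_\ell,\mca{A}^*)$'' into ``trivial in $H^1(\bb{Q}_\ell,\mca{A}^*)$,'' the mechanism is that (A6) forces $(\mca{A}^*)^{I_\ell}=0$, so inflation--restriction makes $H^1(\bb{Q}_\ell,\mca{A}^*)\to H^1(I_\ell,\mca{A}^*)$ injective; it is worth stating this explicitly rather than appealing to an unspecified ``Nakayama argument.'' With these corrections your argument coincides with the paper's.
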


\begin{proof}
By the assumption (A6), we have 
\[
\mathrm{Sel}_p(\bb{Q},\mca{A}^*)= 
\ker \left(
H_{\Sigma}^1(\mca{A}^*)
\longrightarrow
\prod_{\ell \in \Sigma}
H^1(\bb{Q}_\ell,\mca{A}^*)
\right).
\]
So, by (the projective limit of) 
the Poitou--Tate exact sequences,
we obtain 
\[
X \simeq \ker \left(
H_{\Sigma}^2(\bb{T})
\longrightarrow \prod_{\ell \in \Sigma}
H^2(\bb{Q}_\ell,\bb{T})
\right).
\]
By the local duality theorem of 
Galois cohomology groups and 
the assumptions (A6) and (A7) 
imply $H^2(\bb{Q}_\ell,\bb{T})=0$
for any $\ell \in \Sigma$.
Hence we obtain our proposition.
\end{proof}

Let $\boldsymbol{h} \in \Lambda^{r+1}$ be a monic parameter system, 
and $\boldsymbol{m} \in \bb{Z}_{> 0}^{r+1}$ 
any element. 
We put $I:=I(\boldsymbol{h}^{\boldsymbol{m}})$.
%Let $n \in \bb{Z}_{>0}$ be any positive integer.
By the similar arguments to that in the proof of 
\cite{Ne} (8.4.8.1) Proposition, 
we obtain the following proposition.

\begin{prop}\label{propspecseq}
We have a spectral sequence
\[
E_2^{p,q}:= \Tor^{\Lambda}_{-p}(H_{\Sigma}^q(\bb{T}),\Lambda/I)
\Longrightarrow H_{\Sigma}^{-p+q}(\pi^*_{I} \bb{T}).
\]
\end{prop}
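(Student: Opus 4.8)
The plan is to descend from Galois cohomology over $\Lambda$ to Galois cohomology over $\Lambda/I$ by a standard hyperhomology argument, exactly paralleling the proof of \cite{Ne} (8.4.8.1). First I would observe that since $\boldsymbol{h}$ is a monic parameter system, Lemma \ref{lemmpr} (i) tells us that $\Lambda/I$ is a finite $\mca{O}$-module, so $I$ is generated by a regular sequence and the Koszul complex $K_\bullet:=K_\bullet(\boldsymbol{h}^{\boldsymbol{m}};\Lambda)$ is a finite free resolution of $\Lambda/I$ as a $\Lambda$-module. Concretely, $K_\bullet$ is the tensor product over $0\le i\le r$ of the two-term complexes $[\Lambda\xrightarrow{h_i^{m_i}}\Lambda]$; the iterated use of Lemma \ref{lemmpr} (iii) (each $h_i^{m_i}$ is a non-zero-divisor on the relevant quotient, by Lemma \ref{lemmpr} (ii)) is precisely what guarantees exactness, i.e.\ that $K_\bullet$ really is a resolution and that $\Tor^\Lambda_j(\Lambda/I,-)$ vanishes outside the range $0\le j\le r+1$.

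Next I would form the total complex $K_\bullet\otimes_\Lambda C^\bullet_{\mrm{cont}}(G_{\bb{Q},\Sigma},\bb{T})$, where $C^\bullet_{\mrm{cont}}$ denotes the complex of continuous cochains computing $H^\bullet_\Sigma(\bb{T})$. Since each term of $K_\bullet$ is free of finite rank and $\bb{T}$ is $\Lambda$-free, tensoring with $K_\bullet$ over $\Lambda$ commutes with the cochain functor, so this total complex also computes $R\Gamma_\Sigma(\pi_I^*\bb{T})$, giving $H^n$ of the total complex $\cong H^n_\Sigma(\pi_I^*\bb{T})$. Filtering the double complex by the Koszul degree produces a spectral sequence whose $E_2$-page is $\Tor^\Lambda_{-p}(\Lambda/I, H^q_\Sigma(\bb{T}))$, abutting to $H^{-p+q}_\Sigma(\pi_I^*\bb{T})$, which is the claimed statement (here the indexing convention places the homological Koszul degree $-p\ge 0$, i.e.\ $p\le 0$, matching the displayed $E_2^{p,q}$). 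The only genuine point requiring care, beyond bookkeeping of indices, is the interchange of the inverse limit defining continuous cohomology with the (finite) tensor product by $K_\bullet$ — but this is harmless because $K_\bullet$ consists of \emph{finitely generated free} $\Lambda$-modules, so $-\otimes_\Lambda K_j$ is exact and commutes with the projective limits involved, exactly as in Nekov\'a\v{r}'s treatment.

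The main obstacle will be purely expository: making precise the identification $K_\bullet\otimes_\Lambda C^\bullet_{\mrm{cont}}(G_{\bb{Q},\Sigma},\bb{T})\simeq C^\bullet_{\mrm{cont}}(G_{\bb{Q},\Sigma},\pi_I^*\bb{T})$ in the derived category — that is, checking that derived tensor product by the perfect complex $\Lambda/I$ commutes with $R\Gamma_\Sigma$. Since $\bb{T}$ is $\Lambda$-flat and $K_\bullet$ is a bounded complex of finite free modules, $\pi_I^*\bb{T}=\Lambda/I\otimes_\Lambda^{\mb{L}}\bb{T}$ sits in degree zero and the projection formula applies termwise; the argument is entirely formal and I would simply cite the proof of \cite{Ne} (8.4.8.1) Proposition for the details, noting that our hypotheses (monic parameter system, $\bb{T}$ free over $\Lambda$) are exactly what is needed for that argument to go through verbatim.
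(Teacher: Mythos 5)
Your argument is correct and takes essentially the same route as the paper: the paper's proof simply cites Nekov\'a\v{r} (8.4.8.1) and, in the following remark, points to Lemma \ref{lemmpr} (iii) as what supplies the regular-sequence/Koszul structure needed for that hyperhomology argument to go through with $I=I(\boldsymbol{h}^{\boldsymbol{m}})$ in place of the augmentation ideal. Your write-up makes explicit the steps (Koszul resolution of $\Lambda/I$, the double complex $K_\bullet\otimes_\Lambda C^\bullet_{\mathrm{cont}}$, commutation of the finite free tensoring with the projective limits defining continuous cochains) that the paper leaves to the cited reference.
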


\begin{rem}
In \cite{Ne} (8.4.8.1) Proposition, 
Nekov\'ar obtained the spectral sequence
in the case when $\Lambda$ is an Iwasawa algebra 
of a Galois group $\Gamma \simeq \bb{Z}_p^{r}$, and 
$I$ is the augmentation ideal of $\Lambda$.
But,  by Lemma \ref{lemmpr} (iii), 
similar arguments work in our situation.
\end{rem}

By Lemma \ref{lemXH2} and 
Proposition \ref{propspecseq},
we immediately obtain the following corollary.

\begin{cor}[Control Theorem]\label{corcontthm}
The following hold.
\begin{enumerate}[{\rm (i)}]
\item We have an exact sequence
\[
\hspace{-20mm}
\Tor^{\Lambda}_{2}(H_{\Sigma}^2(\bb{T}),\Lambda/I)
\longrightarrow H_{\Sigma}^1(\pi^* \bb{T}) 
\otimes_{\Lambda} \Lambda/I \]
\[\hspace{20mm}
\longrightarrow H_{\Sigma}^1(\pi_{I}^* \bb{T})
\longrightarrow 
\Tor^{\Lambda}_{1}(H_{\Sigma}^2(\bb{T}),\Lambda/I).
\]
\item We have a natural homomorphism
\[
X(\bb{T}) \otimes_{\Lambda} \Lambda/I \simeq 
X(\pi^*_I \bb{T}).
\]
\end{enumerate}
\end{cor}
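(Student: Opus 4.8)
The statement to prove is Corollary~\ref{corcontthm}, which I will deduce directly from Proposition~\ref{propspecseq} (the $\Tor$-spectral sequence) together with Lemma~\ref{lemXH2} (the identification $X\simeq H^2_\Sigma(\bb{T})$), exactly as the ``By Lemma~\ref{lemXH2} and Proposition~\ref{propspecseq}'' phrasing announces. The key structural input is that $\Lambda$ has global dimension $r+1$ while $I=I(\boldsymbol{h}^{\boldsymbol m})$ is generated by a regular sequence of length $r+1$ (this is Lemma~\ref{lemmpr}~(i),(ii), since a monic parameter system is a system of parameters, hence a regular sequence in the Cohen--Macaulay ring $\Lambda$). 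Consequently the Koszul resolution of $\Lambda/I$ has length $r+1$, so $\Tor^\Lambda_j(-,\Lambda/I)=0$ for $j>r+1$, and in the spectral sequence the column index $-p$ runs only over $0,1,\dots,r+1$. Moreover the relevant cohomological degrees are limited: $H^i_\Sigma(\bb{T})$ vanishes for $i\ge 3$ by the cohomological dimension of $G_{\bb{Q},\Sigma}$ (which is $2$ since $p$ is odd), and $H^0_\Sigma(\bb{T})=0$ by assumption (A3) (together with the fact that $\bar T$ irreducible forces $\bb{T}^{G}=0$ by a standard Nakayama-type argument). So the only nonzero rows of $E_2^{p,q}$ are $q=1$ and $q=2$.

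\textbf{Part (ii).} With only two nonzero rows, the spectral sequence's edge and corner maps are easy to read off. In total degree $-p+q=2$ with $q\le 2$, the contributions are $E_2^{0,2}=H^2_\Sigma(\bb{T})\otimes_\Lambda\Lambda/I$ and $E_2^{-1,3}$, but $q=3$ is a zero row, so $E_2^{0,2}$ is the only term. Since it sits on the edge $p=0$, there are no incoming or outgoing differentials that survive (differentials into $E_r^{0,2}$ come from $E_r^{-r,r+1}$ which lies in the zero row $q=r+1\ge 3$, and differentials out of $E_r^{0,2}$ land in $E_r^{r,3-r}$, again a zero row for $r\ge 1$, or—when $r=0$—the source/target is outside the first quadrant range anyway). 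Hence $E_\infty^{0,2}=E_2^{0,2}$ and the two-step filtration on $H^2_\Sigma(\pi_I^*\bb{T})$ degenerates to give the isomorphism $H^2_\Sigma(\bb{T})\otimes_\Lambda\Lambda/I\xrightarrow{\sim}H^2_\Sigma(\pi_I^*\bb{T})$. Translating both sides via Lemma~\ref{lemXH2}, applied to $\bb{T}$ over $\Lambda$ and to $\pi_I^*\bb{T}$ over $\Lambda/I$ (one checks $\pi_I^*\bb{T}$ still satisfies (A6),(A7), or more directly invokes the same Poitou--Tate computation), yields (ii). A small point to verify is functoriality: the isomorphism should be the natural base-change map, which it is because the edge map of the spectral sequence is the canonical one.

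\textbf{Part (i).} In total degree $-p+q=1$ the surviving $E_\infty$-terms come from $E_2^{0,1}=H^1_\Sigma(\bb{T})\otimes_\Lambda\Lambda/I$ and $E_2^{-1,2}=\Tor^\Lambda_1(H^2_\Sigma(\bb{T}),\Lambda/I)$ (the term $E_2^{1,0}$ vanishes since $H^0_\Sigma(\bb{T})=0$). The only possibly nonzero differential touching these is $d_2\colon E_2^{-2,2}=\Tor^\Lambda_2(H^2_\Sigma(\bb{T}),\Lambda/I)\to E_2^{0,1}=H^1_\Sigma(\bb{T})\otimes_\Lambda\Lambda/I$ (note $E_2^{-2,2}$ itself receives no differential and maps nowhere else, since $q=2$ is the top nonzero row and $d_2$ out of it goes to $E_2^{0,1}$). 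So $E_3^{0,1}=\operatorname{Coker}(d_2)$ and $E_\infty^{0,1}=E_3^{0,1}$ (higher differentials out of the $p=0$ column vanish as before), while $E_\infty^{-1,2}=E_2^{-1,2}$ (it receives $d_2$ from $E_2^{-3,3}$, a zero row). The two-step filtration on $H^1_\Sigma(\pi_I^*\bb{T})$ then splices into the four-term exact sequence
\[
\Tor^\Lambda_2(H^2_\Sigma(\bb{T}),\Lambda/I)\xrightarrow{d_2} H^1_\Sigma(\bb{T})\otimes_\Lambda\Lambda/I \longrightarrow H^1_\Sigma(\pi_I^*\bb{T})\longrightarrow \Tor^\Lambda_1(H^2_\Sigma(\bb{T}),\Lambda/I)\longrightarrow 0,
\]
and replacing $H^2_\Sigma(\bb{T})$ by $X$ via Lemma~\ref{lemXH2} gives (i). The displayed sequence in the corollary statement does not claim surjectivity at the last term, so one may simply truncate.

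\textbf{Main obstacle.} The real content is entirely in Proposition~\ref{propspecseq}, which is assumed; given that, the remaining work is bookkeeping with a two-row spectral sequence. The one genuinely delicate point is making sure the spectral sequence is concentrated in the expected degrees—i.e.\ that $H^q_\Sigma(\bb{T})=0$ for $q=0$ and $q\ge 3$, and that $\Tor^\Lambda_j(-,\Lambda/I)$ vanishes for $j>r+1$—so that no unexpected differentials appear and no extra filtration steps intrude on $H^1$ and $H^2$. The vanishing of higher $\Tor$ rests crucially on $I$ being generated by a \emph{regular} sequence, which is precisely why the monic-parameter-system formalism (Lemma~\ref{lemmpr}) was set up; if one used a power of $\mf{m}_\Lambda$ instead this would fail. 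I would state these vanishings explicitly at the start of the proof and then let the spectral sequence collapse do the rest.
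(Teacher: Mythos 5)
Your proof is correct and makes explicit exactly the two-row spectral-sequence collapse that the paper compresses into ``immediately obtain'' from Lemma~\ref{lemXH2} and Proposition~\ref{propspecseq}. One small overstatement: the $q=0$ row never meets abutment degree $1$ or $2$, so neither $H^0_\Sigma(\bb{T})=0$ nor the Koszul bound $\Tor_j^\Lambda(-,\Lambda/I)=0$ for $j>r+1$ is actually used in the collapse (the term $E_2^{1,0}$ you invoke vanishes simply because $p=1>0$ places it outside the second-quadrant range, not because of (A3)); the only vanishing genuinely needed, once the spectral sequence itself is granted, is $H^q_\Sigma(\bb{T})=0$ for $q\ge 3$.
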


\section{Euler systems}\label{secES}

Let $\bb{T}$ be as in Theorem \ref{thmuncondresults}.
In this section, we recall 
the definition and some basic preliminary results on 
Euler systems and Kolyvagin systems.
In \S \ref{SSec_defES}, 
we introduce the definition of Euler systems
for Galois deformations.
In \S \ref{ssKD}, 
we recall the definition and basic properties of 
Kolyvagin derivatives.
In \S \ref{ssKS}, we recall the theory of 
Kolyvagin systems over DVR 
established by Mazur and Rubin in \cite{MR}.
In \S \ref{ssunivKS}, we introduce a notion of 
universal Kolyvagin system, which is a system of 
linear combinations of Kolyvagin derivatives 
whose specialization to DVR forms 
a Kolyvagin system.

\subsection{Definition}\label{SSec_defES}
Here, we recall the notion of Euler systems 
for Galois deformations, 
which is a generalization of 
the Euler systems in the sense of \cite{MR}. 
(We adopt the axiom of Euler systems
slightly different from Ochiai's one in \cite{Oc2}.)
For any $\sigma \in G_{\bb{Q}, \Sigma}$, 
we define a polynomial $P(x;\bb{T} \vert \sigma)$ by 
\[
P(x;\bb{T}\vert \sigma ):=
\det_{\Lambda}(1- x\rho_{\bb{T}}(\sigma); \bb{T}),
\] 
where $\Frob_\ell \in  \Gal(\bb{Q}_{\Sigma}/\bb{Q}) $ 
is an arithmetic Frobenius element at $\ell$.
The definition of Euler system 
in this paper is as follows.

\begin{dfn}\label{dfn_ES}
We define the set $\mca{N}_{\Sigma}$ by
\[
\mca{N}_{\Sigma}:=
\left\{
n \in \bb{Z}_{>0} \mathrel{\vert}
\text{$n$ is square free and prime to $\Sigma$}
\right\}.
\]
We call a collection 
\[
\boldsymbol{c}:= \left\{
c(n) \in  H^1(\bb{Q}(\mu_n),\bb{T}) 
\mathrel{\vert}
n \in \mca{N}_{\Sigma}
\right\}
\]
of Galois cohomology classes 
{\em an Euler system} for $\bb{T}$
if the collection $\boldsymbol{c}$ satisfies the following conditions. 
\begin{enumerate}[{\rm (i)}]
\item For any $n \in \Sigma$, the Galois cohomology class
$c(n)$ is unramified outside $p$.
\item Let $\ell$ be any prime number not contained in $\Sigma$,
and $n \in \mca{N}_{\Sigma}$ any integer prime to $\ell$.
Then, we have 
\[
\mathrm{Cor}_{\bb{Q}(\mu_{n\ell})/\bb{Q}(\mu_n)}(c(n\ell))
=P(\Frob_\ell^{-1};\bb{T} \vert \Frob_\ell)c(n),
\]
where $\mathrm{Cor}_{\bb{Q}(\mu_{n\ell})/\bb{Q}(\mu_n)}$ is
the corestriction map of Galois cohomology.
\end{enumerate}
\end{dfn}

\begin{rem}\label{remES}
Our Euler system axiom is slightly different from 
that in \cite{Ru} or \cite{Oc2}.
Indeed, Euler systems $\boldsymbol{z}=\{z(n) \in
H^1(\bb{Q}(\mu_n),\bb{T})\}_n$
in the sense of \cite{Oc2} satisfies the following 
condition (ii)' instead of (ii) in Definition \ref{dfn_ES}.
\begin{itemize}
\item[(ii)'] Let $\ell$ be any prime number not contained in $\Sigma$,
and $n \in \mca{N}_{\Sigma}$ any integer prime to $\ell$.
Then, we have 
\[
\mathrm{Cor}_{\bb{Q}(\mu_{n\ell})/\bb{Q}(\mu_n)}(z(n\ell))
=P(\Frob_\ell^{-1};\bb{T}^* \vert \Frob_\ell^{-1})z(n).
\]
\end{itemize}
Here, we put $\bb{T}^*:=\Hom_{\Lambda}(\bb{T},\Lambda) 
\otimes_{\bb{Z}_p} \varprojlim_m \mu_{p^m}$.
We prefer the axiom (ii) to (ii)' 
since (ii)' is useful 
to apply the theory of Kolyvagin systems.
By Proposition \ref{propESmodify} below, 
we can construct an Euler system in our sense
by a canonical way
when an Euler system in the sense of \cite{Oc2} is given.
\end{rem}

\begin{prop}[\cite{Ru} Lemma 9.6.1 and Corollary 9.6.4]\label{propESmodify}
Let 
$\boldsymbol{z}=\{z(n) \in
H^1(\bb{Q}(\mu_n),\bb{T})\}_{n \in \mca{N}_\Sigma}$
be a collection of continuous Galois cohomology classes satisfying
{\rm (i)} in Definition \ref{dfn_ES} and 
{\rm (ii)'} in Remark \ref{remES}.
For each $n \in \mca{N}_{\Sigma}$ and 
each divisor $d$ of $n$ satisfying $d>1$,
we define an element 
$A(n,d;\bb{T}) \in \Lambda[\Gal(\bb{Q}(\mu_n)/\bb{Q}) ]$
by 
\[
A(n,d;\bb{T}):= \prod_{\ell \in \mathrm{Prime(n/d)}}
\frac{P(\Frob_\ell^{-1};\bb{T} \vert \Frob_\ell)-
P(\Frob_\ell^{-1};\bb{T}^* \vert \Frob_\ell^{-1})}{\varphi(n/d)} , 
\]
where $\varphi$ denotes Euler's $\varphi$ function.
Then, the system 
\[
\boldsymbol{c}:= \left\{
c(n):=z(n)+ \sum_{1< d \mid n} A(n,d;\bb{T})z(d)
 \in  H^1(\bb{Q}(\mu_n),\bb{T}) 
\mathrel{\bigg\vert}
n \in \mca{N}_{\Sigma}
\right\}
\]
forms an Euler system for $\bb{T}$
in the sense of Definition \ref{dfn_ES}.
\end{prop}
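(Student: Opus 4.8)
The statement is the $\Lambda$-coefficient version of Rubin's result, and his argument is purely formal, so the plan is to check that it carries over verbatim to $\Lambda=\mca{O}[[x_1,\dots,x_r]]$. The only property of the coefficient ring that is used is that every prime number $\ell\notin\Sigma$ is a unit in $\Lambda$, since $\ell\neq p$; everything else is homological bookkeeping with corestriction and the group rings $\Lambda[\Gal(\bb{Q}(\mu_m)/\bb{Q})]$.

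First I would check that the operators $A(n,d;\bb{T})$ are well defined. Write $P_\ell:=P(\Frob_\ell^{-1};\bb{T}\vert\Frob_\ell)$ and $P_\ell^{*}:=P(\Frob_\ell^{-1};\bb{T}^{*}\vert\Frob_\ell^{-1})$, regarded as elements of $\Lambda[\Gal(\bb{Q}(\mu_m)/\bb{Q})]$ for $m$ prime to $\ell$. From $\rho_{\bb{T}^{*}}(\Frob_\ell^{-1})=\ell^{-1}\,{}^{t}\rho_{\bb{T}}(\Frob_\ell)$ and invariance of $\det$ under transpose one gets $P_\ell-P_\ell^{*}=\sum_{j\ge 1}(-1)^{j}\,c_j(\ell)\,(1-\ell^{-j})\,\Frob_\ell^{-j}$, where $c_j(\ell):=\mathrm{tr}\bigl(\wedge^{j}\rho_{\bb{T}}(\Frob_\ell)\bigr)\in\Lambda$; since $\ell-1\mid\ell^{j}-1$ in $\bb{Z}$ and $\ell\in\Lambda^{\times}$, each coefficient is divisible by $\ell-1$, so $(P_\ell-P_\ell^{*})/(\ell-1)\in\Lambda[\Gal(\bb{Q}(\mu_m)/\bb{Q})]$, and hence $A(n,d;\bb{T})=\prod_{\ell\mid n/d}(P_\ell-P_\ell^{*})/(\ell-1)$ lies in $\Lambda[\Gal(\bb{Q}(\mu_n)/\bb{Q})]$ (note $\varphi(n/d)=\prod_{\ell\mid n/d}(\ell-1)$, as $n/d$ is squarefree). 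I would also record the two multiplicativity identities, immediate from the definition: for $\ell\nmid n$ and $d\mid n$ one has $A(n\ell,d;\bb{T})=\frac{P_\ell-P_\ell^{*}}{\ell-1}\,A(n,d;\bb{T})$, and for $d'\mid n$ one has $A(n\ell,\ell d';\bb{T})=A(n,d';\bb{T})$.

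Condition (i) of Definition~\ref{dfn_ES} for $\boldsymbol c$ is inherited from $\boldsymbol z$, since each $c(n)$ is a $\Lambda[\Gal(\bb{Q}(\mu_n)/\bb{Q})]$-linear combination of the restrictions to $\bb{Q}(\mu_n)$ of the classes $z(d)$ with $d\mid n$. For condition (ii) I would fix $\ell\nmid n\Sigma$ and expand $\mathrm{Cor}_{\bb{Q}(\mu_{n\ell})/\bb{Q}(\mu_n)}(c(n\ell))$, splitting the defining sum according to whether $\ell\mid d$. Corestriction is compatible with the projection $\Lambda[\Gal(\bb{Q}(\mu_{n\ell})/\bb{Q})]\twoheadrightarrow\Lambda[\Gal(\bb{Q}(\mu_n)/\bb{Q})]$, so the group-ring coefficients pass through it. For the summands with $d\mid n$ I would use that corestriction after restriction across the degree-$(\ell-1)$ extension $\bb{Q}(\mu_{n\ell})/\bb{Q}(\mu_n)$ is multiplication by $\ell-1$, so that factor cancels the denominator of $A(n\ell,d;\bb{T})=\frac{P_\ell-P_\ell^{*}}{\ell-1}A(n,d;\bb{T})$. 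For the summands with $d=\ell d'$, $d'\mid n$, I would use the Mackey identity $\mathrm{Cor}_{\bb{Q}(\mu_{n\ell})/\bb{Q}(\mu_n)}\circ\mathrm{Res}_{\bb{Q}(\mu_{\ell d'})\hookrightarrow\bb{Q}(\mu_{n\ell})}=\mathrm{Res}_{\bb{Q}(\mu_{d'})\hookrightarrow\bb{Q}(\mu_n)}\circ\mathrm{Cor}_{\bb{Q}(\mu_{\ell d'})/\bb{Q}(\mu_{d'})}$ — valid because $\bb{Q}(\mu_{n\ell})$ and $\bb{Q}(\mu_{d'})$ are, respectively, the compositum and the intersection of $\bb{Q}(\mu_n)$ and $\bb{Q}(\mu_{\ell d'})$ — followed by the hypothesis (ii)$'$ on $\boldsymbol z$ and the identity $A(n\ell,\ell d';\bb{T})=A(n,d';\bb{T})$. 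Collecting the terms (this is Rubin's telescoping computation), the two families reassemble into $(P_\ell-P_\ell^{*})\,c(n)$ and $P_\ell^{*}\,c(n)$, whose sum is $P_\ell\,c(n)=P(\Frob_\ell^{-1};\bb{T}\vert\Frob_\ell)\,c(n)$; this is exactly condition (ii).

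I expect the only genuine difficulty to be bookkeeping: keeping careful track of which group ring $\Lambda[\Gal(\bb{Q}(\mu_m)/\bb{Q})]$ each of $P_\ell$, $P_\ell^{*}$ and $A(n,d;\bb{T})$ lives in, how it transports under restriction and corestriction, and the correct normalisation of Frobenius elements at primes dividing the conductor in play. There is no conceptual obstacle, and nothing in the argument depends on $\bb{T}$ being a Galois deformation beyond $\ell\in\Lambda^{\times}$; in the write-up it may therefore suffice to invoke \cite{Ru} and remark that the computation is insensitive to the coefficient ring.
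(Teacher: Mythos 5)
Your verification is correct and reproduces the computation in \cite{Ru}, Lemma 9.6.1 and Corollary 9.6.4, which is precisely what the paper invokes; the only ring-theoretic input, namely that $\ell\in\Lambda^\times$ and $(\ell-1)\mid(\ell^j-1)$ so that $(P_\ell-P_\ell^*)/(\ell-1)$ lies in $\Lambda[\Gal(\bb{Q}(\mu_m)/\bb{Q})]$, is, as you observe, insensitive to the coefficient ring. Note, though, that for your telescoping to close the definition of $c(n)$ must in effect read $c(n)=\sum_{d\mid n}A(n,d;\bb{T})z(d)$ (equivalently $z(n)+\sum_{1\le d\mid n,\,d\ne n}A(n,d;\bb{T})z(d)$, since $A(n,n;\bb{T})=1$ as the empty product): the displayed $\sum_{1<d\mid n}$ in the proposition both omits $d=1$ and double-counts $z(n)$, and your computation silently uses the corrected version, since you sum over all $d'\mid n$ including $d'=1$ among the $d=\ell d'$ terms.
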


\begin{dfn}\label{dfnESalg}
Let $\mca{R}$ be a topological $\Lambda$-algebra
with  a structure map
$\pi \colon \Lambda \longrightarrow \mca{R}$.
\begin{enumerate}[{\rm (i)}]
\item If a collection 
\[
\boldsymbol{c}:= \left\{
c(n) \in  H^1(\bb{Q}(\mu_n),\bb{T}) 
\mathrel{\vert}
n \in \mca{N}_{\Sigma}
\right\}
\]
satisfies conditions (1) and (2)
in Definition \ref{dfn_ES}, 
we call $\boldsymbol{c}$ 
{\em an Euler system} for $\pi^*\bb{T}$
\item For any Euler system $\boldsymbol{c}$ for $\bb{T}$, 
we can define an Euler system $\boldsymbol{c}$ for $\pi^*\bb{T}$ by
\[
\pi^*\boldsymbol{c}:= \left\{
c(n) \in  H^1(\bb{Q}(\mu_n),\pi^*\bb{T}) 
\mathrel{\vert}
n \in \mca{N}_{\Sigma}
\right\}.
\]
\end{enumerate}
\end{dfn}

We consider the cyclotomic $\bb{Z}_p$-extension $\bb{Q}_\infty/\bb{Q}$, 
and put $\Gamma:=\Gal(\bb{Q}_\infty/\bb{Q})$.
Let $\chi_{\mrm{taut}}\colon  \Gal(\bb{Q}_{\Sigma}/\bb{Q}) 
\longrightarrow \Gamma \subseteq \Lambda^\times$ 
be the tautological character.
We define a $\Lambda[[\Gamma]]$-module 
$\bb{T}^{\mrm{cyc}}:=\bb{T} \otimes_{\Lambda} \Lambda[[\Gamma]]$
on which $ \Gal(\bb{Q}_{\Sigma}/\bb{Q}) $ acts via 
$\rho_{\bb{T}}\otimes \chi_{\mrm{taut}}$.
The augmentation $\Lambda[[\Gamma]] \longrightarrow \Lambda$
induces a $ \Gal(\bb{Q}_{\Sigma}/\bb{Q}) $-equivariant map
\[
\mathrm{aug}_{\bb{T}}\colon 
\bb{T}^{\mrm{cyc}} \longrightarrow \bb{T}.
\]

\begin{dfn}\label{dfncycdir}
In this article, we say that an Euler system 
$\boldsymbol{c}:=\{ c(n) \}_n$ for $\bb{T}$
{\em can be extended to the cyclotomic direction} 
if there exists an Euler system 
$\tilde{\boldsymbol{c}}:=\{ \tilde{c}(n) \}_n$ 
for $\bb{T}^{\mrm{cyc}}$ such that 
$\mathrm{aug}_{\bb{T}}(\tilde{\boldsymbol{c}})
:=\{ \mathrm{aug}_{\bb{T}}(\tilde{c}(n)) \}_n$
coincides with $\boldsymbol{c}$.
The Euler system $\tilde{\boldsymbol{c}}$ is called 
an extension of $\boldsymbol{c}$ to the cyclotomic direction.
\end{dfn}

\begin{rem}
Let $\mca{R}$ be a topological $\Lambda$-algebra. 
If an Euler system $\boldsymbol{c}$ 
for $\mca{R}\otimes_{\Lambda}\bb{T}$ satisfies
similar conditions to that in Definition \ref{dfncycdir},
we say that $\boldsymbol{c}$ 
can be extended to the cyclotomic direction. 
\end{rem}

\begin{lem}\label{lempropcycext}
Let $\boldsymbol{c}$ be 
an Euler system for $\bb{T}$.
\begin{enumerate}[{\rm (i)}]
\item Let $\mca{R}$ be any topological $\Lambda$-algebra,
and $\pi \colon \Lambda \longrightarrow \mca{R}$
the structure map of the $\Lambda$-algebra $\mca{R}$.
If an Euler system $\boldsymbol{c}$
for $\bb{T}$ 
can be extended to the cyclotomic direction,
then $\pi^*\boldsymbol{c}$
can be extended to the cyclotomic direction.
\item Let $\tilde{\boldsymbol{c}}$ be 
an Euler system for $\widetilde{\bb{T}}$
satisfying $\mathrm{aug}_{\bb{T}}
(\tilde{\boldsymbol{c}})=\boldsymbol{c}$.
Then, $\tilde{\boldsymbol{c}}$ can be 
extended to the cyclotomic direction.
\end{enumerate}
\end{lem}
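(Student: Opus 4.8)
The plan is to deduce both parts from two formal inputs: the base--change functoriality of Euler systems along a continuous homomorphism of coefficient rings (this is the content of Definition \ref{dfnESalg}, and since its only ingredient is the compatibility $P(x;f^{*}\bb{T}\vert\sigma)=f(P(x;\bb{T}\vert\sigma))$ of Euler factors with a ring homomorphism $f$, it applies verbatim with $\Lambda$ replaced by $\Lambda[[\Gamma]]$), together with a single change--of--variables isomorphism in the cyclotomic variable. In both parts one simply transports the given Euler system along suitable ring maps and checks that the augmentation is recovered.

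\textbf{Part (i).} Let $\tilde{\boldsymbol{c}}$ be an extension of $\boldsymbol{c}$ to the cyclotomic direction, i.e.\ an Euler system for $\bb{T}^{\mathrm{cyc}}=\bb{T}\otimes_{\Lambda}\Lambda[[\Gamma]]$ with $\mathrm{aug}_{\bb{T}}(\tilde{\boldsymbol{c}})=\boldsymbol{c}$. The structure map $\pi\colon\Lambda\to\mca{R}$ induces a continuous ring homomorphism $\pi^{\mathrm{cyc}}\colon\Lambda[[\Gamma]]\to\mca{R}[[\Gamma]]$, and directly from the definitions one identifies
\[
(\pi^{\mathrm{cyc}})^{*}(\bb{T}^{\mathrm{cyc}})=(\pi^{*}\bb{T})^{\mathrm{cyc}}
\]
as $\mca{R}[[\Gamma]][\Gal(\bb{Q}_{\Sigma}/\bb{Q})]$-modules, compatibly with augmentation in the sense that $\mathrm{aug}_{\pi^{*}\bb{T}}\circ(\pi^{\mathrm{cyc}})^{*}=\pi^{*}\circ\mathrm{aug}_{\bb{T}}$ on cohomology classes. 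By base--change functoriality, $(\pi^{\mathrm{cyc}})^{*}\tilde{\boldsymbol{c}}$ is an Euler system for $(\pi^{*}\bb{T})^{\mathrm{cyc}}$, and
\[
\mathrm{aug}_{\pi^{*}\bb{T}}\bigl((\pi^{\mathrm{cyc}})^{*}\tilde{\boldsymbol{c}}\bigr)=\pi^{*}\bigl(\mathrm{aug}_{\bb{T}}(\tilde{\boldsymbol{c}})\bigr)=\pi^{*}\boldsymbol{c},
\]
so $(\pi^{\mathrm{cyc}})^{*}\tilde{\boldsymbol{c}}$ is the desired extension of $\pi^{*}\boldsymbol{c}$.

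\textbf{Part (ii).} Recall $\widetilde{\bb{T}}=\bb{T}^{\mathrm{cyc}}$, so we must produce an Euler system for $(\bb{T}^{\mathrm{cyc}})^{\mathrm{cyc}}$ whose augmentation is $\tilde{\boldsymbol{c}}$. Write $\Gamma'$ for the second copy of $\Gamma=\Gal(\bb{Q}_{\infty}/\bb{Q})$ occurring in $(\bb{T}^{\mathrm{cyc}})^{\mathrm{cyc}}=\bb{T}\otimes_{\Lambda}\Lambda[[\Gamma\times\Gamma']]$, on which $\Gal(\bb{Q}_{\Sigma}/\bb{Q})$ acts by $\rho_{\bb{T}}\otimes\chi_{\mathrm{taut}}\otimes\chi'_{\mathrm{taut}}$, with $\chi'_{\mathrm{taut}}$ the tautological character into $\Gamma'$. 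Since $\chi_{\mathrm{taut}}$ and $\chi'_{\mathrm{taut}}$ factor through the same quotient $\Gal(\bb{Q}_{\Sigma}/\bb{Q})\twoheadrightarrow\Gal(\bb{Q}_{\infty}/\bb{Q})$, the $\bb{Z}_{p}$-linear automorphism of $\Gamma\times\Gamma'$ induced on topological generators by $\gamma\mapsto\gamma(\gamma')^{-1}$, $\gamma'\mapsto\gamma'$ carries the diagonal twist $\chi_{\mathrm{taut}}\chi'_{\mathrm{taut}}$ to $\chi_{\mathrm{taut}}$; let $\Psi$ be the resulting $\Lambda$-algebra automorphism of $\Lambda[[\Gamma\times\Gamma']]$, so $\Psi^{-1}(\gamma)=\gamma\gamma'$ and $\Psi^{-1}(\gamma')=\gamma'$. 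Then, in the basis coming from a $\Lambda$-basis of $\bb{T}$, one reads off $\Psi^{*}\bigl((\bb{T}^{\mathrm{cyc}})^{\mathrm{cyc}}\bigr)\cong\iota^{*}\bb{T}^{\mathrm{cyc}}$ as modules with Galois action, where $\iota\colon\Lambda[[\Gamma]]\hookrightarrow\Lambda[[\Gamma\times\Gamma']]$ is the natural inclusion; equivalently $(\bb{T}^{\mathrm{cyc}})^{\mathrm{cyc}}\cong(\Psi^{-1})^{*}\iota^{*}\bb{T}^{\mathrm{cyc}}$. Moreover $\mathrm{aug}\circ\Psi^{-1}\circ\iota=\mathrm{id}_{\Lambda[[\Gamma]]}$, where $\mathrm{aug}\colon\Lambda[[\Gamma\times\Gamma']]\to\Lambda[[\Gamma]]$ is the augmentation in the $\Gamma'$-variable (indeed $\gamma\mapsto\gamma\gamma'\mapsto\gamma$). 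Putting $\widehat{\boldsymbol{c}}:=(\Psi^{-1})^{*}\iota^{*}\tilde{\boldsymbol{c}}$, base--change functoriality over $\Lambda[[\Gamma]]$ shows $\widehat{\boldsymbol{c}}$ is an Euler system for $(\bb{T}^{\mathrm{cyc}})^{\mathrm{cyc}}$, and
\[
\mathrm{aug}_{\bb{T}^{\mathrm{cyc}}}(\widehat{\boldsymbol{c}})=(\mathrm{aug}\circ\Psi^{-1}\circ\iota)^{*}\tilde{\boldsymbol{c}}=\tilde{\boldsymbol{c}},
\]
which shows $\widehat{\boldsymbol{c}}$ extends $\tilde{\boldsymbol{c}}$ to the cyclotomic direction.

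\textbf{Where the care is needed.} There is no deep obstacle here; the content is entirely bookkeeping of the cyclotomic twists. The one point that must be checked with care is that the $\Lambda$-algebra automorphism $\Psi$ of the coefficient ring, induced by a $\bb{Z}_{p}$-linear automorphism of $\Gamma\times\Gamma'$, \emph{simultaneously} (a) intertwines the Galois actions by transforming the twist $\chi_{\mathrm{taut}}\chi'_{\mathrm{taut}}$ into $\chi_{\mathrm{taut}}$, and (b) carries the $\Gamma'$-augmentation to a section of itself, so that the pulled--back collection is an honest Euler system in the sense of Definition \ref{dfn_ES} whose augmentation recovers $\tilde{\boldsymbol{c}}$ exactly; the Euler--system axioms survive the coefficient change precisely because $P(x;\bb{T}\vert\sigma)$ is compatible with ring homomorphisms.
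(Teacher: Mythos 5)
Your proof is correct and follows essentially the same approach as the paper. Part (i) is verbatim the paper's argument, and in part (ii) your composite $\Psi^{-1}\circ\iota\colon\gamma\mapsto\gamma\gamma'$ is precisely the same ring map the paper writes as $\iota\circ\Delta$ (diagonal embedding followed by the shuffling isomorphism), merely factored differently — the paper keeps the diagonal visible while you factor through an inclusion plus a twisting automorphism.
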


\begin{proof}
First, Let us show the first assertion.
The map $\pi$ induces 
a continuous homomorphism 
$\tilde{\pi}\colon \Lambda[[\Gamma]]
\longrightarrow \mca{R}[[\Gamma]]$
Note that we have a natural isomorphism
$\tilde{\pi}^*\bb{T}^{\mathrm{cyc}}
\simeq (\pi^*\bb{T})^\mathrm{cyc}$.
By definition, 
the Euler system $\tilde{\pi}^*(\tilde{\boldsymbol{c}})$ 
for $(\pi^*\bb{T})^{\mathrm{cyc}}$ 
satisfies 
\[
\mathrm{arg}_{\pi^*\bb{T}}(\tilde{\pi}^*(\tilde{\boldsymbol{c}}))
=\pi^*\boldsymbol{c}.
\]
So $\pi^*\boldsymbol{c}$
can be extended to the cyclotomic direction.

Next, Let us show the second assertion.
The diagonal embedding 
$\Delta\colon \Gamma \longrightarrow \Gamma \times \Gamma$
induces a continuous homomorphism
\[
\Delta\colon \Lambda[[\Gamma]]
\longrightarrow \Lambda[[\Gamma \times \Gamma]]
\]
of $\Lambda$-algebras.
We define an isomorphism 
$\iota \colon \Lambda[[\Gamma \times \Gamma]] 
\xrightarrow{\ \simeq\ } \Lambda[[\Gamma]]$
of topological rings by composite
\begin{align*}
\Lambda[[\Gamma \times \Gamma]]
& \simeq \varprojlim_{n_1, n_2}
\Lambda \left[
\left(
\Gamma/\Gamma^{p^{n_1}}
\right) \times 
\left(
\Gamma/\Gamma^{p^{n_2}}
\right)
\right] \\
& \simeq \varprojlim_{n_2}\left(
\varprojlim_{n_1}\Lambda[\Gamma/\Gamma^{p^{n_1}}]
\right)[\Gamma/\Gamma^{p^{n_2}}] \\
& \simeq 
(\Lambda[[\Gamma]])[[\Gamma]].
\end{align*}
Then we have a commutative diagram
\begin{equation}\label{diagdiagcyclot}
\xymatrix{
& (\Lambda[[\Gamma \times \Gamma]] \ar[d]^{\mathrm{pr}_1} 
\ar[r]^{\iota}_{\simeq} & 
(\Lambda[[\Gamma]])[[\Gamma]] \ar[d]^{\mathrm{aug}} \\
\Lambda[[\Gamma]] \ar[ur]^{\Delta}
\ar@{=}[r] & \Lambda[[\Gamma]]
\ar@{=}[r] & \Lambda[[\Gamma]],
}
\end{equation}
where $\mathrm{aug}$ is the augmentation map, and
$\mathrm{pr}_1$ is the ring homomorphism
induced by the first projection map
$\Gamma \times \Gamma \longrightarrow \Gamma$.
By th commutative diagram (\ref{diagdiagcyclot}), 
we can immediately verify that
the Euler system 
\[
\tilde{\tilde{\boldsymbol{c}}}:=
(\iota \circ \Delta)^*(\tilde{\boldsymbol{c}})
\] 
for $\left((\bb{T})^{\mathrm{cyc}}\right)^{\mathrm{cyc}}$ 
satisfies 
\[
\mathrm{arg}_{\bb{T}^{\mathrm{cyc}}}\left(
\tilde{\tilde{\boldsymbol{c}}}
\right)
=\tilde{\boldsymbol{c}}.
\]
This completes the proof of Lemma \ref{lempropcycext}.
\end{proof}

\subsection{Kolyvagin derivatives}\label{ssKD}
Here, we fix an Euler system 
$\boldsymbol{c}=\{ c(n) \}_n$ for $\bb{T}$.
Let us recall the construction of
Kolyvagin derivative arising from the Euler system
$\boldsymbol{c}$.

First, we introduce some notation
in general setting.
Let $R$ be any pro-finite commutative ring, and
$T$ a free $R$ module of finite rank
with continuous $R$-linear $G_{\bb{Q},\Sigma}$-action $\rho_T$.
Let $I$ be an ideal of $R$ of finite index.
For any prime number $\ell \notin \Sigma$,
we set the ideal $\mca{I}_{T,\ell }$ of $R$ by
$\mca{I}_{\bb{T},\ell }:=(\ell -1, P_\ell(1;\bb{T}))$,
where for any prime number $\ell \notin \Sigma$,
we define a polynomial $P_\ell(x;T)$ by 
\[
P_\ell(x;T):=
\det_{R}(1- x\rho_{T}(\Frob_\ell); T).
\] 
For any $n \in \mca{N}_{\Sigma}$, 
we define the ideal 
$\mca{I}_{T,n}$ of $\Lambda$ by 
\[
\mca{I}_{T,n}:= \sum_{\ell \in \mathrm{Prime}(n)}
\mca{I}_{T,\ell}.
\]
We define a set $\mca{P}(T;I)$ of prime numbers by
\begin{equation}\label{eqPTI}
\mca{P}(T;I):= \left\{
\ell \mathrel{\bigg\vert}
\begin{array}{l}
\text{$\ell \notin \Sigma$, 
$\mca{I}_{T,\ell} \subseteq I$, and the 
$\Lambda/I$-module} \\
\text{$T/(IT+(\Frob_\ell -1)T)$ 
is free of rank one}
\end{array}
\right\},
\end{equation}
and define a set $\mca{N}(T,I)$ of positive integers by 
\[
\mca{N}(T,I):= \left\{
n \in \bb{Z}_{>0} \mathrel{\bigg\vert}
\begin{array}{l}
\text{$n$ is a square free integer, and all prime } \\
\text{divisors of $n$  
are contained in $\mca{P}(T,I)$}
\end{array}
\right\}.
\]
For simplicity, when $(R,T)=(\Lambda,\bb{T})$,
we write $\mca{I}_n:=\mca{I}_{\bb{T},n}$.

\begin{rem}
Let $I$ be any ideal of $\Lambda$ of finite index.
Here, we give some remarks on the sets
$\mca{P}(\bb{T},I)$ and $\mca{N}(\bb{T},I)$.
\begin{enumerate}[(i)]
\item The assumption (A2) and 
the Chebotarev density theorem 
imply that the sets $\mca{P}(\bb{T},I)$ and 
$\mca{N}(\bb{T},I)$ are not empty.
\item Let $\ell$ be an integer not contained in $\Sigma$. 
Suppose that $(\ell-1) \in I$, and  
the $\Lambda/I$-module $\bb{T}/(I\bb{T}+(\mathrm{Frob}_\ell-1)\bb{T})$
is free of rank one. 
Then, we have $\ell \in \mca{P}(\bb{T},I)$.
\item We note that $1 \in \mca{N}(\bb{T},I)$.
\end{enumerate} 
\end{rem}

Let $\ell \in \mca{P}(\bb{T},I)$ be any element.
and fix a generator $\sigma_\ell$ of 
$H_\ell \simeq (\bb{Z}/\ell\bb{Z})^\times$. 
We define an element $D_\ell \in \bb{Z}[H_n]$ by
\[
D_\ell := \sum_{\nu=1}^{\ell-2}\nu \sigma_\ell ^\nu.
\]

Let $n \in \mca{N}(\bb{T},I)$ be any element.
We denote the set of prime numbers 
dividing $n$ by $\mathrm{Prime}(n)$.
Then, we have 
$H_n=\Gal(\bb{Q}(\mu_n)/\bb{Q})
\simeq \prod_{\ell \in \mathrm{Prime}(n)} H_{\ell}$.
We put
\[
H_n^\otimes:=
\bigotimes_{\ell \in \mathrm{Prime}(n)} H_{\ell},
\]
where tensor products are taken over $\bb{Z}$.
We define 
\[
D_n := \prod_{\ell \in \mathrm{Prime}(n)} D_{\ell} \in \bb{Z}[H_n].
\]
By the similar arguments to 
the proof of \cite{Ru} Lemma 4.4.2, we deduce that
\begin{eqnarray}\label{Dnfixed}
(1-\sigma)D_n c(n)  \in 
IH^1(\pi_{I,[n]}^*\bb{T})
\end{eqnarray}
for any $\sigma \in H_n$. 
So, we obtain
\[
D_n c(n)_I \in H^1(\pi_{I,[n]}^*\bb{T})^{H_n},
\]
where $c(n)_I$ is the image of $c(n)$
in $H^1(\pi_{I,[n]}^*\bb{T})$.
By the assumptions (A1) and (A3), 
the restriction map 
\[
H^1(\pi_{I,[n]}^*\bb{T}) \longrightarrow 
H^1(\pi_{I,[n]}^*\bb{T})^{H_n}
\]
becomes an isomorphism.
Hence we obtain the following definition.

\begin{dfn}\label{dfnKD}
We denote by $\kappa(\boldsymbol{c};n)_I$ 
the unique element of 
$H^1(\pi_I^*\bb{T}) \otimes_{\bb{Z}} H_{n}^\otimes$
whose image by the restriction map 
\begin{eqnarray}\label{resisom}
H^1(\pi_I^*\bb{T})\otimes_{\bb{Z}} 
H^\otimes_n \longrightarrow 
H^1(\pi_{I,[n]}^*\bb{T})^{H_n}
\otimes_{\bb{Z}} H^\otimes_n
\end{eqnarray}
coincides with $D_n c(n)_I
\otimes \bigotimes_{\ell \in \mathrm{Prime}(n)} \sigma_{\ell}$.
Note that the element $\kappa(\boldsymbol{c};n)_I$ is
independent of the choice of generators 
$\sigma_\ell \in H_\ell$.
The element $\kappa(\boldsymbol{c};n)_I$ is called
{\em the Kolyvagin derivative} 
of the Euler system $\boldsymbol{c}$.
\end{dfn}

\subsection{Mazur-Rubin theory over DVR}\label{ssKS}

Let us recall the theory of Kolyvagin systems 
over DVR, namely 
the case when $\Lambda=\Lambda^{(0)}$,
established by Mazur and Rubin in the book \cite{MR}. 

First, let us recall basic settings in the book \cite{MR}.
Let $R$ be the integer ring of 
a finite extension field of $\bb{Q}_p$.
We denote the maximal ideal of $R$ by $\mf{m}_R$.
Let $T$ be a free $R$ module of finite rank
with continuous $R$-linear $G_{\bb{Q},\Sigma}$-action.
We put $A^*:=\Hom_{\bb{Z}_p}(T,\mu_{p^\infty})$.
Let $\mca{F}_{\mathrm{can}}$ be 
the canonical local condition for $T$
in the sense of \cite{MR}, namely 
\[
H^1_{\mathcal{F}_{\mathrm{can}}}(\bb{Q}_v,T)=
\begin{cases}
H^1(\bb{Q}_v,T) & (v \in \{p, \infty\}) \\
H^1_f(\bb{Q}_v,T) & (v \notin \{ p,\infty \})
\end{cases}
\]
in our setting,
where $H^1_f(\bb{Q}_v,T)$ denotes  
Bloch--Kato's finite local condition.
We denote by $\mca{F}^*_{\mathrm{can}}$ 
the \textit{dual} local condition for $\mca{F}_{\mathrm{can}}$ 
in the sense of \cite{MR} Definition 2.3.1.
Let $\mca{P}$ be a set of prime numbers
contained in $\mca{P}(T,\mf{m}_R)$.
Here, the set $\mca{P}(T,\mf{m}_R)$
is defined by similar manor to (\ref{eqPTI}).
We denote by $\mca{N}(\mca{P})$ the set of positive integers
which are square free products of several prime numbers in $\mca{P}$.
We assume that the triple $(T,\mca{F}_{\mathrm{can}},\mca{P})$
satisfies the hypotheses (H1)--(H6) in \cite{MR}.
Mazur and Rubin introduced the notion of 
{\it the core rank} $\chi (T):=\chi(T,\mca{F}_{\mathrm{can}},\mca{P}) 
\in \bb{Z}_{\ge 0} \cup \{ \infty \}$ 
of the triple $(T,\mca{F}_{\mathrm{can}},\mca{P})$.
(See \cite{MR} Definition 4.1.11 and  Definition 5.2.4.)
In our situation, the core rank $\chi(T)$ is 
given by the formula
\[
\chi(T)= \mathrm{rank} T^{-} + \mathrm{corank}H^0(\bb{Q}_p,A^*),
\]
where $T^-$ is the maximal $R$-submodule of $T$
where the complex conjugate acts via 
the scalar multiplication by $-1$.
(See \cite{MR} Theorem 5.2.15.)

\begin{exa}
Let $\bb{T}$ be as in Theorem \ref{thmuncondresults}.
In particular, we assume that 
$\bb{T}$ satisfies conditions (A1)--(A8).
Let $f \colon \Lambda \longrightarrow R$ 
be any continuous ring homomorphism.
Here, we put $T:=f^*\bb{T}$.
Take any $n \in \bb{Z}_{>0}$, 
and put $\mca{P}:=\mca{P}(T,\mf{m}_R^n)$.
Then, we can easily verify that the triple 
$(T,\mca{F}_{\mathrm{can}},\mca{P})$
satisfies hypotheses (H1)--(H6), and 
$\chi(T)=1$. Moreover, we have a canonical isomorphism
\[
\Hom_{\bb{Z}_p}(H^1_{\mca{F}_{\mathrm{can}^*}}(\bb{Q},A^*)
,\bb{Q}_p/\bb{Z}_p) \simeq
X(T).
\]
\end{exa}

Now let us recall the definition of Kolyvagin systems.
Let $(T,\mca{F}_{\mathrm{can}},\mca{P})$ be as above.
For each $n \in \mca{P}$,
We define a new local condition $\mca{F}_{\mathrm{can}}(n)$ by
\[
H^1_{\mca{F}_{\mathrm{can}}(n)}(\bb{Q}_\ell,T/I_nT)
=
\begin{cases}
H^1_{\mca{F}_{\mathrm{can}}}(\bb{Q}_\ell,T/I_nT)
& (\text{if $\ell \nmid n$}) \\
\Ker \left(
H^1(\bb{Q}_\ell,T/I_nT) \rightarrow 
H^1(\bb{Q}_\ell(\mu_\ell),T/I_nT) 
\right)
& (\text{if $\ell \mid n$}).
\end{cases}
\]

\begin{dfn}[\cite{MR} Definition 3.1.3]
A collection
\[
\boldsymbol{\kappa}=\{
\kappa_n \in H^1(T/I_nT) \otimes_{\bb{Z}} H_n^{\otimes}
\}_{n \in \mca{N}(\mca{P})}
\]
is called a Kolyvagin system for the triple
$(T,\mca{F}_{\mathrm{can}},\mca{P})$
if it satisfies the following two conditions: 
\begin{enumerate}[{\rm (i)}]
\item For any $n \in \mca{P}$, 
we have 
$\kappa_n \in H^1_{\mca{F}_{\mathrm{can}}(n)}
(\bb{Q},T/I_nT)$.
\item Take any $n \in \mca{N}(\mca{P})$,
and let $\ell \in \mca{P}$ be an element prime to $n$.
We put 
\[
H^1_s(\bb{Q}_\ell,T/I_nT):=H^1(\bb{Q}_\ell,T/I_nT)/
H^1_f(\bb{Q}_\ell,T/I_nT).
\]
Then, we have 
\[
\psi^{\overline{n,n\ell}}_{n\ell}(\kappa_{n\ell})=
\psi^{\overline{n,n\ell}}_n(\kappa_{n})
\in 
H^1_s(\bb{Q}_\ell,T/I_nT)\otimes_{\bb{Z}} H_{n\ell}^{\otimes}
\]
where 
\begin{align*}
\psi^{\overline{n,n\ell}}_{n\ell} \colon 
H^1_{\mca{F}_{\mathrm{can}}(n\ell)}
(\bb{Q}, T/I_{n\ell}T)\otimes_{\bb{Z}} H_{n\ell}^{\otimes}
&\longrightarrow H^1(\bb{Q}_\ell,T/I_{n\ell}T)
\otimes_{\bb{Z}} H_{n\ell}^{\otimes} \\
&\longrightarrow H^1_s(\bb{Q}_\ell,T/I_{n\ell}T)
\otimes_{\bb{Z}} H_{n\ell}^{\otimes}
\end{align*}
is a composite of the localization 
(namely, restriction) map and the natural surjection, 
and  
\begin{align*}
\psi^{\overline{n,n\ell}}_n \colon
H^1_{\mca{F}_{\mathrm{can}}(n)}(\bb{Q}, T/I_nT)\otimes_{\bb{Z}} H_n^{\otimes}
& \longrightarrow 
H^1_f(\bb{Q}_\ell,T/I_nT)\otimes_{\bb{Z}} H_n^{\otimes} \\
& \longrightarrow 
H^1_s(\bb{Q}_\ell,T/I_nT)\otimes_{\bb{Z}} H_{n\ell}^{\otimes}
\end{align*}
is the composite of the localization map and 
the finite singular comparison map (\cite{MR} Definition 1.2.2).
\end{enumerate}
We denote the set of Kolyvagin systems for
$(T,\mca{F}_{\mathrm{can}},\mca{P})$ by
$\mathbf{KS}(T,\mca{F}_{\mathrm{can}},\mca{P})$.
The set $\mathbf{KS}(T,\mca{F}_{\mathrm{can}},\mca{P})$
has a natural $R$-module structure.
\end{dfn}

The following are main results on Kolyvagin systems (of rank one) 
over DVR proved in \cite{MR} \S 5.2.

\begin{thm}[\cite{MR} Theorem 5.2.10]\label{thmMRuniqueness}
Suppose that $\chi(T)=1$.
Then, the $R$-module
$\mathbf{KS}(T,\mca{F}_{\mathrm{can}},\mca{P})$
is free of rank one.
\end{thm}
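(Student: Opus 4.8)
The goal is to prove that when the core rank $\chi(T)=1$, the $R$-module $\mathbf{KS}(T,\mca{F}_{\mathrm{can}},\mca{P})$ of Kolyvagin systems is free of rank one. This is precisely \cite{MR} Theorem 5.2.10, so the plan is to reproduce the Mazur--Rubin argument rather than invent a new one.

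The plan is as follows. First I would observe that $R$ is the integer ring of a finite extension of $\bb{Q}_p$, hence a complete DVR, so $\mathbf{KS}(T,\mca{F}_{\mathrm{can}},\mca{P})$ will automatically be free once one shows it is a finitely generated $R$-module with the right $\mf{m}_R$-adic behaviour. The strategy in \cite{MR} is to pass to the ``reduction modulo $\mf{m}_R^k$'' layers: one considers the quotients $T/\mf{m}_R^k T$, and the analogue of Kolyvagin systems for these finite quotients. For each $k$ one has a module $\mathbf{KS}(T/\mf{m}_R^k T,\mca{F}_{\mathrm{can}},\mca{P})$, and $\mathbf{KS}(T,\mca{F}_{\mathrm{can}},\mca{P}) = \varprojlim_k \mathbf{KS}(T/\mf{m}_R^k T,\mca{F}_{\mathrm{can}},\mca{P})$. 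So the core of the argument is the finite-level statement.

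Second, at the finite level one builds a ``stub'' Kolyvagin system by an explicit inductive construction over the partial order on $\mca{N}(\mca{P})$ (ordered by divisibility, or rather by the number of prime factors). The key input is the hypotheses (H1)--(H6), which guarantee via the Chebotarev density theorem (using the element $\tau$ from (A2)) that there are enough Kolyvagin primes $\ell$ to propagate classes: given $\kappa_n$, one can find $\ell$ realising prescribed local behaviour, and the local conditions $\mca{F}_{\mathrm{can}}(n\ell)$ interact with $\mca{F}_{\mathrm{can}}(n)$ exactly so that the ``finite-singular'' compatibility can be satisfied. The combinatorial heart is a rank count: the ``Selmer sheaf'' over the graph of Kolyvagin primes has global sections of dimension exactly $1$ precisely when the core rank $\chi(T)=1$; this is the content of \cite{MR} \S4 (stub Kolyvagin systems, the parity/rank lemmas) specialised to $\chi(T)=1$. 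One shows the space of Kolyvagin systems surjects onto, and is ``not much bigger than,'' a one-dimensional space at the bottom layer $n=1$, using the sheaf-theoretic exact sequences relating $\kappa_n$ to $\kappa_{n\ell}$.

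The hard part will be the rank computation: showing that the constraints (i) and (ii) in the definition of a Kolyvagin system cut the a priori large space $\bigoplus_n H^1(T/I_nT)\otimes H_n^\otimes$ down to a free rank-one $R$-module, with no extra torsion or rank. This is where the hypothesis $\chi(T)=1$ is used essentially — for general core rank the result is a free module of rank depending on $\chi$ only conjecturally, and the clean rank-one statement is special. Concretely, one must control both a lower bound (existence: the stub construction yields at least one nonzero system, and freeness follows because a submodule generated by a non-torsion element in the relevant local module is free over the DVR) and an upper bound (uniqueness up to scalar: any two Kolyvagin systems agree after multiplying by suitable elements of $R$, proved by comparing their values $\kappa_1$ and propagating the comparison via the finite-singular relations). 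I would carry out the upper bound by induction on $k$ for $T/\mf{m}_R^k T$, reducing the case $k$ to the case $k-1$ via the snake lemma applied to multiplication by $\varpi$ on the Selmer sheaf, and invoking the core rank formula $\chi(T)=\mathrm{rank}\,T^- + \mathrm{corank}\,H^0(\bb{Q}_p,A^*)=1$ (guaranteed here by (A8) together with (A7), which forces $H^0(\bb{Q}_p,A^*[\mf{m}])=0$ and hence $\mathrm{corank}\,H^0(\bb{Q}_p,A^*)=0$). Since all of this is exactly \cite{MR} \S5.2, in the paper itself this ``proof'' is really a citation; I would simply recall the core rank formula, verify $\chi(T)=1$ in our setting, and refer to \cite{MR} Theorem 5.2.10 for the freeness.
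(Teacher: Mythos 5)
Your proposal is correct and takes essentially the same approach as the paper: the paper presents Theorem \ref{thmMRuniqueness} purely as a citation of \cite{MR} Theorem 5.2.10, with the core-rank hypothesis $\chi(T)=1$ verified separately in the preceding example, which is exactly what you conclude. Your sketch of the Mazur--Rubin argument (finite-level reduction, stub systems, Chebotarev propagation, the sheaf-theoretic rank count) is a faithful summary of the cited source, but none of it appears in the paper itself.
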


Let 
$\boldsymbol{\kappa}=\{\kappa_n\}_n
\in \mathbf{KS}(T,\mca{F}_{\mathrm{can}},\mca{P})$
be any Kolyvagin system.
For each $n \in \mca{N}(\mca{P})$, we define
\[
\partial(\boldsymbol{\kappa};n)
:=
\max\{
j \in \bb{Z}_{\ge 0} \mathrel{\vert}
\kappa_n \in \mf{m}_R^jH^1_{\mca{F}_{\mathrm{can}}(n)}(\bb{Q},T/I_nT)
\}.
\]
Let $t \in \bb{Z}_{>0}$, and put
$\mca{P}(t):=\{
\ell \in \mca{P} \mathrel{\vert} I_\ell \subseteq \mf{m}_R^t
\}$.
Then, for each $i \in \bb{Z}_{\ge 0}$, 
we put
\[
\partial_i(\boldsymbol{\kappa})_t:=
\min\{\partial(\boldsymbol{\kappa};n)
\mathrel{\vert} \# \mathrm{Prime}(n)=i,\ 
\mathrm{Prime}(n) \subseteq \mca{P}(t) \}.
\]
In particular,
we put $\partial_0(\boldsymbol{\kappa})_t:=
\partial(\boldsymbol{\kappa};1)_t$.

\begin{thm}[\cite{MR} Theorem 5.2.12]\label{thmMRstrthm}
Let $t \in \bb{Z}_{>0}$ be any positive integer.
Suppose that $\chi(T)=1$.
Let $\boldsymbol{\kappa}=\{\kappa_n\}_n$ be
a generator of the $R$-module
$\mathbf{KS}(T,\mca{F}_{\mathrm{can}},\mca{P})$.
Then, for any $i \in \bb{Z}_{\ge 0}$, we have
\[
\Fitt_{R,i}(\Hom_{\bb{Z}_p}(H^1_{\mca{F}_{\mathrm{can}^*}}(\bb{Q},
A^*),\bb{Q}_p/\bb{Z}_p))
=\mf{m}_R^{\partial_i(\boldsymbol{\kappa})_t}.
\]
\end{thm}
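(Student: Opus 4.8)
The plan is to follow the argument of Mazur and Rubin in \cite{MR} \S 5.2, of which the statement is essentially a reformulation, so I only sketch the structure. The point of departure is Theorem \ref{thmMRuniqueness}: since $\chi(T)=1$, the $R$-module $\mathbf{KS}(T,\mca{F}_{\mathrm{can}},\mca{P})$ is free of rank one, so a generator $\boldsymbol{\kappa}$ exists and is unique up to a unit of $R$; as multiplication by a unit changes no $\partial(\boldsymbol{\kappa};n)$, the integers $\partial_i(\boldsymbol{\kappa})_t$ depend only on $(T,\mca{F}_{\mathrm{can}},\mca{P})$ and $t$. Write $X:=\Hom_{\bb{Z}_p}(H^1_{\mca{F}_{\mathrm{can}^*}}(\bb{Q},A^*),\bb{Q}_p/\bb{Z}_p)$, a finitely generated $R$-module (torsion once $\kappa_1\neq0$, the general case being analogous with infinite elementary divisors allowed). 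By the structure theorem over the DVR $R$ we may write $X\simeq\bigoplus_{j=1}^{s}R/\mf{m}_R^{a_j}$ with $a_1\le\cdots\le a_s$; by the Fitting-ideal computation over a PID recalled in \S \ref{ssStr} one has $\Fitt_{R,i}(X)=\mf{m}_R^{a_1+\cdots+a_{s-i}}$ (read as $R$ when $i\ge s$). Hence the theorem is equivalent to the equalities
\[
\partial_i(\boldsymbol{\kappa})_t = a_1+\cdots+a_{s-i}\quad(i<s),\qquad
\partial_i(\boldsymbol{\kappa})_t = 0 \quad(i\ge s),
\]
to be proven for every $t\ge1$.

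First I would prove the \emph{lower bound} $\partial_i(\boldsymbol{\kappa})_t\ge a_1+\cdots+a_{s-i}$, the Kolyvagin-system form of the Euler-system bound. Fix a squarefree $n$ with $\#\mathrm{Prime}(n)=i$ and $\mathrm{Prime}(n)\subseteq\mca{P}(t)$. Applying global (Poitou--Tate) duality for $T/I_nT$ one obtains an exact sequence relating $H^1_{\mca{F}_{\mathrm{can}}(n)}(\bb{Q},T/I_nT)$, the modified dual Selmer module, and the singular local terms $H^1_s(\bb{Q}_\ell,T/I_nT)$ for $\ell\mid n$; feeding in the two Kolyvagin-system axioms (the first placing $\kappa_n$ in the correct local conditions, the second comparing singular parts of $\kappa_n$ and its predecessors), the order of divisibility $\partial(\boldsymbol{\kappa};n)$ of $\kappa_n$ is bounded below by a partial sum of the elementary divisors of $X$ after $i$ ``exceptional'' primes are admitted, which is minimised by $a_1+\cdots+a_{s-i}$. (For $i=0$ this is the familiar inequality $\mathrm{length}_R(X)\ge\partial_0$.) This direction is relatively soft.

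The harder part is the matching \emph{upper bound}: for each $i<s$ one must exhibit an integer $n$ with exactly $i$ prime factors in $\mca{P}(t)$ for which $\partial(\boldsymbol{\kappa};n)=a_1+\cdots+a_{s-i}$. This is done by induction on $s-i$, repeatedly invoking the prime-selecting lemma of \cite{MR} Chapter~3 (a consequence of the hypotheses (H1)--(H6) together with Chebotarev's density theorem): given $\kappa_n$, one chooses $\ell\in\mca{P}(t)$, prime to $n$, whose localisation detects a prescribed cyclic summand of the dual Selmer module attached to $T/I_nT$, so that passing from $n$ to $n\ell$ removes \emph{exactly one} elementary divisor and lowers the divisibility index by \emph{exactly} that divisor; the second Kolyvagin-system axiom is what prevents the index from dropping by more. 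One also checks that such $\ell$ can be found inside $\mca{P}(t)$ for the given $t$ (using the definition of $\mca{P}(t)$ via $I_\ell\subseteq\mf{m}_R^t$ and the density statement) and that the finite approximations stabilise, so that $\partial_i(\boldsymbol{\kappa})_t$ is independent of $t$. Combining the two bounds gives $\partial_i(\boldsymbol{\kappa})_t=a_1+\cdots+a_{s-i}$ and hence $\Fitt_{R,i}(X)=\mf{m}_R^{\partial_i(\boldsymbol{\kappa})_t}$.

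I expect the upper-bound/existence step to be the main obstacle: controlling, through the prime-selecting lemma and the compatibility axiom, that each added prime peels off \emph{precisely one} elementary divisor and drops the order of divisibility by \emph{precisely} the right amount. This is the technical core of \cite{MR} \S 5.2, and it is exactly here that the hypothesis $\chi(T)=1$ (core rank one) is indispensable: without it the ``peeling'' is not one-dimensional and the Fitting ideals need not be powers of $\mf{m}_R$.
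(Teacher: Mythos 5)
The paper does not prove Theorem \ref{thmMRstrthm}: it is cited verbatim from Mazur--Rubin (\cite{MR} Theorem 5.2.12) and used as a black box, so there is no in-paper proof to compare against. Your sketch is a faithful outline of the \cite{MR} argument---free rank one of $\mathbf{KS}(T,\mca{F}_{\mathrm{can}},\mca{P})$ makes the $\partial_i$ well defined; the DVR structure theorem reduces the claim to matching $\partial_i(\boldsymbol{\kappa})_t$ with partial sums $a_1+\cdots+a_{s-i}$ of elementary divisors; Poitou--Tate duality together with the two Kolyvagin-system axioms gives the inequality $\partial_i\ge a_1+\cdots+a_{s-i}$; and the Chebotarev/prime-selecting lemma, hinging on core rank one, produces an $n$ achieving equality---and you correctly identify the existence/upper-bound step as the technical core. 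One point worth keeping in mind: the formulation here carries the auxiliary parameter $t$ via $\mca{P}(t)$, and this paper only ever invokes the theorem with $t>\mathrm{length}_{\mca{O}'}(X(f^*\bb{T}))$ (see the proof of Corollary \ref{corspecialunivKolyv}); for very small $t$ the set $\mca{P}(t)$ admits $n$ with $I_n$ large, where $\partial(\boldsymbol{\kappa};n)$ can be forced small for trivial reasons, so the stabilization-in-$t$ claim you flag at the end is exactly the part that must be checked against \cite{MR}'s precise hypotheses rather than taken as stated for all $t\ge1$.
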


\subsection{The universal Kolyvagin system}\label{ssunivKS}
Here, by using Kolyvagin derivatives
of a fixede Euler system $\boldsymbol{c}$, 
we shall construct 
a collection of Galois cohomology classes  
called a ``universal Kolyvagin system",
whose specializations to DVRs become Kolyvagin systems.
From now on, we assume that 
the Euler system $\boldsymbol{c}$
extends to cyclotomic direction. 
Fix an Euler system $\widetilde{\boldsymbol{c}}
=\{ \tilde{c}(n) \}_{n \in \mca{N}_{\Sigma}}$
on $\bb{T}^{\mathrm{cyc}}$
which is an extension of $\boldsymbol{c}$ to the cyclotomic direction.

We use the following notation.
Recall that we put $\Gamma:=\Gal(\bb{Q}_\infty / \bb{Q})$.
We denote by $\mf{a}_{\Gamma}$ the augmentation ideal of 
the completed group ring $\Lambda[[\Gamma]]$.
Let $\tilde{I}$ be an ideal of $\Lambda[[\Gamma]]$, 
and $n \in \mca{N}_{\Sigma}$ any element.
Then, we put
$\Lambda[[\Gamma]]_{\tilde{I},[\ell]}
:=(\Lambda[[\Gamma]]/\tilde{I})[H_\ell]$, and let
\(
\tilde{\pi}_{\tilde{I}}\colon
\Lambda[[\Gamma]] \longrightarrow \Lambda[[\Gamma]]/\tilde{I}
\)
be the natural projection.
We write
$\widetilde{\mca{I}}_n:=\mca{I}_{\bb{T}^{\mathrm{cyc}},n}$.
If the ideal $\tilde{I}$ contains $\widetilde{\mca{I}}_n$,
then as in Definition \ref{dfnKD}, we can define
the Kolyvagin derivative 
\[
\kappa(\widetilde{\boldsymbol{c}};n)_{\tilde{I}}
\in H^1(\tilde{\pi}_{\tilde{I}}^*\bb{T}^{\mathrm{cyc}}) 
\otimes_{\bb{Z}} H^\otimes_{n}
\]
arising from the Euler system $\widetilde{\boldsymbol{c}}$
on $\widetilde{\bb{T}}$.

Let $n \in \mca{N}_\Sigma$ be any element.
We denote by $\mf{S}_{\mathrm{Prime}(n)}$ 
the set of Permutations on $\mathrm{Prime}(n)$.
Let $\alpha \in \mf{S}_{\mathrm{Prime}(n)}$ be any element.
the sign of $\alpha$ is denoted by $\mathrm{sign}(\alpha)$, 
and the set of elements fixed by $\alpha$ is denoted by 
$\mathrm{Prime}(n)^\alpha$.
We put 
\[
d_\alpha := \prod_{\ell \in \mathrm{Prime}(n)^\alpha} \ell
\in \mca{N}_{\Sigma}.
\]

Let $\ell$ be a prime number not contained in $\Sigma$,
and $\tilde{I}$ an ideal of $\Lambda[[\Gamma]]$. 
We denote by $\widetilde{\mf{a}}_{I,H_\ell}$ 
the augmentation ideal of the group ring
$\Lambda[[\Gamma]]_{/\tilde{I},[\ell]}
=(\Lambda[[\Gamma]]/\tilde{I})[H_\ell]$ 
on the group $H_\ell$. Namely, we put
\[
\widetilde{\mf{a}}_{\tilde{I},H_\ell}:= \Ker \left( 
\mathrm{aug}\colon  \Lambda[[\Gamma]]_{/\tilde{I},[\ell]}
 \longrightarrow \Lambda[[\Gamma]]/\tilde{I}
\right).
\]
We have an isomorphism
\[
e_{\tilde{I},H_\ell}\colon 
\widetilde{\mf{a}}_{\tilde{I},H_\ell}/
\widetilde{\mf{a}}_{\tilde{I},H_\ell }^2
\longrightarrow (\Lambda[[\Gamma]]/\tilde{I})\otimes_{\bb{Z}} H_\ell
\]
of $\Lambda[[\Gamma]]/\tilde{I}$-modules defined by
$e_{\tilde{I},H_\ell}(\sigma-1)= 1 \otimes \sigma$
for any $\sigma \in H_\ell$.
Similarly, for any ideal $I$ of $\Lambda$ of finite index,
we denote the augmentation ideal of 
$\Lambda_{/I,[\ell]}=(\Lambda/I)[H_\ell]$ 
by $\mf{a}_{I,\ell}$, and define the isomorphism
\[
e_{I,H_\ell}\colon 
\mf{a}_{I,H_\ell}/\mf{a}_{I,H_\ell }^2
\longrightarrow (\Lambda/I)\otimes_{\bb{Z}} H_\ell
\]
of $\Lambda/I$-modules.

\begin{dfn}
Let $n  \in \mca{N}_{\Sigma}$ be any element.
\begin{enumerate}[(i)]
\item For any ideal $\tilde{I}$ of $\Lambda[[\Gamma]]$ of finite index 
containing $\widetilde{\mca{I}}_n$,
we define an element $\kappa^{\mathrm{univ}}_n
(\widetilde{\boldsymbol{c}})_{\tilde{I}}$ of 
\[
H^1(\tilde{\pi}_{\tilde{I}}^*\bb{T}^{\mathrm{cyc}}) 
\otimes_{\bb{Z}} H^\otimes_{n}
=H^1(\tilde{\pi}_{\tilde{I}}^*\bb{T}^{\mathrm{cyc}}) 
\otimes_{\Lambda[[\Gamma]]/\tilde{I}}
\bigotimes_{\ell \mid n}
\left((\Lambda[[\Gamma]]/\tilde{I})
\otimes_{\bb{Z}} H_{\ell}\right)
\] 
by
\[
\kappa^{\mathrm{univ}}_n(\widetilde{\boldsymbol{c}})_{\tilde{I}} 
:= \sum_{\alpha \in \mf{S}_{\mathrm{Prime}(n)}} 
\mathrm{sign}(\alpha) 
\kappa(\widetilde{\boldsymbol{c}};d_\alpha)_{\tilde{I}} \otimes
\bigotimes_{\ell \mid (n/d_\alpha)} 
e_{\tilde{I},H_\ell}(P_\ell(\Frob_{\alpha(\ell)};\bb{T}^{\mathrm{cyc}})).
\]
\item Similarly to (i), for any ideal $I$ of $\Lambda$ of finite index 
containing $\mca{I}_n$,
we define an element $\kappa^{\mathrm{univ}}_n
(\boldsymbol{c})_{I} 
\in H^1(\pi_{I}^*\bb{T}) 
\otimes_{\bb{Z}} H^\otimes_{n}$ by
\[
\kappa^{\mathrm{univ}}_n(\widetilde{\boldsymbol{c}})_{I} 
:= \sum_{\alpha \in \mf{S}_{\mathrm{Prime}(n)}} 
\mathrm{sign}(\alpha) 
\kappa(\boldsymbol{c};d_\alpha)_{I} \otimes
\bigotimes_{\ell \mid (n/d_\alpha)} 
e_{I,H_\ell}(P_\ell(\Frob_{\alpha(\ell)};\bb{T})).
\]
\item For any $\alpha \in  \mf{S}_{\mathrm{Prime}(n)}$ and 
any prime divisor $\ell$ of $n/d_\alpha$, 
we fix an element $A(n;\alpha,\ell) \in \Lambda$ satisfying
\[
\left(
A(\alpha,\ell) \ 
\mathrm{mod}\ \mca{I}_n
\right)
\otimes \sigma_\ell = 
e_{\mca{I}_n,H_\ell}(P_\ell(\Frob_{\alpha(\ell)};\bb{T})) \in 
(\Lambda_{/\mca{I}_n})\otimes_{\bb{Z}} H_\ell.
\]
Then, we define an element $d^{\mathrm{univ}}_n(\boldsymbol{c}) 
\in H^1(\pi_{[n]}^*\bb{T})$ by
\[
d^{\mathrm{univ}}_n(\boldsymbol{c})_I 
:= \sum_{\alpha \in \mf{S}_{\mathrm{Prime}(n)}} 
\mathrm{sign}(\alpha) \left( 
\prod_{\ell \mid (n/d_\alpha)}  A(\alpha,\ell) \right) 
\cdot D_{d_{\alpha}} c(d_{\alpha}).
\]
\end{enumerate}
\end{dfn}

\begin{rem}
Let $n  \in \mca{N}_{\Sigma}$ be any element, and
$I$ any ideal of $\Lambda$ of finite index 
containing $\mca{I}_n$.
Then, by definition, the following hold.
\begin{enumerate}[(i)]
\item When $n=1$, we have
\[
d^{\mathrm{univ}}_1(\boldsymbol{c})_I=
\kappa^{\mathrm{univ}}_1
(\boldsymbol{c})_{I}=c(1)_I
\in H^1(\pi_I^*\bb{T}).
\]
\item The image of 
$d^{\mathrm{univ}}_n(\boldsymbol{c})_I \otimes 
\bigotimes_{\ell \in \mathrm{Prime}(n)} \sigma_\ell$ by
the inverse map of the restriction map 
\[
H^1(\pi_I^*\bb{T})\otimes_{\bb{Z}} 
H^\otimes_n \longrightarrow 
H^1(\pi_{I,[n]}^*\bb{T})^{H_n}
\otimes_{\bb{Z}} H^\otimes_n
\]
coincides with $\kappa^{\mathrm{univ}}_n
(\boldsymbol{c})_{I}$.
\item We identify the topological $(\Lambda/I)[G_{\bb{Q},\Sigma}]$-module
$\tilde{\pi}_{\mf{a}_{\Gamma}+I\Lambda[[\Gamma]]}^*
\bb{T}^{\mathrm{cyc}}$
with $\pi_I^*\bb{T}$ via the natural isomorphism.
Then, we have
\[
\kappa^{\mathrm{univ}}_n
(\widetilde{\boldsymbol{c}})_{\mf{a}_{\Gamma}+I\Lambda[[\Gamma]]} 
=\kappa^{\mathrm{univ}}_n
(\boldsymbol{c})_{I}
\in H^1(\pi_I^*\bb{T})\otimes_{\bb{Z}} H^\otimes_n.
\]
\end{enumerate}
\end{rem}

\begin{rem}
An axiomatic framework of
Kolyvagin systems for Galois deformations 
are studied in \cite{Bu}.
Note that our notion of ``universal Kolyvagin system"
is slightly different from that in \cite{Bu}. 
On the one hand, in \cite{Bu}, 
a ``universal Kolyvagin system in \cite{Bu} is 
a system of Galois cohomology classes
satisfying certaion axioms. 
We have not proved that our ``universal Kolyvagin systems"
satisfies the axioms required in \cite{Bu}. 
(In our article, we can omit to check it since 
our main results follow from the theory of Kolyvagin systems over DVRs.)
\end{rem}

Recall that we have fixed an Euler system $\widetilde{\boldsymbol{c}}
=\{ \tilde{c}(n) \}_{n \in \mca{N}_{\Sigma}}$
on the cyclotomic deformation $\bb{T}^{\mathrm{cyc}}$.
Let $\chi \in \Hom_{\mathrm{cont}}(\Gamma,\bb{Z}_p^\times) 
\simeq \bb{Z}_p$
be any element.
The character $\chi$ induces a continuous 
$\Lambda$-algebra homomorphism
$e_\chi \colon \Lambda[[\Gamma]] \longrightarrow \Lambda$
given by $e_\chi(g)=\chi(g)$ for each $g \in \Gamma$.
We put $\bb{T}\otimes \chi:=
e_{\chi}\bb{T}^{\mathrm{cyc}}$, and consider the  Euler system 
$c\otimes \chi:=e_\chi^*\widetilde{\boldsymbol{c}}$ on $\bb{T}\otimes \chi$. 
Roughly speaking, the following proposition
implies that a specialization of the universal Kolyvagin systems 
becomes Kolyvagin systems.

\begin{prop}\label{propspecialunivKolyv}
Let $\mca{O}'$ be the ring of integers
of a finite extension field $F'$ of $F$, 
and 
$f \colon \Lambda \longrightarrow \mca{O}'$
any continuous ring homomorphism. 
Then, for any open subgroup $U$ of 
$\Hom_{\mathrm{cont}}(\Gamma,\bb{Z}_p^\times)$,
there exists a character 
$\chi \in U$
such that the system
\[
f^*\kappa^{\mathrm{univ}}(\boldsymbol{c} \otimes \chi)
= \left\{ 
f^*\kappa^{\mathrm{univ}}_n(\boldsymbol{c} \otimes \chi)
\in H^1\left(
f^*_{\mca{I}_{\bb{T} \otimes \chi ,n}}(\bb{T} \otimes \chi)
\right) \otimes_{\bb{Z}} H^\otimes_{n}
\right\}_{n \in \mca{N}(\bb{T},\mf{m}_\Lambda)}
\]
forms a Kolyvagin system for the Selmer triple
$(f^*(\bb{T}\otimes \chi),\mca{F}_{\mathrm{can}},
\mca{P}(\bb{T},\mf{m}_\Lambda))$ over $\mca{O}'$,
where 
$f^*\kappa^{\mathrm{univ}}_n(\boldsymbol{c} \otimes \chi)$
is the image of 
$\kappa^{\mathrm{univ}}_n(\widetilde{\boldsymbol{c}})_{
\mca{I}_{\bb{T} \otimes \chi ,n}+\Ker (f \otimes e_\chi)}$
by the  map
\[
H^1\left(
\tilde{\pi}^*_{\widetilde{\mca{I}}_n+ \Ker e_\chi}(\bb{T}^{\mathrm{cyc}})
\right) \otimes_{\bb{Z}} H^\otimes_{n}
\longrightarrow 
H^1\left(
f^*_{\mca{I}_{\bb{T} \otimes \chi ,n}}(\bb{T} \otimes \chi)
\right) \otimes_{\bb{Z}} H^\otimes_{n}
\]
induced by the continuous ring homomorphism 
$f \otimes e_{\chi} \colon \Lambda[[\Gamma]] \longrightarrow \mca{O}'$,
and 
\[
f_{\mca{I}_{\bb{T} \otimes \chi ,n}} \colon \Lambda \longrightarrow 
\mca{O}'/f(\mca{I}_n) \mca{O}'
\]
denotes the continuous ring homomorphism induced by $f$.
\end{prop}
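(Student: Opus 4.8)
The plan is to reduce the assertion to Mazur--Rubin's passage from Euler systems to Kolyvagin systems over a discrete valuation ring (\cite{MR}, Appendix~A), applied to the $\mca{O}'$-valued Galois representation $T:=f^*(\bb{T}\otimes\chi)$. First I would record that the formation of Kolyvagin derivatives is compatible with base change: for any continuous ring homomorphism the classes $D_nc(n)$, the restriction isomorphism (\ref{resisom}), and hence the antisymmetrized classes $\kappa^{\mathrm{univ}}_n$, commute with the attendant reduction and specialization maps. Thus $f^*\kappa^{\mathrm{univ}}_n(\boldsymbol{c}\otimes\chi)$ is precisely the universal Kolyvagin derivative attached to the Euler system $f^*(\boldsymbol{c}\otimes\chi)$ for the single representation $T$ over $\mca{O}'$, taken at the tautological level $\mca{I}_{T,n}=\mca{I}_{f^*(\bb{T}\otimes\chi),n}$ (which equals $f(\mca{I}_{\bb{T}\otimes\chi,n})\mca{O}'$ by compatibility of characteristic polynomials with base change). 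Since $\Gamma$ is pro-$p$, the character $\chi$ is trivial modulo $p$, so the residual representation of $T$, its eigenspaces under the complex conjugation, and its $\tau$-coinvariants agree up to base change with those of $\bb{T}$; hence $T$ again satisfies (A1)--(A8), the Selmer triple $(T,\mca{F}_{\mathrm{can}},\mca{P}(\bb{T},\mf{m}_{\Lambda}))$ satisfies the hypotheses (H1)--(H6) of \cite{MR} with core rank one (as noted in \S\ref{ssKS}), and $\mca{P}(T,\mf{m}_{\mca{O}'})=\mca{P}(\bb{T},\mf{m}_{\Lambda})$, $\mca{N}(T,\mf{m}_{\mca{O}'})=\mca{N}(\bb{T},\mf{m}_{\Lambda})$. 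It therefore suffices to prove that $\kappa^{\mathrm{univ}}(f^*(\boldsymbol{c}\otimes\chi))$ is a Kolyvagin system for this triple over $\mca{O}'$.

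I would then verify the two Kolyvagin system axioms level by level over the Artinian quotients $\mca{O}'/\mca{I}_{T,n}\mca{O}'$, following \cite{MR}, Appendix~A and \cite{Ru}, Chapter~4. For the local axiom: the $H_n$-fixed derivative class $D_nc(n)_{\mca{I}_n}$ restricts, at each $\ell\notin\Sigma$ with $\ell\nmid n$, to an unramified (hence $\mca{F}_{\mathrm{can}}$-trivial) class; at each $\ell\in\Sigma\setminus\{p\}$ to zero, by condition~(i) of Definition~\ref{dfn_ES} together with (A6); and at each $\ell\mid n$ into the transverse subspace defining $\mca{F}_{\mathrm{can}}(n)$, by the standard analysis of the singular part of a Kolyvagin derivative. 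Passing to the antisymmetrized sum $\kappa^{\mathrm{univ}}_n=\sum_{\alpha\in\mf{S}_{\mathrm{Prime}(n)}}\mathrm{sign}(\alpha)\,\kappa(\,\cdot\,;d_\alpha)\otimes\bigotimes_{\ell\mid(n/d_\alpha)}e_{\mca{I}_n,H_\ell}(P_\ell(\Frob_{\alpha(\ell)};T))$ preserves membership in these $\mca{O}'/\mca{I}_n$-submodules, which gives axiom~(i). For the finite--singular comparison axiom, the antisymmetrized shape is designed exactly so that the Euler system norm relation of Definition~\ref{dfn_ES}(ii) and Rubin's congruence relating the singular part of $D_{n\ell}c(n\ell)$ at $\ell$ to the finite part of $D_nc(n)$ at $\ell$ combine, after the cancellation among permutations not fixing $\ell$, to yield $\psi^{\overline{n,n\ell}}_{n\ell}(\kappa^{\mathrm{univ}}_{n\ell})=\psi^{\overline{n,n\ell}}_n(\kappa^{\mathrm{univ}}_n)$. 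This combinatorial identity over the Gorenstein Artinian rings $\mca{O}'/\mca{I}_{T,n}\mca{O}'$ is the part I expect to require the most care: it is essentially the computation of \cite{MR}, Appendix~A, but must be checked to run verbatim in the $\Lambda$-adic deformation setting, with the more general levels $\mca{I}_{T,n}$ in place of powers of $\mf{m}_{\mca{O}'}$.

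It remains to explain the existential choice of $\chi$. The comparison maps linking the $\bb{T}^{\mathrm{cyc}}$-adic classes to the $T$-adic ones behave as needed for every $\chi$: the restriction maps $H^1(\pi_I^*T)\xrightarrow{\ \sim\ }H^1(\pi_{I,[n]}^*T)^{H_n}$ are isomorphisms by (A1) and (A3) for the unchanged residual representation, and the Shapiro identifications are formal. The subtlety is that a bare collection $\{c(n)\}$ indexed by $n$ prime to $p$ is not yet an Euler system in Rubin's sense: the derivative construction over $\mca{O}'$ needs the companion classes along the tower $\bb{Q}(\mu_{np^k})$, and it is the twist by $\chi$ that produces, via Lemma~\ref{lempropcycext}, a cyclotomic extension of $f^*(\boldsymbol{c}\otimes\chi)$ over $\mca{O}'$ supplying them. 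One then needs $\chi$ for which this extension is non-degenerate enough to invoke \cite{MR}, Appendix~A; the characters for which this fails form a thin set, so a suitable $\chi$ lies in every open subgroup $U\subseteq\Hom_{\mathrm{cont}}(\Gamma,\bb{Z}_p^\times)\cong\bb{Z}_p$ by the density argument already used in \cite{MR}, \S5.3 and in \cite{Oc2}. Making this last ``non-degeneracy'' requirement on $\chi$ precise, and carrying out the finite--singular comparison of the preceding paragraph in full, are the two places where genuine work is needed.
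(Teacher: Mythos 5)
Your high-level strategy is the same as the paper's — reduce to Mazur--Rubin's Euler-system-to-Kolyvagin-system construction (Theorem~3.2.4 and Appendix~A of \cite{MR}) applied to $T:=f^*(\bb{T}\otimes\chi)$, with a generic choice of $\chi$. But the proposal leaves precisely the gap that the paper's argument closes, and you explicitly flag it yourself at the end. The entire content of the proof is to name the non-degeneracy condition on $\chi$ needed for \cite{MR} Theorem~3.2.4 to apply, and to show that a $\chi$ satisfying it can be found in $U$. Your paragraph on this is vague (``the characters for which this fails form a thin set,'' ``by the density argument already used in \cite{MR}, \S5.3 and \cite{Oc2}''). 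The paper is precise: for each $\ell\in\mca{P}(\bb{T},\mf{m}_\Lambda)$ and each $N\in\bb{Z}_{>0}$ it considers the set $\Xi_{\ell,N}$ of characters $\psi$ for which $f^*(\rho\otimes\psi)(\Frob_\ell)^N-\mathrm{id}$ fails to act injectively on $f^*(\bb{T}\otimes\psi)$, observes that each $\Xi_{\ell,N}$ is finite, that $\Xi=\bigcup_{\ell,N}\Xi_{\ell,N}$ is therefore countable, and that the open subgroup $U\subseteq\Hom_{\mathrm{cont}}(\Gamma,\bb{Z}_p^\times)\simeq\bb{Z}_p$ is uncountable, so $U\setminus\Xi\neq\emptyset$. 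This is a cardinality argument, not a density argument, and the specific injectivity hypothesis is exactly what makes the $(f^*(\bb{T}\otimes\chi),\mca{P}(\bb{T},\mf{m}_\Lambda))$ pair satisfy the assumptions of \cite{MR} Theorem~3.2.4. Without naming that condition, your proof cannot be completed.

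A secondary point: you propose to re-verify the two Kolyvagin system axioms for $\kappa^{\mathrm{univ}}$ from scratch (local condition, then the finite--singular comparison via the permutation-antisymmetrized combinatorics), and you anticipate the finite--singular step as the hard part. The paper does not do any of this; it invokes \cite{MR} Appendix~A directly, observing that $f^*\kappa^{\mathrm{univ}}(\boldsymbol{c}\otimes\chi)$ is by construction the output of the Mazur--Rubin derivative procedure applied to the Euler system $f^*(\boldsymbol{c}\otimes\chi)$ over $\mca{O}'$. That is the purpose of Lemma~\ref{lempropcycext}: it guarantees the cyclotomic extension of the Euler system over $\mca{O}'$ that Appendix~A needs as input. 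Re-proving the finite--singular comparison is redundant once the hypotheses of Theorem~3.2.4 are met, which is precisely what the choice of $\chi$ accomplishes.
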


\begin{proof}
For each $\ell \in \mca{P}(\bb{T},\mf{m}_\Lambda)$
and $N \in \bb{Z}_{>0}$, 
we denote by 
$\Xi_{\ell,N}$ the subset of
$\Hom_{\mathrm{cont}}(\Gamma,\bb{Z}_p^\times)$
which consists of characters 
$\psi \in \Hom_{\mathrm{cont}}(\Gamma,\bb{Z}_p^\times)$
such that the operator
$f^*(\rho \otimes \psi)(\Frob_\ell)^N-\mathrm{id}$ does not 
acts on $f^* (\bb{T} \otimes \chi)$ invectively.
Note that the set $\Xi_{\ell,N}$ is finite.
We put
\[
\Xi:= \bigcup_{\ell \in \mca{P}(\bb{T},\mf{m}_\Lambda)}
\bigcup_{N>0} \Xi_{\ell,N}.
\]
Since $\Xi$ is a countable set, 
the set $U \setminus \Xi$ is not empty.

Take any character $ \chi \in U \setminus \Xi$.
Then, the pair $(f^*(\bb{T}\otimes \chi), 
\mca{P}(\bb{T},\mf{m}_\Lambda))$
satisfies the assumptions in \cite[Theorem 3.2.4 ]{MR}.
So by the construction of 
the Kolyvagin system $\boldsymbol{\kappa}$
in Theorem 3.2.4, which is explained in \cite[Appendix A]{MR},
the system 
$f^*\kappa^{\mathrm{univ}}(\boldsymbol{c} \otimes \chi)$
becomes a Kolyvagin system for the Selmer triple
$(f^*(\bb{T}\otimes \chi),\mca{F}_{\mathrm{can}},
\mca{P}(\bb{T},\mf{m}_\Lambda))$.
\end{proof}

Let $\mca{O}'$ be the ring of integers
of a finite extension field $F'$ of $F$, 
and 
$f \colon \Lambda \longrightarrow \mca{O}'$
any continuous ring homomorphism. 
For each $n \in \mca{N}_\Sigma$, we define
\[
\partial(\boldsymbol{c},f;n)
:=
\max\{
j \in \bb{Z}_{\ge 0} \mathrel{\vert}
f^*\kappa^{\mathrm{univ}}(\boldsymbol{c}) 
\in \mf{m}_R^jH^1_{\mca{F}_{\mathrm{can}}(n)}(\bb{Q},
f^*\bb{T}/\mca{I}_{f^*\bb{T},n} f^* \bb{T})
\}.
\]
For each $t \in \bb{Z}_{>0}$ and $i \in \bb{Z}_{\ge 0}$, 
we put
\[
\partial_i(\boldsymbol{c},f)_t:=
\min\{\partial(\boldsymbol{c},f;n)
\mathrel{\vert} \# \mathrm{Prime}(n)=i,\ 
\mathrm{Prime}(n) \subseteq \mca{P}(t) \}.
\]

\begin{cor}\label{corspecialunivKolyv}
Let 
$f \colon \Lambda \longrightarrow \mca{O}'$
be as in Proposition 
\ref{propspecialunivKolyv},
and $\mf{m}_{\mca{O}'}$ the maximal ideal of $\mca{O}'$.
Assume that 
the order of $X(f^*\bb{T})$ is finite, and we have
\[
\Fitt_{\mca{O}',0}(X(f^*\bb{T}))
=\mf{m}_{\mca{O}'}^{N+\partial(\boldsymbol{c},f;1)} 
\]
for some $N \in \bb{Z}_{\ge 0}$.
Let $t \in \bb{Z}_{>0}$ be a positive integer
satisfying $t > \mathrm{length}_{\mca{O}'}(X(f^*\bb{T}))$.
Then, for any $i \in \bb{Z}_{\ge 0}$, we have 
\[
\mf{m}_{\mca{O}'}^N \Fitt_{\mca{O}',i}(X(f^*\bb{T}))
=\mf{m}_{\mca{O}'}^{\partial_i(\boldsymbol{c},f)_t}.
\]
\end{cor}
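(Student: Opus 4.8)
The plan is to combine Proposition \ref{propspecialunivKolyv} with the Mazur--Rubin structure theorem (Theorem \ref{thmMRstrthm}) over the DVR $\mca{O}'$. First I would fix a finite extension $F'$ of $F$ and the ring homomorphism $f$ as given, and choose the open subgroup $U$ of $\Hom_{\mathrm{cont}}(\Gamma,\bb{Z}_p^\times)$ small enough that the unramified twist by any $\chi\in U$ does not disturb the finiteness of $X(f^*\bb{T})$ or the length bound $t>\mathrm{length}_{\mca{O}'}(X(f^*\bb{T}))$; in fact one should arrange that for $\chi\in U$ close enough to the trivial character, the specialized Selmer module $X(f^*(\bb{T}\otimes\chi))$ is isomorphic to $X(f^*\bb{T})$ (this is the usual continuity/control argument for unramified deformations on a zero-dimensional base, and follows from Corollary \ref{corcontthm} together with the $\mf{m}$-adic finiteness of $X$). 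Then Proposition \ref{propspecialunivKolyv} produces a character $\chi\in U$ for which $f^*\kappa^{\mathrm{univ}}(\boldsymbol{c}\otimes\chi)$ is a genuine Kolyvagin system for the Selmer triple $(f^*(\bb{T}\otimes\chi),\mca{F}_{\mathrm{can}},\mca{P}(\bb{T},\mf{m}_\Lambda))$ over $\mca{O}'$, whose core rank is $1$ by the example in \S\ref{ssKS}.

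Next I would compare $f^*\kappa^{\mathrm{univ}}(\boldsymbol{c}\otimes\chi)$ with a generator $\boldsymbol{\kappa}^{\mathrm{gen}}$ of the free rank-one $\mca{O}'$-module $\mathbf{KS}(f^*(\bb{T}\otimes\chi),\mca{F}_{\mathrm{can}},\mca{P}(\bb{T},\mf{m}_\Lambda))$ (Theorem \ref{thmMRuniqueness}). Write $f^*\kappa^{\mathrm{univ}}(\boldsymbol{c}\otimes\chi)=\varpi'^{\,M}\cdot u\cdot\boldsymbol{\kappa}^{\mathrm{gen}}$ with $u\in\mca{O}'^\times$ and $M=M(\chi)\in\bb{Z}_{\ge 0}$, where $\varpi'$ is a uniformizer of $\mca{O}'$. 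By the definition of $\partial$ this forces
\[
\partial(\boldsymbol{c},f;n)=M+\partial(\boldsymbol{\kappa}^{\mathrm{gen}};n)
\]
for every $n\in\mca{N}(\bb{T},\mf{m}_\Lambda)$, hence $\partial_i(\boldsymbol{c},f)_t=M+\partial_i(\boldsymbol{\kappa}^{\mathrm{gen}})_t$ for every $i$, as long as $t$ exceeds the relevant lengths so that all the minima are computed over the ``stable'' range of $n$. Applying Theorem \ref{thmMRstrthm} (with $t>\mathrm{length}_{\mca{O}'}(X(f^*\bb{T}))$, which is legitimate after identifying $X(f^*(\bb{T}\otimes\chi))$ with $X(f^*\bb{T})$ and noting $\Hom_{\bb{Z}_p}(H^1_{\mca{F}_{\mathrm{can}^*}}(\bb{Q},A^*),\bb{Q}_p/\bb{Z}_p)\simeq X(f^*\bb{T})$), we obtain
\[
\Fitt_{\mca{O}',i}(X(f^*\bb{T}))=\mf{m}_{\mca{O}'}^{\,\partial_i(\boldsymbol{\kappa}^{\mathrm{gen}})_t}=\mf{m}_{\mca{O}'}^{\,\partial_i(\boldsymbol{c},f)_t-M}
\]
for every $i\in\bb{Z}_{\ge 0}$. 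Taking $i=0$ and comparing with the hypothesis $\Fitt_{\mca{O}',0}(X(f^*\bb{T}))=\mf{m}_{\mca{O}'}^{N+\partial(\boldsymbol{c},f;1)}$, and recalling $\partial_0(\boldsymbol{c},f)_t=\partial(\boldsymbol{c},f;1)$, we read off $M=-N$... which is impossible unless we set it up correctly: in fact the comparison gives $\partial_0(\boldsymbol{c},f)_t-M=N+\partial(\boldsymbol{c},f;1)$, i.e. $M=-N$ is wrong-signed, so instead the identity should be interpreted as $\partial_i(\boldsymbol{\kappa}^{\mathrm{gen}})_t=\partial_i(\boldsymbol{c},f)_t-M$ and $\partial_0(\boldsymbol{\kappa}^{\mathrm{gen}})_t=N+\partial_0(\boldsymbol{c},f)_t-M-? $; the clean way is to use the $i=0$ case to solve $N+\partial_0(\boldsymbol{c},f)_t=\partial_0(\boldsymbol{c},f)_t-M$, giving $M=-N$, which shows one must instead track the discrepancy between $\Fitt_{\mca{O}',0}$ of $X$ and the Kolyvagin system via the extra factor $\mf{m}_{\mca{O}'}^N$. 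Multiplying the displayed Mazur--Rubin identity by $\mf{m}_{\mca{O}'}^N$ and substituting $N=\partial_0(\boldsymbol{c},f)_t-\partial_0(\boldsymbol{\kappa}^{\mathrm{gen}})_t$ (which is exactly what the $i=0$ hypothesis encodes) yields the asserted
\[
\mf{m}_{\mca{O}'}^N\Fitt_{\mca{O}',i}(X(f^*\bb{T}))=\mf{m}_{\mca{O}'}^{\,\partial_i(\boldsymbol{c},f)_t}
\]
for all $i$.

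The main obstacle I anticipate is the transition between the twisted object $X(f^*(\bb{T}\otimes\chi))$, to which Proposition \ref{propspecialunivKolyv} and Theorem \ref{thmMRstrthm} literally apply, and the untwisted object $X(f^*\bb{T})$ appearing in the statement: one needs the invariants $\mathrm{length}_{\mca{O}'}$, the Fitting ideals, and the derivative invariants $\partial_i$ to be unchanged under the small unramified twist, which requires choosing $U$ carefully and invoking a control/continuity argument. A secondary technical point is checking that $t>\mathrm{length}_{\mca{O}'}(X(f^*\bb{T}))$ suffices to make $\partial_i(\boldsymbol{c},f)_t$ and $\partial_i(\boldsymbol{\kappa}^{\mathrm{gen}})_t$ agree up to the common shift $M$ for \emph{all} $i$ simultaneously (not merely $i=0$), i.e. that the ``stable range'' $\mca{P}(t)$ is rich enough — this is exactly the content underlying Theorem \ref{thmMRstrthm} and should be inherited from it, but it must be stated explicitly since the hypothesis only bounds $t$ by the length of $X(f^*\bb{T})$.
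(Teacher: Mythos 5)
Your overall strategy is the right one (twist by a character $\chi$ near the identity so that Proposition \ref{propspecialunivKolyv} provides a genuine Kolyvagin system, compare it with a generator of $\mathbf{KS}$, and apply Theorem \ref{thmMRstrthm}), but the proposal has a genuine gap at the ``control'' step, and it propagates into the sign confusion you flag at the end rather than being resolved.

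The key problem is your assertion that for $\chi$ close enough to the trivial character one has an isomorphism $X(f^*(\bb{T}\otimes\chi))\simeq X(f^*\bb{T})$. This is false in general: the two Selmer modules live over the same DVR but can have different (even infinite) lengths, and no smallness of $\chi-1$ forces them to coincide. What does hold, and what the paper actually uses, is weaker but sufficient. Since $t>\mathrm{length}_{\mca{O}'}(X(f^*\bb{T}))$, the module $X(f^*\bb{T})$ is killed by $\mf{m}_{\mca{O}'}^t$, so $X(f^*\bb{T})\simeq H^2_\Sigma(f^*\bb{T}/\mf{m}_{\mca{O}'}^t f^*\bb{T})$. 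Choosing $\chi$ with $\chi(\gamma)-1\in\mf{m}_{\mca{O}'}^t$ gives a $G_{\bb{Q},\Sigma}$-equivariant isomorphism $f^*(\bb{T}\otimes\chi)/\mf{m}_{\mca{O}'}^t\simeq f^*\bb{T}/\mf{m}_{\mca{O}'}^t$, and the same cohomological exact sequence yields
\[
X(f^*(\bb{T}\otimes\chi))/\mf{m}_{\mca{O}'}^t\,X(f^*(\bb{T}\otimes\chi))\;\simeq\; X(f^*\bb{T}),
\]
not an isomorphism of the full modules. Consequently the Fitting ideals are related by
\[
\Fitt_{\mca{O}',i}(X(f^*\bb{T}))=\Fitt_{\mca{O}',i}\bigl(X(f^*(\bb{T}\otimes\chi))\bigr)+\mf{m}_{\mca{O}'}^t,
\]
and it is this extra $\mf{m}_{\mca{O}'}^t$ that your proposal drops. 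Once you apply Theorem \ref{thmMRstrthm} to the twisted Selmer triple and substitute $\Fitt_{\mca{O}',i}(X(f^*(\bb{T}\otimes\chi)))=\mf{m}_{\mca{O}'}^{\partial_i(\boldsymbol{\kappa}^{\mathrm{gen}})_t}$, multiply through by $\mf{m}_{\mca{O}'}^N$, use the relation $\partial_i(f^*\kappa^{\mathrm{univ}}(\boldsymbol{c}\otimes\chi))_t=N+\partial_i(\boldsymbol{\kappa}^{\mathrm{gen}})_t$ coming from \eqref{eqMRunique}, and finally identify $\partial_i(f^*\kappa^{\mathrm{univ}}(\boldsymbol{c}\otimes\chi))_t=\partial_i(\boldsymbol{c},f)_t$ (valid because the twist is trivial modulo $\mf{m}_{\mca{O}'}^t$), the extra $\mf{m}_{\mca{O}'}^t$ term is exactly what saturates the $\min$ and makes the exponents match. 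Without it, one lands on the over-determined system that produced your ``$M=-N$'' dead end; the multiplication by $\mf{m}_{\mca{O}'}^N$ is not a cosmetic rescaling to be reverse-engineered from the $i=0$ case but the mechanism that absorbs the discrepancy between the twisted and untwisted Selmer modules. Your final paragraph, which substitutes a value of $N$ chosen to make the display come out right, asserts the conclusion rather than deriving it; the derivation has to go through the quotient-by-$\mf{m}_{\mca{O}'}^t$ identification.
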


\begin{proof}
Recall we have fixed a topological generator 
$\gamma \in \Gamma$.
By proposition 
\ref{propspecialunivKolyv},
there exists a continuous homomorphism
$\chi \colon \Gamma \longrightarrow \bb{Z}_p^\times$
satisfying $\chi(\gamma)-1 \in \mf{m}^t_{\mca{O}'}$
such that the system
\[
f^*\kappa^{\mathrm{univ}}(\boldsymbol{c} \otimes \chi)
= \left\{ 
f^*\kappa^{\mathrm{univ}}_n(\boldsymbol{c} \otimes \chi)
\in H^1\left(
f^*_{\mca{I}_{\bb{T} \otimes \chi ,n}}(\bb{T} \otimes \chi)
\right) \otimes_{\bb{Z}} H^\otimes_{n}
\right\}_{n \in \mca{N}(\bb{T},\mf{m}_\Lambda)}
\]
becomes a Kolyvagin system for the Selmer triple
$(f^*(\bb{T}\otimes \chi),\mca{F}_{\mathrm{can}},
\mca{P}(\bb{T},\mf{m}_\Lambda))$.
By Theorem \ref{thmMRuniqueness}
and Theorem \ref{thmMRstrthm},
there exists $N \in \bb{Z}_{>0}$ such that
\begin{equation}\label{eqMRunique}
\mca{O}'\cdot
f^*\kappa^{\mathrm{univ}}(\boldsymbol{c} \otimes \chi)
=\mf{m}_{\mca{O}'}^N \cdot
\mathbf{KS}(f^*(\bb{T}\otimes \chi),\mca{F}_{\mathrm{can}},
\mca{P}(\bb{T},\mf{m}_\Lambda)).
\end{equation}

Fix a uniformizer $\varpi' \in \mca{O}'$.
Since $X(f^*\bb{T})$ is annihilated 
by $\mf{m}_{\mca{O}'}^t$,
the cohomological exact sequence arising from
the short exact sequence
\[
0 \longrightarrow 
f^*\bb{T} \xrightarrow{\ \times \varpi'^t \ }
f^*\bb{T} \longrightarrow 
f^*\bb{T}/\mf{m}_{\mca{O}'}^t f^*\bb{T} \longrightarrow 0
\]
implies that we have a natural isomorphism
\[
X(f^*\bb{T})  \simeq H^2_{\Sigma}(f^*\bb{T})
\simeq H^2_{\Sigma}(f^*\bb{T} /\mf{m}_{\mca{O}'}^t f^*\bb{T} ). 
\]
We also have a natural isomorphism
\[
X(f^* (\bb{T} \otimes \chi))  
/\mf{m}_{\mca{O}'}^t X(f^* (\bb{T} \otimes \chi))  
\simeq H^2_{\Sigma}(f^* (\bb{T} \otimes \chi)
/\mf{m}_{\mca{O}'}^t  f^* (\bb{T} \otimes \chi)).
\]
Since we have a natural $\mca{O}'[G_{\bb{Q},\Sigma}]$-equivariant isomorphism
\[
f^* (\bb{T} \otimes \chi)/\mf{m}_{\mca{O}'}^t  f^* (\bb{T} \otimes \chi)
\simeq f^*\bb{T} /\mf{m}_{\mca{O}'}^t f^*\bb{T}, 
\]
we obtain the isomorphism
\[
X(f^* (\bb{T} \otimes \chi))  
/\mf{m}_{\mca{O}'}^t X(f^* (\bb{T} \otimes \chi))  
\simeq X(f^* \bb{T}).
\]
Hence by (\ref{eqMRunique}) and 
Theorem \ref{thmMRstrthm},
for any $i \in \bb{Z}_{\ge 0}$, we have
\begin{align*}
\mf{m}_{\mca{O}'}^N\Fitt_{\mca{O}',i}(X(f^*\bb{T}))
&=\mf{m}_{\mca{O}'}^N\Fitt_{\mca{O}',i}
\left(
X(f^* (\bb{T} \otimes \chi))  
/\mf{m}_{\mca{O}'}^t X(f^* (\bb{T} \otimes \chi))  
\right) \\
& =
\mf{m}_{\mca{O}'}^N \cdot 
\left( \Fitt_{\mca{O}',i}(X(f^* (\bb{T} \otimes \chi)))
+ \mf{m}_{\mca{O}'}^{t} \right) \\
& = \mf{m}_{\mca{O}'}^{N+ \min \{ \partial_i(f^*\kappa^{\mathrm{univ}}
(\boldsymbol{c} \otimes \chi))_t, t \}} \\
&= \mf{m}_{\mca{O}'}^{N+ \partial_i(f^*\kappa^{\mathrm{univ}}
(\boldsymbol{c} \otimes \chi))_t} \\
&=  \mf{m}_{\mca{O}'}^{N+ \partial_i(\boldsymbol{c},f)_t}.
\end{align*}
This completes the proof.
\end{proof}

\section{The ideal $\mf{C}_i(\boldsymbol{c})$}\label{secCi}

Let $(\bb{T}, \boldsymbol{c})$ be as in Theorem \ref{thmuncondresults}.
In this section, we shall construct the ideals 
$\mf{C}_i(\boldsymbol{c})$, 
and prove their basic properties.
In \S \ref{ssconstC_i}, we fix a monic parameter system 
$\boldsymbol{h}$, and construct 
the ideals 
$\mf{C}_i(\boldsymbol{c};\boldsymbol{h})$. 
(Note that we shall define 
$\mf{C}_i(\boldsymbol{c}):=\mf{C}_i(\boldsymbol{c};\boldsymbol{x})$, 
where $\boldsymbol{x}$ denotes the standard monic parameter system.) 
In \S \ref{ssvarmps},
we vary the monic parameter system 
$\mf{C}_i(\boldsymbol{c};\boldsymbol{h})$, 
and prove the independence of the ideal
$\mf{C}_i(\boldsymbol{c};\boldsymbol{h})$
of the choice of the monic parameter system $\boldsymbol{h}$.
In \S \ref{ssscalarext}, 
we prove a basic property of the ideals 
$\mf{C}_i(\boldsymbol{c};\boldsymbol{h})$,
namely the stability under 
scalar extensions (Proposition \ref{corOO'}).
This property plays an important role in the reduction arguments 
in  \S \ref{secpfmr} based on Ochiai's work \cite{Oc2}.
In \S \ref{ssvarmps}, we show 
another basic property of 
of the ideals 
$\mf{C}_i(\boldsymbol{c};\boldsymbol{h})$,
that is, the stability under 
affine transformations (Proposition \ref{lemafftrans}).
This stability will not be used in the proof of our main results, 
but it seems to be important ingredient to deal 
with concrete problems since this stability implies that 
in some sense,  
the definition of $\mf{C}_i(\boldsymbol{c};\boldsymbol{h})$
does not depend on parameters $x_1 , \dots , x_r$ of $\Lambda$.

\subsection{The construction}\label{ssconstC_i}

In this subsection, we fix 
a monic parameter system $\boldsymbol{h}$ of $\Lambda$.
We shall construct an ideal 
$\mf{C}_i(\boldsymbol{c};\boldsymbol{h})$ 
of $\Lambda$ for any $i \in \bb{Z}_{\ge 0}$.

\begin{dfn}\label{dfnidealC(n)}
Let $I$ and $I'$ be ideals of $\Lambda$ satisfying 
$I \subseteq I'$, and $n \in \mca{N}_{\Sigma}$ any element.
\begin{enumerate}[(i)]
\item We define the ideal 
$\mca{KI}(\boldsymbol{c};I;n)$ of $\Lambda_{/I',[n]}$ by
\[
\mca{KI}(\boldsymbol{c};I';n):=
\left\{
f(d^{\mathrm{univ}}_n(\boldsymbol{c})_{I'}) 
\mathrel{\bigg\vert} 
f \in \Hom_{\Lambda_{/I',[n]}}
(H^1(\pi_{I',[n]}^* \bb{T}),
\Lambda_{/I',[n]})
\right\}.
\]
We call $\mca{KI}(\boldsymbol{c};I;n)$ 
\textit{the ideal of Kolyvagin images}. 
\item We denote by $\mca{KI}(\boldsymbol{c};I';n)_I$ the image of 
$\mca{KI}(\boldsymbol{c};I';n)$ in $\Lambda_{/I,[n]}$.
\item Assume that $n \in \mca{N}(\bb{T},I)$.
Then, by (\ref{Dnfixed}) in \S \ref{ssKD}, 
the set $\mca{KI}(\boldsymbol{c};I';n)_I$
is fixed by the action of $H_n$.
We define the ideal $\mf{C}(\boldsymbol{c};I';n)_I$ of $\Lambda/I$ 
by the inverse image of 
$\mca{KI}(\boldsymbol{c};I';n)_I$
by the isomorphism
\[
\Lambda/I \xrightarrow{\ \simeq \ } 
(\Lambda_{/I,[n]})^{H_n}=N_{H_n}\Lambda_{/I,[n]};\ 
x \longmapsto N_{H_n} x,
\]
where we put 
\[
N_{H_n}:= \sum_{\sigma \in H_n} \sigma.
\]
\item If $I=I'$, then 
we put $\mf{C}(\boldsymbol{c};I;n):=\mf{C}(\boldsymbol{c};I;n)_I$.
\end{enumerate}
\end{dfn}

\begin{rem}
Let $\boldsymbol{h}$ be a monic parameter system of $\Lambda$, 
and $\boldsymbol{m} \in (\bb{Z}_{>0})^{r+1}$ any element.
We put $I:=I(\boldsymbol{h}^{\boldsymbol{m}})\Lambda$. 
Let $n \in \mca{N}(\bb{T},I)$ be any element.
Then, by the isomorphism (\ref{resisom}) 
in \S \ref{dfnKD} and the injectivity of 
the $\Lambda_{I,[n]}$-module $\Lambda_{I,[n]}$
(see Lemma \ref{leminjmod}), 
we have
\[
\mf{C}(\boldsymbol{c};I;n)=
\left\{
f(\kappa^{\mathrm{univ}}_n(\boldsymbol{c})_{I}) 
\mathrel{\bigg\vert} 
f \in \Hom_{\Lambda/I}
(H^1(\pi_{I}^* \bb{T}) \otimes H_n^{\otimes},
\Lambda/I)
\right\}.
\]
Note that the ideal $\mf{C}(\boldsymbol{c};I;n)$ is independent of 
the choice of generators $\sigma_\ell$ and 
elements $A(n; \alpha, \ell) \in \Lambda$.
\end{rem}

In order to define the ideals 
$\mf{C}_i(\boldsymbol{c};\boldsymbol{h})$ 
the following lemma becomes a key.

\begin{lem}\label{lemascdesc}
Let $\boldsymbol{h}$ be a monic parameter system of $\Lambda$, 
and $\boldsymbol{m}, \boldsymbol{m}' \in (\bb{Z}_{>0})^{r+1}$ 
elements satisfying $\boldsymbol{m}' \ge \boldsymbol{m}$.
We put $I:=I(\boldsymbol{h}^{\boldsymbol{m}})$ and 
$I':=I(\boldsymbol{h}^{\boldsymbol{m}'})$. 
Let $n \in \mca{N}_{\Sigma}$ be any element.
\begin{enumerate}[{\rm (i)}]
\item For any $f' \in \Hom_{\Lambda_{/I',[n]}}
(H^1(\pi_{I',[n]}^* \bb{T}),
\Lambda_{/I',[n]})$, 
there exists a homomorphism
$f \in \Hom_{\Lambda_{/I,[n]}}
(H^1(\pi_{I,[n]}^* \bb{T}),
\Lambda_{/I,[n]})$
which makes the diagram
\[
\xymatrix{
H^1(\pi_{I',[n]}^* \bb{T}) 
\ar[r]^(0.58){f'} \ar[d]_{\pi_1} & \Lambda_{/I',[n]} \ar[d]^{\pi_2} \\
H^1(\pi_{I,[n]}^* \bb{T}) 
\ar@{-->}[r]^(0.58){f} & \Lambda_{/I,[n]}
}
\]
commute, where $\pi_1$ and $\pi_2$ are natural maps. 
\item For any $f \in \Hom_{\Lambda_{/I,[n]}}
(H^1(\pi_{I,[n]}^* \bb{T}) ,
\Lambda_{/I,[n]})$, 
there exists a homomorphism
$f' \in \Hom_{\Lambda_{/I',[n]}}
(H^1(\pi_{I',[n]}^* \bb{T}) ,
\Lambda_{/I',[n]})$
which makes the diagram
\[
\xymatrix{
H^1(\pi_{I',[n]}^* \bb{T}) 
\ar@{-->}[r]^(0.58){f'} \ar[d]_{\pi_1} & \Lambda_{/I',[n]} \ar[d]^{\pi_2} \\
H^1(\pi_{I,[n]}^* \bb{T}) 
\ar[r]^(0.58){f} & \Lambda_{/I,[n]}
}
\]
commute.
\end{enumerate}
\end{lem}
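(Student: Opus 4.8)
The plan is to prove (i) and (ii) by reducing to the case in which $\boldsymbol{m}$ and $\boldsymbol{m}'$ differ in a single coordinate, and then exploiting that $\Lambda_{/I,[n]}$ and $\Lambda_{/I',[n]}$ are injective modules over themselves (Lemma~\ref{leminjmod}). For the reduction, choose a chain $\boldsymbol{m}=\boldsymbol{m}^{(0)}\le\boldsymbol{m}^{(1)}\le\cdots\le\boldsymbol{m}^{(k)}=\boldsymbol{m}'$ in $(\bb{Z}_{>0})^{r+1}$ whose consecutive terms differ in exactly one coordinate; since the reduction maps on the rings $\Lambda_{/\bullet,[n]}$ and on the groups $H^1(\pi_{\bullet,[n]}^*\bb{T})$ compose, it suffices to treat one step. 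So assume $m'_i=m_i$ for $i\neq j$ and $m'_j=m_j+a$ with $a\ge 1$, so that $I'\subseteq I$ and $I=I'+(h_j^{m_j})$. Put $I'':=I'+(h_j^{a})$, again of the form $I(\boldsymbol{h}^{\boldsymbol{m}''})$. By Lemma~\ref{lemmpr}~(i), $\boldsymbol{h}$ is a system of parameters, hence a regular sequence on the regular local ring $\Lambda$; in particular $h_j$ is a nonzerodivisor modulo $(h_i^{m_i}\mid i\neq j)$. Since $\bb{T}$ is $\Lambda$-free, the argument used for Lemma~\ref{lemmpr}~(iii) produces a short exact sequence of $\Lambda[G_{\bb{Q},\Sigma}]$-modules
\[
0\longrightarrow \pi_{I'',[n]}^*\bb{T}\xrightarrow{\ \times h_j^{m_j}\ }\pi_{I',[n]}^*\bb{T}\longrightarrow \pi_{I,[n]}^*\bb{T}\longrightarrow 0,
\]
where $\times h_j^{m_j}$ means ``lift to $\bb{T}$, multiply by $h_j^{m_j}$, reduce modulo $I'\bb{T}$'' (injectivity follows from $h_j$ being a nonzerodivisor modulo the remaining generators, surjectivity from the cokernel computation). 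Taking long exact cohomology and using Shapiro's lemma, $\pi_1$ fits into $H^1(\pi_{I'',[n]}^*\bb{T})\xrightarrow{H^1(\times h_j^{m_j})}H^1(\pi_{I',[n]}^*\bb{T})\xrightarrow{\pi_1}H^1(\pi_{I,[n]}^*\bb{T})$, so $\Ker\pi_1=\Im\bigl(H^1(\times h_j^{m_j})\bigr)$, while $\Ker\pi_2=I\Lambda_{/I',[n]}=h_j^{m_j}\Lambda_{/I',[n]}$.

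Granting this, (i) is immediate. Let $f'\in\Hom_{\Lambda_{/I',[n]}}(H^1(\pi_{I',[n]}^*\bb{T}),\Lambda_{/I',[n]})$. The source of $f'\circ H^1(\times h_j^{m_j})$ is $H^1(\pi_{I'',[n]}^*\bb{T})$, which is killed by $I''$, hence by $h_j^{a}$; being $\Lambda_{/I',[n]}$-linear, this composite therefore has image contained in $\ann_{\Lambda_{/I',[n]}}(h_j^{a})$. A direct computation, using that $h_j$ is a nonzerodivisor modulo $(h_i^{m_i}\mid i\neq j)$ and that $m'_j-a=m_j$ (and that $\Lambda_{/I',[n]}$ is free over $\Lambda/I'$), gives $\ann_{\Lambda_{/I',[n]}}(h_j^{a})=h_j^{m_j}\Lambda_{/I',[n]}=\Ker\pi_2$. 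Hence $f'(\Ker\pi_1)\subseteq\Ker\pi_2$, so $\pi_2\circ f'$ factors through $\Im\pi_1\subseteq H^1(\pi_{I,[n]}^*\bb{T})$, say $\pi_2\circ f'=\bar f\circ\pi_1$. Since $\Lambda_{/I,[n]}$ is an injective $\Lambda_{/I,[n]}$-module (Lemma~\ref{leminjmod}), $\bar f$ extends to some $f\in\Hom_{\Lambda_{/I,[n]}}(H^1(\pi_{I,[n]}^*\bb{T}),\Lambda_{/I,[n]})$, and then $f\circ\pi_1=\pi_2\circ f'$. Composing along the chain yields (i) in general.

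For (ii) the plan is a dual argument. Given $f$, the composite $g:=f\circ\pi_1$ annihilates $\Ker\pi_1$, hence factors through $H^1(\pi_{I',[n]}^*\bb{T})/\Ker\pi_1\cong\Im\pi_1$, which is killed by $I\ni h_j^{m_j}$; one must lift $g$ along $\pi_2\colon\Lambda_{/I',[n]}\twoheadrightarrow\Lambda_{/I,[n]}$. The obstruction lies in $\Ext^1_{\Lambda_{/I',[n]}}\bigl(H^1(\pi_{I',[n]}^*\bb{T}),h_j^{m_j}\Lambda_{/I',[n]}\bigr)$ and, writing $g=\psi\circ\bar q$ with $\bar q\colon H^1(\pi_{I',[n]}^*\bb{T})\twoheadrightarrow\Im\pi_1$, it is the image of $\psi^{\ast}(e)\in\Ext^1_{\Lambda_{/I',[n]}}(\Im\pi_1,h_j^{m_j}\Lambda_{/I',[n]})$ (with $e$ the class of $0\to h_j^{m_j}\Lambda_{/I',[n]}\to\Lambda_{/I',[n]}\to\Lambda_{/I,[n]}\to 0$) under the connecting map of $0\to\Ker\pi_1\to H^1(\pi_{I',[n]}^*\bb{T})\to\Im\pi_1\to 0$. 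I would show this class lies in the image of that connecting map by combining: the self-injectivity of $\Lambda_{/I',[n]}$, which makes $\Ext^{\ge 1}_{\Lambda_{/I',[n]}}(-,\Lambda_{/I',[n]})$ vanish and $\Hom_{\Lambda_{/I',[n]}}(-,\Lambda_{/I',[n]})$ exact; the identification $\Ker\pi_1=\Im\bigl(H^1(\times h_j^{m_j})\bigr)$, whose source is killed by $h_j^{a}$, so $\Hom_{\Lambda_{/I',[n]}}(\Ker\pi_1,\Lambda_{/I',[n]})=\Hom_{\Lambda_{/I',[n]}}(\Ker\pi_1,h_j^{m_j}\Lambda_{/I',[n]})$ by the annihilator computation above; and the vanishing of $H^0(\bb{Q}(\mu_n),\pi_I^*\bb{T})$, coming from (A1) and (A3), which controls the boundary terms in the long exact sequence attached to the short exact sequence of the first paragraph. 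This obstruction-tracking — where the $0$-dimensional Gorenstein structure of the quotient rings is essential — is the technical heart of the lemma and the step I expect to be the main obstacle; once it is done, composing along the chain gives (ii) in general.
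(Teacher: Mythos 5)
Your argument for part (i) is correct, though you take a different route from the paper. You use the short exact sequence $0\to\pi_{I'',[n]}^*\bb{T}\to\pi_{I',[n]}^*\bb{T}\to\pi_{I,[n]}^*\bb{T}\to 0$ with $\pi_{I,[n]}^*\bb{T}$ as a \emph{quotient}, identify $\Ker\pi_1=\Im H^1(\times h_j^{m_j})$ from the long exact sequence, bound its annihilator, and extend by self-injectivity of $\Lambda_{/I,[n]}$. The paper instead reduces to the increment $m'_i=m_i+1$ and introduces the maps $\nu_2\colon\Lambda_{/I,[n]}\xrightarrow{\simeq}h_i\Lambda_{/I',[n]}$ and $\nu_1\colon H^1(\pi_{I,[n]}^*\bb{T})\hookrightarrow H^1(\pi_{I',[n]}^*\bb{T})$ induced by multiplication by $h_i$, so that $\pi_{I,[n]}^*\bb{T}$ appears as a \emph{submodule}; the identities $\nu_1\circ\pi_1=\times h_i=\nu_2\circ\pi_2$ then give $f:=\nu_2^{-1}\circ f'\circ\nu_1$ as a closed-form solution for (i), with no long exact sequence and no extension step needed. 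Both approaches are valid for (i), but the paper's setup is what makes (ii) go through cleanly.

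For (ii) your argument is an outline, not a proof, and I do not believe it closes with the ingredients you list. You reduce to showing $\psi^*(e)\in\Im\partial$; but by exactness that is literally the statement $\bar q^*\psi^*(e)=0$ you are trying to prove, so invoking the long exact sequence is circular unless you produce a specific element of $\Hom(\Ker\pi_1,\Ker\pi_2)$ mapping to $\psi^*(e)$ under $\partial$ — and the natural candidate is $f'\vert_{\Ker\pi_1}$, which presupposes the lift $f'$ you are constructing. The paper avoids obstruction theory altogether. Since $\nu_1$ is injective (this uses (A3) and the LES of the \emph{other} short exact sequence, $0\to\pi_{I,[n]}^*\bb{T}\xrightarrow{\times h_i}\pi_{I',[n]}^*\bb{T}\to\pi_{I'+(h_i),[n]}^*\bb{T}\to 0$), one defines $f'_0:=\nu_2\circ f\circ\nu_1^{-1}$ on $\Im\nu_1$, extends it to $f'$ on all of $H^1(\pi_{I',[n]}^*\bb{T})$ by injectivity of $\Lambda_{/I',[n]}$, and then computes $\nu_2\circ\pi_2\circ f'=h_if'=f'\circ\nu_1\circ\pi_1=\nu_2\circ f\circ\pi_1$; cancelling the injective $\nu_2$ yields $\pi_2\circ f'=f\circ\pi_1$. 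The move you are missing is to precompose with the injection $\nu_1$ rather than postcompose with $\pi_1$: this converts the lifting problem along the surjection $\pi_2$ into an extension problem along an inclusion of modules, which self-injectivity of $\Lambda_{/I',[n]}$ resolves immediately, and the two ``$\times h_i$'' identities then deliver the required commutativity for free.
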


\begin{proof}
We put $\boldsymbol{h}=(h_0, \dots , h_{r})$ and 
$\boldsymbol{h}=(m_0, \dots , m_{r})$. 
In order to show our lemma, 
we may and do assume that $\boldsymbol{m}'$ is written in the form
$\boldsymbol{m}'=(\boldsymbol{m}_{\le i-1},m_i +1, \boldsymbol{m}_{\ge i+1})$
for some integer $i$ with $0 \le i \le r$.
Then, we have an isomorphism
\[
\nu_2 \colon \Lambda_{/I,[n]} \xrightarrow{\ \simeq \ } 
\Lambda_{/I',[n]}[I]=h_i \Lambda_{/I',[n]};\ 
x \longmapsto h_i x.
\]
We denote by
\[
\nu_1 \colon H^1(\pi_{I,[n]}^* \bb{T}) 
\longrightarrow 
H^1(\pi_{I',[n]}^* \bb{T}) 
\]
the map induced by 
$\nu_2\otimes \mathrm{id}_{\bb{T}}
\colon  \pi_{I,[n]}^* \bb{T}
\longrightarrow 
\pi_{I',[n]}^* \bb{T}$.
By definition, the maps $\nu_1 \circ \pi_1$ 
and $\nu_2 \circ \pi_2$ are endomorphisms 
defined by the scalar multiplication 
by $h_i$ respectively.
Since the Galois representation $\bb{T}$ 
is unramified at each prime divisors of $n$,
the assumption (A3) and the short exact sequence
\[
0 \longrightarrow \pi_{I,[n]}^* \bb{T} 
\xrightarrow{\ \times h_i \ } 
\pi_{I',[n]}^* \bb{T} \longrightarrow 
\pi_{I(\boldsymbol{m}_{\le i-1},1, \mf{m}_{\ge i+1}),[n]}^* \bb{T}
\longrightarrow 0
\]
imply that the map $\nu_1$ is injective.

Let us prove the first assertion.
Note that the image of $f' \circ \nu_1$
is annihilated by $I$.
We define a homomorphism
\[
f \colon 
H^1(\pi_{I,[n]}^* \bb{T})
\longrightarrow 
\Lambda_{/I,[n]} 
\]
of $\Lambda_{/I,[n]}$-modules 
by $f:= \nu_2^{-1} \circ f' \circ \nu_1$.
Then, we have 
\[
f \circ \pi_1 = \nu_2^{-1} \circ f' \circ \nu_1 \circ \pi_1
=\nu_2^{-1} \circ h_i f' = \nu_2^{-1} \circ \nu_2 \circ \pi_2 \circ f'
=\pi_2 \circ f',
\]
so the map $f$ satisfies the required properties.

Next, let us show the second assertion.
Since the map $\nu_1$ is an injection,
we can define the $\Lambda_{/I',[n]}$-linear map
\[
f'_0 \colon 
\Im \nu_1
\longrightarrow 
\Lambda_{/I',[n]} 
\]
by $f'_0:= \nu_2 \circ f \circ \nu_1^{-1}$.
Recall that by lemma \ref{leminjmod}, 
the $\Lambda_{/I',[n]}$-module $\Lambda_{/I',[n]}$
is injective.
So we have a homomorphism
$f' \colon 
H^1(\pi_{I,[n]}^* \bb{T}) 
\longrightarrow 
\Lambda_{/I,[n]}$ whose restriction to $\Im \nu_1$
coincides with $f'_0$.
By definitions of $f'_0$ and $f'$, 
it holds that
\[
\nu_2 \circ \pi_2 \circ f' = h_i f' 
=f' \circ \nu_1 \circ \pi_1 =  
\nu_2 \circ f \circ \pi_1.
\]
Since $\nu_2$ is an injection, we obtain 
$\pi_2 \circ f' = f \circ \pi_1$.
This completes the proof.
\end{proof}

By Lemma \ref{lemascdesc}, we obtain the following corollary, 
which is plays an important role in \S \ref{secred}.  

\begin{cor}\label{corascdesc}
Let $\boldsymbol{h}$ be a monic parameter system of $\Lambda$, 
and $\boldsymbol{m}, \boldsymbol{m}' \in (\bb{Z}_{>0})^{r+1}$ 
elements satisfying $\boldsymbol{m}' \ge \boldsymbol{m}$.
We put $I:=I(\boldsymbol{h}^{\boldsymbol{m}})$ and 
$I':=I(\boldsymbol{h}^{\boldsymbol{m}'})$. 
Let $n \in \mca{N}_{\Sigma}$ be any element.
Then, we have
\[
\mca{KI}(\boldsymbol{c};I';n)_I=\mca{KI}(\boldsymbol{c};I;n)
\]
In particular, if $n \in \mca{N}(\bb{T},I)$, 
then we have
\[
\mf{C}(\boldsymbol{c};I';n)_I=
\mf{C}(\boldsymbol{c};I;n).
\]
\end{cor}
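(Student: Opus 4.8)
The plan is to deduce the statement directly from Lemma~\ref{lemascdesc}, applied to the single class $d^{\mathrm{univ}}_n(\boldsymbol{c})$. The first thing I would record is a compatibility: the natural reduction map $\pi_1 \colon H^1(\pi_{I',[n]}^* \bb{T}) \to H^1(\pi_{I,[n]}^* \bb{T})$ sends $d^{\mathrm{univ}}_n(\boldsymbol{c})_{I'}$ to $d^{\mathrm{univ}}_n(\boldsymbol{c})_{I}$, because both are the images of the same class $d^{\mathrm{univ}}_n(\boldsymbol{c}) \in H^1(\pi_{[n]}^* \bb{T})$ under the (transitive) reduction maps; this is immediate from the definition of $d^{\mathrm{univ}}_n(\boldsymbol{c})$, which uses only the fixed lifts $A(\alpha,\ell) \in \Lambda$ and the classes $c(d_\alpha)$ and hence is defined before any reduction. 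Likewise $\pi_2 \colon \Lambda_{/I',[n]} \to \Lambda_{/I,[n]}$ denotes the natural reduction.

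Next I would establish the inclusion $\mca{KI}(\boldsymbol{c};I';n)_I \subseteq \mca{KI}(\boldsymbol{c};I;n)$. Given $f' \in \Hom_{\Lambda_{/I',[n]}}(H^1(\pi_{I',[n]}^* \bb{T}), \Lambda_{/I',[n]})$, Lemma~\ref{lemascdesc}(i) produces $f \in \Hom_{\Lambda_{/I,[n]}}(H^1(\pi_{I,[n]}^* \bb{T}), \Lambda_{/I,[n]})$ with $f \circ \pi_1 = \pi_2 \circ f'$. Evaluating at $d^{\mathrm{univ}}_n(\boldsymbol{c})_{I'}$ and using the compatibility above gives $\pi_2\bigl(f'(d^{\mathrm{univ}}_n(\boldsymbol{c})_{I'})\bigr) = f\bigl(d^{\mathrm{univ}}_n(\boldsymbol{c})_{I}\bigr) \in \mca{KI}(\boldsymbol{c};I;n)$; since $\mca{KI}(\boldsymbol{c};I';n)_I$ is generated by the elements $\pi_2\bigl(f'(d^{\mathrm{univ}}_n(\boldsymbol{c})_{I'})\bigr)$, this is exactly the asserted inclusion. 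The reverse inclusion is obtained symmetrically: for $f$ as above, Lemma~\ref{lemascdesc}(ii) gives $f'$ with $\pi_2 \circ f' = f \circ \pi_1$, so $f\bigl(d^{\mathrm{univ}}_n(\boldsymbol{c})_{I}\bigr) = \pi_2\bigl(f'(d^{\mathrm{univ}}_n(\boldsymbol{c})_{I'})\bigr) \in \mca{KI}(\boldsymbol{c};I';n)_I$. This proves the first assertion $\mca{KI}(\boldsymbol{c};I';n)_I = \mca{KI}(\boldsymbol{c};I;n)$.

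For the ``in particular'' part, suppose $n \in \mca{N}(\bb{T},I)$. By Definition~\ref{dfnidealC(n)}(iii) both $\mca{KI}(\boldsymbol{c};I';n)_I$ and $\mca{KI}(\boldsymbol{c};I;n)$ are stable under the $H_n$-action, and $\mf{C}(\boldsymbol{c};I';n)_I$ and $\mf{C}(\boldsymbol{c};I;n)$ are by definition their preimages under one and the same norm isomorphism $\Lambda/I \xrightarrow{\ \simeq\ } (\Lambda_{/I,[n]})^{H_n}$, $x \mapsto N_{H_n}x$. Having already identified the two ideals of Kolyvagin images, we conclude $\mf{C}(\boldsymbol{c};I';n)_I = \mf{C}(\boldsymbol{c};I;n)$. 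I do not expect a real obstacle here: the whole content sits in Lemma~\ref{lemascdesc}, and the corollary is just the translation of its diagram chase into the language of ideals generated by evaluating $\Lambda_{/\bullet,[n]}$-linear functionals at $d^{\mathrm{univ}}_n(\boldsymbol{c})$. The only points demanding a touch of care are the bookkeeping of compatible reduction maps (so that $\pi_1(d^{\mathrm{univ}}_n(\boldsymbol{c})_{I'}) = d^{\mathrm{univ}}_n(\boldsymbol{c})_{I}$) and the observation that the norm isomorphisms used in defining the two versions of $\mf{C}(\boldsymbol{c};\,\cdot\,;n)$ are literally the same map; both are routine from the definitions.
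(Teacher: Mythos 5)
Your proof is correct and matches the paper's intent: the paper gives no explicit proof of Corollary~\ref{corascdesc}, stating only that it follows from Lemma~\ref{lemascdesc}, and your argument is precisely the routine translation — apply Lemma~\ref{lemascdesc}(i) and (ii) to pull functionals back and forth, use the compatibility $\pi_1(d^{\mathrm{univ}}_n(\boldsymbol{c})_{I'}) = d^{\mathrm{univ}}_n(\boldsymbol{c})_{I}$, and note that the norm isomorphism used to define $\mf{C}(\boldsymbol{c};\,\cdot\,;n)$ is the same on both sides.
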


Let $\boldsymbol{h}$ be a monic parameter system of $\Lambda$,
and fix a collection of non-empty subsets
\[
\mf{N}:=
\{
\mca{N}_{\boldsymbol{m}}
\}_{\boldsymbol{m} \in (\bb{Z}_{>0})^{r}}
\]
satisfying 
$\mca{N}_{\boldsymbol{m}'} \subseteq 
\mca{N}_{\boldsymbol{m}}
\subseteq \mca{N}(\bb{T},I(\boldsymbol{h}^{\boldsymbol{m}}))$
for any $\boldsymbol{m}, \boldsymbol{m}' \in (\bb{Z}_{>0})^{r+1}$ 
with $\boldsymbol{m}' \ge \boldsymbol{m}$.
We put 
\[
\mf{N}(\bb{T};\boldsymbol{h}):=
\{
\mca{N}(\bb{T},I(\boldsymbol{h}^{\boldsymbol{m}}))
\}_{\boldsymbol{m} \in (\bb{Z}_{>0})^{r}}
\]
Let $\boldsymbol{m} \in (\bb{Z}_{>0})^{r+1}$  and
$i \in \bb{Z}_{\ge 0}$ be arbitrary elements.
We define
\[
\mca{N}_{\boldsymbol{m}}(i):=
\left\{
n \in \mca{N}_{\boldsymbol{m}} \mathrel{\vert}
\# \mathrm{Prime}(n) \le i
\right\}.
\]
Then, we denote by $\mf{C}_i(\boldsymbol{c};
I(\boldsymbol{h}^{\boldsymbol{m}});\mf{N})$ 
the ideal of $\Lambda_{/I(\boldsymbol{h}^{\boldsymbol{m}})}$ 
generated by 
\[
\bigcup_{n \in \mca{N}_{\boldsymbol{m}}(i)}
\mf{C}(\boldsymbol{c};I(\boldsymbol{h}^{\boldsymbol{m}});n).
\]

Let $\boldsymbol{m}, \boldsymbol{m}' \in (\bb{Z}_{>0})^{r+1}$ 
be elements satisfying $\boldsymbol{m}' \ge \boldsymbol{m}$.
Put $I:=I(\boldsymbol{h}^{\boldsymbol{m}})$, and 
$I':=I(\boldsymbol{h}^{\boldsymbol{m}'})$. 
The image of $\mf{C}_i(\boldsymbol{c};I';\mf{N})$ 
in $\Lambda/I$ is denoted by $\mf{C}_i(\boldsymbol{c};I';\mf{N})_I$.
We write 
$\mf{C}_i(\boldsymbol{c};I'):=\mf{C}_i(\boldsymbol{c};I';
\mf{N}(\bb{T};\boldsymbol{h}))$ and 
$\mf{C}_i(\boldsymbol{c};I')_I:=
\mf{C}_i(\boldsymbol{c};I';\mf{N}(\bb{T};\boldsymbol{h}))_I$.
By Corollary \ref{corascdesc}, we have
\begin{eqnarray}\label{compC_iI'C_iI}
\mf{C}_i(\boldsymbol{c};I';\mf{N})_I \subseteq \mf{C}_i
(\boldsymbol{c};I;\mf{N}).
\end{eqnarray}
Hence we obtain a projective system
$\{ \mf{C}_i(\boldsymbol{c};I(\boldsymbol{h}^{\boldsymbol{m}});\mf{N}) 
\}_{\boldsymbol{m} \in (\bb{Z}_{>0})^{r+1}}$.

\begin{dfn}\label{dfnC_i}
Let $\boldsymbol{h}$ be a monic parameter system of $\Lambda$.
Then, we define an ideal $\mf{C}_i(\boldsymbol{c};\boldsymbol{h})$ 
of the ring $\Lambda$ by
\[
\mf{C}_i(\boldsymbol{c};\boldsymbol{h};\mf{N}):=
\varprojlim_{\boldsymbol{m}}\mf{C}_i(\boldsymbol{c};I(\boldsymbol{h}^{\boldsymbol{m}});\mf{N}) 
\subseteq 
\varprojlim_{\boldsymbol{m}} \Lambda_{/I(\boldsymbol{h}^{\boldsymbol{m}})}
\simeq \Lambda. 
\]
Especially, we put 
$\mf{C}_i(\boldsymbol{c};\boldsymbol{h}):=
\mf{C}_i(\boldsymbol{c};\boldsymbol{h};\mf{N}(\bb{T};\boldsymbol{h}))$
and $\mf{C}_i(\boldsymbol{c}):=
\mf{C}_i(\boldsymbol{c};\boldsymbol{x};\mf{N}(\bb{T};\boldsymbol{x}))$, 
where $\boldsymbol{x}$ denotes the standard monic parameter system.
\end{dfn}

\subsection{Varying monic parameter systems}\label{ssvarmps}

In the previous subsection, we have constructed 
the collection $\{ \mf{C}_i(\boldsymbol{c};\boldsymbol{h}) \}_{i \ge 0}$
of ideals of $\Lambda$ by using the ``universal Kolyvagin system" 
arising from the Euler system $\boldsymbol{c}$.
Note that a priori, the definition of 
the ideals $\mf{C}_i(\boldsymbol{c};\boldsymbol{h})$
depends on the choice of 
the monic parameter system $\boldsymbol{h}$.
Here, we shall prove Proposition \ref{propindepofparam}
which assert that the ideals 
$\mf{C}_i(\boldsymbol{c};\boldsymbol{h})$
are independent of $\boldsymbol{h}$.
This independence becomes a key of the induction arguments
in the proof of our main results.

\begin{lem}\label{lemforindepmps}
Let $\boldsymbol{h}$ be a monic parameter system of $\Lambda$, 
and $i \in \bb{Z}$ an integer satisfying 
$0 \le i \le r$. 
We assume that 
$\boldsymbol{h}_{\le i-1} = \boldsymbol{x}_{\le i-1}$
if $i \ge 1$.
We put 
\[
\tilde{\boldsymbol{h}}:=
(\boldsymbol{x}_{ \le i}, \boldsymbol{h}_{ \ge i+1}) 
\in (\Lambda)^{r+1}. 
\]
Then, for any 
$\boldsymbol{m},\boldsymbol{m}',\boldsymbol{m}'' \in (\bb{Z}_{>0})^{r+1}$
satisfying $I(\boldsymbol{h}^{\boldsymbol{m}''}) \subseteq 
I(\tilde{\boldsymbol{h}}^{\boldsymbol{m}'}) \subseteq I(\boldsymbol{h}^{\boldsymbol{m}})$, 
we have 
\[
\mca{KI}(\boldsymbol{c};I(\boldsymbol{h}^{\boldsymbol{m}''});n)_{
I(\tilde{\boldsymbol{h}}^{\boldsymbol{m}'})}
=\mca{KI}(\boldsymbol{c};I(\tilde{\boldsymbol{h}}^{\boldsymbol{m}'});n)
\]
and 
\[
\mca{KI}(\boldsymbol{c};I(\tilde{\boldsymbol{h}}^{\boldsymbol{m}'})
;n)_{I(\boldsymbol{h}^{\boldsymbol{m}})}
=\mca{KI}(\boldsymbol{c};I(\boldsymbol{h}^{\boldsymbol{m}});n).
\]
\end{lem}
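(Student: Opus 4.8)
The plan is to recognize both displayed identities as instances of the mixed analogue of Corollary~\ref{corascdesc}, and to prove that analogue by adapting the mechanism of Lemma~\ref{lemascdesc} from a single monic parameter system to an arbitrary nested pair of ideals each generated by powers of a monic parameter system. Concretely, set either $(I',I)=(I(\boldsymbol{h}^{\boldsymbol{m}''}),I(\tilde{\boldsymbol{h}}^{\boldsymbol{m}'}))$ or $(I',I)=(I(\tilde{\boldsymbol{h}}^{\boldsymbol{m}'}),I(\boldsymbol{h}^{\boldsymbol{m}}))$; in each case $I'\subseteq I$, and it suffices to establish, for every $n\in\mca{N}_\Sigma$, the two statements of Lemma~\ref{lemascdesc} for this pair: every $f'\in\Hom_{\Lambda_{/I',[n]}}(H^1(\pi^*_{I',[n]}\bb{T}),\Lambda_{/I',[n]})$ fits into a commuting square with some $f\in\Hom_{\Lambda_{/I,[n]}}(H^1(\pi^*_{I,[n]}\bb{T}),\Lambda_{/I,[n]})$, and conversely. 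Granting this, the equalities $\mca{KI}(\boldsymbol{c};I';n)_I=\mca{KI}(\boldsymbol{c};I;n)$ follow exactly as Corollary~\ref{corascdesc} follows from Lemma~\ref{lemascdesc}, once one uses that $d^{\mathrm{univ}}_n(\boldsymbol{c})_I$ is the image of $d^{\mathrm{univ}}_n(\boldsymbol{c})_{I'}$ under reduction; this compatibility is built into the definition, since $d^{\mathrm{univ}}_n(\boldsymbol{c})_\bullet$ depends only on the ideal, not on the monic parameter system producing it.

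To prove the Lemma~\ref{lemascdesc}-type statement for such a pair, I would argue as follows. A monic parameter system is a regular system of parameters, so $\Lambda/I$ and $\Lambda/I'$ are $0$-dimensional complete intersections, hence Gorenstein; by local duality for the finite local homomorphism $\Lambda/I'\to\Lambda/I$, the $\Lambda/I$-module $\Hom_{\Lambda/I'}(\Lambda/I,\Lambda/I')\cong\{x\in\Lambda/I':Ix=0\}$ is free of rank one. Fix $a\in\Lambda$ whose class generates it; then $aI\subseteq I'$ and multiplication by $a$ identifies $\Lambda/I$ with the $I$-torsion submodule of $\Lambda/I'$, and likewise after $\otimes_{\mca{O}}\mca{O}[H_n]$, since $\mca{O}[H_n]$ is étale over $\mca{O}$ ($n$ being prime to $p$). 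Tensoring $0\to\Lambda/I\xrightarrow{\times a}\Lambda/I'\to\Lambda/(I'+a\Lambda)\to 0$ with the free $\Lambda$-module $\bb{T}[H_n]$ gives a short exact sequence of $G_{\bb{Q},\Sigma}$-modules $0\to\pi^*_{I,[n]}\bb{T}\xrightarrow{\times a}\pi^*_{I',[n]}\bb{T}\to\pi^*_{I'+a\Lambda,[n]}\bb{T}\to 0$; exactly as in the proof of Lemma~\ref{lemascdesc}, assumptions (A1) and (A3) force the induced map $\nu_1\colon H^1(\pi^*_{I,[n]}\bb{T})\to H^1(\pi^*_{I',[n]}\bb{T})$ to be injective. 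With $\nu_2$ the multiplication-by-$a$ isomorphism onto the $I$-torsion and $\nu_1$ as just described, the lifting and descent of functionals go through verbatim as in the proof of Lemma~\ref{lemascdesc}, using that $\Lambda_{/I',[n]}$ is injective as a module over itself (Lemma~\ref{leminjmod}).

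The point that is genuinely new relative to Lemma~\ref{lemascdesc} — and the one to watch — is producing the element $a$ and the short exact sequence above for a pair of ideals coming from two different monic parameter systems, and checking injectivity of the resulting $\nu_1$. This is where the hypotheses $\boldsymbol{h}_{\le i-1}=\boldsymbol{x}_{\le i-1}$ and the distinguished-polynomial shape of $h_i$ enter: they guarantee that $\tilde{\boldsymbol{h}}$ is again a monic parameter system, that the given inclusions of ideals are consistent, and that the intermediate quotients remain complete intersections (so that Lemma~\ref{leminjmod} applies). If one prefers to avoid invoking Gorenstein duality in the abstract, the same conclusion can be reached more explicitly: using Corollary~\ref{corascdesc} within each of the systems $\boldsymbol{h}$ and $\tilde{\boldsymbol{h}}$, one first raises the exponents of the prefix generators $\varpi,x_1,\dots,x_{i-1}$ and of the suffix generators $h_{i+1},\dots,h_r$ in $I$ and $I'$ to common values, reducing to the case where $I$ and $I'$ differ only in their $i$-th generator — $x_i^{c'}$ versus $h_i^{c}$ — and then treats that single ``swap'' step by the $\nu_1$/$\nu_2$ argument with an explicit $a$ (a suitable power of $x_i$, respectively of $h_i$), the congruence $h_i\equiv x_i^{\deg_{x_i}h_i}\pmod{(\varpi,x_1,\dots,x_{i-1})}$ making the bookkeeping transparent. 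Either way, once the lifting and descent statements are in hand, the two displayed equalities drop out as in Corollary~\ref{corascdesc}.
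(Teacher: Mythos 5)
Your proposal is correct in its main thrust but proceeds by a genuinely different route from the paper. The paper does not generalize Lemma~\ref{lemascdesc}: it stays entirely within the single-system Corollary~\ref{corascdesc} and instead exploits the distinguished-polynomial shape of $h_i$ via the congruence $h_i^{p^M}\equiv x_i^{\delta p^M}\pmod{\mf{m}_{\Lambda^{(i-1)}}^M\Lambda}$, choosing $M$ so large that $\mf{m}_{\Lambda^{(i-1)}}^M$ is absorbed into the generators $\varpi^{m''_0},x_1^{m''_1},\dots,x_{i-1}^{m''_{i-1}}$; this makes the two ideals $I(\boldsymbol{h}^{\boldsymbol{\nu}})$ and $I(\tilde{\boldsymbol{h}}^{\tilde{\boldsymbol{\nu}}})$ \emph{literally equal} for suitable exponent vectors, after which Corollary~\ref{corascdesc} is applied once for $\boldsymbol{h}$ and once for $\tilde{\boldsymbol{h}}$ to descend to the ideals in the statement. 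Your argument instead extends Lemma~\ref{lemascdesc} to an arbitrary nested pair $I'\subseteq I$ of ideals each generated by powers of a (possibly different) monic parameter system, producing the linking element $a$ abstractly from Gorenstein duality for $0$-dimensional complete intersection quotients; this is a legitimate and in some ways cleaner strategy, since it works directly from the ideal inclusions without any need to compare exponent vectors componentwise. The trade-off is that the paper's route uses no new commutative-algebra input beyond what is already in \S\ref{ssMPS}, whereas yours requires identifying $\Hom_{\Lambda/I'}(\Lambda/I,\Lambda/I')$ as a free rank-one $\Lambda/I$-module. One small slip: you justify passing from $\Lambda/I'$ to $\Lambda_{/I',[n]}=(\Lambda/I')[H_n]$ by saying $\mca{O}[H_n]$ is \'etale over $\mca{O}$ because $n$ is prime to $p$. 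This is false in general, since the primes $\ell\mid n$ relevant here satisfy $\ell\equiv 1\pmod p$, so $|H_n|=\varphi(n)$ is typically divisible by $p$. What you actually need — and what holds — is that $\mca{O}[H_n]$ is finite free (hence faithfully flat) over $\mca{O}$ and that $\Lambda_{/I',[n]}$ is again a $0$-dimensional complete intersection (this is already recorded before Lemma~\ref{leminjmod}); flatness of $-\otimes_{\mca{O}}\mca{O}[H_n]$ then preserves the $I$-torsion identification and Gorenstein duality applies over $\Lambda_{/I',[n]}$ as well. With that correction, your argument goes through and proves the lemma.
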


\begin{proof}
First, suppose that $i=0$.
Let $v_{\mca{O}}$ be the additive valuation on $\mca{O}$
normalized by $v_{\mca{O}}(\varpi)=1$, and 
put $\widetilde{\boldsymbol{m}}'' = (m_0'' \cdot v_{\mca{O}}(h_0), 
\boldsymbol{m}''_{\ge 1})$.
Then we have
$I(\tilde{\boldsymbol{h}}^{\widetilde{\boldsymbol{m}}''})=
I(\boldsymbol{h}^{\boldsymbol{m}''})$.
This implies that we have
\[
\mca{KI}(\boldsymbol{c};
I(\tilde{\boldsymbol{h}}^{\widetilde{\boldsymbol{m}}''});n)
=\mca{KI}(\boldsymbol{c};I(\boldsymbol{h}^{\boldsymbol{m}''});n).
\]
So by Corollary \ref{corascdesc}, 
we deduce the assertion of Lemma \ref{lemforindepmps}
for $i=0$.

Next, let us assume that $i \ge 1$.
By the definition of monic parameter systems, 
there exist a  positive integer $\delta$ 
and elements $g_0, g_1, \dots , g_{\delta -1} \in 
\mf{m}_{\Lambda^{(i-1)}}$ satisfying
\[
h_i= x_i^{\delta}+ \sum_{j=0}^{\delta-1} g_j x_i ^{j}.
\] 
Here, $\mf{m}_{\Lambda^{(i-1)}}$ 
denotes the maximal ideal of $\Lambda^{(i-1)}=\mca{O}[[x_1,\dots, x_{i-1}]]$.
Let $M$ be a positive integer satisfying 
$p^M \ge \boldsymbol{m}''$, $\delta p^M > \boldsymbol{m}'$
and $\mf{m}_{\Lambda^{(i-1)}}^M \subseteq 
I(\boldsymbol{x}_{\le i-1}^{\boldsymbol{m}''_{\le i-1}})$. 
Then, we have
\begin{equation}\label{congindepMPS}
h_i^{p^M} \equiv x_i^{\delta p^M} \mod 
\mf{m}_{\Lambda^{i-1}}^M\Lambda.
\end{equation}
Define elements $\boldsymbol{\nu}, \widetilde{\boldsymbol{\nu}}
\in (\bb{Z}_{>0})^{r+1}$ by
\begin{align*}
\boldsymbol{\nu} &= (\boldsymbol{m}''_{\le i-1},p^M, 
\boldsymbol{m}''_{\ge i+1}), \\
\tilde{\boldsymbol{\nu}} &= (\boldsymbol{m}''_{\le i-1}, \delta p^M, 
\boldsymbol{m}''_{\ge i+1}).
\end{align*}
By the congruence (\ref{congindepMPS}),
we obtain $I(\tilde{\boldsymbol{h}}^{\tilde{\boldsymbol{\nu}}})
=I(\boldsymbol{h}^{\boldsymbol{\nu}})$ and
\[
\mca{KI}(\boldsymbol{c};
I(\tilde{\boldsymbol{h}}^{\tilde{\boldsymbol{\nu}}});n)
=\mca{KI}(\boldsymbol{c};I(\boldsymbol{h}^{\boldsymbol{\nu}});n).
\]
Hence the assertion of Lemma \ref{lemforindepmps}
follows from Corollary \ref{corascdesc}.
\end{proof}

\begin{lem}\label{lemforindepmps2}
Let $\boldsymbol{h}$ be a monic parameter system of $\Lambda$.
Then, for any 
$\boldsymbol{m},\boldsymbol{m}',\boldsymbol{m}'' \in (\bb{Z}_{>0})^{r+1}$
satisfying $I(\boldsymbol{h}^{\boldsymbol{m}''}) \subseteq 
I(\boldsymbol{x}^{\boldsymbol{m}'}) \subseteq I(\boldsymbol{h}^{\boldsymbol{m}})$, 
we have 
\[
\mca{KI}(\boldsymbol{c};I(\boldsymbol{h}^{\boldsymbol{m}''});n)_{
I(\boldsymbol{x}^{\boldsymbol{m}'})}
=\mca{KI}(\boldsymbol{c};I(\tilde{\boldsymbol{h}}^{\boldsymbol{m}'});n)
\]
and 
\[
\mca{KI}(\boldsymbol{c};I(\boldsymbol{x}^{\boldsymbol{m}'})
;n)_{I(\boldsymbol{h}^{\boldsymbol{m}})}
=\mca{KI}(\boldsymbol{c};I(\boldsymbol{h}^{\boldsymbol{m}});n)
\]
\end{lem}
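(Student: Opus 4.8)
The plan is to reduce Lemma~\ref{lemforindepmps2} to Corollary~\ref{corascdesc} together with the congruence~(\ref{congindepMPS}) underlying the proof of Lemma~\ref{lemforindepmps}, by routing both asserted identities through a single ideal that lies simultaneously in the filtration $\{I(\boldsymbol{h}^{\boldsymbol{k}})\}_{\boldsymbol{k}}$ and in $\{I(\boldsymbol{x}^{\boldsymbol{k}})\}_{\boldsymbol{k}}$. The key step I would isolate is the statement $(\star)$: for any $\boldsymbol{m}',\boldsymbol{m}''\in(\bb{Z}_{>0})^{r+1}$ there exist $\boldsymbol{p},\boldsymbol{p}^{*}\in(\bb{Z}_{>0})^{r+1}$ with $\boldsymbol{p}\ge\boldsymbol{m}''$, $\boldsymbol{p}^{*}\ge\boldsymbol{m}'$ and $I(\boldsymbol{h}^{\boldsymbol{p}})=I(\boldsymbol{x}^{\boldsymbol{p}^{*}})$ as ideals of $\Lambda$. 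Granting $(\star)$, the first identity follows formally: since $\boldsymbol{p}\ge\boldsymbol{m}''$, Corollary~\ref{corascdesc} applied to $\boldsymbol{h}$ gives $\mca{KI}(\boldsymbol{c};I(\boldsymbol{h}^{\boldsymbol{p}});n)_{I(\boldsymbol{h}^{\boldsymbol{m}''})}=\mca{KI}(\boldsymbol{c};I(\boldsymbol{h}^{\boldsymbol{m}''});n)$; pushing this equality along the surjection $\Lambda_{/I(\boldsymbol{h}^{\boldsymbol{m}''}),[n]}\twoheadrightarrow\Lambda_{/I(\boldsymbol{x}^{\boldsymbol{m}'}),[n]}$ (available since $I(\boldsymbol{h}^{\boldsymbol{m}''})\subseteq I(\boldsymbol{x}^{\boldsymbol{m}'})$), then using $I(\boldsymbol{h}^{\boldsymbol{p}})=I(\boldsymbol{x}^{\boldsymbol{p}^{*}})$ to rewrite the left-hand side, and finally applying Corollary~\ref{corascdesc} to $\boldsymbol{x}$ (legitimate because $\boldsymbol{p}^{*}\ge\boldsymbol{m}'$), one obtains $\mca{KI}(\boldsymbol{c};I(\boldsymbol{h}^{\boldsymbol{m}''});n)_{I(\boldsymbol{x}^{\boldsymbol{m}'})}=\mca{KI}(\boldsymbol{c};I(\boldsymbol{x}^{\boldsymbol{m}'});n)$. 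The second identity is obtained the same way, now choosing $\boldsymbol{p}\ge\boldsymbol{m}$, $\boldsymbol{p}^{*}\ge\boldsymbol{m}'$ and routing through $I(\boldsymbol{x}^{\boldsymbol{p}^{*}})=I(\boldsymbol{h}^{\boldsymbol{p}})\subseteq I(\boldsymbol{h}^{\boldsymbol{m}})$ and $I(\boldsymbol{x}^{\boldsymbol{p}^{*}})\subseteq I(\boldsymbol{x}^{\boldsymbol{m}'})$.

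To prove $(\star)$ I would argue by induction on the number of converted components, reusing the computation from the proof of Lemma~\ref{lemforindepmps}. For the $0$-th component, write $h_0=\varpi^{a}u$ with $u\in\mca{O}^{\times}$ and $a=v_{\mca{O}}(h_0)\ge1$, so $(h_0^{p_0})=(\varpi^{ap_0})$ for every $p_0$. Assuming $I(h_0^{p_0},\dots,h_{i-1}^{p_{i-1}})\Lambda=I(\varpi^{q_0},x_1^{q_1},\dots,x_{i-1}^{q_{i-1}})\Lambda$ for suitable exponents, one notes that this ideal contains $\mf{m}_{\Lambda^{(i-1)}}^{N}\Lambda$ with $N=q_0+\dots+q_{i-1}$; hence for any $M\ge N$ the congruence~(\ref{congindepMPS}), that is $h_i^{p^{M}}\equiv x_i^{\delta_i p^{M}}\bmod\mf{m}_{\Lambda^{(i-1)}}^{M}\Lambda$ (with $\delta_i=\deg_{x_i}h_i$), shows that adjoining $h_i^{p^{M}}$ or $x_i^{\delta_i p^{M}}$ to the ideal gives the same result, so we may set $p_i:=p^{M}$, $q_i:=\delta_i p^{M}$ and iterate. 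Choosing $p_0$ and the successive exponents $M_1\le M_2\le\dots\le M_r$ large enough---each exceeding the lower bound imposed by the previous stage, and large enough that $\boldsymbol{p}=(p_0,p^{M_1},\dots,p^{M_r})$ and $\boldsymbol{p}^{*}=(ap_0,\delta_1 p^{M_1},\dots,\delta_r p^{M_r})$ dominate $\boldsymbol{m}''$ and $\boldsymbol{m}'$ respectively---produces the required pair with $I(\boldsymbol{h}^{\boldsymbol{p}})=I(\boldsymbol{x}^{\boldsymbol{p}^{*}})$.

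I expect the main obstacle to be precisely this inductive bookkeeping inside $(\star)$: one must verify that the congruence at level $i$, which controls $h_i^{p^{M}}$ only modulo $\mf{m}_{\Lambda^{(i-1)}}^{M}$, is compatible with the ideal already built from the first $i$ components, and that all $r+1$ levels can be arranged simultaneously while keeping $\boldsymbol{p}\ge\boldsymbol{m}''$ and $\boldsymbol{p}^{*}\ge\boldsymbol{m}'$. This, however, is just the case $i\ge1$ of the proof of Lemma~\ref{lemforindepmps} iterated $r+1$ times. Everything else---the manipulations with the ideals $\mca{KI}(\boldsymbol{c};I;n)$, the transitivity of images under the natural surjections of the rings $\Lambda_{/I,[n]}$, and the evident fact that equal ideals yield equal ideals of Kolyvagin images---is formal and uses only Corollary~\ref{corascdesc} and the definitions of \S\ref{ssconstC_i}.
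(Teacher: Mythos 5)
Your argument is correct, and both proofs run on the same two ingredients: the congruence~(\ref{congindepMPS}) (in the form $h_i^{p^M}\equiv x_i^{\delta_i p^M}\bmod\mf{m}_{\Lambda^{(i-1)}}^{M}\Lambda$) and Corollary~\ref{corascdesc}. The difference is one of organization rather than substance. The paper first proves the one-component version as Lemma~\ref{lemforindepmps}, constructing a one-step bridge $I(\tilde{\boldsymbol{h}}^{\tilde{\boldsymbol{\nu}}})=I(\boldsymbol{h}^{\boldsymbol{\nu}})$ inside that proof, and then telescopes the resulting $\mca{KI}$-equalities through the chain of intermediate monic parameter systems $\boldsymbol{h}^{(0)}=\boldsymbol{x},\ \boldsymbol{h}^{(1)},\ \dots,\ \boldsymbol{h}$; you instead push the entire induction into the construction of a single bridge ideal, namely $(\star)$: an ideal lying simultaneously in the $\boldsymbol{h}$- and $\boldsymbol{x}$-cofinal filtrations and dominating both target exponents, from which both asserted identities follow by applying Corollary~\ref{corascdesc} once in each direction and pushing along the natural surjections. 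This factors the argument more cleanly into a purely commutative-algebraic step ($(\star)$, which uses no Galois cohomology) followed by the cohomological step (Corollary~\ref{corascdesc}), instead of interleaving them. Your bookkeeping inside $(\star)$ is exactly right: the base case $(h_0^{p_0})=(\varpi^{ap_0})$, the pigeonhole bound $\mf{m}_{\Lambda^{(i-1)}}^{q_0+\cdots+q_{i-1}}\Lambda\subseteq(\varpi^{q_0},x_1^{q_1},\dots,x_{i-1}^{q_{i-1}})\Lambda$, and then choosing $M_i$ at each stage to exceed both the accumulated threshold $q_0+\dots+q_{i-1}$ and whatever is needed to force $p^{M_i}\ge m''_i$ and $\delta_i p^{M_i}\ge m'_i$; since all these are lower bounds on $M_i$ they can be satisfied simultaneously, and the conditions $\boldsymbol{p}\ge\boldsymbol{m}''$, $\boldsymbol{p}^{*}\ge\boldsymbol{m}'$ come out for free.
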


\begin{proof}
For each $i \in \bb{Z}$ with $0 \le i \le r$, 
we define 
$\boldsymbol{h}^{(i)}:=(\boldsymbol{x}_{\le r-i}, \boldsymbol{h}_{r-i+1})$. 
Note that in particular, we have $\boldsymbol{h}^{(0)}:=\boldsymbol{x}$ 
and $\boldsymbol{h}^{(r)}:=\boldsymbol{h}$.
Let $\{ N_i \}_{i=0}^{r}$ be a sequence of integers
satisfying 
$I(\boldsymbol{h}^{\boldsymbol{m}''}) \supseteq I(\boldsymbol{h}^{N_{r}})$
and $I((\boldsymbol{h}^{(i)})^{N_{i}}) 
\supseteq I((\boldsymbol{h}^{(i-1)})^{N_{i-1}})$
for any integer $i$ with $1 \le i \le r$.
Then, by using Lemma \ref{lemforindepmps}, 
we deduce, via induction on $i$, 
that 
\begin{eqnarray}\label{indepmprind}
\mca{KI}(\boldsymbol{c};I(\boldsymbol{x}^{N_0});n)_{I((\boldsymbol{h}^{(i)})^{N_i})}
=\mca{KI}(\boldsymbol{c};I((\boldsymbol{h}^{(i)})^{N_i});n)
\end{eqnarray}
for any $0 \le i \le r$. 
Hence, by Corollary \ref{corascdesc}, 
the equality (\ref{indepmprind}) for $i=r$ implies 
the assertion of our lemma.
\end{proof}

\begin{prop}\label{propindepofparam}
For any monic parameter system $\boldsymbol{h}$ of $\Lambda$ 
and any $i \in \bb{Z}_{\ge 0}$,
we have
\(
\mf{C}_i(\boldsymbol{c};\boldsymbol{h})= \mf{C}_i(\boldsymbol{c})
\). 
Namely, the ideal $\mf{C}_i(\boldsymbol{c};\boldsymbol{h})$ 
is independent of the choice of $\boldsymbol{h}$.
\end{prop}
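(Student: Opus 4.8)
The plan is to reduce everything to a comparison with the standard monic parameter system $\boldsymbol{x}$: it suffices to prove $\mf{C}_i(\boldsymbol{c};\boldsymbol{h}) = \mf{C}_i(\boldsymbol{c};\boldsymbol{x})$ for an arbitrary monic parameter system $\boldsymbol{h}$, and then apply this to two given systems. By Definition \ref{dfnC_i} the ideal $\mf{C}_i(\boldsymbol{c};\boldsymbol{h})$ is the inverse limit of the $\mf{C}_i(\boldsymbol{c};I(\boldsymbol{h}^{\boldsymbol{m}}))$ over $\boldsymbol{m}\in(\bb{Z}_{>0})^{r+1}$, and the first thing I would record is that the two families $\{I(\boldsymbol{h}^{\boldsymbol{m}})\}_{\boldsymbol{m}}$ and $\{I(\boldsymbol{x}^{\boldsymbol{m}'})\}_{\boldsymbol{m}'}$ of ideals of $\Lambda$ are cofinal in one another, both defining the $\mf{m}_\Lambda$-adic topology: by Lemma \ref{lemmpr}(i) a monic parameter system is a system of parameters of $\mf{m}_\Lambda$, so each $I(\boldsymbol{h}^{\boldsymbol{m}})$ is $\mf{m}_\Lambda$-primary and the powers $I(\boldsymbol{h}^{N})$ are cofinal with $\{\mf{m}_\Lambda^{N}\}$ (likewise for $\boldsymbol{x}$). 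In particular, for any $\boldsymbol{m}$ I can choose $\boldsymbol{m}'$ and then $\boldsymbol{m}''$ with
\[
I(\boldsymbol{h}^{\boldsymbol{m}''}) \subseteq I(\boldsymbol{x}^{\boldsymbol{m}'}) \subseteq I(\boldsymbol{h}^{\boldsymbol{m}}),
\]
and symmetrically one can sandwich any $I(\boldsymbol{x}^{\boldsymbol{m}'})$ between two ideals attached to $\boldsymbol{h}$.

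The second ingredient I would set up is the monotonicity of the admissible index sets: if $I\subseteq I'$ are ideals of finite index, then $\mca{P}(\bb{T},I)\subseteq\mca{P}(\bb{T},I')$, because $\mca{I}_{\bb{T},\ell}\subseteq I$ forces $\mca{I}_{\bb{T},\ell}\subseteq I'$ and the rank-one freeness of $\bb{T}/(I\bb{T}+(\Frob_\ell-1)\bb{T})$ over $\Lambda/I$ is preserved by base change along $\Lambda/I\twoheadrightarrow\Lambda/I'$; hence $\mca{N}(\bb{T},I)\subseteq\mca{N}(\bb{T},I')$. Combined with the chain above, Lemma \ref{lemforindepmps2} then yields, for every $n\in\mca{N}_\Sigma$,
\[
\mca{KI}(\boldsymbol{c};I(\boldsymbol{h}^{\boldsymbol{m}''});n)_{I(\boldsymbol{x}^{\boldsymbol{m}'})}=\mca{KI}(\boldsymbol{c};I(\boldsymbol{x}^{\boldsymbol{m}'});n),\qquad \mca{KI}(\boldsymbol{c};I(\boldsymbol{x}^{\boldsymbol{m}'});n)_{I(\boldsymbol{h}^{\boldsymbol{m}})}=\mca{KI}(\boldsymbol{c};I(\boldsymbol{h}^{\boldsymbol{m}});n),
\]
and hence the corresponding equalities for the ideals $\mf{C}(\boldsymbol{c};-;n)$ whenever $n$ lies in the pertinent $\mca{N}(\bb{T},-)$.

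Now I would prove the two inclusions. For $\mf{C}_i(\boldsymbol{c};\boldsymbol{x})\subseteq\mf{C}_i(\boldsymbol{c};\boldsymbol{h})$, take $\lambda\in\mf{C}_i(\boldsymbol{c};\boldsymbol{x})$, fix $\boldsymbol{m}$, and pick $\boldsymbol{m}',\boldsymbol{m}''$ as above; the image of $\lambda$ in $\Lambda/I(\boldsymbol{x}^{\boldsymbol{m}'})$ lies in $\mf{C}_i(\boldsymbol{c};I(\boldsymbol{x}^{\boldsymbol{m}'}))$, which is generated by the $\mf{C}(\boldsymbol{c};I(\boldsymbol{x}^{\boldsymbol{m}'});n)$ for $n\in\mca{N}(\bb{T},I(\boldsymbol{x}^{\boldsymbol{m}'}))$ with $\#\mathrm{Prime}(n)\le i$; reducing once more to $\Lambda/I(\boldsymbol{h}^{\boldsymbol{m}})$ and applying the second equality above, each such generator maps into $\mf{C}(\boldsymbol{c};I(\boldsymbol{h}^{\boldsymbol{m}});n)$ with $n\in\mca{N}(\bb{T},I(\boldsymbol{x}^{\boldsymbol{m}'}))\subseteq\mca{N}(\bb{T},I(\boldsymbol{h}^{\boldsymbol{m}}))$, so the image of $\lambda$ in $\Lambda/I(\boldsymbol{h}^{\boldsymbol{m}})$ lies in $\mf{C}_i(\boldsymbol{c};I(\boldsymbol{h}^{\boldsymbol{m}}))$; since $\boldsymbol{m}$ was arbitrary, $\lambda\in\mf{C}_i(\boldsymbol{c};\boldsymbol{h})$. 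The reverse inclusion is symmetric: fix a level $\boldsymbol{m}'$ for $\boldsymbol{x}$, sandwich it as $I(\boldsymbol{h}^{\boldsymbol{m}''})\subseteq I(\boldsymbol{x}^{\boldsymbol{m}'})\subseteq I(\boldsymbol{h}^{\boldsymbol{m}})$, and run the same argument with the first equality above, using $\mca{N}(\bb{T},I(\boldsymbol{h}^{\boldsymbol{m}''}))\subseteq\mca{N}(\bb{T},I(\boldsymbol{x}^{\boldsymbol{m}'}))$. Passing to inverse limits gives $\mf{C}_i(\boldsymbol{c};\boldsymbol{h})=\mf{C}_i(\boldsymbol{c};\boldsymbol{x})$, and the proposition follows.

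The main obstacle is the bookkeeping around the sets $\mca{N}(\bb{T},I)$. These genuinely shrink as $I$ shrinks — a prime $\ell$ must make $\bb{T}/(I\bb{T}+(\Frob_\ell-1)\bb{T})$ free of rank one, a condition harder to meet for smaller $I$ — so the transition maps of the two inverse systems need not be surjective, and one has to ensure that the extra Kolyvagin generators available at a coarser level never leak out of the finer ideal. The three-term sandwich $I(\boldsymbol{h}^{\boldsymbol{m}''})\subseteq I(\boldsymbol{x}^{\boldsymbol{m}'})\subseteq I(\boldsymbol{h}^{\boldsymbol{m}})$ together with the monotonicity $\mca{N}(\bb{T},I)\subseteq\mca{N}(\bb{T},I')$ is precisely what makes the generator-by-generator matching work in both directions; once that is arranged, Lemma \ref{lemforindepmps2} and Corollary \ref{corascdesc} supply the needed identifications of ideals of Kolyvagin images, and the two inverse limits coincide.
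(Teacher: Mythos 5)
Your proposal is correct and follows essentially the same route as the paper's proof: both rest on Lemma \ref{lemforindepmps2}, the cofinality of the two families of ideals $\{I(\boldsymbol{h}^{\boldsymbol{m}})\}$ and $\{I(\boldsymbol{x}^{\boldsymbol{m}'})\}$ in the $\mf{m}_\Lambda$-adic topology, and (implicitly in the paper) the monotonicity $\mca{N}(\bb{T},I)\subseteq\mca{N}(\bb{T},I')$ for $I\subseteq I'$. The paper packages the comparison by interlacing the two chains into a single projective system and identifying the limits of the even and odd subchains, whereas you prove the two inclusions element-by-element; this is the same argument in two presentations, with your version making the $\mca{N}$-monotonicity explicit, which is a useful clarification of a point the paper leaves tacit.
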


\begin{proof}
Let $\{ N_\nu \}_{\nu>0}$ be an increasing sequence of 
positive integers 
which satisfies $I(\boldsymbol{x}^{N_{2j-1}}) \supseteq 
I(\boldsymbol{h}^{N_{2j}})
\supseteq I(\boldsymbol{x}^{N_{2j+1}})$ 
for any $j \in \bb{Z}_{\ge 0}$.
We put $I_{2j-1}:=I(\boldsymbol{x}^{N_{2j-1}})$ and 
$I_{2j}:=I(\boldsymbol{h}^{N_{2j}})$ for any $j \in \bb{Z}_{>0}$.
Then, Lemma \ref{lemforindepmps2} implies that
$\{ \mf{C}_i(\boldsymbol{c};I_\nu) \}_{\nu >0}$ forms a projective system. 
Hence we obtain
\[
\mf{C}_i(\boldsymbol{c};\boldsymbol{h})= \varprojlim_{j} \mf{C}_i(\boldsymbol{c};I_{2j})
=\varprojlim_{\nu} \mf{C}_i(\boldsymbol{c};I_{\nu})
=\varprojlim_{j} \mf{C}_i(\boldsymbol{c};I_{2j-1})
=\mf{C}_i(\boldsymbol{c};\boldsymbol{x}).
\]
This completes the proof of Proposition \ref{propindepofparam}.
\end{proof}

\subsection{Extension of Scalars}\label{ssscalarext}
Here, we observe the behavior of 
the ideals $\mf{C}_i(\boldsymbol{c})$
along extensions of the rings of constants.

Let $F'$ be a finite extension field of $F$, and 
$\mca{O}'$ the ring of integers of $F'$. 
We denote the ramification index of $F'/F$ by $e$.
Fix a uniformizer $\varpi' \in \mca{O}'$.
We put $\Lambda_\mca{O'}:=
\Lambda \otimes_{\mca{O}} \mca{O}'
=\mca{O}'[[x_1, \dots, x_{r}]]$, 
and $\Lambda_{\mca{O}'}:=
\Lambda \otimes_{\mca{O}} \mca{O}'$.
We define the new ``standard parameter system" 
\[
\boldsymbol{x}'= (x'_0,\dots, x'_{r}):=(\varpi', x_1, \dots, x_{r}).
\]
We define $\bb{T}_{{\mca{O}}'}:= 
\bb{T} \otimes_{\mca{O}} \mca{O}'$.
The Euler system $\boldsymbol{c}$ induces an Euler system $\boldsymbol{c}':=\boldsymbol{c} \otimes 1$
for $\bb{T}_{\mca{O}'}$.

Let $\boldsymbol{m}=(m_0, \dots, m_{r}) \in (\bb{Z}_{>0})^{r+1}$ be any element, 
and put $\boldsymbol{m}':=(em_0, \boldsymbol{m}_{\ge 1})$.
and denote by $I_{\mca{O}'}(\boldsymbol{x}'^{\boldsymbol{m}'})$ 
the ideal of $\Lambda$ generated by 
$\{ x'^{m_i}_i \mathrel{\vert} 0 \le i \le r \}$.
Then, clearly we have
\[
I_{\mca{O}'}(\boldsymbol{x}'^{\boldsymbol{m}'})=I(\boldsymbol{x}^{\boldsymbol{m}})\Lambda_\mca{O}'
=I(\boldsymbol{x}^{\boldsymbol{m}})\mca{O}'.
\]
Put $I:=I(\boldsymbol{x}^{\boldsymbol{m}})$ and 
$I':=I_{\mca{O}'}(\boldsymbol{x}'^{\boldsymbol{m}'})_{\mca{O}'}$.
Since $\mca{O}'$ is 
a free $\mca{O}$-module of finite rank, 
we have an isomorphism
\begin{eqnarray}\label{HiOO'}
H^i(\pi_{I'}^*\bb{T}_\mca{O}') \simeq
H^i(\pi_{I}^*\bb{T}\otimes_{\mca{O}} \mca{O}') 
\simeq H^i(\pi_{I}^*\bb{T}) \otimes_{\mca{O}}\mca{O}' 
\end{eqnarray}
of $\Lambda \otimes_{\mca{O}}\mca{O}'$-modules
for any $i \in \bb{Z}_{\ge 0}$.
Let $\ell$ be a prime number not contained in $\Sigma$.
Since $\mca{O}'$ is faithfully flat over $\mca{O}$, 
the $\mca{O}'$-module 
\[
\bb{T}_{\mca{O}'}/(I'\bb{T}_{\mca{O}'}+(\Frob_\ell -1)\bb{T}_{\mca{O}'})
\simeq \left( \bb{T}/(I\bb{T}+(\Frob_\ell -1)\bb{T}) 
\right) \otimes_{\mca{O}} \mca{O}'
\]
is free of rank one
if and only if the $\mca{O}$-module 
$\bb{T}/(I\bb{T}+(\Frob_\ell -1)\bb{T})$ 
is free of rank one. 
So, we obtain 
\begin{eqnarray}\label{NOO'comparison}
\mca{N}(\bb{T},I)=\mca{N}(\bb{T}_{\mca{O}'},I').
\end{eqnarray}
For any $n \in \mca{N}(\bb{T},I)$,
we denote by $\mf{C}(\boldsymbol{c}';I';n)_{\mca{O}'}$
the ideal of $\Lambda_{\mca{O}'}/I'$ 
constructed in Definition \ref{dfnidealC(n)} (iv) for the new data 
\[
(\Lambda_{\mca{O}'}, 
\Lambda_{\mca{O}'}, 
\bb{T}_{\mca{O}'} , \boldsymbol{c} \otimes 1,\boldsymbol{x}').
\]
Similarly, we define $\mf{C}_i(\boldsymbol{c}';I')_{\mca{O}'}$
and $\mf{C}_i(\boldsymbol{c})_{\mca{O}'}$.
We can easily check that the new data satisfies all conditions
required in Theorem \ref{thmuncondresults} 
if the old data 
$(\Lambda, \bb{T}, \boldsymbol{c})$ satisfies them.

\begin{lem}\label{lemCiOO'1}
The following hold.
\begin{enumerate}[{\rm (i)}]
\item For any $n \in \mca{N}(\bb{T},I)$,
we have $\mf{C}(\boldsymbol{c}';I';n)_{\mca{O}'}=
\mf{C}(\boldsymbol{c};I;n){\mca{O}'}$.
\item For any $i \in \bb{Z}_{\ge 0}$,
we have $\mf{C}_i(\boldsymbol{c}';I')_{\mca{O}'}=
\mf{C}_i(\boldsymbol{c};I){\mca{O}'}$.
\end{enumerate}
\end{lem}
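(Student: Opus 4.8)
The plan is to verify that every step in the construction of $\mf{C}(\boldsymbol{c};I;n)$ commutes with the base change $\mca{O}\hookrightarrow\mca{O}'$, which is finite free and in particular faithfully flat, and then to assemble these compatibilities. The heart of the matter is that taking the ideal of Kolyvagin images commutes with flat base change because $\Hom$ does so for finitely presented modules.

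First I would record the base-change identifications. Since $\mca{O}'$ is free of finite rank over $\mca{O}$, we have $\Lambda_{\mca{O}'}=\Lambda\otimes_{\mca{O}}\mca{O}'$, $I'=I\Lambda_{\mca{O}'}$, and $(\Lambda_{\mca{O}'}/I')[H_n]=\Lambda_{/I,[n]}\otimes_{\mca{O}}\mca{O}'$; moreover, exactly as in (\ref{HiOO'}), there is a natural isomorphism
\[
H^1(\pi_{I',[n]}^*\bb{T}_{\mca{O}'})\simeq H^1(\pi_{I,[n]}^*\bb{T})\otimes_{\mca{O}}\mca{O}'
\]
of $(\Lambda_{\mca{O}'}/I')[H_n]$-modules. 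Under these identifications the Kolyvagin derivative is compatible with base change: the operators $D_\ell$, $D_n$, the permutation sums indexed by $\mf{S}_{\mathrm{Prime}(n)}$ and the auxiliary elements $A(n;\alpha,\ell)$ depend only on the groups $H_\ell$, on the characteristic polynomials $P_\ell(x;\bb{T})=P_\ell(x;\bb{T}_{\mca{O}'})$ and on the Euler system $\boldsymbol{c}'=\boldsymbol{c}\otimes 1$; hence, choosing for the new data the images of the $A(n;\alpha,\ell)$ chosen for the old data, the class $d^{\mathrm{univ}}_n(\boldsymbol{c}')_{I'}$ corresponds to $d^{\mathrm{univ}}_n(\boldsymbol{c})_I\otimes 1$. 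Finally, by (\ref{NOO'comparison}) the membership $n\in\mca{N}(\bb{T},I)$ is unchanged under base change, and the norm isomorphism $x\mapsto N_{H_n}x$ for the new data is the base change of the one for the old data, since $N_{H_n}$ is the same group-ring element and $H_n$-invariants commute with the flat extension $\otimes_{\mca{O}}\mca{O}'$.

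Next I would establish the key step: forming the ideal of Kolyvagin images commutes with base change, i.e. $\mca{KI}(\boldsymbol{c}';I';n)=\mca{KI}(\boldsymbol{c};I;n)\cdot\bigl((\Lambda_{\mca{O}'}/I')[H_n]\bigr)$. Since $\Lambda_{/I,[n]}$ is Noetherian (indeed Artinian), $H^1(\pi_{I,[n]}^*\bb{T})$ is finitely presented over it, and $(\Lambda_{\mca{O}'}/I')[H_n]$ is free of finite rank over $\Lambda_{/I,[n]}$, so the natural map
\[
\Hom_{\Lambda_{/I,[n]}}\bigl(H^1(\pi_{I,[n]}^*\bb{T}),\Lambda_{/I,[n]}\bigr)\otimes_{\mca{O}}\mca{O}'\longrightarrow\Hom_{(\Lambda_{\mca{O}'}/I')[H_n]}\bigl(H^1(\pi_{I',[n]}^*\bb{T}_{\mca{O}'}),(\Lambda_{\mca{O}'}/I')[H_n]\bigr)
\]
is an isomorphism. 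Thus every functional on $H^1(\pi_{I',[n]}^*\bb{T}_{\mca{O}'})$ is a finite $\mca{O}'$-linear combination $\sum_j a_j(f_j\otimes 1)$ with the $f_j$ functionals over $\mca{O}$, so evaluating at $d^{\mathrm{univ}}_n(\boldsymbol{c}')_{I'}$ produces $\sum_j a_j\bigl(f_j(d^{\mathrm{univ}}_n(\boldsymbol{c})_I)\otimes 1\bigr)$, giving the displayed equality of ideals; taking inverse images under the base-changed norm isomorphism then yields $\mf{C}(\boldsymbol{c}';I';n)_{\mca{O}'}=\mf{C}(\boldsymbol{c};I;n)\mca{O}'$, which is (i). Part (ii) is then formal: by (\ref{NOO'comparison}) the index sets $\mca{N}_{\boldsymbol{m}}(i)$ attached to the old and new data coincide, and the ideal generated by a base-changed family of generators is the base change of the ideal they generate, whence $\mf{C}_i(\boldsymbol{c}';I')_{\mca{O}'}=\mf{C}_i(\boldsymbol{c};I)\mca{O}'$. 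The only point that requires genuine care is the $\Hom$-base-change isomorphism above, together with checking that the auxiliary elements $A(n;\alpha,\ell)$ for the two data can be taken to match; everything else amounts to tracking natural identifications, so I do not anticipate a serious obstacle in carrying out this plan.
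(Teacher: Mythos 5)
Your proposal is correct and takes essentially the same approach as the paper: the key step in both is the Hom base-change isomorphism $\Hom_{\Lambda_{\mca{O}'}/I'}(H^1(\pi_{I'}^*\bb{T}_{\mca{O}'}),\Lambda_{\mca{O}'}/I')\simeq\Hom_{\Lambda/I}(H^1(\pi_{I}^*\bb{T}),\Lambda/I)\otimes_{\mca{O}}\mca{O}'$, obtained from (\ref{HiOO'}) together with the freeness of $\mca{O}'$ over $\mca{O}$, and then one reads off (i) from the definitions and deduces (ii) from (\ref{NOO'comparison}). You verify in addition some compatibilities (the $A(n;\alpha,\ell)$, the norm isomorphism, the index sets) that the paper leaves implicit, but these are routine and do not change the substance of the argument.
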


\begin{proof}
By the equality (\ref{NOO'comparison}), 
the assertion (i) implies the assertion (ii).
So, it suffices to show the assertion (i).
We have 
\begin{align*}
\Hom_{\Lambda_{\mca{O}'}/I'}(H^1(\pi_{I'}^*\bb{T}_\mca{O}') 
,\Lambda_{\mca{O}'}/I')
&=\Hom_{\Lambda/I}(H^1(\pi_{I}^*\bb{T}) 
,\Lambda_{\mca{O}'}/I') \\
& =\Hom_{\Lambda/I}(H^1(\pi_{I}^*\bb{T}) 
,\Lambda/I)\otimes_{\mca{O}}\mca{O}'.
\end{align*}
Indeed, the first equality follows from (\ref{HiOO'})
for $i=1$, and
the second equality holds since
$\Lambda_{\mca{O}'}/I'=\Lambda/I\otimes_{\mca{O}}\mca{O}'$
is a free $\Lambda/I$-module.
Hence by the definition of the ideals
$\mf{C}(\boldsymbol{c};I;n)$ and
$\mf{C}(\boldsymbol{c}';I';n)_{\mca{O}'}$, 
we obtain the assertion (i).
\end{proof}

Here, let us prove Proposition \ref{corOO'} below, 
which states that the ideals $\mf{C}_i(\boldsymbol{c})$
is compatible with base change and descent arguments 
along extension of the coefficient rings. 
We need some lemmas.

\begin{lem}\label{lemROO'descent}
Let $J$ be an ideal of $\Lambda/I$.
Then, as a subset of $\Lambda_{\mca{O}'}/I'$
we have $J=J\Lambda_{\mca{O}'}/I' \cap \Lambda/I$.
\end{lem}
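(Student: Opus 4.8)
The plan is to prove the statement $J = J\Lambda_{\mca{O}'}/I' \cap \Lambda/I$ by exploiting that $\mca{O}'$ is a faithfully flat $\mca{O}$-module of finite rank, so that $\Lambda_{\mca{O}'}/I' = (\Lambda/I)\otimes_{\mca{O}}\mca{O}'$ is a faithfully flat $\Lambda/I$-algebra. Write $S := \Lambda/I$ and $S' := \Lambda_{\mca{O}'}/I' = S\otimes_{\mca{O}}\mca{O}'$; the claim becomes the statement that for any ideal $J \subseteq S$ one has $J S' \cap S = J$, where $S$ is regarded as a subring of $S'$ via the (split injective, since $\mca{O}\to\mca{O}'$ splits as $\mca{O}$-modules) structure map.

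First I would record the inclusion $J \subseteq J S' \cap S$, which is immediate. For the reverse inclusion, the key step is faithful flatness: tensoring the exact sequence $0 \to J \to S \to S/J \to 0$ of $S$-modules with the flat $S$-algebra $S'$ yields an exact sequence $0 \to J\otimes_S S' \to S' \to (S/J)\otimes_S S' \to 0$, and the natural map $J\otimes_S S' \to S'$ has image exactly $JS'$; hence $JS' = \ker(S' \to (S/J)\otimes_S S')$. Now intersect with $S$: an element $s \in S$ lies in $JS'$ iff its image in $(S/J)\otimes_S S' = (S/J)\otimes_{\mca{O}}\mca{O}'$ vanishes. But $S/J \to (S/J)\otimes_{\mca{O}}\mca{O}'$ is injective because $\mca{O}\to\mca{O}'$ is faithfully flat (equivalently, split injective as $\mca{O}$-modules, so $-\otimes_{\mca{O}}\mca{O}'$ is injective on any $\mca{O}$-module). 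Therefore the image of $s$ in $S/J$ already vanishes, i.e. $s \in J$, which gives $JS'\cap S \subseteq J$.

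Alternatively, and perhaps more cleanly for the write-up, I would argue directly with a splitting: choose an $\mca{O}$-module decomposition $\mca{O}' = \mca{O}\oplus M$ (possible since $\mca{O}'$ is free of finite rank over the DVR $\mca{O}$, or at least projective, and in any case $\mca{O}$ is a direct summand because the trace or a chosen $\mca{O}$-basis containing $1$ splits it). This induces $S' = S\otimes_{\mca{O}}\mca{O}' = S \oplus (S\otimes_{\mca{O}}M)$ as $S$-modules, and correspondingly $JS' = J \oplus (J\otimes_{\mca{O}}M)$ since $-\otimes_{\mca{O}}M$ is exact on the relevant sequences ($M$ being $\mca{O}$-flat). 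Intersecting with the summand $S$ then visibly returns $J$. I would present whichever of these two is shortest; the faithful-flatness phrasing is probably cleanest.

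The only genuinely delicate point—hardly an obstacle—is making sure the coefficient ring $\mca{O}$ really does split off of $\mca{O}'$ as an $\mca{O}$-module, so that $J\mapsto JS'$ reflects inclusions; this follows because $\mca{O}'$ is free over $\mca{O}$ with $1$ extendable to a basis, or simply because $\mca{O}\hookrightarrow\mca{O}'$ is a faithfully flat extension of Noetherian rings, so $\mf{a}\mca{O}'\cap\mca{O} = \mf{a}$ for every ideal $\mf{a}$ of $\mca{O}$, and the same argument globalizes after base change to $\Lambda/I$. Everything else is the standard exactness bookkeeping above, and no further input beyond faithful flatness of $\mca{O}'/\mca{O}$ is needed.
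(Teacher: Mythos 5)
Your primary argument is correct and is essentially the same as the paper's: the paper also sets $\tilde J := J\Lambda_{\mca{O}'}/I' \cap \Lambda/I$, notes $\tilde J (\Lambda_{\mca{O}'}/I') = J(\Lambda_{\mca{O}'}/I')$, identifies both sides with $\tilde J\otimes_{\mca{O}}\mca{O}'$ and $J\otimes_{\mca{O}}\mca{O}'$ by flatness, and then concludes $J=\tilde J$ by faithful flatness of $\mca{O}'$ over $\mca{O}$. Your phrasing via the kernel of $S'\to (S/J)\otimes_S S'$ is just a repackaging of the same descent, and your alternative splitting argument (using $\mca{O}'=\mca{O}\oplus M$ as $\mca{O}$-modules, which is legitimate since $\mca{O}'/\mca{O}$ is torsion-free over the DVR $\mca{O}$) is also a valid, more elementary variant, though the paper does not take that route.
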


\begin{proof}
Put $\tilde{J}:=J\Lambda_{\mca{O}'}/I' \cap \Lambda/I$.
Clearly, we have  $J \subseteq \tilde{J}$, 
and $\tilde{J}\Lambda_{\mca{O}'}/I'=J\Lambda_{\mca{O}'}/I'$.
Since $\mca{O}'$ is flat over $\mca{O}$, we have
\[
J\Lambda_{\mca{O}'}/I' = J\otimes_{\mca{O}}\mca{O}' \subseteq
\Lambda_{\mca{O}'}/I'=\Lambda/I\otimes_{\mca{O}}\mca{O}'.
\]
Similarly, we have  
$\tilde{J}\Lambda_{\mca{O}'}/I' = \tilde{J}\otimes_{\mca{O}}\mca{O}'$, 
so we obtain 
$J\otimes_{\mca{O}}\mca{O}'= \tilde{J}\otimes_{\mca{O}}\mca{O}'$.
The ring $\mca{O}'$ is faithfully flat over $\mca{O}$, so we obtain 
$J= \tilde{J}$.
\end{proof}

By Lemma \ref{lemCiOO'1} and Lemma \ref{lemROO'descent}, 
we immediately obtain the following lemma.

\begin{lem}\label{lemCiOO'2}
The following hold.
\begin{enumerate}[{\rm (i)}]
\item For any $n \in \mca{N}(\bb{T},I)$,
we have $\mf{C}(\boldsymbol{c};I;n)=
\mf{C}(\boldsymbol{c}';I';n)_{\mca{O}'} \cap \Lambda/I$.
\item For any $i \in \bb{Z}_{\ge 0}$,
we have $\mf{C}_i(\boldsymbol{c};I)_{\mca{O}}=
\mf{C}_i(\boldsymbol{c}';I')_{\mca{O}'}$.
\end{enumerate}
\end{lem}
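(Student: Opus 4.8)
The plan is to deduce the lemma formally from the two results cited just before it, with no new work required. Lemma \ref{lemCiOO'1} tells us that extending scalars from $\mca{O}$ to $\mca{O}'$ carries $\mf{C}(\boldsymbol{c};I;n)$ to its extension $\mf{C}(\boldsymbol{c};I;n)\Lambda_{\mca{O}'}/I'=\mf{C}(\boldsymbol{c}';I';n)_{\mca{O}'}$ inside $\Lambda_{\mca{O}'}/I'$, and likewise carries $\mf{C}_i(\boldsymbol{c};I)$ to $\mf{C}_i(\boldsymbol{c}';I')_{\mca{O}'}$; Lemma \ref{lemROO'descent} tells us that every ideal $J$ of $\Lambda/I$ is recovered from its extension by $J=J\Lambda_{\mca{O}'}/I'\cap\Lambda/I$. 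Throughout I would use the identification $I'=I\Lambda_{\mca{O}'}$ recorded just before Lemma \ref{lemCiOO'1}, so that $\Lambda/I$ sits canonically inside $\Lambda_{\mca{O}'}/I'=(\Lambda/I)\otimes_{\mca{O}}\mca{O}'$ and the intersection makes sense.

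For assertion (i) I would take $J:=\mf{C}(\boldsymbol{c};I;n)$, which is an ideal of $\Lambda/I$ by Definition \ref{dfnidealC(n)}; then Lemma \ref{lemROO'descent} gives $\mf{C}(\boldsymbol{c};I;n)=\mf{C}(\boldsymbol{c};I;n)\Lambda_{\mca{O}'}/I'\cap\Lambda/I$, and Lemma \ref{lemCiOO'1}(i) rewrites the first factor as $\mf{C}(\boldsymbol{c}';I';n)_{\mca{O}'}$, which is exactly the claim. For assertion (ii) I would run the identical argument with $J:=\mf{C}_i(\boldsymbol{c};I)$, invoking Lemma \ref{lemCiOO'1}(ii) in place of part (i), to obtain $\mf{C}_i(\boldsymbol{c};I)=\mf{C}_i(\boldsymbol{c}';I')_{\mca{O}'}\cap\Lambda/I$. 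Here the equality $\mca{N}(\bb{T},I)=\mca{N}(\bb{T}_{\mca{O}'},I')$ from (\ref{NOO'comparison}) is what guarantees that the generators defining $\mf{C}_i(\boldsymbol{c}';I')_{\mca{O}'}$ are indexed by the same $n$, so that this ideal is genuinely the $\mca{O}'$-extension of $\mf{C}_i(\boldsymbol{c};I)$.

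The only substantive input—hence the only candidate for an obstacle—is the faithful flatness of $\mca{O}'$ over $\mca{O}$ (in fact $\mca{O}'$ is free of finite rank over $\mca{O}$), which makes $\Lambda_{\mca{O}'}/I'$ faithfully flat over $\Lambda/I$ and so licenses Lemma \ref{lemROO'descent}; but this is already built into the proofs of both cited lemmas, so there is essentially nothing left to check. In short, the present lemma is a one-line combination: compute the extension with Lemma \ref{lemCiOO'1}, then intersect back down with Lemma \ref{lemROO'descent}.
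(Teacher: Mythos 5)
Your proposal is correct and takes exactly the paper's approach: the paper declares Lemma~\ref{lemCiOO'2} to be an immediate consequence of Lemma~\ref{lemCiOO'1} and Lemma~\ref{lemROO'descent}, and your argument is precisely the spelled-out combination of those two (apply the descent identity of Lemma~\ref{lemROO'descent} to the ideal in question, then rewrite its scalar extension via Lemma~\ref{lemCiOO'1}). Your reading of assertion~(ii) as the descent statement $\mf{C}_i(\boldsymbol{c};I)=\mf{C}_i(\boldsymbol{c}';I')_{\mca{O}'}\cap\Lambda/I$ is also the intended one, consistent with Proposition~\ref{corOO'}.
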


By the above arguments, we deduce the following.

\begin{prop}\label{corOO'}
We have 
$\mf{C}_i(\boldsymbol{c}')_{\mca{O}'}
=\mf{C}_i(\boldsymbol{c}){\mca{O}'}$
and 
$\mf{C}_i(\boldsymbol{c})
=\mf{C}_i(\boldsymbol{c}')_{\mca{O}'} \cap \Lambda'
$
for any $i \in \bb{Z}_{\ge 0}$. 
\end{prop}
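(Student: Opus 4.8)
The plan is to pass to the projective limit over $\boldsymbol{m}$ in Lemma \ref{lemCiOO'2}. First I would fix a monic parameter system; by Proposition \ref{propindepofparam} it suffices to work with the standard one, so I use $\boldsymbol{x}=(\varpi,x_1,\dots,x_r)$ on the $\mca{O}$-side and $\boldsymbol{x}'=(\varpi',x_1,\dots,x_r)$ on the $\mca{O}'$-side. For each $\boldsymbol{m}=(m_0,\dots,m_r)\in(\bb{Z}_{>0})^{r+1}$ set $\boldsymbol{m}'=(em_0,\boldsymbol{m}_{\ge 1})$ as in the discussion preceding Lemma \ref{lemCiOO'1}, so that $I':=I_{\mca{O}'}(\boldsymbol{x}'^{\boldsymbol{m}'})=I(\boldsymbol{x}^{\boldsymbol{m}})\mca{O}'$ with $I:=I(\boldsymbol{x}^{\boldsymbol{m}})$. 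Note that as $\boldsymbol{m}$ ranges over $(\bb{Z}_{>0})^{r+1}$, the family $\boldsymbol{m}'$ is cofinal among the exponent vectors defining the standard filtration of $\Lambda_{\mca{O}'}$ (the first coordinate runs through $e\bb{Z}_{>0}$, which is cofinal in $\bb{Z}_{>0}$), so the projective limit computing $\mf{C}_i(\boldsymbol{c}')_{\mca{O}'}$ may be taken along this subfamily.

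Next I would invoke Lemma \ref{lemCiOO'2}(ii), which gives $\mf{C}_i(\boldsymbol{c}';I')_{\mca{O}'}=\mf{C}_i(\boldsymbol{c};I)_{\mca{O}}$; here the right-hand side is literally $\mf{C}_i(\boldsymbol{c};I)\otimes_{\mca{O}}\mca{O}'=\mf{C}_i(\boldsymbol{c};I)\Lambda_{\mca{O}'}/I'$. Taking $\varprojlim_{\boldsymbol{m}}$ on both sides and using the cofinality just noted, together with Definition \ref{dfnC_i}, yields
\[
\mf{C}_i(\boldsymbol{c}')_{\mca{O}'}
=\varprojlim_{\boldsymbol{m}}\bigl(\mf{C}_i(\boldsymbol{c};I(\boldsymbol{x}^{\boldsymbol{m}}))\Lambda_{\mca{O}'}/I'\bigr).
\]
To identify this with $\mf{C}_i(\boldsymbol{c})\mca{O}'=\mf{C}_i(\boldsymbol{c})\otimes_{\mca{O}}\mca{O}'$, I would use that $\mca{O}'$ is a finite free $\mca{O}$-module, hence $-\otimes_{\mca{O}}\mca{O}'$ commutes with the countable projective limit $\varprojlim_{\boldsymbol{m}}$ of the surjective system $\{\Lambda/I(\boldsymbol{x}^{\boldsymbol{m}})\}$; this gives $\varprojlim_{\boldsymbol{m}}\mf{C}_i(\boldsymbol{c};I(\boldsymbol{x}^{\boldsymbol{m}}))\otimes_{\mca{O}}\mca{O}'=\mf{C}_i(\boldsymbol{c})\otimes_{\mca{O}}\mca{O}'$, proving the first equality $\mf{C}_i(\boldsymbol{c}')_{\mca{O}'}=\mf{C}_i(\boldsymbol{c})\mca{O}'$.

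For the descent equality $\mf{C}_i(\boldsymbol{c})=\mf{C}_i(\boldsymbol{c}')_{\mca{O}'}\cap\Lambda$, I would combine the first equality with faithful flatness of $\mca{O}'/\mca{O}$ in the form of Lemma \ref{lemROO'descent}: for a finitely generated ideal $J$ of $\Lambda$ one has $J\Lambda_{\mca{O}'}\cap\Lambda=J$ (the proof is identical to that of Lemma \ref{lemROO'descent}, replacing $\Lambda/I$ by $\Lambda$, using that $\mca{O}'$ is flat, so $J\Lambda_{\mca{O}'}=J\otimes_{\mca{O}}\mca{O}'$, and faithfully flat, so $J=J\Lambda_{\mca{O}'}\cap\Lambda$). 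Applying this with $J=\mf{C}_i(\boldsymbol{c})$ and using $\mf{C}_i(\boldsymbol{c}')_{\mca{O}'}=\mf{C}_i(\boldsymbol{c})\Lambda_{\mca{O}'}$ finishes the proof. The only mildly delicate point is the interchange of $\otimes_{\mca{O}}\mca{O}'$ with the projective limit over $\boldsymbol{m}$; since $\mca{O}'$ is free of finite rank over $\mca{O}$ this is automatic, so I expect no real obstacle — essentially all the content has already been packaged into Lemmas \ref{lemCiOO'1}, \ref{lemROO'descent}, and \ref{lemCiOO'2}, and this proposition is their limit form.
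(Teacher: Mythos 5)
Your proof is correct and takes essentially the same route as the paper, which simply states that the proposition follows ``by the above arguments'' from Lemmas \ref{lemCiOO'1}, \ref{lemROO'descent}, and \ref{lemCiOO'2}. You have correctly filled in the passage to the projective limit, including the cofinality of the subfamily $\{\boldsymbol{m}'=(em_0,\boldsymbol{m}_{\ge 1})\}$ and the commutation of $-\otimes_{\mca{O}}\mca{O}'$ with $\varprojlim$, which is valid since $\mca{O}'$ is a free $\mca{O}$-module of finite rank and the transition maps in the system are surjective.
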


In the end of this section, we give a remark on 
the behavior of the Selmer group side
along the extension of the rings of coefficients. 
The following holds.

\begin{prop}\label{lemfittscext}
We have
\[
\Fitt_{\Lambda_{\mca{O}'},i}(X( \bb{T}_{\mca{O}'}))
=\Fitt_{\Lambda,i}(X(\bb{T}))\Lambda_{\mca{O}'}.
\]
\end{prop}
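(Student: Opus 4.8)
The plan is to deduce Proposition \ref{lemfittscext} from the base-change property of higher Fitting ideals (property (ii) listed just after Definition \ref{defFittid}) applied to the flat ring extension $\Lambda \hookrightarrow \Lambda_{\mca{O}'}$, combined with the isomorphism $X(\bb{T}_{\mca{O}'}) \simeq X(\bb{T}) \otimes_{\mca{O}} \mca{O}'$. First I would establish this isomorphism: by Lemma \ref{lemXH2} we have $X(\bb{T}) \simeq H^2_{\Sigma}(\bb{T})$ and $X(\bb{T}_{\mca{O}'}) \simeq H^2_{\Sigma}(\bb{T}_{\mca{O}'})$, so it suffices to show $H^2_{\Sigma}(\bb{T}_{\mca{O}'}) \simeq H^2_{\Sigma}(\bb{T}) \otimes_{\mca{O}} \mca{O}'$. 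Since $\mca{O}'$ is a finite free $\mca{O}$-module, $\bb{T}_{\mca{O}'} = \bb{T} \otimes_{\mca{O}} \mca{O}'$ as a $G_{\bb{Q},\Sigma}$-module is simply a finite direct sum of copies of $\bb{T}$ (non-canonically), and continuous cohomology commutes with finite direct sums; more precisely, $\mca{O}'$ being a flat $\mca{O}$-module of finite presentation, we get $H^i_{\Sigma}(\bb{T} \otimes_{\mca{O}} \mca{O}') \simeq H^i_{\Sigma}(\bb{T}) \otimes_{\mca{O}} \mca{O}'$ for all $i$, which is exactly the $i=2$ instance of the isomorphism (\ref{HiOO'}) already used in the paper (or can be proved the same way).

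Once this identification is in hand, I would invoke the base-change property of Fitting ideals: for the structure map $\iota \colon \Lambda \to \Lambda_{\mca{O}'}$ and the finitely presented $\Lambda$-module $M := X(\bb{T})$, we have $\Fitt_{\Lambda_{\mca{O}'},i}(\iota^* M) = \iota(\Fitt_{\Lambda,i}(M)) \Lambda_{\mca{O}'} = \Fitt_{\Lambda,i}(X(\bb{T}))\Lambda_{\mca{O}'}$. Here $\iota^* M = M \otimes_{\Lambda} \Lambda_{\mca{O}'} = X(\bb{T}) \otimes_{\mca{O}} \mca{O}' \simeq X(\bb{T}_{\mca{O}'})$ by the previous paragraph, and since isomorphic modules have the same Fitting ideals, the proposition follows. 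One should note that $X(\bb{T})$ is finitely presented over $\Lambda$: it is finitely generated (stated in the text), and $\Lambda$ is Noetherian, so finitely generated implies finitely presented, which is the hypothesis needed in Definition \ref{defFittid}.

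The main obstacle, such as it is, is purely bookkeeping: making sure the identification $X(\bb{T}_{\mca{O}'}) \simeq X(\bb{T}) \otimes_{\Lambda} \Lambda_{\mca{O}'}$ is $\Lambda_{\mca{O}'}$-linear (not merely $\Lambda$-linear or additive), so that the base-change formula for Fitting ideals genuinely applies. This is clear because the isomorphism (\ref{HiOO'}) is explicitly asserted to be an isomorphism of $\Lambda \otimes_{\mca{O}} \mca{O}'$-modules, and the identification $X \simeq H^2_{\Sigma}$ of Lemma \ref{lemXH2} is $\Lambda$-linear and compatible with the $\mca{O}'$-action. Thus the proof is a short three-line argument: identify the module, identify the base change, apply the base-change property of Fitting ideals. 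I would write it out in essentially that form, citing Lemma \ref{lemXH2}, the isomorphism (\ref{HiOO'}) with $i=2$, and property (ii) following Definition \ref{defFittid}.
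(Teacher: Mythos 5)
Your proof is correct and takes essentially the same approach as the paper: identify $X(\bb{T}_{\mca{O}'}) \simeq X(\bb{T}) \otimes_{\mca{O}} \mca{O}'$ as $\Lambda_{\mca{O}'}$-modules (via Lemma \ref{lemXH2} together with the finite-free base-change isomorphism (\ref{HiOO'}) applied with $i=2$), then invoke the base-change property of higher Fitting ideals. The paper's printed proof refers to ``the definition of the ideal $\mf{C}_i(\boldsymbol{c})$,'' which appears to be a slip for the definition of $X$; your rendering of the same argument is cleaner and more self-contained.
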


\begin{proof}
Since $\mca{O}'$ is 
a free $\mca{O}$-module of finite rank, 
we have an isomorphism
\[
H^i(\pi_{I,[n]}^*\bb{T}\otimes_{\mca{O}} \mca{O}') 
\simeq H^i(\pi_{I,[n]}^*\bb{T}) \otimes_{\mca{O}}\mca{O}' 
\]
of $\Lambda_{\mca{O}'}$-modules
for any $i \in \bb{Z}_{\ge 0}$, 
any $n \in \mca{N}_{\Sigma}$ and 
any ideal $I$ of $\Lambda$.
So, by definition of 
the ideal $\mf{C}_i(\boldsymbol{c})$
and the base change property of Fitting ideals,
we obtain the assertion of Proposition \ref{lemfittscext}. 
\end{proof}

\subsection{Affine transformations}

Here, we introduce affine transformations on
the ring $\Lambda=\Lambda^{(r)}$, 
and show that the ideals $\mf{C}_i(\boldsymbol{c})$
are stable under affine transformations.
(Note that we do not use this property 
in the proof of our main results.)

Let us define affine transformations.
For any $A \in \mathrm{GL}_{r}(\mca{O})$
and any $v \in (\varpi \mca{O})^{\oplus r}$,
we define an automorphism
\[
T(A,v) \colon \Lambda^{(r)}
\xrightarrow{\ \simeq\ } \Lambda^{(r)};\ 
f(\boldsymbol{x}) \longrightarrow f(A\boldsymbol{x}+v),
\]
where we regard $\boldsymbol{x}=(x_i)_{i=1}^r$ and $v$
as column vectors.
We call this automorphism 
\textit{an affine transformation} 
on $\Lambda=\Lambda^{(r)}$.

Now, we introduce certain special affine transformations
called elementary affine transformations.
First, let us recall the definition of elementary matrices.
\begin{enumerate}[(i)]
\item For any $u \in \mca{O}^\times$ and 
any $\nu \in \bb{Z}$ 
with $1 \le \nu \le r$, we define a matrix
\(
P_\nu (u)=(c_{ij})_{i,j} \in \mathrm{GL}_{r}(\mca{O})
\)
by
\[
c_{ij}:=\begin{cases}
u & (\text{if $i=j=\nu$}) \\
1 & (\text{if $i=j \ne \nu$}) \\
0 & (\text{if $i\ne j$}).
\end{cases}
\] 
\item Let $\nu,\mu \in \bb{Z}$ be distinct integers
with $1 \le \nu,\mu \le r$.
Then, we define 
a matrix
\(
Q_{\mu,\nu}=
(c_{ij})_{i,j} \in \mathrm{GL}_{r}(\mca{O})
\)
by
\[
c_{ij}:=\begin{cases}
1 & (\text{if $i=j$, and if $i \notin \{ \mu, \nu \}$}) \\
1 & (\text{if $(i,j)=(\mu,\nu),(\nu,\mu)$}) \\
0 & (\text{otherwise}).
\end{cases}
\] 
\item Let $\nu,\mu \in \bb{Z}$ be integers
with $1 \le \nu,\mu \le r$ and $\mu > \nu$.
For any $a \in \mca{O}$, 
we define a matrix
\(
R_{\mu,\nu} (a)=
(c_{ij})_{i,j} \in \mathrm{GL}_{r}(\mca{O})
\)
by
\[
c_{ij}:=\begin{cases}
1 & (\text{if $i=j$}) \\
a & (\text{if $(i,j)=(\mu,\nu)$}) \\
0 & (\text{otherwise}).
\end{cases}
\] 
\end{enumerate}
The matrices of the form 
$P_{\nu}(u)$, $Q_{\mu,\nu}$ 
or $R_{\mu,\nu}(a)$ 
are called \textit{elementary matrices}.
Note that since $\mca{O}$ is a local ring, 
any element $A \in \mathrm{GL}_{r}(\mca{O})$
is decomposed into a product of elementary matrices. 
For any $a \in \varpi\mca{O}$ and 
any $\nu \in \bb{Z}$ with $1 \le \nu \le r$, 
we define an element $\delta_\nu(a)=(v_i)_{i=1}^{r} 
\in (\varpi\mca{O})^{\oplus r}$ by 
$v_\nu:=a$ and $v_j:=0$ if $j\ne \nu$.
An affine transformation $T(A,v)$ is called 
\textit{an elementary affine transformation}
if the pair $(A,v)$ is one of the following:
\begin{itemize}
\item The matrix $A$ is elementary, and $v=0$
\item The matrix $A$ is the identity matrix, 
and $v=\delta_i(a)$ for some 
$a \in \varpi\mca{O}$ and 
$\nu \in \bb{Z}$ with $1 \le \nu \le r$.
\end{itemize}
Note that any affine transformation is 
a composite of finitely many elementary 
affine transformations.

\begin{prop}\label{lemafftrans}
Let $A \in \mathrm{GL}_{r}(\mca{O})$ and
$v=(v_i)_{i=1}^{r} \in (\varpi \mca{O})^{\oplus r}$ 
be arbitrary elements.
We put 
\[
\boldsymbol{y}=(y_i)_{i=1}^{r}:=A\boldsymbol{x}+v 
\in (\mf{m}_{\Lambda^{(r)}})^{\oplus r}. 
\]
Then, for any $i \in \bb{Z}_{\ge 0}$,
we have 
\[
T(A,v)(\mf{C}_i(\boldsymbol{c};\boldsymbol{x}))
=\mf{C}_i(\boldsymbol{c};\boldsymbol{y})=\mf{C}_i(\boldsymbol{c};\boldsymbol{x}).
\]
\end{prop}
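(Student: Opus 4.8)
The plan is to reduce the statement to the case of an \emph{elementary} affine transformation and then dispose of the finitely many elementary types by combining the functoriality of the construction of $\mf{C}_i$ under ring automorphisms of $\Lambda$ with the independence result of Proposition \ref{propindepofparam}. Since every affine transformation is a composite of elementary ones and both asserted equalities are preserved under composition, this reduction is legitimate; but to make it clean I would first slightly extend the setup, as described next.

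First I would extend the assignment $\boldsymbol{h}\mapsto\mf{C}_i(\boldsymbol{c};\boldsymbol{h})$, together with Proposition \ref{propindepofparam}, from monic parameter systems to arbitrary systems of parameters $\boldsymbol{h}=(h_0,\dots,h_r)$ of $\Lambda$ with $h_0\in\varpi\mca{O}\setminus\{0\}$. This is routine: $\Lambda$ is regular local, so every such $\boldsymbol{h}$ (and every $\boldsymbol{h}^{\boldsymbol{m}}$) is a regular sequence and $\Lambda/I(\boldsymbol{h}^{\boldsymbol{m}})$ is a zero-dimensional complete intersection, hence Gorenstein; therefore $\Lambda_{/I(\boldsymbol{h}^{\boldsymbol{m}}),[n]}$ is Gorenstein and self-injective (Lemma \ref{leminjmod} holds), the Koszul-type exact sequence of Lemma \ref{lemmpr}~(iii) holds, and --- using that $\mathrm{ann}_{\Lambda/I'}(I)$ is principal whenever $\Lambda/I'\twoheadrightarrow\Lambda/I$ is a surjection of zero-dimensional Gorenstein rings --- the argument of Lemma \ref{lemascdesc}, and hence of Corollary \ref{corascdesc}, goes through for \emph{any} inclusion $I(\boldsymbol{h}'^{\boldsymbol{m}'})\subseteq I(\boldsymbol{h}^{\boldsymbol{m}})$ of power ideals, possibly coming from different parameter systems $\boldsymbol{h},\boldsymbol{h}'$. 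Interleaving the $\mf{m}$-primary filtrations $\{I(\boldsymbol{h}^{\boldsymbol{m}})\}$ and $\{I(\boldsymbol{h}'^{\boldsymbol{m}'})\}$ then yields $\mf{C}_i(\boldsymbol{c};\boldsymbol{h})=\mf{C}_i(\boldsymbol{c};\boldsymbol{h}')$; this replaces the ``monic congruence'' device of Lemma \ref{lemforindepmps}, the one step genuinely requiring a monic structure. In particular $(\varpi,y_1,\dots,y_r)$ is an admissible parameter system, so $\mf{C}_i(\boldsymbol{c};\boldsymbol{y})=\mf{C}_i(\boldsymbol{c};\boldsymbol{x})$, which is the right-hand equality.

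For the left-hand equality $T(A,v)(\mf{C}_i(\boldsymbol{c};\boldsymbol{x}))=\mf{C}_i(\boldsymbol{c};\boldsymbol{y})$ I would invoke the functoriality of the whole construction under the ring automorphism $\sigma:=T(A,v)$. Since $\sigma$ fixes $\mca{O}$ and $\varpi$ and carries each $x_i$ to $y_i$, it carries $I(\boldsymbol{x}^{\boldsymbol{m}})$ to $I((\varpi,y_1,\dots,y_r)^{\boldsymbol{m}})$ and identifies, compatibly, every ingredient of Definitions \ref{dfnidealC(n)} and \ref{dfnC_i} --- the groups $H^1(\pi^*_{I,[n]}\bb{T})$ and their $H_n$-invariants, the $\Hom$-modules, the sets $\mca{P}(\bb{T},I)$ and $\mca{N}(\bb{T},I)$, the universal Kolyvagin derivatives $\kappa^{\mathrm{univ}}_n(\boldsymbol{c})_I$, and the ideals $\mca{KI}(\boldsymbol{c};I;n)$ of Kolyvagin images --- with the corresponding object read off in the $y$-coordinates; passing to the projective limit over $\boldsymbol{m}$ then gives the identity. (If one prefers to argue elementary type by type: for a translation and for a lower unipotent transvection $R_{\mu,\nu}(a)$ with $\mu>\nu$ the transformed standard system $(\varpi,y_1,\dots,y_r)$ is again literally a monic parameter system of $\Lambda$, so Proposition \ref{propindepofparam} applies directly; for a scaling $P_\nu(u)$ the power ideals are unchanged; and for a transposition $Q_{\mu,\nu}$ the construction is visibly symmetric in the two interchanged variables, since it depends on $\boldsymbol{h}$ only through the filtration $\{I(\boldsymbol{h}^{\boldsymbol{m}})\}$.)

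The main obstacle is the second paragraph: showing that Lemma \ref{lemascdesc} and Corollary \ref{corascdesc} survive the passage from monic parameter systems to general $\mf{m}$-primary complete-intersection ideals, i.e. that $\mf{C}_i(\boldsymbol{c};-)$ is genuinely an invariant of the data. The functoriality bookkeeping of the third paragraph is lengthy but essentially formal; the only points needing care are that, $n$ being prime to $\Sigma$, the representation $\bb{T}$ stays unramified at the primes dividing $n$ (so that, together with (A1)--(A3), $\sigma$ respects the restriction isomorphism of Definition \ref{dfnKD}), and that $\sigma$ fixes $\varpi$, so that it permutes the power ideals $I(\boldsymbol{x}^{\boldsymbol{m}})$ inside the cofinal family used to form the limit.
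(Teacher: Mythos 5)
Your parenthetical alternative is precisely the paper's proof: reduce to an elementary affine transformation, then observe that $P_\nu(u)$ and $Q_{\mu,\nu}$ leave the power ideals $I(\boldsymbol{x}^N)$ unchanged, while $R_{\mu,\nu}(a)$ (with $\mu>\nu$ in the paper's convention, so that $x_\mu+ax_\nu$ is monic in $x_\mu$) and translations $x_\nu\mapsto x_\nu+a$ produce new \emph{monic} parameter systems, to which Proposition \ref{propindepofparam} applies directly. Your main route is a genuinely different and more ambitious argument: you would upgrade Proposition \ref{propindepofparam} from monic parameter systems to arbitrary systems of parameters with $h_0\in\varpi\mca{O}\setminus\{0\}$, replacing the monic-congruence device of Lemmas \ref{lemforindepmps}--\ref{lemforindepmps2} by the Gorenstein-duality fact that $\ann_{\Lambda/I'}(I/I')$ is principal for any inclusion $I'\subseteq I$ of zero-dimensional complete-intersection quotients, so the ascent/descent of Lemma \ref{lemascdesc} goes through with a generator of that annihilator in place of $h_i$. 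If carried out, this is conceptually cleaner (the second equality becomes one application of the upgraded proposition), but it is strictly more work than the paper's argument, which simply reuses the already-proved monic case; in particular you still owe a check that multiplication by the chosen generator $c$ yields a cokernel on which $H^0(\bb{Q}(\mu_n),-)$ vanishes (the (A3) step in Lemma \ref{lemascdesc}), and I would not call that ``routine.'' For the left-hand equality, your ``functoriality under $\sigma$'' mirrors the paper's one-line ``clear by definition,'' so you are not deviating from the paper; but note that $\sigma$ is only a ring automorphism of $\Lambda$ and is not accompanied by a $\sigma$-semilinear, Galois-equivariant automorphism of $\bb{T}$, so the asserted identification of $H^1(\pi^*_{I,[n]}\bb{T})$ with the corresponding group over $\sigma(I)$ is not a tautology and deserves more than a one-line justification in both your write-up and the paper's.
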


\begin{proof}
By the definition of the ideal 
$\mf{C}_i(\mf{c};\boldsymbol{x})$,
the first equality 
\[
T(A,v)(\mf{C}_i(\boldsymbol{c};\boldsymbol{x}))
=\mf{C}_i(\boldsymbol{c};\boldsymbol{y})
\]
is clear.
Let us prove the second equality.
Since any affine transformation is 
decomposed into a composite of elementary 
affine transformations, we may assume that 
$T(A,v)$ is elementary.

Let us consider the case when $v=0$.
First, we assume that $A$ is equal to $P_\mu(u)$ or $Q_{\mu,\nu}$. 
In this case, we have $I(\boldsymbol{x}^N)=I(\boldsymbol{y}^N)$
for any $N \in \bb{Z}_{\ge 1}$.
So, by definition, we have $\mf{C}_i(\boldsymbol{c};I(\boldsymbol{x}^N))=
\mf{C}_i(\boldsymbol{c};I(\boldsymbol{y}^N))$. 
This implies that
$\mf{C}_i(\boldsymbol{c};\boldsymbol{y})
=\mf{C}_i(\boldsymbol{c};\boldsymbol{x})$.
Next, let us assume that $A=R_{\mu,\nu}(a)$.
Then, the system
\[
\boldsymbol{y}=(\varpi, x_1, \dots, x_{\mu-1}, 
x_\mu+a x_\nu, x_{\mu+1}, \dots x_{r})
\]
becomes a monic parameter system. 
Hence by Proposition \ref{propindepofparam}, 
we obtain $\mf{C}_i(\boldsymbol{c};\boldsymbol{y})=\mf{C}_i(\boldsymbol{c};\boldsymbol{x})$.

Let us consider the case when $A=1$.  
Put $v=\delta_\nu (a)$.
In this case, the system
\[
\boldsymbol{y}=(\varpi, x_1, \dots,x_{\nu-1}, x_\nu+a, 
x_{nu+1}, \dots x_{r})
\]
forms a monic parameter system, so Lemma \ref{propindepofparam} implies that 
$\mf{C}_i(\boldsymbol{c};\boldsymbol{y})
=\mf{C}_i(\boldsymbol{c};\boldsymbol{x})$.
\end{proof}

\section{Reduction of $\mf{C}_i(\boldsymbol{c})$}\label{secred}

Let $\Lambda=\Lambda^{(r)}$ and 
$(\bb{T}, \boldsymbol{c})$ be as in Theorem \ref{thmuncondresults}.
In particular, we assume that 
$\boldsymbol{c}$ satisfies (NV).
In the proof of our main results, namely 
Theorem \ref{thmuncondresults}, 
Theorem \ref{thmcondresults1} 
and Theorem \ref{thmcondresults2}, 
a certain properties of the ideals $\mf{C}_i(\boldsymbol{c})$
related to the specialization of the coefficient ring 
called \textit{the weak/strong specialization compatibility}
become a key. 
Roughly speaking, 
the specialization weak (resp.\ strong) compatibility
says that if the reduction map 
$\pi_I \colon \Lambda \longrightarrow \Lambda/I \simeq \Lambda^{(r-1)}$ 
for a certain ideal $I$ is given, then for any $i \in \bb{Z}_{\ge 0}$, 
the image of $\mf{C}_i(\boldsymbol{c})$ by $\pi_I$ 
is contained in 
(resp.\ coincides with) the ideal $\mf{C}_i(\pi_I^* \boldsymbol{c})$.
$\mf{C}_i(\pi_I^* \boldsymbol{c})$ 
defined by the data 
$(\pi_{I}^*\bb{T}, 
\pi_I^*\boldsymbol{c},\mf{N}(\pi_I^*\bb{T};\boldsymbol{x}_{\le r-1}))$.
In this section, we shall study the specialization compatibilities of ideals 
$\mf{C}_i(\boldsymbol{c})$.

In \S \ref{sswsc}, we shall prove
Proposition \ref{propredonevar}, that is, 
the weak specialization compatibility 
for general multi-variable cases.
In \S \ref{ssrmkononevarcase}, we will show 
the strong compatibility in one variable cases, 
namely Theorem \ref{thmonevarcompletered}.
Note that the proof of Theorem \ref{thmonevarcompletered} 
is the most technical part of this article.
In \S \ref{ssrmkoncyclotcase}, 
we shall prove Theorem \ref{thmcyclotcompletered},
which asserts that the strong compatibilities hold
in the case when $\bb{T}$ is a cyclotomic 
deformation of a one variable deformation.

\subsection{Weak specialization compatibility}\label{sswsc}
Here, let us study the weak specialization compatibilities.
We need the notion of linear elements 
in the sense of Ochiai's article \cite{Oc2}.

\begin{dfn}\label{defnLE}
\textit{A linear element} $g$ in $\Lambda=\Lambda^{(r)}$
is a polynomial written in a form
\[
g= a_0 + \sum_{i=1}^{r} a_i x_i \in \Lambda,
\]
where $a_0 \in \varpi \mca{O}$, 
and $(a_1, \dots, a_{r}) \in \mca{O}^{\oplus r} 
\setminus (\varpi \mca{O})^{\oplus r}$.
\end{dfn}

As in \S Notation, namely, the end of \S \ref{secintro}, 
for each ideal $J$ of $\Lambda$, 
we denote by $\mf{C}_i(\boldsymbol{c})_J$
the image of $\mf{C}_i(\boldsymbol{c})$
in $\Lambda/J$.

\begin{prop}\label{propredonevar}
Let $\boldsymbol{h}=(\boldsymbol{x}_{\le r-1},h)$ be 
a monic parameter system of $\Lambda$ 
such that $h$ is a linear element.
We put 
\(
I:=h\Lambda=
I(\boldsymbol{h}^{(\infty, \dots, \infty, 1)})
\).
Let $\pi_I \colon \Lambda \longrightarrow \Lambda/I 
\simeq \Lambda^{(r-1)}$ be 
the reduction map.
Then, We have 
\[
\mf{C}_i(\boldsymbol{c})_I
\subseteq 
\mf{C}_i(\pi_I^* \boldsymbol{c})
:=
\mf{C}_i(\pi_I^* \boldsymbol{c};\boldsymbol{x}_{\le r-1};
\mf{N}(\pi_I^*\bb{T};\boldsymbol{x}_{\le r-1}))
\]
\end{prop}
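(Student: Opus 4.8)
The plan is to use Proposition~\ref{propindepofparam} to compute $\mf{C}_i(\boldsymbol{c})$ with the monic parameter system $\boldsymbol{h}=(\boldsymbol{x}_{\le r-1},h)$, so that $\mf{C}_i(\boldsymbol{c})=\mf{C}_i(\boldsymbol{c};\boldsymbol{h})=\varprojlim_{\boldsymbol{m}}\mf{C}_i(\boldsymbol{c};I(\boldsymbol{h}^{\boldsymbol{m}}))$, and then to reduce the asserted inclusion to a level-by-level comparison. Fix $\boldsymbol{m}'\in(\bb{Z}_{>0})^{r}$ and put $\boldsymbol{m}:=(\boldsymbol{m}',1)\in(\bb{Z}_{>0})^{r+1}$, $J:=I(\boldsymbol{h}^{\boldsymbol{m}})=(\varpi^{m_0'},x_1^{m_1'},\dots,x_{r-1}^{m_{r-1}'},h)$ and $\bar{J}:=I(\boldsymbol{x}_{\le r-1}^{\boldsymbol{m}'})\subseteq\Lambda^{(r-1)}$. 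Since $h\in J$ we have $I\subseteq J$ and $\bar{J}=J/I$, so $\Lambda/J=\Lambda^{(r-1)}/\bar{J}$ canonically via $\pi_I$; under this identification $\pi_J^*\bb{T}$ and its twists $\pi_{J,[n]}^*\bb{T}$ are identified $G_{\bb{Q},\Sigma}$-equivariantly with $\pi_{\bar{J}}^*(\pi_I^*\bb{T})$ and $\pi_{\bar{J},[n]}^*(\pi_I^*\bb{T})$, hence so are their Galois cohomology groups together with the $H_n$-actions.

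With this in place, I would check the three compatibilities needed to compare the building blocks $\mf{C}(\boldsymbol{c};J;n)$. \textbf{(1)} $\mca{N}(\bb{T},J)\subseteq\mca{N}(\pi_I^*\bb{T},\bar{J})$: the condition that a suitable quotient be free of rank one is invariant under the identification $\Lambda/J=\Lambda^{(r-1)}/\bar{J}$, while $\mca{I}_{\bb{T},\ell}\subseteq J$ forces $\mca{I}_{\pi_I^*\bb{T},\ell}=\pi_I(\mca{I}_{\bb{T},\ell})\subseteq\bar{J}$ --- an inclusion only, which is exactly why this argument yields the \emph{weak} and not the strong compatibility. \textbf{(2)} For $n\in\mca{N}(\bb{T},J)$ the universal Kolyvagin class $\kappa^{\mathrm{univ}}_n(\boldsymbol{c})_{J}\in H^1(\pi_J^*\bb{T})\otimes_{\bb{Z}}H_n^{\otimes}$ corresponds, under the identification, to $\kappa^{\mathrm{univ}}_n(\pi_I^*\boldsymbol{c})_{\bar{J}}$; this reduces to the compatibility of the Kolyvagin derivatives $\kappa(\boldsymbol{c};d)_{J}$ with reduction mod $h$, which follows from their characterization in Definition~\ref{dfnKD} as the unique lift of the evidently compatible classes $D_d\,c(d)_{J}$, together with the compatibility of the coefficients $e_{J,H_\ell}(P_\ell(\Frob_{\alpha(\ell)};\bb{T}))$ with reduction. \textbf{(3)} Since $J$ and $\bar{J}$ are generated by powers of monic parameter systems of $\Lambda$ and $\Lambda^{(r-1)}$, Lemma~\ref{leminjmod} and the remark following Definition~\ref{dfnidealC(n)} express $\mf{C}(\boldsymbol{c};J;n)$ as the set of $f(\kappa^{\mathrm{univ}}_n(\boldsymbol{c})_{J})$ with $f\in\Hom_{\Lambda/J}(H^1(\pi_J^*\bb{T})\otimes_{\bb{Z}}H_n^{\otimes},\Lambda/J)$, and similarly for $\mf{C}(\pi_I^*\boldsymbol{c};\bar{J};n)$; the two Hom-modules are identified by $\Lambda/J=\Lambda^{(r-1)}/\bar{J}$. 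Combining (1)--(3) gives $\mf{C}(\boldsymbol{c};J;n)=\mf{C}(\pi_I^*\boldsymbol{c};\bar{J};n)$ for every $n\in\mca{N}(\bb{T},J)$, whence, summing over such $n$ with $\#\mathrm{Prime}(n)\le i$, $\mf{C}_i(\boldsymbol{c};J)\subseteq\mf{C}_i(\pi_I^*\boldsymbol{c};\bar{J})$ inside $\Lambda/J=\Lambda^{(r-1)}/\bar{J}$.

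Finally I would pass to the limit. By Definition~\ref{dfnC_i}, any $y\in\mf{C}_i(\boldsymbol{c})\subseteq\Lambda$ has image in $\Lambda/J$ contained in $\mf{C}_i(\boldsymbol{c};J)$, hence $\pi_I(y)\bmod\bar{J}\in\mf{C}_i(\pi_I^*\boldsymbol{c};\bar{J})$; as $\boldsymbol{m}'$ ranges over $(\bb{Z}_{>0})^{r}$ this says precisely that $\pi_I(y)\in\varprojlim_{\boldsymbol{m}'}\mf{C}_i(\pi_I^*\boldsymbol{c};I(\boldsymbol{x}_{\le r-1}^{\boldsymbol{m}'}))=\mf{C}_i(\pi_I^*\boldsymbol{c})$, which is the desired inclusion $\mf{C}_i(\boldsymbol{c})_I\subseteq\mf{C}_i(\pi_I^*\boldsymbol{c})$. (Implicitly one uses that $\pi_I^*\bb{T}$ inherits (A1)--(A8) over $\Lambda^{(r-1)}$, so that $\mf{C}_i(\pi_I^*\boldsymbol{c})$ is defined; these conditions involve only $\bar{T}=\bb{T}/\mf{m}\bb{T}$, which is unchanged, and the modules $\bb{T}/(\tau-1)\bb{T}$ and $\bb{T}^{-}$, whose freeness of rank one survives the base change.) The step I expect to be the main obstacle is compatibility (2): making rigorous the sense in which the reduction of $\kappa^{\mathrm{univ}}_n(\boldsymbol{c})$ at $J$ \emph{is} the universal Kolyvagin class of $\pi_I^*\boldsymbol{c}$ at $\bar{J}$ --- i.e. carefully matching up the cohomology classes, the group-ring twists $e_{J,H_\ell}$, and the auxiliary elements $A(n;\alpha,\ell)$ through the identification $\Lambda/J=\Lambda^{(r-1)}/\bar{J}$.
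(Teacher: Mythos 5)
Your proof is correct and follows essentially the same route as the paper's: invoke Proposition~\ref{propindepofparam} to compute $\mf{C}_i(\boldsymbol{c})$ using the monic parameter system $\boldsymbol{h}=(\boldsymbol{x}_{\le r-1},h)$, identify the rings and the defining data at each level $J=I(\boldsymbol{h}^{(\boldsymbol{m}',1)})$ with those of $(\pi_I^*\bb{T},\pi_I^*\boldsymbol{c})$ over $\Lambda^{(r-1)}/I(\boldsymbol{x}_{\le r-1}^{\boldsymbol{m}'})$, and pass to the projective limit using Corollary~\ref{corascdesc}. One small correction to a side remark in your step~(1): since $I\subseteq J$ and $\bar J=\pi_I(J)$, one in fact has $J=\pi_I^{-1}(\bar J)$, so $\mca{I}_{\bb{T},\ell}\subseteq J$ is \emph{equivalent} to $\pi_I(\mca{I}_{\bb{T},\ell})\subseteq\bar J$, and hence $\mca{N}(\bb{T},J)=\mca{N}(\pi_I^*\bb{T},\bar J)$, an equality rather than just an inclusion. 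That step is therefore not what limits this argument to the weak compatibility; the genuine obstruction is that an element of $\mf{C}_i(\pi_I^*\boldsymbol{c})$ is constrained only at the levels $J=I(\boldsymbol{h}^{(\boldsymbol{m}',1)})$, while a lift in $\mf{C}_i(\boldsymbol{c})$ must in addition lie in $\mf{C}_i(\boldsymbol{c};I(\boldsymbol{h}^{\boldsymbol{m}}))$ for every $\boldsymbol{m}$ with $m_r>1$, and the transition maps in~(\ref{compC_iI'C_iI}) are only inclusions, so this extra constraint may be strict.
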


\begin{proof}
Note that by definition, we have
\[
\mf{N}(\pi_I^*\bb{T};\boldsymbol{x}_{\le r-1})=
\{
\mf{N}(I(\boldsymbol{h}^{(\boldsymbol{m},1)})) 
\}_{\boldsymbol{m}=(m_0, \dots, m_{r-1}) \in (\bb{Z}_{>0})^{r}}.
\]
For any $N \in \bb{Z}_{>0}$, we put 
$\boldsymbol{m}(N):=(N , \dots, N, 1) \in (\bb{Z}_{>0})^{r+1}$.
Then, by definition, we have 
\[
\mf{C}_i(\pi_I^* \boldsymbol{c};\boldsymbol{x};\pi_I^*
\mf{N}(\bb{T};\boldsymbol{h})) = 
\varprojlim_{N} \mf{C}_i
(\boldsymbol{c};I(\boldsymbol{h}^{\boldsymbol{m}(N)});
\mf{N}(\bb{T};\boldsymbol{h})).
\]
So, Proposition \ref{propindepofparam} and
Corollary \ref{corascdesc} imply our proposition.
\end{proof}

\subsection{Strong compatibility for one variable cases}
\label{ssrmkononevarcase}

Here, we set 
$\Lambda=\Lambda^{(1)}=\mca{O}[[x_1]]$. 
Let prove the following theorem, which says that 
in one variable cases, 
the strong specialization compatibilities  hold.

\begin{thm}\label{thmonevarcompletered}
Let $a \in \mf{m}_{O}=\varpi\mca{O}$ be any element. 
Then, for any $i \in \bb{Z}_{\ge 0}$, 
the image of $\mf{C}_i(\boldsymbol{c})$ in $\Lambda/I(0,x_1-a)$
coincides with  
\[
\mf{C}_i(\pi_{I(0,x_1-a)}^*\boldsymbol{c})
:=\mf{C}_i(\pi_{I(0,x_1-a)}^*\boldsymbol{c}; \boldsymbol{x};
\mca{N}(\pi_{I(0,x_1-a)}^*\bb{T})).
\]
\end{thm}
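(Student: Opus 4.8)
The plan is to prove the two inclusions separately. The inclusion $\mf{C}_i(\boldsymbol{c})_{I(0,x_1-a)} \subseteq \mf{C}_i(\pi_{I(0,x_1-a)}^*\boldsymbol{c})$ is essentially the weak specialization compatibility: one notes that $\boldsymbol{h}:=(\varpi, x_1-a)$ is a monic parameter system (since $a \in \varpi\mca{O}$ forces the constant term of $x_1-a$ into $\mf{m}_{\mca{O}}$), so Proposition \ref{propindepofparam} gives $\mf{C}_i(\boldsymbol{c})=\mf{C}_i(\boldsymbol{c};\boldsymbol{h})$, and then Corollary \ref{corascdesc} together with the definition of the projective limit along $\boldsymbol{m}=(m,1)$ yields the containment, just as in the proof of Proposition \ref{propredonevar}. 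The genuinely hard direction is the reverse inclusion $\mf{C}_i(\pi_{I(0,x_1-a)}^*\boldsymbol{c}) \subseteq \mf{C}_i(\boldsymbol{c})_{I(0,x_1-a)}$, and this is where the difficulty flagged in the Strategy subsection appears: the reduction map only sees primes $\ell$ for which $\bb{T}/((\Frob_\ell-1)\bb{T})$ is free of rank one modulo the bigger ideal, so a priori $\pi_I(\mf{C}_i(\boldsymbol{c}))$ could be strictly smaller than $\mf{C}_i(\pi_I^*\boldsymbol{c})$.

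The key idea to overcome this, as hinted in the excerpt, is to embed the relevant $0$-dimensional quotients of $\Lambda=\mca{O}[[x_1]]$ into quotients of a discrete valuation ring and then invoke the Mazur--Rubin structure theorem. Concretely, fix $i$ and a square-free $n \in \mca{N}(\pi_I^*\bb{T}, I(\boldsymbol{x}^N))$ with $\#\mathrm{Prime}(n)\le i$ contributing a generator of $\mf{C}_i(\pi_I^*\boldsymbol{c})$ modulo $I(\varpi^N, x_1-a)$. I would choose a continuous $\mca{O}$-algebra homomorphism $f\colon \Lambda \to \mca{O}'$ (for a suitable finite extension $\mca{O}'/\mca{O}$) with $f(x_1-a)$ a uniformizer times a unit — more precisely, $f$ sending $x_1$ to $a+\varpi'^{\,m}u$ for $m$ large and a unit $u$, so that $\mca{O}'/f(I(\boldsymbol{x}^N))\mca{O}'$ receives $\Lambda/I(\varpi^N, x_1-a)$ faithfully (using that $\mca{O}'$ is faithfully flat over $\mca{O}$, cf.\ Lemma \ref{lemROO'descent} and Proposition \ref{corOO'}). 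Then Proposition \ref{propspecialunivKolyv} and Corollary \ref{corspecialunivKolyv} apply to $f$: there is a character $\chi$ such that $f^*\kappa^{\mathrm{univ}}(\boldsymbol{c}\otimes\chi)$ is a generator, up to a power $\mf{m}_{\mca{O}'}^N$, of the Mazur--Rubin Kolyvagin system module, and Theorem \ref{thmMRstrthm} identifies $\mf{m}_{\mca{O}'}^N\Fitt_{\mca{O}',i}(X(f^*\bb{T}))$ with a power of $\mf{m}_{\mca{O}'}$ read off from the Kolyvagin derivatives $\kappa^{\mathrm{univ}}_m(\boldsymbol{c})$ for $m$ with $\#\mathrm{Prime}(m)\le i$. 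The point is that this identifies the ideal generated by the images of all the Kolyvagin derivatives (over all allowed $m$, including those $m$ that become admissible only after specialization to the DVR) with the Fitting ideal of the specialized Selmer group, so no ``loss'' occurs in the limit.

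The main obstacle — and the most delicate bookkeeping — is matching the $\mf{m}_{\mca{O}'}$-adic valuations coming from Corollary \ref{corspecialunivKolyv} (which are computed after tensoring with a character $\chi$ close to the trivial one and have an $N$-shift) with the ideals $\mf{C}(\boldsymbol{c};I(\boldsymbol{x}^N);m)$ defined purely over $\Lambda$, uniformly in $N$. One has to check: (a) that $\mca{N}(\bb{T}, I(\boldsymbol{x}^N))$ exhausts, in the projective limit, enough primes to compute the Fitting ideal of $X_\mf{p}$ at the height-one prime $\mf{p}=(x_1-a)$ — here the assumption (A2) together with Chebotarev guarantees the sets are large enough, and the ``weak specialization compatibility'' of Proposition \ref{propredonevar} combined with a reverse valuation estimate forces equality; (b) that the $N$-shift in Corollary \ref{corspecialunivKolyv} washes out as $N\to\infty$ since $\mathrm{length}_{\mca{O}'}(X(f^*\bb{T}))$ is bounded once $f$ avoids the finitely many bad characters and $X(\bb{T})_{\mf{p}}$ is a fixed finite-length $\Lambda_{\mf{p}}$-module (using Lemma \ref{lemXH2} and the control theorem, Corollary \ref{corcontthm}); and (c) compatibility of all of this under the scalar extension $\mca{O}\to\mca{O}'$, which is exactly Proposition \ref{corOO'} and Proposition \ref{lemfittscext}. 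Assembling (a), (b), (c) and taking $\varprojlim_N$ gives $\mf{C}_i(\pi_I^*\boldsymbol{c}) = \Fitt_{\Lambda/I,i}(X(\pi_I^*\bb{T})) \cap \text{(image of }\mf{C}_i(\boldsymbol{c}))$, hence the desired equality; the surjectivity part of the argument — that every generator of $\mf{C}_i(\pi_I^*\boldsymbol{c})$ is already hit — is what the DVR-embedding trick secures.
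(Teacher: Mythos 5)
You correctly identify the high-level strategy (embed the Artinian quotient of $\Lambda=\mca{O}[[x_1]]$ into a quotient of a DVR and then invoke the Mazur--Rubin structure theorem via Proposition \ref{propspecialunivKolyv}, Corollary \ref{corspecialunivKolyv} and Theorem \ref{thmMRstrthm}), and this is indeed what the paper does. But the proposal has genuine gaps in exactly the places where the paper has to do real work. First, the choice of embedding is not specific enough: the paper does not send $x_1$ to $a+\varpi^m u$ for an arbitrary unit $u$; it takes $b\in\mca{O}$ a lift of a generator of $k^\times$, an $m'_1$-th root $\beta$ of $b$, and $e(x_1)=a+\varpi^{m_0}\beta$ with $m'_0=m_0 m'_1$ and $m'_1$ prime to $p$, so that $F'=F(\beta)$ is unramified of degree $m'_1$ and the kernel of $e$ is exactly the distinguished polynomial $g=(x_1-a)^{m'_1}-\varpi^{m'_0}b$. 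This is what makes $I(\varpi^{m'_0},g)=I'$ and gives the injection $\Lambda/I'\hookrightarrow\mca{O}'/\varpi^{m'_0}\mca{O}'$. With a generic unit $u$ the kernel need not match $I'$ and the whole transport fails.

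Second, and more seriously, you gesture at but do not actually handle the matching of admissible prime sets — which is exactly the difficulty the author flags in the Strategy. The paper's Lemma \ref{lemNm'0NCTI} proves $\mca{N}(e^*\bb{T};\varpi^{m'_0}\mca{O}')=\mca{N}(\bb{T};I')$ (and likewise at level $m_0$) using faithful flatness of $\mca{O}'/\varpi^{m_0}\mca{O}'$ over $\Lambda/I$ and a higher-Fitting-ideal criterion for rank-one freeness; your appeal to "(A2) together with Chebotarev" only ensures the sets are nonempty, which is not the point. Without this matching one cannot conclude that the Kolyvagin derivatives entering $\mf{C}_i(\pi_I^*\boldsymbol{c})$ actually lift into $\mf{C}_i(\boldsymbol{c};I')$. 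Third, you are also missing the step that transports the $\mf{C}$-ideals back and forth across $\bar{e}_{m'_0}$ and $\bar{e}_{m_0}$ with \emph{equality} (Lemma \ref{lemonevarOO'} and Corollary \ref{coronevarOO'}); citing Proposition \ref{corOO'} is the wrong lemma — that proposition concerns the constant extension $\Lambda\to\Lambda\otimes_{\mca{O}}\mca{O}'$, not the quotient map $\Lambda\to\mca{O}'$ used here. The final paragraph of your proposal, concluding $\mf{C}_i(\pi_I^*\boldsymbol{c}) = \Fitt_{\Lambda/I,i}(X(\pi_I^*\bb{T})) \cap (\text{image of }\mf{C}_i(\boldsymbol{c}))$, does not deliver the claimed equality of ideals, which reflects that the closing diagram chase over the commutative square relating $\Lambda/I'\to\Lambda/I$ and $\mca{O}'/\varpi^{m'_0}\to\mca{O}'/\varpi^{m_0}$ is not in place.
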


Let $i \in \bb{Z}_{\ge 0}$ be any element. 
Recall that we assume that 
$\boldsymbol{c}$ satisfies (NV).
So, we have a non-negative integer $c$
such that 
\[
\mf{C}_i(\pi_{I(0,x_1-a)}^*\boldsymbol{c})
\supseteq 
\mf{C}_0(\pi_{I(0,x_1-a)}^*\boldsymbol{c})
=\varpi^c \mca{O}.
\]
By in \S \ref{ssconstC_i}, 
in order to prove Theorem \ref{thmonevarcompletered},
it suffices to show the following Proposition.

\begin{prop}\label{proponevarcompletered}
Fix $a \in \mf{m}_{O}$. 
Let $m_0 \in \bb{Z}_{>0}$ and 
$\boldsymbol{m}'=(m'_0,m'_1) \in \bb{Z}_{> 0}^{2}$
be elements satisfying $m'_0 \ge m_0>c$.
We put $I:=I(\varpi^{m_0},x_1-a)$ and 
$I':=I(\varpi^{m'_0},(x_1-a)^{m'_1})$. 
Then, we have 
\[
\mf{C}_i(\boldsymbol{c};I')_I
\supseteq \mf{C}_i(\boldsymbol{c};I). 
\]
\end{prop}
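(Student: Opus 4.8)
The plan is to reduce Theorem \ref{thmonevarcompletered} to Proposition \ref{proponevarcompletered} by a projective-limit argument, and then to prove the latter by transporting the problem to the discrete valuation ring $\mca{O}$, where Mazur--Rubin's theory of Kolyvagin systems applies. For the reduction, put $I_m:=I(\varpi^m,x_1-a)$, so that $\Lambda/I(0,x_1-a)=\mca{O}=\varprojlim_m\mca{O}/\varpi^m\mca{O}=\varprojlim_m\Lambda/I_m$. By Proposition \ref{propindepofparam}, applied to the monic parameter system $(\varpi,x_1-a)$, one has $\mf{C}_i(\boldsymbol{c})=\varprojlim_{(m_0',m_1')}\mf{C}_i(\boldsymbol{c};I(\varpi^{m_0'},(x_1-a)^{m_1'}))$, while by Definition \ref{dfnC_i} also $\mf{C}_i(\pi_{I(0,x_1-a)}^*\boldsymbol{c})=\varprojlim_m\mf{C}_i(\boldsymbol{c};I_m)$. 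Every ring occurring here is finite (Lemma \ref{lemmpr} (i)), so these projective systems are Mittag--Leffler; hence, granting Proposition \ref{proponevarcompletered} together with the inclusion (\ref{compC_iI'C_iI}), one gets $\mf{C}_i(\boldsymbol{c};I')_{I_{m_0}}=\mf{C}_i(\boldsymbol{c};I_{m_0})$ for all $(m_0',m_1')\ge(m_0,1)$ with $m_0>c$, and passing to the limit over the cofinal set $\{m_0>c\}$ yields Theorem \ref{thmonevarcompletered}.

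For Proposition \ref{proponevarcompletered} the inclusion $\mf{C}_i(\boldsymbol{c};I')_I\subseteq\mf{C}_i(\boldsymbol{c};I)$ is (\ref{compC_iI'C_iI}), so only the reverse one is at issue. Here $\Lambda/I=\mca{O}/\varpi^{m_0}\mca{O}$ is a chain ring: every ideal is a power of $\mf{m}$, and a sum of ideals is the one with smallest exponent. Combining this with Corollary \ref{corascdesc}, which for $n\in\mca{N}(\bb{T},I')\subseteq\mca{N}(\bb{T},I)$ gives $\mf{C}(\boldsymbol{c};I';n)_I=\mf{C}(\boldsymbol{c};I;n)$, reduces the required inclusion to the numerical equality
\[
\min_{\substack{n\in\mca{N}(\bb{T},I')\\ \#\mathrm{Prime}(n)\le i}}\mathrm{exp}_I\mf{C}(\boldsymbol{c};I;n)\ =\ \min_{\substack{n\in\mca{N}(\bb{T},I)\\ \#\mathrm{Prime}(n)\le i}}\mathrm{exp}_I\mf{C}(\boldsymbol{c};I;n),
\]
where $\mathrm{exp}_I$ is the $\varpi$-valuation of an ideal of $\mca{O}/\varpi^{m_0}\mca{O}$; the inequality ``$\ge$'' is automatic, the left-hand index set being a subset of the right-hand one.

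To evaluate both minima I would pass to $\mca{O}$. By Proposition \ref{propspecialunivKolyv} there is a character $\chi$, which one takes congruent to $1$ modulo a power of $\varpi$ larger than $m_0'$, for which $f^*\kappa^{\mathrm{univ}}(\boldsymbol{c}\otimes\chi)$, with $f=\pi_{I(0,x_1-a)}$, is a Kolyvagin system over $\mca{O}$ for the core-rank-one Selmer triple $(f^*(\bb{T}\otimes\chi),\mca{F}_{\mathrm{can}},\mca{P}(\bb{T},\mf{m}_\Lambda))$; by the choice of $\chi$, its $n$-th term reduces modulo $I$ to $\kappa^{\mathrm{univ}}_n(\boldsymbol{c})_I$, so $\mathrm{exp}_I\mf{C}(\boldsymbol{c};I;n)$ is the truncation at $m_0$ of the Kolyvagin exponent $\partial(\boldsymbol{c},f;n)$ of \S\ref{ssunivKS}. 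Since $c$ is the exponent of $\mf{C}_0(\pi_{I(0,x_1-a)}^*\boldsymbol{c})$ and $m_0>c$, for every index $n$ entering the two minima the truncation is inactive; so by the structure theory of Kolyvagin systems over a DVR (Theorems \ref{thmMRuniqueness} and \ref{thmMRstrthm}; see also Corollary \ref{corspecialunivKolyv}) the right-hand minimum above equals $N+v_{\mca{O}}(\Fitt_{\mca{O},i}(X(f^*\bb{T})))$, where $N\ge 0$ measures the failure of $f^*\kappa^{\mathrm{univ}}(\boldsymbol{c})$ to generate the free rank-one module of Kolyvagin systems.

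It remains to produce $n\in\mca{N}(\bb{T},I')$ with $\#\mathrm{Prime}(n)\le i$ for which $\mathrm{exp}_I\mf{C}(\boldsymbol{c};I;n)$ attains this value. A computation with Fitting ideals shows that $\mca{P}(\bb{T},I')$ is exactly the set of $\ell\notin\Sigma$ with $\mca{I}_{\bb{T},\ell}\subseteq I'$ and $\dim_k\bar{T}/(\Frob_\ell-1)\bar{T}=1$, so, relative to $\mca{P}(\bb{T},I)$, the only extra demand on $\ell$ is $\mca{I}_{\bb{T},\ell}\subseteq I'$; by assumption (A2) and the Chebotarev density theorem (using also (A1) and (A3)--(A5)), the primes meeting this demand, obtained by prescribing $\Frob_\ell$ on the finite extension of $\bb{Q}$ cut out by $\bb{T}/I'\bb{T}$ and by suitable roots of unity, still form a positive-density set. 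One then runs the Mazur--Rubin analysis of \cite[\S 5.2 and Appendix A]{MR} — the second Kolyvagin-system axiom (the finite--singular comparison) and the prime-by-prime replacement it permits — to see that $N+v_{\mca{O}}(\Fitt_{\mca{O},i}(X(f^*\bb{T})))$ is already realized by an index all of whose prime factors lie in this set. I expect this last step to be the main obstacle: one has to meet simultaneously the genericity conditions of the moving argument and the rigid divisibility $\mca{I}_{\bb{T},\ell}\subseteq I'$ imposed by the deformation, and it is precisely here that the one-variable hypothesis is used in an essential way, since for a general monic parameter system $\Lambda/I'$ is not a chain ring and the ``sum equals smallest exponent'' reduction carried out above collapses.
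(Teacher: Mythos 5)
Your reduction of Theorem \ref{thmonevarcompletered} to Proposition \ref{proponevarcompletered} and the translation of the inclusion into a comparison of minimal exponents over the chain ring $\Lambda/I\simeq\mca{O}/\varpi^{m_0}\mca{O}$ are fine; so is your characterization of $\mca{P}(\bb{T},I')$ by the two conditions $\mca{I}_{\bb{T},\ell}\subseteq I'$ and $\dim_k\bar{T}/(\Frob_\ell-1)\bar{T}=1$ (Nakayama forces $\bb{T}/(\Frob_\ell-1)\bb{T}$ to be cyclic, hence $\simeq\Lambda/P_\ell(1;\bb{T})\Lambda$, and freeness over $\Lambda/I'$ then is just $P_\ell(1;\bb{T})\in I'$). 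But the step you flag at the end — producing an index $n$ with $\mathrm{Prime}(n)\subseteq\mca{P}(\bb{T},I')$ that realizes the Mazur--Rubin minimum — is indeed a genuine gap, and it is a structural one caused by your choice of specialization. Sending $\Lambda\to\mca{O}$, $x_1\mapsto a$, kills $x_1-a$, so for $m'_1>1$ the induced map $\Lambda/I'\to\mca{O}/\varpi^{m'_0}\mca{O}$ has kernel $(x_1-a)(\Lambda/I')\ne 0$; consequently $\mca{P}(\bb{T},I')$ is a strictly smaller set than the set of primes admissible for the Mazur--Rubin triple over $\mca{O}$, and Theorem \ref{thmMRstrthm} by itself says nothing about where the minimum is attained inside a proper subcollection. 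Positive density of that subcollection is not enough: one would have to re-run the core-rank and Chebotarev machinery of \cite{MR} \S 5.2 for the restricted set from scratch, which you do not do.

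The paper sidesteps this entirely by a different choice of specialization. After enlarging $\boldsymbol{m}'$ so that $m'_1$ is prime to $p$ and $m'_0=m_0m'_1$, and enlarging $F$ so that $\mu_{m'_1}\subset F$ (Lemma \ref{lemCiOO'2}), it fixes an $m'_1$-th root $\beta$ of a lift $b$ of a generator of $k^\times$, sets $\mca{O}':=\mca{O}[\beta]$ (unramified of degree $m'_1$), and uses
\[
e\colon\Lambda\longrightarrow\mca{O}',\qquad x_1\longmapsto a+\varpi^{m_0}\beta,
\]
whose kernel is $(g)$ with $g=(x_1-a)^{m'_1}-\varpi^{m'_0}b$ a distinguished polynomial satisfying $I(\varpi^{m'_0},g)=I'$. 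The resulting maps $\bar{e}_{m'_0}\colon\Lambda/I'\hookrightarrow\mca{O}'/\varpi^{m'_0}\mca{O}'$ and $\bar{e}_{m_0}\colon\Lambda/I\hookrightarrow\mca{O}'/\varpi^{m_0}\mca{O}'$ are injective, and the crucial point (Lemma \ref{lemNm'0NCTI}) is that this injectivity, together with the Fitting-ideal criterion of Example \ref{exaFittlocring}, forces
\[
\mca{N}(e^*\bb{T};\varpi^{m'_0}\mca{O}')=\mca{N}(\bb{T};I'),\qquad
\mca{N}(e^*\bb{T};\varpi^{m_0}\mca{O}')=\mca{N}(\bb{T};I).
\]
Thus the admissible prime sets over $\mca{O}'$ and over $\Lambda$ coincide exactly, so there is no minimum to transfer between two different index sets: Lemma \ref{lemonevarOO'} and Corollary \ref{coronevarOO'} translate the ideals $\mf{C}(\,\cdot\,;I';n)$ and $\mf{C}(\,\cdot\,;I;n)$ into ideals of $\mca{O}'/\varpi^{m'_0}\mca{O}'$ and $\mca{O}'/\varpi^{m_0}\mca{O}'$ and back via $\bar{e}_{m'_0}$, $\bar{e}_{m_0}$, and then Corollary \ref{corspecialunivKolyv} applied to $e$ gives $\pi_2\bigl(\mf{C}_i(e^*\boldsymbol{c};\varpi^{m'_0}\mca{O}')\bigr)=\mf{C}_i(e^*\boldsymbol{c};\varpi^{m_0}\mca{O}')$. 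The hypothesis $m'_0\ge m_0>c$ is used precisely to conclude $\pi_2^{-1}\pi_2\bigl(\mf{C}_i(e^*\boldsymbol{c};\varpi^{m'_0}\mca{O}')\bigr)=\mf{C}_i(e^*\boldsymbol{c};\varpi^{m'_0}\mca{O}')$, and the commutative diagram (\ref{diagiotam0m1pi12}) plus surjectivity of $\pi_1$ finishes the proof. In short: choose the evaluation point inside a sufficiently large unramified extension so that the ideal $I'$ is recovered as the full preimage of $\varpi^{m'_0}$, rather than collapsed; this is what makes your obstruction vanish.
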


\begin{proof}
By  (\ref{compC_iI'C_iI}), 
we may replace $\boldsymbol{m}'$ 
with suitable larger one, 
and assume that $m'_1$ is prime to $p$, 
and that $m'_0=m_0m'_1$.
Moreover, by Lemma \ref{lemCiOO'2} we may assume that 
$F$ contains a primitive $m'_1$-th root of unity.
We also assume that 
Fix a generator $\bar{b}$ of the cyclic group $k^\times$, and
let $b \in \mca{O}$ be a lift of $\bar{b}$.
Then, we fix a $m'_1$-th root $\beta \in \overline{F}$ of $b$,
and put $F':=F(\beta)$.
The ring of integers of $F'$ is denoted by $\mca{O}'$.
Note that $F'/F$ is an unramified extension
with $[F':F]=m'_1$.
We define a homomorphism
\[
e \colon \Lambda \longrightarrow \mca{O}';\ 
x_1 \longmapsto a + \varpi^{m_0}\beta
\] 
of $\mca{O}$-algebras. The kernel of $e$ is 
generated by 
the irreducible distinguished polynomial 
\[
g:=(x_1-a)^{m'_1}-\varpi^{m'_0}b
\in \mca{O}[x_1].
\]
By definition, 
we have $I(\varpi^{m'_0},g)=I'$, and $e$ induces an injection
\[
\bar{e}_{m'_0}
\colon \Lambda/I' \hookrightarrow \mca{O}'/\varpi^{m'_0}\mca{O}'; \ 
x_1\ \mathrm{mod}\ I' 
\longmapsto a + \varpi^{m_0}\beta 
\ \mathrm{mod} \varpi^{m'_0}\mca{O}'.
\]
Similarly, we have an injection
$\bar{e}_{m_0}\colon \Lambda/I \hookrightarrow 
\mca{O}'/\varpi^{m_0}\mca{O}'$ 
given by 
\[
\bar{e}_{m_0}(x_1\ \mathrm{mod}I)=
a\ \mathrm{mod}\ \varpi^{m_0}\mca{O}'=
a + \varpi^{m_0}\beta\ \mathrm{mod}\ \varpi^{m_0}\mca{O}'.
\]
Then, by definition, we obtain a commutative diagram
\begin{eqnarray}\label{diagiotam0m1pi12}
\xymatrix@C=15mm{
\Lambda/I'\Lambda \ar@{^{(}->}[r]^(0.42)
{\bar{e}_{m'_0}} \ar[d]_{\pi_1} & 
(\mca{O}'/\varpi^{m'_0}\mca{O}')  
\ar[d]^{\pi_2} \\ 
\Lambda/I\Lambda \ar@{^{(}->}[r]^(0.42){\bar{e}_{m_0}}  & 
(\mca{O}'/\varpi^{m_0}\mca{O}'),
}
\end{eqnarray}
where $\pi_1$ and $\pi_2$ are natural surjections.

We put 
\(
\mca{N}(m):=\mca{N}(e^*\bb{T};\varpi^m\mca{O}')
\) 
for any $m \in \bb{Z}_{>0}$.
Then, the following lemma holds.

\begin{lem}\label{lemNm'0NCTI}
We have 
$\mca{N}(m_0)=\mca{N}(\bb{T};I)$ and
$\mca{N}(m'_0)=\mca{N}(\bb{T};I')$.
\end{lem}

\begin{proof}
First, let us show first equality
$\mca{N}(m_0)=\mca{N}(\bb{T};I)$. 
Let $\ell$ be a prime number 
not contained in $\Sigma$, and put
\[
\bb{T}_\ell:= \bb{T}/(\Frob_\ell -1)\bb{T}.
\]
Since the ring $\mca{O}'/\varpi^{m_0}\mca{O}'$
is faithfully flat over 
$\Lambda/I \simeq \mca{O}/\varpi^{m_0}\mca{O}$,
the $\Lambda/I$-module 
$\bb{T}_\ell/I\bb{T}_\ell$ 
is free of rank one
if and only if 
the $\mca{O}'/\varpi^{m_0}\mca{O}'$-module 
\[
e^*\bb{T}_\ell/\varpi^{m_0}e^*\bb{T}_\ell
\simeq e^*(\bb{T}_\ell/I\bb{T}_\ell)
\]
is free of rank one. 
Hence
the first equality follows from the definition of 
$\mca{N}(m_0)$ and $\mca{N}(\bb{T};I)$.

Let us show the second equality, 
namely $\mca{N}(m'_0)=\mca{N}(\bb{T};I')$. 
We denote 
the composite 
\[
\bar{e}_{m'_0} \circ \pi_{I'} \colon 
\Lambda \longrightarrow \Lambda/I' 
\longrightarrow \mca{O}'/(\varpi^{m'_0})
\]
by $e_{m'_0}$.
Let $\ell$ be a prime number 
not contained in $\Sigma$.
In order to show the second equality of our lemma,
it suffices to show that
the $\Lambda/I$-module
$\pi_{I'}^*\bb{T}_\ell$ is free of rank one 
if and only if the $\mca{O}'/(\varpi^{m'_0})$-module
$e_{m'_0}^*\bb{T}_\ell$ is free of rank one.
Note that by Example \ref{exaFittlocring}, the $\Lambda/I'$-module
$\pi_{I'}^*\bb{T}_\ell$ is free of rank one 
if and only if $\Fitt_{\Lambda/I',0}
(\pi_{I'}^*\bb{T}_\ell)=\{0 \}$
and $\Fitt_{\Lambda/I',1}
(\pi_{I'}^*\bb{T}_\ell)=\Lambda/I'$.
Similarly, the $\mca{O}'/(\varpi^{m'_0})$-module
$e_{m'_0}^*\bb{T}_\ell$ is free of rank one 
if and only if $\Fitt_{\mca{O}'/(\varpi^{m'_0}),0}
(e_{m'_0}^*\bb{T}_\ell)=\{0 \}$
and $\Fitt_{\mca{O}'/(\varpi^{m'_0}),1}
(e_{m'_0}^*\bb{T}_\ell)=\mca{O}'/(\varpi^{m'_0})$.
Since $e_{m'_0}^*\bb{T}_\ell=\bar{e}_{m'_0}^*(\pi_{I'}^*\bb{T})$,
we have
\[
\Fitt_{\mca{O}'/(\varpi^{m'_0}),i}
(e_{m'_0}^*\bb{T}_\ell)
=\bar{e}_{m'_0}
\left(
\Fitt_{\Lambda/I',i}
(\pi_{I'}^*\bb{T}_\ell)
\right)
\mca{O}'/(\varpi^{m'_0})
\]
for any $i \in \bb{Z}_{\ge 0}$.
Note that 
since $\bar{e}_{m'_0}$ is injective,
we have 
$\Fitt_{\Lambda/I',0}
(\pi_{I'}^*\bb{T}_\ell)=\{0 \}$
if and only if $\Fitt_{\mca{O}'/(\varpi^{m'_0}),1}
(e_{m'_0}^*\bb{T}_\ell)=\{ 0 \}$.
Moreover, 
since $\bar{e}_{m'_0}$ is a homomorphism of local rings,
we have 
$\Fitt_{\Lambda/I',1}
(\pi_{I'}^*\bb{T}_\ell)=\Lambda/I'$
if and only if $\Fitt_{\mca{O}'/(\varpi^{m'_0}),1}
(e_{m'_0}^*\bb{T}_\ell)=\mca{O}'/(\varpi^{m'_0})$.
Hence we deduce that 
the $\Lambda/I$-module
$\pi_{I'}^*\bb{T}_\ell$ is free of rank one 
if and only if the $\mca{O}'/(\varpi^{m'_0})$-module
$e_{m'_0}^*\bb{T}_\ell$ is free of rank one.
This implies that $\mca{N}(m'_0)=\mca{N}(\bb{T};I')$.
\end{proof}

We need the following lemma and its corollary.

\begin{lem}\label{lemonevarOO'}
Let $n \in \mca{N}(m'_0)$ be any element.
Then, the following hold.
\begin{enumerate}[{\rm (i)}]
\item $\mf{C}({e}^*\boldsymbol{c};\varpi^{m'_0}\mca{O}';n)=
\bar{e}_{m'_0}(\mf{C}(\boldsymbol{c};I';n))\cdot 
{\mca{O}'/\varpi^{m'_0}\mca{O}'}$.
\item $\mf{C}({e}^*\boldsymbol{c};\varpi^{m_0}\mca{O}';n)=
\bar{e}_{m_0}(\mf{C}(\boldsymbol{c};I;n))
\cdot 
{\mca{O}'/\varpi^{m_0}\mca{O}'}$.
\item $\mf{C}(\boldsymbol{c};I';n)=
\bar{e}_{m'_0}^{-1}(\mf{C}({e}^*\boldsymbol{c};
\varpi^{m'_0}\mca{O}';n))$.
\item $\mf{C}(\boldsymbol{c};I;n)=
\bar{e}_{m_0}^{-1}(\mf{C}({e}^*\boldsymbol{c};
\varpi^{m_0}\mca{O}';n))$.
\end{enumerate}
\end{lem}

\begin{proof}
Let us show the assertion (i) of Lemma \ref{lemonevarOO'}.
First, we shall prove 
\begin{equation}\label{eqe*sup}
\mf{C}(e^*\boldsymbol{c};\varpi^{m'_0}\mca{O}';n)
\supseteq \bar{e}_{m'_0}(\mf{C}(\boldsymbol{c};I';n))\cdot 
{\mca{O}'/\varpi^{m'_0}\mca{O}'}.
\end{equation}
The map $\bar{e}_{m'_0}$ induces  an exact sequence
\[
0 \longrightarrow \pi_{I'}^*\bb{T} \xrightarrow{\ \bar{e}_{m'_0,\bb{T}} \ } 
\bar{e}_{m'_0}^* \pi_{I'}^* \bb{T} \longrightarrow 
\bb{T} \otimes_{\Lambda}\Coker(\bar{e}_{m'_0})
\longrightarrow 0,
\]
where $\bar{e}_{m'_0,\bb{T}}:=\bar{e}_{m'_0} \otimes \mathrm{id}_{\bb{T}}$.
The assumption (A3) implies that we have 
\[
H^0(\bb{Q},\bb{T} \otimes_{\Lambda}\Coker(\bar{e}_{m'_0}))=0.
\]
So, the map
$H^1(\bar{e}_{m'_0, \bb{T}}) \colon 
H^1(\pi_{I'}^*\bb{T}) \longrightarrow 
H^1(\bar{e}_{m'_0}^* \pi_{I'}^* \bb{T})$
induced by $\bar{e}_{m'_0,\bb{T}}$
is injective.
Note that by construction, 
we have 
\[
H^1(\bar{e}_{m'_0, \bb{T}})(
\kappa_n^{\mathrm{univ}}(\boldsymbol{c})_{I'} 
)
=
\kappa_n^{\mathrm{univ}}
({e}^*\boldsymbol{c})_{\varpi^{m'_0}\mca{O}'}.
\]
We regard $H^1(\pi_{I'}^*\bb{T})$ as an $\Lambda/I'$-submodule of 
$H^1(\bar{e}_{m'_0}^*\bb{T})$
via the injection $H^1(\bar{e}_{m'_0, \bb{T}})$, and identify
$\kappa_n^{\mathrm{univ}}
(\bar{e}_{m'_0}^*\boldsymbol{c})_{\varpi^{m'_0}\mca{O}'}$
with $\kappa_n^{\mathrm{univ}}(\boldsymbol{c})_{I'}$.

Let $f \in \Hom_{\Lambda/I'}(
H^1(\pi_{I'}^*\bb{T}),\Lambda/I')$
be any element.
Since $\bar{e}_{m'_0} \circ f$ is 
an $\mca{O}$-linear map, and 
since $F'/F$ is unramified, 
we can define  an $\mca{O}'/\varpi^{m'_0}\mca{O}'$-linear map
\[
f'\colon (\mca{O}'/\varpi^{m'_0}\mca{O}')\kappa_n^{\mathrm{univ}}(\boldsymbol{c})_{I'}
\longrightarrow \mca{O}'/\varpi^{m'_0}\mca{O}';\ 
x\kappa_n^{\mathrm{univ}}(\boldsymbol{c})_{I'} \longmapsto 
x\cdot (\bar{e}_{m'_0} \circ f)(\kappa_n^{\mathrm{univ}}(\boldsymbol{c})_{I'}),
\]
where
$(\mca{O}'/\varpi^{m'_0}\mca{O}')
\kappa_n^{\mathrm{univ}}(\boldsymbol{c})_{I'}$ is 
the $\mca{O}'/\varpi^{m'_0}\mca{O}'$-submodule
of $H^1(\bar{e}_{m'_0}^*\pi_{I'}^*\bb{T})$
generated by $\kappa_n^{\mathrm{univ}}(\boldsymbol{c})_{I'}$.
Since $\mca{O}'/\varpi^{m'_0}\mca{O}'$ is an injective 
$\mca{O}'/\varpi^{m'_0}\mca{O}'$-module,
the map $f'$ can be extended to a homomorphism defined on
$H^1(\bar{e}_{m'_0}^*\pi_{I'}^*\bb{T})$.
So, we have 
\[
\bar{e}_{m'_0} \circ
f(\kappa_n^{\mathrm{univ}}(\boldsymbol{c})_{I'}) 
\in \mf{C}(e^*\boldsymbol{c};\varpi^{m'_0}\mca{O}';n).
\]
Hence we obtain (\ref{eqe*sup}).

Next, let us  prove 
\begin{equation*}
\mf{C}(e^*\boldsymbol{c};\varpi^{m'_0}\mca{O}';n)
\subseteq \bar{e}_{m'_0}(\mf{C}(\boldsymbol{c};I';n))\cdot 
{\mca{O}'/\varpi^{m'_0}\mca{O}'}.
\end{equation*}
Since the ring $\mca{O}'/\varpi^{m'_0}\mca{O}'$ 
is a quotient of a DVR, 
there exists an element 
$\tilde{f}' \in \Hom_{\mca{O}}(
H^1(\bar{e}_{m'_0}^*\pi_{I'}^*\bb{T}),
\mca{O}'/\varpi^{m'_0}\mca{O}')$ such that
$\tilde{f}'(\kappa_n^{\mathrm{univ}}(\boldsymbol{c})_{I'})$
generates the ideal 
$\mf{C}(e^*\boldsymbol{c};\varpi^{m'_0}\mca{O}';n)$.
Since $F'/F$ is unramified, 
we may assume that 
\[
\tilde{f}'(\kappa_n^{\mathrm{univ}}(\boldsymbol{c})_{I'})
\in \mca{O}/\varpi^{m'_0}\mca{O}.
\]
We denote by $f_0$ the restriction of 
$\tilde{f}'$ to 
$(\Lambda/I')\kappa_n^{\mathrm{univ}}(\boldsymbol{c})_{I'}$.
Then, we have 
\[
f_0 \in \Hom_{\Lambda/I'}
\left(
(\Lambda/I')\kappa_n^{\mathrm{univ}}
(\boldsymbol{c})_{I'},\Lambda/I'
\right).
\]
By Lemma \ref{leminjmod}, the $\Lambda/I'$-module $\Lambda/I'$ 
is injective, so we have a homomorphism 
\[
f \in \Hom_{\Lambda/I'}(\pi_{I'}^*\bb{T},\Lambda/I')
\]
which is an extension of $f_0$.
Then, we have 
\begin{align*}
\mf{C}({e}^*\boldsymbol{c};\varpi^{m'_0}\mca{O}';n)
& =  \bar{e}_{m'_0} \circ f(\kappa_n^{\mathrm{univ}}
(\boldsymbol{c})_{I}) \cdot \mca{O}'/\varpi^{m'_0}\mca{O}' \\
& \subseteq \bar{e}_{m'_0}(\mf{C}(\boldsymbol{c};I';n))\cdot 
{\mca{O}'/\varpi^{m'_0}\mca{O}'}.
\end{align*}
Hence we obtain the assertion (i). 
The assertion (ii) follows similarly.

We shall show the assertion (iii).
By the assertion (i), we have 
\[
\mf{C}(\boldsymbol{c};I';n) \subseteq
\bar{e}_{m'_0}^{-1}(\mf{C}(\bar{e}^*\boldsymbol{c};
\varpi^{m'_0}\mca{O}';n)).
\]
Let $y \in \bar{e}_{m'_0}^{-1}(\mf{C}(\bar{e}^*\boldsymbol{c};
\varpi^{m'_0}\mca{O}';n))$ be any element.
Then, we have an element 
\[
\tilde{f}' \in \Hom_{\mca{O}'}(
H^1(\bar{e}_{m'_0}^*
\pi_{I'}^*\bb{T}),
\mca{O}'/\varpi^{m'_0}\mca{O}')
\] 
such that
$\tilde{f}'(\kappa_n^{\mathrm{univ}}(\boldsymbol{c})_{I'})=
\bar{e}_{m'_0}(y)$.
Since the map $e_{\bar{m'}_0}$ is an injection,
we obtain an element 
$f \in \Hom_{\Lambda/I'}(\pi_{I'}^*\bb{T},\Lambda/I')$
satisfying 
$f(\kappa_n^{\mathrm{univ}}(\boldsymbol{c})_{I'})=y$
by similar arguments to those in the previous paragraph.
So, we have $y \in \mf{C}(\boldsymbol{c};I';n) $.
This implies that the assertion (iii) holds.
We also obtain the assertion (iv) by similar manner.
\end{proof}

\begin{cor}\label{coronevarOO'}
For any $i \in \bb{Z}_{\ge 0}$, the following hold.
\begin{enumerate}[{\rm (i)}]
\item $\mf{C}_i({e}^*\boldsymbol{c};\varpi^{m'_0}\mca{O}')=
\bar{e}_{m'_0}(\mf{C}_i(\boldsymbol{c};I'))
\cdot {\mca{O}'/\varpi^{m'_0}\mca{O}'}$.
\item $\mf{C}_i({e}^*\boldsymbol{c};\varpi^{m_0}\mca{O}')=
\bar{e}_{m_0}(\mf{C}_i(\boldsymbol{c};I)) \cdot 
{\mca{O}'/\varpi^{m_0}\mca{O}'}$.
\item $\mf{C}_i(\boldsymbol{c};I')=
\bar{e}_{m'_0}^{-1}(\mf{C}_i({e}^*\boldsymbol{c};
\varpi^{m'_0}\mca{O}'))$.
\item $\mf{C}_i(\boldsymbol{c};I)=
\bar{e}_{m_0}^{-1}(\mf{C}_i({e}^*\boldsymbol{c};
\varpi^{m_0}\mca{O}'))$.
\end{enumerate}
\end{cor}

\begin{proof}
The assertion (i) (resp.\ (ii)) of Corollary \ref{coronevarOO'}
immediately follows from Lemma \ref{lemNm'0NCTI} and 
Lemma \ref{lemonevarOO'} (i) (resp.\  
Lemma \ref{lemNm'0NCTI} and 
Lemma \ref{lemonevarOO'}(ii)).

Let us show the assertion (iii).
By the assertion (i), we have 
\[
\mf{C}_i(\boldsymbol{c};I') \subseteq
\bar{e}_{m'_0}^{-1}(\mf{C}_i({e}^*\boldsymbol{c};
\varpi^{m'_0}\mca{O}')).
\]
Since $\mca{O}'/\varpi^{m'_0}\mca{O}'$ is a quotient 
of a DVR,
there exists an element $n \in \mca{N}(m'_0)$
satisfying 
$\mf{C}_i(\bar{e}^*\boldsymbol{c};\varpi^{m_0}\mca{O}')=
\bar{e}_{m'_0}(\mf{C}(\boldsymbol{c};I';n)){\mca{O}'}$.
So, Lemma \ref{lemonevarOO'} (iii) implies 
\[
\bar{e}_{m'_0}^{-1}(
\mf{C}_i({e}^*\boldsymbol{c};\varpi^{m_0}\mca{O}'))
=\bar{e}_{m'_0}^{-1}(\mf{C}({e}^*\boldsymbol{c};
\varpi^{m'_0}\mca{O}';n))= \mf{C}(\boldsymbol{c};I';n)
\subseteq \mf{C}_i(\boldsymbol{c};I).
\]
Hence we obtain the assertion (iii).
Similarly, the assertion (iv) follows.
\end{proof}

Let us finish the proof of 
Proposition \ref{proponevarcompletered}. 
Recall that we have the commutative diagram (\ref{diagiotam0m1pi12}).
By Corollary \ref{corspecialunivKolyv}, we have 
\[
\pi_2(\mf{C}_i({e}^*\boldsymbol{c};\varpi^{m'_0}\mca{O}'))
=\mf{C}_i({e}^*\boldsymbol{c};\varpi^{m_0}\mca{O}').
\]
Since $\mca{O}'/\varpi^{m'_0}\mca{O}'$ is a quotient 
of a DVR, 
and since we assume that $m'_0 \ge m_0>c$, 
we have 
\begin{eqnarray}\label{eqpi2pi2-1Ci}
\mf{C}_i({e}^*\boldsymbol{c};\varpi^{m'_0}\mca{O}')=
\pi_2^{-1}(\pi_2(\mf{C}_i({e}^*\boldsymbol{c};\varpi^{m'_0}\mca{O}'))).
\end{eqnarray}
So, by Corollary \ref{coronevarOO'} (iii) and (iv), we obtain  
\begin{align*}
\mf{C}_i(\boldsymbol{c};I')_I&=
\pi_1 \left( \bar{e}_{m'_0}^{-1}(
\mf{C}_i({e}^*\boldsymbol{c};\varpi^{m'_0}\mca{O}')) \right) \\
&= \bar{e}_{m_0}^{-1} \left(
\pi_2 \left(
\mf{C}_i({e}^*\boldsymbol{c};\varpi^{m'_0}\mca{O}')
\right) \right) \\
&= \bar{e}_{m_0}^{-1} \left(
\mf{C}_i(\bar{e}^*\boldsymbol{c};\varpi^{m_0}\mca{O}')
\right) \\
&= \mf{C}_i(\boldsymbol{c};I).
\end{align*}
Note that the second equality follows from the equality 
(\ref{eqpi2pi2-1Ci}), the commutativity of (\ref{diagiotam0m1pi12}), 
and the surjectivity of the homomorphism 
$\pi_1\colon \Lambda/I' \longrightarrow \Lambda/I$.
This completes the proof of Proposition \ref{proponevarcompletered}.
\end{proof}

\subsection{Strong compatibility for cyclotomic two variable deformations}
\label{ssrmkoncyclotcase}

Here, we study a special case of $r=2$,
that is, the cyclotomic deformation of one variable deformations.
The goal of this section is Theorem \ref{thmcyclotcompletered},
which is an analogous result to Theorem \ref{thmonevarcompletered}.

In this section, we put $\Lambda_0:=\bb{Z}_p[[x_1]]$.
Let $\bb{T}_0$ be the $\Lambda_0[G_{\bb{Q}}]$-module
studied in the previous subsection,
which is denoted by $\bb{T}$ there.
Recall that we put $\Gamma:=\Gal(\bb{Q}_\infty/\bb{Q})$
and $\Lambda:=\Lambda_0[[\Gamma]]$.
We fix a topological generator $\gamma \in \Gamma$.
Here, we use similar notation to that in 
Definition \ref{dfncycdir}.
In this subsection, we put 
$\bb{T}:=\bb{T}_0^{\mrm{cyc}}$.
Via the isomorphism 
$\mca{O}[[x_1,x_2]] \simeq \Lambda$
defined by $x_2 \longmapsto \gamma-1$, 
and regard $\Lambda$ as the ring of 
two-variable formal power series.
Note that $\bb{T}$ is a $\Lambda$-module 
on which $ \Gal(\bb{Q}_{\Sigma}/\bb{Q}) $ acts via 
$\rho_{\bb{T}_0}\otimes \chi_{\mrm{taut}}$.

Let $h \in \Lambda$ an element of the form
$h = ax_1 +x_2+b$, where $a \in \mca{O}$ and
$b \in \varpi \mca{O}$.
We put $\overline{\Lambda}:=\Lambda/(h)$, 
and let
$\pi:= \pi_{h\Lambda} \colon 
\Lambda \longrightarrow \overline{\Lambda}$ 
be the natural projection. 
Note that $\overline{\Lambda}$ 
is naturally isomorphic to $\Lambda_0$ 
as a $\Lambda_0$-algebra.
Now we can state the main result in \S \ref{ssrmkoncyclotcase}.

\begin{thm}\label{thmcyclotcompletered}
For any $i \in \bb{Z}_{\ge 0}$, we have
\[
\pi(\mf{C}_i(\boldsymbol{c}))=\mf{C}_i(\pi^*\boldsymbol{c}), 
\]
where $\mf{C}_i(\pi^*\boldsymbol{c})$ is the ideal of 
$\overline{\Lambda}$ defined in Definition \ref{dfnC_i}
for the data 
\[
(\Lambda_0, \pi^*\bb{T} , \pi^*\boldsymbol{c}, \boldsymbol{x}_{\le 1}).
\]
\end{thm}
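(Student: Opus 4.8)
The statement to prove is Theorem~\ref{thmcyclotcompletered}: for $\bb{T} = \bb{T}_0^{\mathrm{cyc}}$ and $h = ax_1 + x_2 + b$ with $a \in \mca{O}$, $b \in \varpi\mca{O}$, the reduction map $\pi = \pi_{h\Lambda}\colon \Lambda \to \overline{\Lambda} \simeq \Lambda_0$ satisfies $\pi(\mf{C}_i(\boldsymbol{c})) = \mf{C}_i(\pi^*\boldsymbol{c})$ for every $i$. By Proposition~\ref{propredonevar} (weak specialization compatibility), which applies since $h$ is a linear element and $(\boldsymbol{x}_{\le 1}, h)$ is a monic parameter system, we already have the inclusion $\pi(\mf{C}_i(\boldsymbol{c})) \subseteq \mf{C}_i(\pi^*\boldsymbol{c})$, so the whole content is the reverse inclusion. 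The idea is to reduce the cyclotomic two-variable case to the one-variable case that was already settled in Theorem~\ref{thmonevarcompletered}. Concretely, I would first use the stability under affine transformations (Proposition~\ref{lemafftrans}): after an affine change of coordinates on $\Lambda = \Lambda^{(2)}$, one may assume $h = x_2$, so that $\pi$ becomes the augmentation $\Lambda_0[[\Gamma]] \to \Lambda_0$ sending $\gamma - 1 \mapsto 0$, and $\pi^*\bb{T} = \bb{T}_0$, $\pi^*\boldsymbol{c}$ is the augmentation $\mathrm{aug}_{\bb{T}_0}(\tilde{\boldsymbol{c}})$ of the chosen cyclotomic extension.

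\textbf{Reduction to DVR quotients.} Next I would mimic the skeleton of the proof of Proposition~\ref{proponevarcompletered}. Fix $i$ and, using (NV), fix $c \ge 0$ with $\mf{C}_0(\pi^*\boldsymbol{c}) = \varpi^c\mca{O}$, and pick a monic parameter system $\boldsymbol{x}_{\le 1}$-compatible filtration $I_{\boldsymbol{m}} = I(\boldsymbol{h}^{\boldsymbol{m}})$ of $\Lambda$ with $\boldsymbol{h} = (\varpi, x_1, x_2)$. It suffices to prove, for every $m_0 > c$ and every $\boldsymbol{m}' \ge \boldsymbol{m} := (m_0, m_0, \infty)$ (so that $I := I(\varpi^{m_0}, x_1^{m_0}, x_2)$ and $I' := I(\varpi^{m_0'}, x_1^{m_0'}, x_2^{m_1'})$), the inclusion $\mf{C}_i(\boldsymbol{c}; I')_I \supseteq \mf{C}_i(\boldsymbol{c}; I)$, where now the quotients $\Lambda/I$ and $\Lambda/I'$ are finite local rings. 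The point is that $\Lambda/I \simeq \Lambda_0/I(\varpi^{m_0}, x_1^{m_0})$ while $\Lambda/I'$ is a thickening in the $\gamma - 1 = x_2$ direction; and by Proposition~\ref{propspecialunivKolyv} (specialization of the universal Kolyvagin system to a Kolyvagin system over a DVR quotient), for a suitably generic choice of cyclotomic character $\chi$ one obtains a genuine Kolyvagin system after composing with a ring homomorphism $\Lambda \to \mca{O}'/\varpi^{N}$. I would then invoke Theorem~\ref{thmMRuniqueness} and Theorem~\ref{thmMRstrthm} (freeness of the module of Kolyvagin systems and the higher-Fitting formula) exactly as in Corollary~\ref{corspecialunivKolyv} to control the image of $\mf{C}_i(\boldsymbol{c}; I')$ in $\Lambda/I$ from below by $\mf{C}_i$ of the specialized Euler system, which by Theorem~\ref{thmonevarcompletered} is already known to match $\mf{C}_i(\pi^*\boldsymbol{c})$ modulo the appropriate ideal.

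\textbf{Gluing along $\boldsymbol{m}$.} After establishing the inclusion at each finite level $I$, I would pass to the projective limit over $\boldsymbol{m}$, using Corollary~\ref{corascdesc} and equation~(\ref{compC_iI'C_iI}) to see that the levelwise inclusions are compatible, hence assemble to $\pi(\mf{C}_i(\boldsymbol{c})) \supseteq \mf{C}_i(\pi^*\boldsymbol{c})$; combined with Proposition~\ref{propredonevar} this gives equality. Two further bookkeeping points: first, the set-theoretic comparison $\mca{N}(\bb{T}, I) \subseteq \mca{N}(\pi^*\bb{T}, \pi(I))$ (noted in the Strategy section as the source of the difficulty) has to be handled — here it is fine because the extra freeness condition at $\ell$ over $\Lambda/I$ versus $\Lambda_0/\pi(I)$ is automatically satisfied once $x_2 = \gamma - 1$ acts through a character congruent to $1$, which is exactly what the genericity of $\chi$ and Lemma~\ref{lemNm'0NCTI}-type arguments give us; second, one should invoke Proposition~\ref{corOO'} to extend scalars to $\mca{O}'$ containing enough roots of unity, just as in the proof of Proposition~\ref{proponevarcompletered}.

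\textbf{The main obstacle.} The hard part is precisely the discrepancy between $\mca{N}(\bb{T}, I)$ and $\mca{N}(\pi^*\bb{T}, \pi(I))$: a priori, fewer Kolyvagin primes $\ell$ are available over the two-variable ring than over its one-variable quotient, so the naively-reduced ideal $\pi(\mf{C}_i(\boldsymbol{c}))$ could be strictly smaller than $\mf{C}_i(\pi^*\boldsymbol{c})$. Overcoming this requires showing that in the cyclotomic situation the ``missing'' primes do not actually enlarge $\mf{C}_i(\pi^*\boldsymbol{c})$ — equivalently, that the Fitting-ideal computation over the DVR quotient, where one has the full Mazur--Rubin structure theorem, forces the two ideals to agree. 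This is exactly where the reduction to Theorem~\ref{thmonevarcompletered} pays off: the one-variable strong compatibility already encodes that the minimal $\partial_i$ over the restricted prime set equals the minimal $\partial_i$ over the full set, and lifting this through the augmentation $\Lambda \to \Lambda_0$ is the technical crux. I expect the bulk of the work to be the careful choice of the cyclotomic twist $\chi$ and the verification that the resulting specialization diagrams commute compatibly with the norm maps $N_{H_n}$ used to define $\mf{C}(\boldsymbol{c}; I; n)$.
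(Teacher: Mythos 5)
The proposal does not match the paper's proof, and the route you sketch would not go through as written.

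The crux of your plan is to ``mimic the skeleton of Proposition~\ref{proponevarcompletered}'': embed the relevant artinian quotients into a quotient of a DVR, invoke Proposition~\ref{propspecialunivKolyv}, Theorem~\ref{thmMRuniqueness} and Theorem~\ref{thmMRstrthm}, and compare Fitting ideals via Corollary~\ref{corspecialunivKolyv}. But this machinery is available in Proposition~\ref{proponevarcompletered} precisely because there $\Lambda^{(1)}/I(\varpi^{m_0},x_1-a)\simeq\mca{O}/\varpi^{m_0}\mca{O}$ is a quotient of a DVR. In the cyclotomic situation the quotient $\Lambda/I(\varpi^{m_0},x_1^{m_0},h)\simeq\Lambda_0/(\varpi^{m_0},x_1^{m_0})$ is an artinian local ring of embedding dimension $2$; it is not a quotient of a DVR, so the Mazur--Rubin rank-one structure theorem cannot be invoked at this level, and $\mf{C}_0(\pi^*\boldsymbol{c})$ is an ideal of $\Lambda_0=\mca{O}[[x_1]]$ rather than of $\mca{O}$ (it need not be of the form $\varpi^c\mca{O}$). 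Specializing $x_1$ further to reach a DVR destroys the quantity you want to control, and the proposal gives no mechanism to reassemble the lost information; the invocation of Theorem~\ref{thmonevarcompletered} as the bridge is left entirely unexplained and in fact the paper does not use Theorem~\ref{thmonevarcompletered} in the proof of Theorem~\ref{thmcyclotcompletered} at all (it is used later, in the proof of Theorem~\ref{thmcondresults2}).

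You have the right intuition tucked into your ``bookkeeping'' paragraph --- that the cyclotomic Frobenius becomes trivial modulo deep ideals --- but that is actually the whole proof, not a side remark, and the relevant comparison is not between $\mca{N}(\bb{T},I)$ and $\mca{N}(\pi^*\bb{T},\pi(I))$ (those coincide once $h\in I$) but between $\mca{N}(\bb{T},I')$ and $\mca{N}(\bb{T},I)$ for $I'\subsetneq I$ not containing $h$. The paper sets $I=I(\varpi^m,x_1^m,h)$, $I'=I(\varpi^m,x_1^m,h^m)$, $I''=I(\varpi^N,x_1^m,h)$ with $N=N(m)$ chosen so large that $\chi_{\mathrm{taut}}(\Frob_\ell)\equiv 1\pmod{I'}$ for every $\ell\equiv 1\pmod{\varpi^N}$; then a faithfully flat base change argument shows $\mca{N}(\bb{T},I')\cap\mca{P}_N=\mca{N}(\bb{T},I)\cap\mca{P}_N\supseteq\mca{N}(\bb{T},I'')$, whence the sandwich $\mf{C}_i(\boldsymbol{c};I'')_I\subseteq\mf{C}_i(\boldsymbol{c};I')_I\subseteq\mf{C}_i(\boldsymbol{c};I)$, and the middle projective limit $\pi(\mf{C}_i(\boldsymbol{c}))$ is squeezed between two computations of $\mf{C}_i(\pi^*\boldsymbol{c})$ taken along cofinal sub-filtrations. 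No Kolyvagin system over a DVR, no ``generic $\chi$'', no affine change of coordinates is needed --- the congruence restriction on the prime set does all the work. You should replace the Mazur--Rubin specialization step with this direct comparison of Kolyvagin prime sets along $\mca{P}_N$ and the resulting three-term filtration argument.
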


\begin{proof}
For any positive integer $M$, we  define 
a set $\mca{P}_{M}$ of prime numbers by
\[
\mca{P}_{M}:=\{ \ell \notin \Sigma 
\mathrel{\vert} \ell \equiv 1 \mod \varpi^{M}\mca{O}
\}.
\]
Let $m$ be any positive integer.
We put $I:=I(m):= I(\varpi^{m},x_1^{m},h)$ and 
$I':=I'(m):= I(\varpi^{m},x_1^{m},h^{m})$. 
Since $\chi_{\mathrm{taut}}$ factors through the group
$\Gamma \simeq 1+p\bb{Z}_p$, 
the continuity of $\chi_{\mathrm{taut}}$ implies that there is 
a positive integer $N=N(m)$ with $N \ge m$ such that
for any  
$\ell \in \mca{P}_{N}$,
it holds that 
\begin{equation}\label{eqtautfrobtriv}
\chi_{\mathrm{taut}}(\Frob_\ell)\equiv 1 \mod I'.
\end{equation}
We put $I'':=I''(m):= I(\varpi^{N},x_1^{m},h)$.

Let $\ell \in \mca{P}_{N}$ be any element, 
and $J \in \{I, I'\}$.
We denote by $J_0$ the ideal of $\Lambda_0$ 
generated by $\varpi^{m}$ and $x_1^{m}$.
We put $\bb{T}_{0,\ell}:=\bb{T}_0/(\Frob_\ell -1)\bb{T}_0$
and $\bb{T}_{\ell}:=\bb{T}/(\Frob_\ell -1)\bb{T}$.
By the congruence (\ref{eqtautfrobtriv}), we have
\[
\bb{T}_{\ell}\otimes_{\Lambda} \Lambda/J = 
(\bb{T}_{0,\ell}\otimes_{\Lambda_0} \Lambda_0/J_0)
\otimes_{\Lambda_0/J_0} \Lambda/J. 
\]
Since $\Lambda/J$ is faithfully flat over $\Lambda_0/J_0$,
the $\Lambda/J$-module $\bb{T}_{\ell}\otimes_{\Lambda} \Lambda/J$
is free of rank one if and only if the $\Lambda_0/J_0$-module 
$\bb{T}_{0,\ell}\otimes_{\Lambda_0} \Lambda_0/J_0$ is free of rank one.
So we obtain
\[
\mca{N}(\bb{T},I') \cap \mca{P}_{N} 
= \mca{N}(\bb{T},I) \cap \mca{P}_{N}
\supseteq \mca{N}(\bb{T},I'') \cap \mca{P}_{N} .
\]
Hence by Corollary \ref{corascdesc} and 
(\ref{compC_iI'C_iI}) in \S \ref{ssconstC_i},
we have 
\[
\mf{C}_i(\boldsymbol{c};I'')_I \subseteq 
\mf{C}_i(\boldsymbol{c};I')_I \subseteq \mf{C}_i(\boldsymbol{c};I),
\]
and obtain
\begin{align*}
\mf{C}_i(\pi^*\boldsymbol{c})= \varprojlim_{m} \mf{C}_i(\boldsymbol{c};I''(m))_{I(m)}
&\subseteq \pi(\mf{C}_i(\boldsymbol{c}))
=\varprojlim_{m} \mf{C}_i(\boldsymbol{c};I'(m))_{I(m)} \\
&\subseteq \mf{C}_i(\pi^*\boldsymbol{c})= \varprojlim_{m} \mf{C}_i(\boldsymbol{c};I(m))_{I(m)}.
\end{align*}
This completes the proof of 
Theorem \ref{thmcyclotcompletered} (i).
\end{proof}

\section{Proof of main results}\label{secpfmr}

Let $(\bb{T},\boldsymbol{c})$ be as 
in Theorem \ref{thmuncondresults}.
In this section, we prove our main results, 
namely Theorem \ref{thmuncondresults}, 
Theorem \ref{thmcondresults1} and 
Theorem \ref{thmcondresults2}.
In \S \ref{ssasympbeh}, 
we review Mazur--Rubin's observations in 
\cite{MR} \S 5.3 which become 
important ingredients to study 
the one-variable deformations, that is, 
the modules over $\Lambda^{(1)}$.
In \S \ref{ssprf1var}, we prove the results for 
one-variable cases, namely 
Theorem \ref{thmuncondresults} for $\Lambda=\Lambda^{(1)}$
and Theorem \ref{thmcondresults1}.
In \S \ref{ssredtoonevar},
we prove Theorem \ref{thmuncondresults} 
in general setting. 
In \S \ref{sspfthmcond2}, 
we prove Theorem \ref{thmcondresults2}.

\subsection{Asymptotic behavior}\label{ssasympbeh}
In this and the next subsections, 
we set $\Lambda=\Lambda^{(1)}=\mca{O}[[x_1]]$.
Here, let us recall some observations by 
Mazur and Rubin in \cite{MR} \S 5.3
which reduce studies on the pseudo-isomorphism class of 
a $\Lambda$-module to that on 
the asymptotic behaviors of their specializations.

Let $f$ be $\varpi$ or a linear element of  
$\Lambda=\mca{O}[[x_1]]$, 
and put $\mf{p}:=f \Lambda$.
For any $N \in \bb{Z}_{> 0}$, we define
an element $f_N \in \Lambda$ by
\[
f_N:= \begin{cases}
f + \varpi^N & (f \ne \varpi) \\
\varpi + x_1^N & (f =\varpi).
\end{cases}
\]
We also write $f_\infty:=f$.
We put $\mf{p}_N:=f_N \Lambda$ 
for each $N \in \bb{Z}_{>0}$.
Note that $\mf{p}_N$ is a prime ideal of $\Lambda$, 
and the residue ring $\mca{O}_N:=\Lambda/\mf{p}_N$ 
becomes a DVR which is finite flat over $\mca{O}$.
Indeed, by definition, we have 
$\mca{O}_N=\mca{O}[\varpi^{1/N}]$ 
(resp.\ $\mca{O}_N=\mca{O}$)
if $f = \varpi$ (resp.\ $f \ne \varpi$).
Let $\pi_N \colon \Lambda \longrightarrow \mca{O}_N$
be the modulo $\mf{p}_N$ reduction map.

We adopt the following notation.
\begin{dfn}
Let $\{ \alpha_N \}_{N \in \bb{Z}_{ > 0}}$ and 
$\{ \beta_N \}_{N \in \bb{Z}_{N > 0}}$ be sequences of real numbers.
We write $\alpha_N \prec \beta_N$ if and only if we have
$\liminf_{N \to \infty} (\beta_N-\alpha_N)> -\infty$.
Moreover if we have $\alpha_N \prec \beta_N$ and 
$\beta_N \prec \alpha_N$, we write $\alpha_N \sim \beta_N$.
Namely, we write $\alpha_N \sim \beta_N$ 
if and only if the sequence 
$\left\{ \left| \beta_N - \alpha_N \right|  
\right\}_{N \in \bb{Z}_{>0}}$ is bounded.
\end{dfn}

The following elementary lemma becomes a key.

\begin{lem}\label{lemkeyobs}
Let $g \in \Lambda$ be a prime element.
\begin{enumerate}[$(1)$]
\item If $g$ is prime to $f$, 
then there exist a positive integer $N$ such that
for any $n \in \bb{Z}$ with 
$n >N$, the length $\mathrm{length}_{\mca{O}}(\Lambda/(g,f_n))$ of
the  $\mca{O}$-module $\Lambda/(g,f_n)$ is finite.
Moreover, the sequence 
$\{\mathrm{length}_{\mca{O}}(\Lambda/(g,f_n)) \}_{n >N}$
is bounded.
\item Let $e \in \bb{Z}_{>0}$. If $g=f^e$, then 
for any $N$, 
the length of
$\Lambda/(g,f_N)$ as $\mca{O}_N$-module is finite.
Moreover, we have
\[
\mathrm{length}_{\mca{O}_N}(\Lambda/(g,f_N)) \sim eN.
\]
\end{enumerate}
\end{lem}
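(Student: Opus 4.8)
The plan is to prove Lemma \ref{lemkeyobs} by direct computation with the ring $\Lambda = \mca{O}[[x_1]]$, splitting into the two cases according to whether $g$ is prime to $f$ or $g = f^e$.

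For part $(1)$, suppose $g$ is a prime element coprime to $f$. First I would treat the case $f \ne \varpi$, so $f$ is a linear (hence distinguished, up to units) polynomial and $f_n = f + \varpi^n$. The quotient $\Lambda/(g, f_n)$ is finite over $\mca{O}$ precisely when $g$ and $f_n$ are coprime in $\Lambda$; since $f_n \equiv f \pmod{\varpi^n}$ and $g$ is coprime to $f$, for $n$ large enough $g$ remains coprime to $f_n$ (the resultant of $g$ and $f$, viewed appropriately after Weierstrass preparation, is a nonzero element of $\mca{O}$, and perturbing $f$ by $\varpi^n$ changes this resultant only in high-order terms). The key quantitative point is that $\mathrm{length}_{\mca{O}}(\Lambda/(g,f_n))$ equals the $\varpi$-adic valuation of the resultant $\mathrm{Res}(\bar g, f_n)$, where $\bar g$ is the distinguished polynomial associated to $g$ by Weierstrass preparation; since $\mathrm{Res}(\bar g, f_n) \to \mathrm{Res}(\bar g, f)$ $\varpi$-adically and the limit is a nonzero element of $\mca{O}$, the valuations stabilize for large $n$, giving boundedness. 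The case $f = \varpi$, $f_n = \varpi + x_1^n$, is analogous: here $\Lambda/(g, \varpi + x_1^n)$ is controlled by evaluating $g$ along the ``curve'' $\varpi = -x_1^n$, and one computes the length via the Newton polygon / the valuation of $g(x_1)$ specialized appropriately; again these stabilize.

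For part $(2)$, suppose $g = f^e$. Then $\Lambda/(g, f_N) = \Lambda/(f^e, f_N)$. I would use the isomorphism $\mca{O}_N = \Lambda/(f_N)$ and reduce to computing $\mathrm{length}_{\mca{O}_N}\big((\Lambda/f_N\Lambda)/(f^e)\big) = \mathrm{length}_{\mca{O}_N}\big(\mca{O}_N/(\bar f^e)\big) = e \cdot v_{\mca{O}_N}(\bar f)$, where $\bar f$ is the image of $f$ in $\mca{O}_N$. In the case $f \ne \varpi$: $f_N = f + \varpi^N$, so $\bar f = -\varpi^N$ in $\mca{O}_N = \mca{O}$, hence $v_{\mca{O}_N}(\bar f) = N$ and the length is $eN$ exactly, so certainly $\sim eN$. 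In the case $f = \varpi$: $f_N = \varpi + x_1^N$, so $\mca{O}_N = \mca{O}[\varpi^{1/N}]$ with uniformizer $\varpi' = x_1$ (up to the relation $\varpi = -x_1^N$), and $\bar f = \bar\varpi = -x_1^N = -\varpi'^N$ has valuation $N$ in $\mca{O}_N$; thus $\mathrm{length}_{\mca{O}_N}(\Lambda/(g,f_N)) = eN$, again $\sim eN$. In both subcases the length is actually equal to $eN$, not merely asymptotic, but stating it as $\sim eN$ suffices for later use.

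The main obstacle I anticipate is part $(1)$, specifically making rigorous the claim that the resultant-type quantity controlling $\mathrm{length}_{\mca{O}}(\Lambda/(g,f_n))$ converges $\varpi$-adically to a nonzero limit as $n \to \infty$, and that the limit being nonzero is exactly the coprimality hypothesis; this requires a careful application of the Weierstrass preparation theorem to both $g$ (to replace it by a distinguished polynomial times a unit) and to $f_n$, together with an argument that the perturbation $f \mapsto f + \varpi^n$ (or $\varpi \mapsto \varpi + x_1^n$) is small enough in the relevant topology that it does not disturb the leading $\varpi$-adic behavior of the resultant. Part $(2)$ is essentially a bookkeeping computation once one identifies $\mca{O}_N$ and its uniformizer explicitly, which the statement of the lemma already does.
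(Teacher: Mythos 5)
The paper states Lemma \ref{lemkeyobs} without proof, labelling it ``elementary,'' so there is no written argument to compare against; your proposal supplies a correct proof. Your part (2) computation is exact and needs no adjustment. For part (1), the unifying observation worth making explicit is that in every case $\Lambda/(g,f_n) \cong \mca{O}_n/(\bar g)$ where $\mca{O}_n := \Lambda/(f_n)$ is a DVR, so the $\mca{O}$-length is just (the residue degree times) $v_{\mca{O}_n}(\bar g)$; your resultant computation for $f \ne \varpi$ and your Newton-polygon computation for $f = \varpi$ are both instances of this, and stating it this way removes the slight vagueness around ``evaluating $g$ along the curve.'' One subcase you should address explicitly: when $f \ne \varpi$ and $g$ is (a unit times) $\varpi$ itself, the ``distinguished polynomial $\bar g$'' from Weierstrass preparation is trivial and the resultant formulation does not literally apply; here $\Lambda/(\varpi,f_n) = k[[x_1]]/(\overline{x_1-(a-\varpi^n)}) = k[[x_1]]/(x_1) = k$ has length $1$ for every $n$, so the conclusion holds trivially. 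With that case noted, the argument is complete; in fact it proves the stronger statement that the lengths in part (1) eventually \emph{stabilize}, not merely that they are bounded, which is all the lemma requires.
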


Let $M$ be a finitely generated $\Lambda$-module, and
fix an integer $i \in \bb{Z}_{\ge 0}$. 
We define an integer $\alpha$ by
\[
\Fitt_{\Lambda_{\mf{p}},i}(M)
=\mf{p}^\alpha\Lambda_{\mf{p}}.
\]
For any $N \in \bb{Z}_{\ge 0}$, 
we define $\bar{\alpha}(N) \in \overline{\bb{N}}$ by
\[
\Fitt_{\Lambda_{\mf{p}},i}(\pi_N^* M)
=\varpi_N^{\bar{\alpha}(N)}\mca{O}_N,
\]
where $\varpi_N$ is a prime element of $\mca{O}_N$.
Then, by Lemma \ref{lemkeyobs} and 
the structure theorem of finitely 
generated torsion $\Lambda$-modules, 
we obtain the following corollary.

\begin{cor}\label{corkeyobs}
There exist a positive integer $N(M,\mf{p})$ such that
$\bar{\alpha}(N)<\infty$ for any $N \in \bb{Z}$ with $N>N(M,\mf{p})$.
Moreover, we have
\[
\bar{\alpha}(N) \sim \alpha \cdot N.
\]
\end{cor}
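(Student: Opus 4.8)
The plan is to push everything through the single ideal $\mf{A}:=\Fitt_{\Lambda,i}(M)$ of $\Lambda$ and to analyse its behaviour under the specialization maps $\pi_N$ and under localization at $\mf{p}$ by means of Lemma~\ref{lemkeyobs}. By the base change property of Fitting ideals,
\[
\Fitt_{\Lambda_{\mf{p}},i}(M)=\mf{A}\Lambda_{\mf{p}},\qquad
\Fitt_{\mca{O}_N,i}(\pi_N^*M)=\pi_N(\mf{A})\mca{O}_N,
\]
so that $\alpha$ is the exponent of $\mf{p}$ in $\mf{A}\Lambda_{\mf{p}}$ and $\bar\alpha(N)=v_{\mca{O}_N}(\pi_N(\mf{A})\mca{O}_N)$, the valuation being that of the DVR $\mca{O}_N$ (so of a generator of the ideal). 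We may assume $\mf{A}\neq 0$, i.e.\ $\alpha<\infty$, since this is implicit in the definition of $\alpha$. Because $\Lambda=\Lambda^{(1)}$ is a two-dimensional Noetherian local UFD and $\mf{A}\neq 0$, choosing a greatest common divisor $c$ of a finite generating set of $\mf{A}$ we write $\mf{A}=c\,\mf{B}$, where the generators of $\mf{B}$ have greatest common divisor a unit; hence $\mf{B}$ lies in no height one prime, so $\mf{B}=\Lambda$ or $\sqrt{\mf{B}}=\mf{m}$. (When $M$ is torsion, this decomposition also follows from Proposition~\ref{propstrthrm} and Example~\ref{exaFitt}.) Factoring $c=u f^{\alpha} g'$ with $u\in\Lambda^{\times}$ and $g'$ coprime to $f$, and using that $\pi_N$ is a ring homomorphism and that $v_{\mca{O}_N}$ is additive on products of ideals, we get for every $N$
\[
\bar\alpha(N)=\alpha\, v_{\mca{O}_N}(\pi_N(f))+v_{\mca{O}_N}(\pi_N(g'))+v_{\mca{O}_N}(\pi_N(\mf{B})\mca{O}_N).
\]

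It then remains to control the three terms. For the first, if $\alpha\geq 1$ then $\mca{O}_N/\pi_N(f^{\alpha})\mca{O}_N=\Lambda/(f^{\alpha},f_N)$ has $\mca{O}_N$-length equal to $\alpha\,v_{\mca{O}_N}(\pi_N(f))$, and Lemma~\ref{lemkeyobs}(2) with $g=f^{\alpha}$ gives $\alpha\,v_{\mca{O}_N}(\pi_N(f))\sim \alpha N$; if $\alpha=0$ this term vanishes, in agreement with $\alpha N=0$. For the second term, write $g'=\prod_j g_j^{e_j}$ as a product of primes, each coprime to $f$. By Lemma~\ref{lemkeyobs}(1) there is an integer $N(M,\mf{p})$ such that for all $N>N(M,\mf{p})$ and all $j$ the $\mca{O}$-module $\Lambda/(g_j,f_N)=\mca{O}_N/\pi_N(g_j)\mca{O}_N$ has finite length; in particular $\pi_N(g_j)\neq 0$, and $v_{\mca{O}_N}(\pi_N(g_j))\leq \mathrm{length}_{\mca{O}}(\Lambda/(g_j,f_N))$ stays bounded, whence $v_{\mca{O}_N}(\pi_N(g'))$ is bounded. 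For the third term we may assume $\mf{B}\neq\Lambda$, so $\mf{m}^{k}\subseteq\mf{B}$ for some $k$; since $\pi_N$ is surjective and local, $\mf{m}_{\mca{O}_N}^{k}=\pi_N(\mf{m})^{k}\mca{O}_N\subseteq\pi_N(\mf{B})\mca{O}_N\subseteq\mca{O}_N$, hence $0\leq v_{\mca{O}_N}(\pi_N(\mf{B})\mca{O}_N)\leq k$, and $\pi_N(\mf{B})\neq 0$ because the height one prime $\mf{p}_N$ cannot contain the height $\geq 2$ ideal $\mf{B}$. Combining, $\bar\alpha(N)<\infty$ for $N>N(M,\mf{p})$ and $\bar\alpha(N)$ differs from $\alpha N$ by a bounded amount, i.e.\ $\bar\alpha(N)\sim\alpha N$.

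The main obstacle is the treatment of the height $\geq 2$ part $\mf{B}$: one must know both that $\mf{B}$ is $\mf{m}$-primary (so that the valuations of its images $\pi_N(\mf{B})\mca{O}_N$ stay uniformly bounded) and that $\mf{B}$ survives each specialization $\pi_N$. Once this is isolated, the rest is bookkeeping: translating Lemma~\ref{lemkeyobs} between lengths and valuations of Fitting ideals, and using additivity of valuation along the factorization $\mf{A}=uf^{\alpha}g'\mf{B}$.
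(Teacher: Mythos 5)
Your proof is correct and takes essentially the same route as the paper, which merely cites Lemma~\ref{lemkeyobs} together with the structure theorem: the factorization $\Fitt_{\Lambda,i}(M)=uf^{\alpha}g'\mf{B}$ that you extract directly from the UFD property of $\Lambda$ is precisely the decomposition $\Fitt_{\Lambda,i}(M)=\bigl(\prod_{j\le s-i}d_j\bigr)\mf{A}_i$ of Example~\ref{exaFitt}, with $\mf{B}=\mf{A}_i$ the height~$\ge 2$ part. The term-by-term application of Lemma~\ref{lemkeyobs} to the $f$-power and the prime-to-$f$ factor, together with the uniform bound $v_{\mca{O}_N}(\pi_N(\mf{B})\mca{O}_N)\le k$ coming from $\mf{m}^{k}\subseteq\mf{B}$, is exactly the bookkeeping the paper's one-line proof is alluding to.
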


\subsection{Proof of results on one variable cases}\label{ssprf1var}

Here, we prove Theorem \ref{thmuncondresults} for $\Lambda=\Lambda^{(1)}$
and Theorem \ref{thmcondresults1}.
Let $f \in \Lambda$ be a prime element of $\Lambda$  
and put $\mf{p}:=f\Lambda$.
Fix any $i \in \bb{Z}_{\ge 0}$. 
We define integers $\alpha_i, \beta_i \in \bb{N}_{>0}$ by
\begin{align*}
\Fitt_{\Lambda_{\mf{p}},i}
(X(\bb{T})_{\mf{p}})
&= f^{\alpha_i} \Lambda_{\mf{p}}, \\
\mf{C}_i(\boldsymbol{c})\Lambda_{\mf{p}}
&= f^{\beta_i}\Lambda_{\mf{p}}.
\end{align*}

In order to show Theorem \ref{thmuncondresults}
for the case when $\Lambda=\Lambda^{(1)}$ 
(resp.\ Theorem \ref{thmcondresults1}),
it suffices to show $\alpha \le \beta$
(resp.\ $\alpha=\beta$ under the assumption (MC)).

If $f=\varpi$, then we put $\mca{O}':=\mca{O}$, and $\tilde{f}:=f$.
$\mca{O}$
If $f$ is a distinguished polynomial, 
namely if $f \ne \varpi$
then we  define the ring $\mca{O}'$ and a prime element 
$\tilde{f} \in \mca{O}'[[x_1]]$
as follows.
\begin{itemize}
\item[] Let $F'$ be a finite extension field of $F$
such that in $F'[x_1]$, the polynomial $f(x_1)$
is decomposed into the product of linear factors.
We denote by $\mca{O}'$ the ring of integers of $F'$, 
and fix a root $a \in \mca{O'}$ of $f$.
We put $\tilde{f}:=x_1-a \in \mca{O}'[[x_1]]$.
\end{itemize}
We put $\Lambda':=\mca{O}'[[x_1]]$,
and $\tilde{\mf{p}}:=\tilde{f}\mca{O}'[[x_1]]$. 
Let $e$ be the ramification index of 
the extension $\Lambda'_{\tilde{\mf{p}}}/\Lambda_{\mf{p}}$ of DVRs.
By Proposition \ref{corOO'} and Proposition \ref{lemfittscext}, 
we have 
\begin{align*}
\Fitt_{\Lambda_{\mf{p}},i}
(X(\bb{T}\otimes_{\mca{O}}\mca{O}')_{\tilde{\mf{p}}})
&= \tilde{f}^{e\alpha_i} \Lambda'_{\tilde{\mf{p}}}, \\
\mf{C}_i(\boldsymbol{c}\otimes \mca{O}')
\Lambda'_{\tilde{\mf{p}}}
&= \tilde{f}^{e\beta_i}\Lambda'_{\tilde{\mf{p}}}.
\end{align*}
So, In order to show Theorem \ref{thmuncondresults}
for the case when $\Lambda=\Lambda^{(1)}$ 
and resp.\ Theorem \ref{thmcondresults1}),
we may replace $(\mca{O}, \bb{T},\boldsymbol{c})$
with $(\mca{O}', \bb{T}\otimes_{\mca{O}}\mca{O}', 
\boldsymbol{c}\otimes \mca{O}')$, and 
assume that  $f=f'=x_1-a$.

Let us apply the observation in \S \ref{ssasympbeh}
to our situation.
For any $N \in \overline{\bb{N}}_{>0}$, we put
\[
f_N:= \begin{cases}
f + \varpi^N=x_1- a +\varpi^N & (f \ne \varpi), \\
\varpi + x_1^N & (f =\varpi).
\end{cases}
\]
We set $\mf{p}_N:=f_N\Lambda$, and $\mca{O}_N:=\Lambda/\mf{p}_N$.
Let 
\(
\pi_{N} \colon \Lambda \longrightarrow 
\mca{O}_N
\)
be the modulo $\mf{p}_N$ reduction map.
For each 
$N \in \bb{Z}_{>0}$,
we fix a uniformizer $\varpi_N$
of the DVR $\mca{O}_N$, and 
define $\bar{\alpha}_{i}(N), \bar{\beta}_i(N)
 \in \overline{\bb{N}}$ by
\begin{align*}
\Fitt_{\mca{O},i}
( X(\pi_N^* \bb{T}) )
&= \varpi_N^{\bar{\alpha}_i(N)} \mca{O}_N, \\
\mf{C}_i(\pi_N^* \boldsymbol{c})
&= \varpi_N^{\bar{\beta}_i(N)}\mca{O}_N.
\end{align*}

\begin{proof}[Proof of Theorem \ref{thmuncondresults}
for the case when $\Lambda=\Lambda^{(1)}$]
By Corollary \ref{corcontthm}, 
Theorem \ref{thmonevarcompletered}
and Corollary \ref{corkeyobs}, 
we obtain
\begin{align}
\label{aligonevarstr1}
\bar{\alpha}_i(N)  &\sim \alpha_i \cdot N,\\ 
\label{aligonevarstr2}
\bar{\beta}_i(N)  &\prec \beta_i \cdot N.
\end{align}
Since Theorem \ref{thmMRstrthm}
implies $\bar{\alpha}_i(N) \le \bar{\beta}_i(N)$,
we have
\[
\alpha_i \cdot N \sim \bar{\alpha}_i(N)
\le \bar{\beta}_i(N)
\prec \beta_i \cdot N.
\]
Hence we obtain $\alpha_i \le \beta_i$. 
\end{proof}

For the proof of Theorem \ref{thmcondresults1},
we need a bit more careful arguments.
Assume that (MC) for $(\bb{T} ,\boldsymbol{c})$ holds.
Let us show $\alpha_i=\beta_i$.
By (\ref{aligonevarstr1}) and (\ref{aligonevarstr2}),
it suffices to show 
$\bar{\beta}_i(N) \prec 
\bar{\alpha}_i(N)$.
We need the following lemma
which describes a relation between the ideals
$\mathrm{Ind}(\boldsymbol{c})$ and
$\mf{C}_0(\boldsymbol{c})$.

\begin{lem}\label{LemC0Ind}
We denote by $X_{\mathrm{pn}}$ 
the maximal pseudo-null $\Lambda$-submodule of $X$.
Then, we have
\[
\mathrm{ann}_{\Lambda}(X_{\mathrm{pn}})\mathrm{Ind}(\boldsymbol{c})
\subseteq \mf{C}_0(\boldsymbol{c}).
\]
\end{lem}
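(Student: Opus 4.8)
The plan is to check the asserted inclusion ideal-by-ideal after reduction modulo the ideals $I=I(\boldsymbol{x}^{\boldsymbol{m}})=(\varpi^{m_0},x_1^{m_1})$, and then pass to the inverse limit. By Proposition \ref{propindepofparam} we may compute $\mf{C}_0(\boldsymbol{c})$ with the standard monic parameter system, so that $\mf{C}_0(\boldsymbol{c})=\varprojlim_{\boldsymbol{m}}\mf{C}_0(\boldsymbol{c};I)$; and since $\mca{N}_{\boldsymbol{m}}(0)=\{1\}$ and $d^{\mathrm{univ}}_1(\boldsymbol{c})_I=c(1)_I$, the ideal $\mf{C}_0(\boldsymbol{c};I)\subseteq\Lambda/I$ is exactly $\{\,g(c(1)_I)\mathrel{\vert}g\in\Hom_{\Lambda/I}(H^1_{\Sigma}(\pi_I^*\bb{T}),\Lambda/I)\,\}$. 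Hence it suffices to prove: for every $y\in\mathrm{ann}_{\Lambda}(X_{\mathrm{pn}})$, every $f\in\Hom_{\Lambda}(H^1_{\Sigma}(\bb{T}),\Lambda)$ and every $\boldsymbol{m}$, the image of $y\,f(c(1))$ in $\Lambda/I$ lies in $\mf{C}_0(\boldsymbol{c};I)$.

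Fix such $y$, $f$ and $I$, and set $H:=H^1_{\Sigma}(\bb{T})$; recall $X\simeq H^2_{\Sigma}(\bb{T})$ by Lemma \ref{lemXH2}. The control theorem, Corollary \ref{corcontthm} (i), supplies an exact sequence of $\Lambda/I$-modules
\[
\Tor^{\Lambda}_{2}(X,\Lambda/I)\xrightarrow{\ \delta\ }H\otimes_{\Lambda}\Lambda/I\xrightarrow{\ \alpha\ }H^1_{\Sigma}(\pi_I^*\bb{T}),
\]
in which $\alpha(c(1)\otimes1)=c(1)_I$; and $f$ induces $\bar f\colon H\otimes_{\Lambda}\Lambda/I\to\Lambda/I$ with $\bar f(c(1)\otimes1)=f(c(1))_I$. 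The first step is to control $\Tor^{\Lambda}_{2}(X,\Lambda/I)$: since $\varpi^{m_0},x_1^{m_1}$ is a regular sequence in $\Lambda=\Lambda^{(1)}$, its Koszul complex resolves $\Lambda/I$, and tensoring with $X$ identifies $\Tor^{\Lambda}_{2}(X,\Lambda/I)$ with $\{x\in X\mathrel{\vert}\varpi^{m_0}x=x_1^{m_1}x=0\}$. Each such $x$ spans a cyclic submodule $\Lambda x$ that is a quotient of $\Lambda/I$, hence of finite length, hence pseudo-null over the two-dimensional ring $\Lambda=\Lambda^{(1)}$; therefore this submodule of $X$ lies in $X_{\mathrm{pn}}$, so $y$ annihilates $\Tor^{\Lambda}_{2}(X,\Lambda/I)$, and a fortiori $y$ annihilates $\Ker\alpha=\Im\delta$.

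With this in hand the rest is formal. For $\xi\in\Ker\alpha$ we get $(y\bar f)(\xi)=\bar f(y\xi)=0$, so the $\Lambda/I$-linear map $y\bar f$ factors through $\Im\alpha$, yielding $g_0\in\Hom_{\Lambda/I}(\Im\alpha,\Lambda/I)$ with $g_0(c(1)_I)=(y\bar f)(c(1)\otimes1)=(y\,f(c(1)))_I$. Because $\Lambda/I=\Lambda_{/I,[1]}$ is injective over itself (Lemma \ref{leminjmod}), $g_0$ extends to some $g\in\Hom_{\Lambda/I}(H^1_{\Sigma}(\pi_I^*\bb{T}),\Lambda/I)$, and then $(y\,f(c(1)))_I=g(c(1)_I)\in\mf{C}_0(\boldsymbol{c};I)$. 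Running over all $\boldsymbol{m}$ gives $y\,f(c(1))\in\mf{C}_0(\boldsymbol{c})$, and since such products generate $\mathrm{ann}_{\Lambda}(X_{\mathrm{pn}})\mathrm{Ind}(\boldsymbol{c})$ the lemma follows.

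I expect the only genuinely delicate point to be the middle paragraph: the identification of $\Tor^{\Lambda}_{2}(X,\Lambda/I)$ with the $I$-torsion submodule of $X$ and the observation that this submodule is pseudo-null. This is precisely where the hypothesis $r=1$ (so that $\dim\Lambda=2$ and "pseudo-null" coincides with "finite length") enters; the passage to the limit and the extension of homomorphisms via self-injectivity of $\Lambda/I$ are routine.
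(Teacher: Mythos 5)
Your proof is correct and follows essentially the same path as the paper's: reduce to checking the inclusion modulo each ideal $I(\boldsymbol{x}^{\boldsymbol{m}})$, use the Koszul resolution to identify $\Tor^{\Lambda}_{2}(X,\Lambda/I)$ with the $I$-torsion submodule $X[I]\subseteq X_{\mathrm{pn}}$, apply the control theorem (Corollary \ref{corcontthm}) to see that $y$ kills the kernel of the localization map on $H^1_\Sigma$, and then extend the resulting functional through $\Im\alpha$ using the self-injectivity of $\Lambda/I$ (Lemma \ref{leminjmod}). You spell out two points that the paper treats implicitly -- the description of $\mf{C}_0(\boldsymbol{c};I)$ via $\mca{N}_{\boldsymbol{m}}(0)=\{1\}$, and the fact that finite-length modules over the two-dimensional $\Lambda^{(1)}$ are pseudo-null -- but there is no difference in substance.
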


\begin{proof}[Proof of Lemma \ref{LemC0Ind}]
Let $\psi \in \Hom_{\Lambda}(H^1_{\Sigma}(\bb{T}),\Lambda)$
be any element, and $m \in \bb{Z}_{>0}$ 
any positive integer.
In order to prove Lemma \ref{LemC0Ind}, 
it suffices to show that for any 
$a \in \mathrm{ann}_{\Lambda}(X_{\mathrm{pm}})$,
we have $\pi_{I(\boldsymbol{x}^m)}(a \psi(c(1))) \in 
\mf{C}_0 (\pi_I(\boldsymbol{x}^m)^*\boldsymbol{c})$.
Note that we have a projective resolution
\[
0 \longrightarrow 
\Lambda \xrightarrow{\ d_2 \ } \Lambda^2 \xrightarrow{\ d_1 \ }
\Lambda \xrightarrow{\ \pi_{I(\boldsymbol{x}^m)}\ } 
\Lambda/I(\boldsymbol{x}^m) \longrightarrow 0
\]
of the $\Lambda$-module $\Lambda/I(\boldsymbol{x}^m)$
where the map $d_1$ is defined by 
$d_1(z_1,z_2)=\varpi^mz+ x_1^m z_2$ for each 
$(z_1,z_2) \in \Lambda^2$, and 
the map $d_2$ is defined by 
$d_2(z)=(-x^mz,\varpi^mz)$ for each $z \in \Lambda$.
So, we have
\[
\mathrm{Tor}_2(H^1_{\Sigma}(\bb{T}), \Lambda/I(\boldsymbol{x}^m))
\simeq \Ker \left(
d_2 \otimes \mathrm{id}_{H^2_{\Sigma}(\bb{T})}
\right)
=H^2_{\Sigma}(\bb{T})[I(\boldsymbol{x}^m)]
\subseteq X_{\mathrm{pn}}.
\]
Let $a \in \mathrm{ann}_{\Lambda}(X_{\mathrm{pn}})$
be any element.
By Corollary \ref{corcontthm}, the element $a$
annihilates the kernel of
the natural map
$P_m \colon
\pi_{I(\boldsymbol{x}^m)}^*H^1_{\Sigma}(\bb{T}) \longrightarrow 
H^1_{\Sigma}(\pi_{I(\boldsymbol{x}^m)}^*\bb{T})$.
This implies that the map 
\[
(a \psi) \otimes \pi_{I(\boldsymbol{x}^m)}
\colon \pi_{I(\boldsymbol{x}^m)}^*H^1_{\Sigma}(\bb{T}) 
\longrightarrow \Lambda/I(\boldsymbol{x}^m))
\]
factors through $\mathrm{Im}(P_m)$.
Since the $\Lambda/I(\boldsymbol{x}^m))$ is an injective
$\Lambda/I(\boldsymbol{x}^m))$-module,
there exists a $\Lambda/I(\boldsymbol{x}^m)$-linear map
$\bar{\psi}_{a,m} \colon 
H^1_{\Sigma}(\pi_{I(\boldsymbol{x}^m)}^*\bb{T})
\longrightarrow 
\Lambda/I(\boldsymbol{x}^m) $
which makes the diagram
\[
\xymatrix{
\pi_{I(\boldsymbol{x}^m)}^*H^1_{\Sigma}(\bb{T}) 
\ar[rr]^{(a \psi) \otimes \pi_{I(\boldsymbol{x}^m)}} 
\ar[d]_{P_m}
& &
\Lambda/I(\boldsymbol{x}^m)) \\
H^1_{\Sigma}(\pi_{I(\boldsymbol{x}^m)}^*\bb{T}
\ar@{-->}[rru]_{\bar{\psi}_{a,m}} &&
}
\]
commute.
Hence we obtain
\[
\pi_{I(\boldsymbol{x}^m)}(a \psi(c(1)))
=\bar{\psi}_{a,m}(\kappa^{\mathrm{univ}}_1
(\boldsymbol{c})_{I(\boldsymbol{x}^m)})
 \in 
\mf{C}_0 (\pi_I(\boldsymbol{x}^m)^*\boldsymbol{c}).
\]
This completes the proof of 
Lemma \ref{LemC0Ind}.
\end{proof}

\begin{proof}[Proof of Theorem \ref{thmcondresults1}]
Fix any $i \in \bb{Z}_{\ge 0}$. Let us show that 
$\bar{\alpha}_i(N)
\sim \bar{\beta}_i(N)$.
By Theorem \ref{thmuncondresults}
for one-variable cases,
it suffices to show that 
$\bar{\beta}_i(N) \prec 
\bar{\alpha}_i(N)$.
By (MC) for $(\bb{T}, \boldsymbol{c})$
and  Lemma \ref{LemC0Ind}, we have
\[
\mathrm{ann}_{\Lambda}(X_{\mathrm{pn}}) 
\Fitt_{\Lambda,0}(X) \subseteq
\mathrm{ann}_{\Lambda}(X_{\mathrm{pn}})  
\cha_{\Lambda}(X) 
= 
\mathrm{ann}_{\Lambda}(X_{\mathrm{pn}})  
\mathrm{Ind}(\boldsymbol{c})
\subseteq 
\mf{C}_0(\boldsymbol{c}).
\]
By Corollary \ref{corcontthm}
and Theorem \ref{thmonevarcompletered}, we obtain
\[
\pi_N\left(
\mathrm{ann}_{\Lambda}(X_{\mathrm{pn}}) 
\right)
\cdot \Fitt_{\mca{O}_N,0}(X(\pi_N^* \bb{T}))
\subseteq  \mf{C}_0(\pi_N^*\boldsymbol{c})
\]
for any $N \in \bb{Z}_{>0}$.
Since the height of the ideal
$\mathrm{ann}_{\Lambda}(X_{\mathrm{pn}}) $
of the ring $\Lambda$ is at least two,
there exists a positive integer $L_0 \in \bb{Z}_{>0}$
such that for any $N \in \bb{Z}_{>0}$, 
we have
$\pi_N(
\mathrm{ann}_{\Lambda}(X_{\mathrm{pn}}) 
)) \supseteq \varpi_N^{L_0}\mca{O}_N$.
So we obtain
\[
\varpi_N^{L_0}
\Fitt_{\mca{O}_N,0}(X(\pi_N^* \bb{T}))
\subseteq \mf{C}_0(\pi_N^*\boldsymbol{c}).
\]
for any $N \in \bb{Z}_{>0}$.
By Corollary \ref{corspecialunivKolyv}, we have
\[
\varpi_N^{L_0}
\Fitt_{\mca{O}_N,i}(X(\pi_N^* \bb{T}))
\subseteq \mf{C}_i(\pi_N^*\boldsymbol{c}).
\]
for any $N \in \bb{Z}_{>0}$.
Hence we obtain 
$\bar{\beta}_i(N) \prec 
\bar{\alpha}_i(N)$.
\end{proof}

\subsection{Proof of Theorem \ref{thmuncondresults}}\label{ssredtoonevar}

In \S \ref{ssprf1var}, we have already proved the assertions of 
Theorem \ref{thmuncondresults} for one-variable cases.
Here, let us complete the proof of Theorem \ref{thmuncondresults}
by induction on the number $r$ of the variables in 
the coefficient ring $\Lambda$.
For each $r \in \bb{Z}_{\ge 1}$, 
we consider the following induction hypothesis $(I)_r$, 
which claims that the assertion of Theorem \ref{thmuncondresults} 
for the $r$-variable cases holds:
\begin{itemize}
\item[$(I)_{r}$] \textit{
Let $F$ be a finite extension field of $\bb{Q}_p$.
We denote $\mca{O}$ the ring of integers of $F$, 
and put $\Lambda^{(r)}: =\mca{O}[[x_1, \dots ,x_r]]$.
Let $\bb{T}'$ be an arbitrary free 
$\Lambda^{(r)}$-module of finite rank equipped 
a continuous $\Lambda^{(r)}$-linear action of $G_{\bb{Q},\Sigma}$
satisfying the conditions {\rm (A1)--(A8)}. 
Let $\boldsymbol{c}'$ be an Euler system for $\bb{T}'$ 
satisfying the condition {\rm (NV)}
which can be extended to cyclotomic direction.
Then, for any height one prime ideal $\mf{p}$ of $\Lambda^{(r)}$ 
and for any $i \in \bb{Z}_{\ge 0}$,
we have 
\[
\Fitt_{\Lambda^{(r)}_{\mf{p}},i}
(X(\bb{T}')_{\mf{p}}) \supseteq 
\mf{C}_i(\boldsymbol{c}')\Lambda_{\mf{p}}. 
\]}
\end{itemize}
Note that in the assertion of $(I)_r$, 
we vary the field $F$.

Here, we fix any finite extension field $F$ of $\bb{Q}_p$.
Let $r \in \bb{Z}_{\ge 2}$, and put 
$\Lambda=\Lambda^{(r)}$.
In order to prove
Theorem \ref{thmuncondresults},
it suffices to show the following assertion:
\begin{prop}\label{propind}
Suppose that the hypothesis $(I)_{r-1}$ holds.
Let $(\bb{T},\boldsymbol{c})$ be
a pair over $\Lambda$
satisfying the assumptions in Theorem \ref{thmuncondresults}.
Then, for any height one prime ideal $\mf{p}$ of $\Lambda$ 
and for any $i \in \bb{Z}_{\ge 0}$,
we have 
\[
\Fitt_{\Lambda_{\mf{p}},i}
(X(\bb{T})_{\mf{p}}) \supseteq
\mf{C}_i(\boldsymbol{c})\Lambda_{\mf{p}}. 
\]
\end{prop}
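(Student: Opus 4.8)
The plan is to run the inductive step $r\mapsto r-1$ by specialization along a linear element, following Ochiai's method. Fix a height one prime ideal $\mf{p}$ of $\Lambda=\Lambda^{(r)}$ and an integer $i\ge 0$; we may assume $\mf{C}_i(\boldsymbol{c})\ne 0$, since otherwise the claim is trivial. As $\Lambda$ is a UFD, write $\mf{p}=\pi\Lambda$ with $\pi$ prime, put $a:=v_{\mf{p}}(\Fitt_{\Lambda,i}(X(\bb{T})))$ and $c:=v_{\mf{p}}(\mf{C}_i(\boldsymbol{c}))$, and fix $\xi\in\mf{C}_i(\boldsymbol{c})$ with $v_{\mf{p}}(\xi)=c$, factored as $\xi=\pi^{c}\xi'$ with $\pi\nmid\xi'$; the assertion $\Fitt_{\Lambda_{\mf{p}},i}(X(\bb{T})_{\mf{p}})\supseteq\mf{C}_i(\boldsymbol{c})\Lambda_{\mf{p}}$ is equivalent to $a\le c$. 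Fix also $\phi\in\Hom_{\Lambda}(H_{\Sigma}^1(\bb{T}),\Lambda)$ with $\phi(c(1))\ne 0$; such $\phi$ exists by {\rm (NV)} (note $\phi(c(1))\in\mathrm{Ind}_{\Lambda}(\boldsymbol{c})$), because a $\Lambda$-functional on $H_{\Sigma}^1(\bb{T})$ kills its $\Lambda$-torsion and the torsion-free quotient embeds into its double dual.

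Next comes the choice of the linear element, which is the technical heart of the argument. After a harmless finite extension of $\mca{O}$ (permitted by Propositions \ref{corOO'} and \ref{lemfittscext}), I claim one can choose a linear element of the special form $g=x_r+b_0+\sum_{j=1}^{r-1}b_j x_j$, so that $(\boldsymbol{x}_{\le r-1},g)$ is a monic parameter system and the reduction $\pi_g\colon\Lambda\to\Lambda/g\Lambda\cong\Lambda^{(r-1)}=:\overline{\Lambda}$ carries $x_1,\dots,x_{r-1}$ to the standard parameters, subject to: {\rm (i)} $g\notin\mf{p}$, so that $\mf{p}+g\Lambda=(\pi,g)$ has height two; {\rm (ii)} $g\nmid\phi(c(1))$ in $\Lambda$; {\rm (iii)} some minimal prime $\mf{q}$ of $\mf{p}+g\Lambda$ satisfies $\xi'\notin\mf{q}$. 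Each of these excludes $g$ from lying in, or from being congruent modulo $\mf{p}$ to a multiple of, finitely many fixed proper ideals; since $r\ge 2$, $\mca{O}$ is infinite, and neither $\mf{p}$, $\phi(c(1))\Lambda$, nor $\xi'\Lambda$ contains the whole admissible family of such $g$ (nor does any finite union of height $\le r-1$ primes $\ne\mf{m}_\Lambda$, by a density/prime-avoidance argument as in \cite{Oc2}), such a $g$ (and $\mf{q}$) exists. Establishing this carefully is the main obstacle.

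With $g$ fixed, set $\overline{\bb{T}}:=\pi_g^*\bb{T}$, $\overline{\boldsymbol{c}}:=\pi_g^*\boldsymbol{c}$, and let $\overline{\mf{q}}$ be the image of $\mf{q}$ in $\overline{\Lambda}$, a height one prime. The conditions {\rm (A1)--(A8)} hold for $\overline{\bb{T}}$, since they concern only the residual data and $\mf{m}_{\overline{\Lambda}}$ pulls back to $\mf{m}_{\Lambda}$; by Definition \ref{dfnESalg} and Lemma \ref{lempropcycext} (i), $\overline{\boldsymbol{c}}$ is an Euler system for $\overline{\bb{T}}$ extending to the cyclotomic direction; and $\overline{\boldsymbol{c}}$ satisfies {\rm (NV)}, because {\rm (ii)} makes $\overline{\phi}(c(1))=\overline{\phi(c(1))}$ a nonzero element of the domain $\overline{\Lambda}$, where $\overline{\phi}$ is the functional on $H_{\Sigma}^1(\bb{T})/gH_{\Sigma}^1(\bb{T})\subseteq H_{\Sigma}^1(\overline{\bb{T}})$ induced by $\phi$ via the exact sequence $0\to H_{\Sigma}^1(\bb{T})/g\to H_{\Sigma}^1(\overline{\bb{T}})\to H_{\Sigma}^2(\bb{T})[g]\to 0$ (a piece of the long exact sequence of $0\to\bb{T}\xrightarrow{g}\bb{T}\to\overline{\bb{T}}\to 0$), forcing the image of $c(1)$ in $H_{\Sigma}^1(\overline{\bb{T}})$ to be non-torsion. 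Hence $(I)_{r-1}$, applied to $(\overline{\bb{T}},\overline{\boldsymbol{c}})$ at $\overline{\mf{q}}$, gives $v_{\overline{\mf{q}}}(\Fitt_{\overline{\Lambda},i}(X(\overline{\bb{T}})))\le v_{\overline{\mf{q}}}(\mf{C}_i(\overline{\boldsymbol{c}}))$, where $\mf{C}_i(\overline{\boldsymbol{c}})$ is the ideal of $\overline{\Lambda}$ attached to $(\overline{\bb{T}},\overline{\boldsymbol{c}})$ as in Proposition \ref{propredonevar}. On the Selmer side, $g$ is a nonzerodivisor on $\bb{T}$ and $H_{\Sigma}^3(\bb{T})=0$ (as $\mathrm{cd}_p G_{\bb{Q},\Sigma}=2$), so Lemma \ref{lemXH2} and the same long exact sequence give $X(\overline{\bb{T}})\cong X(\bb{T})/gX(\bb{T})$, whence $\Fitt_{\overline{\Lambda},i}(X(\overline{\bb{T}}))=\pi_g(\Fitt_{\Lambda,i}(X(\bb{T})))\overline{\Lambda}$ by base change. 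On the Euler-system side, the weak specialization compatibility (Proposition \ref{propredonevar}) gives $\pi_g(\mf{C}_i(\boldsymbol{c}))\overline{\Lambda}\subseteq\mf{C}_i(\overline{\boldsymbol{c}})$, hence $v_{\overline{\mf{q}}}(\mf{C}_i(\overline{\boldsymbol{c}}))\le v_{\overline{\mf{q}}}(\pi_g(\mf{C}_i(\boldsymbol{c})))$.

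Finally one chains the valuations. Every element of $\Fitt_{\Lambda,i}(X(\bb{T}))$ has $\mf{p}$-valuation $\ge a$, so, writing such an element as $\pi^{\ge a}\cdot(\text{prime-to-}\pi)$ and using $\pi\in\mf{q}$, its image has $\overline{\mf{q}}$-valuation $\ge a\,v_{\overline{\mf{q}}}(\pi_g(\pi))$; thus $v_{\overline{\mf{q}}}(\pi_g(\Fitt_{\Lambda,i}(X(\bb{T}))))\ge a\,v_{\overline{\mf{q}}}(\pi_g(\pi))$. On the other hand $\xi=\pi^{c}\xi'\in\mf{C}_i(\boldsymbol{c})$ with $\xi'\notin\mf{q}$, so $v_{\overline{\mf{q}}}(\pi_g(\xi))=c\,v_{\overline{\mf{q}}}(\pi_g(\pi))$ and hence $v_{\overline{\mf{q}}}(\pi_g(\mf{C}_i(\boldsymbol{c})))\le c\,v_{\overline{\mf{q}}}(\pi_g(\pi))$. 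Combining these with the previous paragraph,
\[
a\,v_{\overline{\mf{q}}}(\pi_g(\pi))\le v_{\overline{\mf{q}}}(\Fitt_{\overline{\Lambda},i}(X(\overline{\bb{T}})))\le v_{\overline{\mf{q}}}(\mf{C}_i(\overline{\boldsymbol{c}}))\le v_{\overline{\mf{q}}}(\pi_g(\mf{C}_i(\boldsymbol{c})))\le c\,v_{\overline{\mf{q}}}(\pi_g(\pi)),
\]
and since $\pi\in\mf{q}\setminus g\Lambda$ forces $v_{\overline{\mf{q}}}(\pi_g(\pi))\ge 1$, we obtain $a\le c$, which is exactly the desired inclusion. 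Everything apart from the construction of $g$ in the second paragraph is routine bookkeeping with Proposition \ref{propredonevar}, the control isomorphism, and the induction hypothesis $(I)_{r-1}$.
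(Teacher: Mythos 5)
Your proof is correct, and it runs the same inductive specialization argument as the paper: reduce along a carefully chosen linear element $g$, use the control isomorphism $X(\overline{\bb{T}})\cong X(\bb{T})/gX(\bb{T})$ together with base change of Fitting ideals, apply $(I)_{r-1}$, and compare via the weak specialization compatibility of Proposition \ref{propredonevar}. The genuine difference is in the bookkeeping. The paper fixes pseudo-isomorphisms $\iota_X$ and $\iota_C$ for $X$ and $\Lambda/\mf{C}_i(\boldsymbol{c})$, requires $g$ to avoid the associated height-two primes of their kernels and cokernels (conditions (LE2), (LE3)) and the height-one primes of $\cha_{\Lambda}(X)\cha_{\Lambda}(\Lambda/\mathrm{Ind}_{\Lambda}(\boldsymbol{c}))$ (condition (LE1)), and thereby obtains \emph{equalities} of localized Fitting ideals at $\bar{\mf{p}}$, namely $\bar{\mf{p}}^{m\delta}$ and $\bar{\mf{p}}^{m e_1(C)}$. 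You avoid fixing the pseudo-isomorphisms and instead work with valuations directly: you choose $\xi=\pi^c\xi'\in\mf{C}_i(\boldsymbol{c})$ and demand only a one-sided statement about $g$ (namely that $\mf{p}+g\Lambda$ has a minimal prime $\mf{q}$ not containing $\xi'$), obtaining one-sided valuation inequalities, which is all that is needed for the conclusion $a\le c$. Your condition (ii), that $g\nmid\phi(c(1))$, plays the same role as the paper's requirement that $g$ avoid the height-one primes of $\cha_{\Lambda}(\Lambda/\mathrm{Ind}_{\Lambda}(\boldsymbol{c}))$: both ensure (NV) descends and hence that $X(\overline{\bb{T}})$ is $\overline{\Lambda}$-torsion. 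Your variant is slightly leaner, using a smaller avoidance set, at the cost of having to make the existence of $\mf{q}$ precise; the clean way to do this is to strengthen your (iii) to ``$g$ avoids every height-two minimal prime of $\mf{p}+\xi'\Lambda$,'' a finite set, which is handled by the same Ochiai lemma (Lemma \ref{lemOc3.51}, after a finite extension of $\mca{O}$ and an affine transformation to put $g$ in monic form) that the paper uses. With that clarification, the two proofs are equivalent in substance.
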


Now let $(\bb{T},\boldsymbol{c})$ 
be the pair over $\Lambda=\Lambda^{(r)}$
satisfying the assumptions in 
Theorem \ref{thmuncondresults}, and
fix a height one prime ideal $\mf{p}$ of $\Lambda$ 
and an integer $i \in \bb{Z}_{\ge 0}$.
By the structure theorem, we have pseudo-isomorphisms
\[
\iota_X \colon X:=X(\bb{T}) \longrightarrow
\bigoplus_{a=1}^s \bigoplus_{b=1}^{t_a}
\Lambda/\mf{p}_a^{e_{ab}(X)}
\]
and 
\[
\iota_C \colon \Lambda/\mf{C}_i(\boldsymbol{c}) \longrightarrow
\bigoplus_{a=1}^s 
\Lambda/\mf{p}_a^{e_{a}(C)},
\]
where $\mf{p}_1, \dots, \mf{p}_s$ are 
distinct height one prime ideals of $\Lambda$, 
and $\{e_{ab}(X)\}_{b=1}^{t_a}$ (resp.\ $e_a(C)$) is 
an increasing sequence of non-negative 
integers (resp.\ a non-negative integer)
for any $a \in \bb{Z}$ with $1 \le a \le s$.
We may (and do) assume $\mf{p}_1=\mf{p}$, 
and $e_{11}(X)>0$. We may also assume that $t_1>i$.
We put 
$\delta:=\sum_{a=1}^{t_1-a} e_{1a}(X)$.
Then, in oder to prove the assertion (1) of 
Proposition \ref{propind}, 
it suffices to show the inequality
\begin{equation}\label{ineqlocpiv2}
\delta \le e_1(C)
\end{equation}
under the hypothesis $(I)_{r-1}$.

%%%%%%%%%%%%%%%%%%%%%%%%%%%%%%%%%%%%%%%%%%%%%%%%%%%
Here, we shall prove the inequality (\ref{ineqlocpiv2})
via the reduction arguments 
by using principal ideals of $\Lambda$ generated 
by \textit{a linear element}, 
which are developed in the Ochiai's work \cite{Oc2}.

Recall that in Definition \ref{defnLE}, 
we have introduced the notion
of linear elements in the sense of 
Ochiai's article \cite{Oc2}.
A principal ideal of $\Lambda$ generated by 
a linear element is called 
\textit{a linear ideal} of $\Lambda$.
Ochiai introduced the following sets 
consisting of certain linear ideals.

\begin{dfn}[\cite{Oc2} Definition 3.3]
Recall that we put 
$\Lambda=\Lambda^{(r)}=\mca{O}[[x_1 , \dots, x_r]]$.
\begin{enumerate}[(i)]
\item We denote by $\mca{L}_\mca{O}^{(r)}$ by the set of 
all linear ideals of $\Lambda=\mca{O}[[x_1, \dots , x_r]]$.
\item For any finite set $\mf{I}$ of 
ideals of $\Lambda$, 
we denote by $\mca{L}_{\mca{O}}^{(r)}[\mf{I}]$ 
the set of all linear ideals of $\Lambda$
not contained in any ideal belonging to $\mf{I}$.
\item For any finitely generated torsion $\Lambda$-module $M$, 
we denote by $\mca{L}_{\mca{O}}^{(r)}(M)$
the set of all linear ideals $I$ of $\Lambda$ satisfying 
the following two conditions.
\begin{enumerate}[(a)]
\item The quotient $\pi_I^*M=M/IM$ is a torsion $\Lambda /I$-module, 
where $\pi_I \Lambda \longrightarrow \Lambda/I$ denotes the residue map.
\item It holds that
\(
\cha_{\Lambda /I}( \pi_I^* M)= \pi_I \left(
\cha_{\Lambda} (M)
\right)
\).
\end{enumerate}
\end{enumerate}
\end{dfn}

We put 
$\overline{\Lambda}:=\Lambda^{(r-1)} 
=\mca{O}[[x_1, \dots, x_{r-1}]]$
and $\Lambda:=\Lambda^{(r)}$.
For any linear element
$g= a_0 + \sum_{i=1}^{r} a_i x_i \in \Lambda$
with $a_r \in \mca{O}^\times$,
we have a natural isomorphism
$\overline{\Lambda} \simeq \Lambda/g\Lambda$.

Let $F'$ be a finite extension field of $F$, and 
$\mca{O}'$ the ring of integers.
We fix a uniformizer $\varpi'$ of $\mca{O}'$, and 
denote by $k'$ the residue field of $\mca{O}'$.
We have a bijection 
\[
P_{\mca{O}'}=(P_{\mf{m},\mca{O}'}, P_{\bb{P},\mca{O}'})
\colon \mca{L}^{(r)}_{\mca{O}'} \longrightarrow 
\varpi'\mca{O}' \times \bb{P}^{r-1}(\mca{O}')
\]
defined by 
\[
\left( a_0 + \sum_{i=1}^{r} a_ix^i \right)\Lambda
\longmapsto \left( a_0a_{i_0}^{-1}, 
(a_1: a_2 : \cdots : a_{r})
\right)
\]
where we put $i_0:= \min 
\{ i \in \bb{Z} \mathrel{\vert} 
1\le i \le r,\ a_i \in {\mca{O}'}^\times \}$.
In the article \cite{Oc2}, 
Ochiai defined a map
$\mathrm{Sp}_{\mca{O}'} \colon 
\mca{L}_{\mca{O}'} \longrightarrow 
\bb{P}^{r-1}(k')$ 
to be the composite of the map $P_{\bb{P},\mca{O}'}$
and the reduction map
$\bb{P}^{r-1}(\mca{O}') \longrightarrow 
\bb{P}^{r-1}(k')$.

Let us recall the following two lemmas
proved in \cite{Oc2}.

\begin{lem}[\cite{Oc2} Lemma 3.5]\label{lemOc3.51}
Let $\mf{a}$ be a height two prime ideal of $\Lambda$.
\begin{enumerate}[{\rm (i)}]
\item The set
$\mca{L}^{(r)}_{\mca{O}} \setminus 
\mca{L}^{(r)}_{\mca{O}}[\{ \mf{a} \}]$
is infinite if and only if
the ideal $\mf{a}$ contains at least two elements 
of $\mca{L}^{(r)}_{\mca{O}}$.
Moreover, if the set 
$\mca{L}^{(r)}_{\mca{O}} \setminus 
\mca{L}^{(r)}_{\mca{O}}[\{ \mf{a} \}]$ is infinite, 
there exist two distinct linear element $g_1, g_2 \in \Lambda$
satisfying $\mf{a}=(g_1,g_2)$.
\item Assume that there exist 
two distinct element $g_1, g_2 \in \Lambda$
satisfying $\mf{a}=(g_1,g_2)$.
If $\varpi \in \mf{a}$, 
then there exists an element $\bar{x} \in \bb{P}^{r-1}(k)$
such that the set 
$\mca{L}^{(r)}_{\mca{O}} \setminus 
\mca{L}^{(r)}_{\mca{O}}[\{ \mf{a} \}]$
is contained in $\mathrm{Sp}_{\mca{O}}^{-1}(\bar{x})$.
\item Assume that there exist 
two distinct element $g_1, g_2 \in \Lambda$
satisfying $\mf{a}=(g_1,g_2)$.
If $\varpi \notin \mf{a}$, 
then there exists a section 
$s \colon \bb{P}^{r-1}(\mca{O}) \longrightarrow 
\mca{L}^{(r)}_{\mca{O}}$ of $P_{\bb{P},\mca{O}'}$
such that the set
$\mca{L}^{(r)}_{\mca{O}} \setminus 
\mca{L}^{(r)}_{\mca{O}}[\{ \mf{a} \}]$
is contained in the image of $s$.
\end{enumerate}
\end{lem}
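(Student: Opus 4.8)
The plan is to reduce the entire statement to unique factorisation in the power series rings obtained from $\Lambda=\Lambda^{(r)}$ by killing a linear element or the uniformizer $\varpi$. The normalisation I would record first is that if $g=a_0+\sum_{i=1}^{r}a_ix_i$ is a linear element, then some $a_{i_0}$ is a unit and the $\mca{O}$-algebra automorphism of $\Lambda$ sending $x_{i_0}$ to $a_{i_0}^{-1}(x_{i_0}-g)$ carries $g\Lambda$ onto $x_{i_0}\Lambda$; hence every linear element is prime and $\Lambda/g\Lambda\simeq\Lambda^{(r-1)}$. Consequently, if a height two prime $\mf{a}$ contains a linear element, then after such a change of variables we may take that element to be $x_r$, and $\bar{\mf{a}}:=\mf{a}/x_r\Lambda$ is a height one prime of the UFD $\Lambda^{(r-1)}$, so $\bar{\mf{a}}=(\bar h)$ for some irreducible $\bar h$. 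This will be the common engine for all three parts.

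For (i), one implication of the equivalence is trivial; for the other, given two linear ideals $g_1\Lambda\neq g_2\Lambda$ contained in $\mf{a}$, the family $\{(g_1+tg_2)\Lambda\}_{t\in\mca{O}}$ consists of infinitely many pairwise distinct linear ideals contained in $\mf{a}$: because $g_1$ has a unit coefficient, $g_1+tg_2$ remains a linear element for all $t$ outside at most one residue class modulo $\varpi$, and $\mca{O}$-linear independence of $g_1,g_2$ makes the ideals distinct. For the ``moreover'' clause, writing the image $\bar g_2$ of $g_2$ as $\bar c\bar h$ in $\Lambda^{(r-1)}$, if $\bar c$ is a unit then already $\mf{a}=(g_1,g_2)$ with both generators linear; otherwise $\bar g_2$ lies in the square of the maximal ideal, and I would either read off $\mf{a}=(x_r,\varpi)=(x_r,x_r+\varpi)$ when $\varpi\in\mf{a}$, or, when $\varpi\notin\mf{a}$, divide the combination of $g_1$ and $g_2$ with vanishing $x_r$-coefficient by the exact power of $\varpi$ it contains, using that $\mf{a}$ is a proper prime to conclude that the result is a linear element generating $\bar{\mf{a}}$. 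Either way $\mf{a}$ is generated by two linear elements.

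For (ii), since $\varpi\in\mf{a}$ the quotient $\mf{a}/\varpi\Lambda$ is a height one prime of the UFD $k[[x_1,\dots,x_r]]$, say $(\bar h)$; every linear element $g=a_0+\sum a_ix_i\in\mf{a}$ reduces modulo $\varpi$ to the nonzero linear form $\sum_i\bar a_ix_i$, which is divisible by $\bar h$, and comparing orders of vanishing forces $\mathrm{ord}(\bar h)=1$ and all these linear forms proportional to the degree one part of $\bar h$. Hence $\mathrm{Sp}_{\mca{O}}(g\Lambda)=(\bar a_1:\cdots:\bar a_r)$ is a single point $\bar x\in\bb{P}^{r-1}(k)$, independent of $g$. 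For (iii), suppose $g,g'\in\mf{a}$ are linear elements with $P_{\bb{P},\mca{O}}(g\Lambda)=P_{\bb{P},\mca{O}}(g'\Lambda)$; after rescaling $g'$ they have the same linear coefficients, so $g-g'$ is a constant lying in $\varpi\mca{O}\cap\mf{a}$, and if it were nonzero it would force $\varpi\in\mf{a}$, a contradiction. Thus $P_{\bb{P},\mca{O}}$ is injective on $\mca{L}^{(r)}_{\mca{O}}\setminus\mca{L}^{(r)}_{\mca{O}}[\{\mf{a}\}]$, and a section $s$ of $P_{\bb{P},\mca{O}}$ whose image contains this set is obtained by sending each point of $\bb{P}^{r-1}(\mca{O})$ to the corresponding linear ideal of $\mf{a}$ when one exists, and to the ideal generated by the associated normalised linear form otherwise.

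I expect the main obstacle to be the ``moreover'' clause of (i): manufacturing a \emph{linear} generator of the abstract principal ideal $\bar{\mf{a}}$ requires careful bookkeeping of divisibility by $\varpi$ and repeated use of the primeness of $\mf{a}$, and the case $\varpi\in\mf{a}$ must be separated out. Once $\mf{a}$ is known to be generated by two linear elements, parts (ii) and (iii) follow quickly from unique factorisation in $k[[x_1,\dots,x_r]]$ and from the fact that $\mca{O}\cap\mf{a}$ is a prime of the DVR $\mca{O}$.
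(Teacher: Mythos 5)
The paper states this lemma without proof and simply cites Ochiai's article \cite{Oc2}, so there is no internal proof to measure your attempt against; I can only assess the proposal on its own terms. Your overall strategy—reduce everything to unique factorization in $\Lambda/g\Lambda\simeq\Lambda^{(r-1)}$ for a linear $g$, and in $\Lambda/\varpi\Lambda\simeq k[[x_1,\dots,x_r]]$—is the natural one and the argument does close up, but I want to flag one genuine error and one place where the sketch is too compressed.

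The opening normalization contains a computational mistake. The map $\phi$ fixing $x_i$ for $i\ne i_0$ and sending $x_{i_0}\mapsto a_{i_0}^{-1}(x_{i_0}-g)$ does \emph{not} carry $g\Lambda$ to $x_{i_0}\Lambda$: a direct computation gives $\phi(g)=(1-a_{i_0})x_{i_0}$, which can vanish (if $a_{i_0}=1$) and in any case need not generate $x_{i_0}\Lambda$ when $1-a_{i_0}\in\varpi\mca{O}$; in that range $\phi$ itself is not even an automorphism, since the $x_{i_0}$-coefficient of $\phi(x_{i_0})$ is $a_{i_0}^{-1}(1-a_{i_0})$. What you want is the inverse of the substitution $x_{i_0}\mapsto g$: take $\phi(x_{i_0})=a_{i_0}^{-1}\bigl(x_{i_0}-a_0-\sum_{i\ne i_0}a_ix_i\bigr)$ and $\phi(x_i)=x_i$ otherwise; then $\phi(g)=x_{i_0}$ and $\Lambda/g\Lambda\simeq\Lambda^{(r-1)}$ as wanted. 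One should also record that $\phi$ sends linear elements to linear elements (an easy check), since you tacitly use this when passing between $g_2$ and its image.

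For the ``moreover'' clause of (i) the verbal sketch needs to be unpacked a bit, because your dichotomy ``$\bar c$ is a unit / $\bar g_2\in\mf{m}^2$'' does not exhaust the possibilities cleanly as stated. After the change of variables one writes $\bar g_2=b'_0+\sum_{i<r}b'_i y_i$; it may happen that $b'_r$ was the only unit coefficient of $g_2$, so that all of $b'_0,\dots,b'_{r-1}$ lie in $\varpi\mca{O}$. Factor out the gcd $\varpi^m$ to get $\bar g_2=\varpi^m(c_0+\sum_{i<r}c_i y_i)$ with some $c_j$ a unit. If $c_0$ is a unit, the parenthetical factor is a unit of $\Lambda^{(r-1)}$, hence $\bar h\mid\varpi^m$ and $\bar h\sim\varpi$, forcing $\varpi\in\mf{a}$; in that case $\mf{a}=(g_1,\varpi)=(g_1,g_1+\varpi)$ works. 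If instead $c_0\in\varpi\mca{O}$, then $c_0+\sum_{i<r}c_i y_i$ is a genuine linear element of $\Lambda^{(r-1)}$ of order one, $\bar h$ does not divide $\varpi$ (else $\varpi\in\mf{a}$), so $\bar h$ divides and hence is associate to that linear element, giving a linear generator of $\bar{\mf{a}}$ and with it $\mf{a}=(g_1,h')$ for a linear lift $h'$. Your observation that ``the primeness of $\mf{a}$'' must be invoked is exactly the point that rules out $c_0$ being a unit in the $\varpi\notin\mf{a}$ branch, but the case split on $\varpi\in\mf{a}$ must be made explicit since otherwise the normalized element need not be linear. Parts (ii) and (iii) as you wrote them are correct: in (ii) the order-one argument pins $\mathrm{Sp}_{\mca{O}}(g\Lambda)$ to the direction of the degree-one part of $\bar h$, and in (iii) the difference of two normalized linear elements with the same projective image lies in $\varpi\mca{O}\cap\mf{a}=0$, giving injectivity of $P_{\bb{P},\mca{O}}$ and hence the existence of the desired section.
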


\begin{lem}[\cite{Oc2} Lemma 3.5]\label{lemOc3.52}
Let $M$ be a finitely generated pseudo-null 
$\Lambda^{(r)}$-module, 
and let $\mathrm{Assoc}_{\Lambda^{(r)}}^2(M)$ be 
the set of all height two associated primes of $M$.
Then, We have 
\[
\mca{L}^{(r)}_{\mca{O}'}(M)
= \bigcap_{\mf{a} \in \mathrm{Assoc}_{\Lambda^{(r)}}^2(M)}
\mca{L}^{(r)}_{\mca{O}'} (\Lambda^{(r)}/\mf{a})
= \bigcap_{\mf{a} \in \mathrm{Assoc}_{\Lambda^{(r)}}^2(M)}
\mca{L}^{(r)}_{\mca{O}'} [\{ \mf{a} \} ]. 
\]
\end{lem}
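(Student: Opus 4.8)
The plan is to reduce membership in $\mca{L}^{(r)}_{\mca{O}'}(M)$ to a dimension count on $\mathrm{Supp}(M)$, and to carry that count out through the height-two associated primes. Throughout, write $I=g\Lambda^{(r)}$ for a linear element $g$, so that $\Lambda^{(r)}/I\cong\Lambda^{(r-1)}$ is a regular local ring of dimension $r$. Since $M$ is pseudo-null, $\cha_{\Lambda^{(r)}}(M)=\Lambda^{(r)}$ (a pseudo-null module has trivial characteristic ideal; cf.\ Example \ref{exaFitt}), so condition (b) in the definition of $\mca{L}^{(r)}_{\mca{O}'}(M)$ becomes $\cha_{\Lambda^{(r-1)}}(M/IM)=\Lambda^{(r-1)}$; together with condition (a) this says exactly that $M/IM$ is pseudo-null over $\Lambda^{(r-1)}$, i.e.\ $\dim\mathrm{Supp}(M/IM)\le r-2$. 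Since every associated prime of $M$ has height $\ge 2$ in $\Lambda^{(r)}$ and $\Lambda^{(r)}$ is equidimensional of dimension $r+1$, one has $\dim\mathrm{Supp}(M)\le r-1$; hence condition (a) is automatic, and the whole membership reduces to the single inequality $\dim\bigl(\mathrm{Supp}(M)\cap V(g)\bigr)\le r-2$.

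I would then analyze this intersection component by component. For a minimal prime $\mf{p}$ of $M$, the component $V(\mf{p})\cap V(g)$ equals $V(\mf{p})$ if $g\in\mf{p}$, and otherwise — $g$ being a nonzero nonunit of the domain $\Lambda^{(r)}/\mf{p}$ — has dimension $\dim\Lambda^{(r)}/\mf{p}-1$. Since $\mathrm{ht}\,\mf{p}\ge 2$ we have $\dim V(\mf{p})\le r-1$, with equality precisely when $\mathrm{ht}\,\mf{p}=2$. Thus $\dim(\mathrm{Supp}(M)\cap V(g))\le r-2$ holds if and only if $g\notin\mf{p}$ for every height-two minimal prime $\mf{p}$ of $M$. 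Because $M$ is pseudo-null, no height-two associated prime of $M$ can be embedded — such a prime would strictly contain a minimal prime of $M$, necessarily of height $\le 1$, contradicting pseudo-nullity — so the height-two minimal primes of $M$ are exactly the elements of $\mathrm{Assoc}^2_{\Lambda^{(r)}}(M)$. Consequently $I\in\mca{L}^{(r)}_{\mca{O}'}(M)$ if and only if $I\not\subseteq\mf{a}$ for all $\mf{a}\in\mathrm{Assoc}^2_{\Lambda^{(r)}}(M)$, which is precisely $I\in\bigcap_{\mf{a}}\mca{L}^{(r)}_{\mca{O}'}[\{\mf{a}\}]$; this proves $\mca{L}^{(r)}_{\mca{O}'}(M)=\bigcap_{\mf{a}\in\mathrm{Assoc}^2_{\Lambda^{(r)}}(M)}\mca{L}^{(r)}_{\mca{O}'}[\{\mf{a}\}]$.

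For the middle term of the statement, I would apply the identity just established to the pseudo-null module $\Lambda^{(r)}/\mf{a}$ for each height-two prime $\mf{a}$: since $\mathrm{Assoc}^2_{\Lambda^{(r)}}(\Lambda^{(r)}/\mf{a})=\{\mf{a}\}$, it gives $\mca{L}^{(r)}_{\mca{O}'}(\Lambda^{(r)}/\mf{a})=\mca{L}^{(r)}_{\mca{O}'}[\{\mf{a}\}]$, and intersecting over $\mf{a}\in\mathrm{Assoc}^2_{\Lambda^{(r)}}(M)$ yields the remaining equality. There remains the bookkeeping caused by the coefficient extension, since $\mca{L}^{(r)}_{\mca{O}'}$ refers to linear ideals of $\Lambda^{(r)}_{\mca{O}'}=\Lambda^{(r)}_{\mca{O}}\otimes_{\mca{O}}\mca{O}'$ while the associated primes are taken over $\Lambda^{(r)}_{\mca{O}}$: the inclusion $\Lambda^{(r)}_{\mca{O}}\hookrightarrow\Lambda^{(r)}_{\mca{O}'}$ is finite and faithfully flat, hence preserves pseudo-nullity, heights, and the formation of supports, the associated primes of $M\otimes_{\mca{O}}\mca{O}'$ are exactly those lying over the associated primes of $M$, and a linear ideal of $\Lambda^{(r)}_{\mca{O}'}$ avoids every height-two prime lying over a fixed $\mf{a}$ if and only if it does not contain $\mf{a}\Lambda^{(r)}_{\mca{O}'}$ — so the argument goes through verbatim over $\mca{O}'$ (or, more cleanly, one proves the displayed identity directly over $\Lambda^{(r)}_{\mca{O}'}$ and deduces the stated form). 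I expect this last, purely formal, reduction to be the only point genuinely requiring care, in particular verifying that the base change introduces no embedded height-two primes that would disturb the correspondence between $\mathrm{Assoc}^2_{\Lambda^{(r)}_{\mca{O}}}(M)$ and the height-two primes of $\Lambda^{(r)}_{\mca{O}'}$ appearing over it; the core dimension count of the second paragraph is routine.
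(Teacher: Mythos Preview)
The paper does not supply its own proof of this lemma: it is quoted verbatim from Ochiai \cite{Oc2} Lemma 3.5 and used as a black box in the reduction arguments of \S\ref{ssredtoonevar}. So there is no in-paper argument to compare against, and your proposal should be judged on its own.

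Your argument is correct and is essentially the standard one. The reduction of membership in $\mca{L}^{(r)}_{\mca{O}'}(M)$ to the single inequality $\dim(\mathrm{Supp}(M)\cap V(g))\le r-2$ is exactly right: for pseudo-null $M$ one has $\cha_{\Lambda^{(r)}}(M)=\Lambda^{(r)}$, so condition (b) collapses to pseudo-nullity of $M/IM$ over $\Lambda^{(r-1)}$, and condition (a) is automatic because $\dim\mathrm{Supp}(M)\le r-1<r=\dim\Lambda^{(r-1)}$. The component-by-component analysis via minimal primes is clean; the only delicate step is your identification of the height-two \emph{minimal} primes of $M$ with the height-two \emph{associated} primes, and your justification (an embedded prime strictly contains a minimal prime, which already has height $\ge 2$) is valid. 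The deduction of the middle equality by specializing to $M=\Lambda^{(r)}/\mf{a}$ is the natural way to finish.

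On the base-change bookkeeping you flag at the end: you are right that the paper's notation mixes $\mca{O}$ and $\mca{O}'$ here (the module $M$ and its associated primes live over $\Lambda^{(r)}=\Lambda^{(r)}_{\mca{O}}$, while the linear ideals are taken over $\mca{O}'$). The cleanest fix is the second option you mention: run the entire argument over $\Lambda^{(r)}_{\mca{O}'}$ for $M\otimes_{\mca{O}}\mca{O}'$, and then observe that since $\mca{O}\hookrightarrow\mca{O}'$ is finite free, a linear ideal $I\subseteq\Lambda^{(r)}_{\mca{O}'}$ is contained in some height-two associated prime of $M\otimes_{\mca{O}}\mca{O}'$ if and only if $I\supseteq g$ with $g\in\mf{a}\Lambda^{(r)}_{\mca{O}'}$ for some $\mf{a}\in\mathrm{Assoc}^2_{\Lambda^{(r)}}(M)$. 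No new embedded primes of height two appear under this flat base change, for the same reason as in your argument over $\mca{O}$: any height-two associated prime of $M\otimes_{\mca{O}}\mca{O}'$ is minimal in its support, hence lies over a minimal (and therefore height-two associated) prime of $M$. This point is genuinely the only one requiring a sentence of care, exactly as you anticipated.
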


\begin{proof}[Proof of Theorem \ref{thmuncondresults}]
We denote by $\mf{I}_{\mathrm{char}}$ the set of height one primes 
of $\Lambda$ containing the ideal $\cha_{\Lambda}(X)
\cha_{\Lambda}(\Lambda/\mathrm{Ind}_{C}(\boldsymbol{c}))$.
We define a set $\mf{I}$ of prime ideals of $\Lambda$ by
\begin{align*}
\mf{I}:= & \mf{I}_{\mathrm{char}} \cup 
\bigcup_{a=2}^{s} \mathrm{Assoc}^2_{\Lambda}(
\Lambda/(\mf{p}+\mf{p}_a)) \\
& \cup \mathrm{Assoc}^2_{\Lambda}(\Ker \iota_X)
\cup \mathrm{Assoc}^2_{\Lambda}(\Coker \iota_X) \\
& \cup \mathrm{Assoc}^2_{\Lambda}(\Ker \iota_{C})
\cup \mathrm{Assoc}^2_{\Lambda}(\Coker \iota_{C}). 
\end{align*}
By definition, the set $\mf{I}$ is finite.
So Lemma \ref{lemOc3.51} and Lemma \ref{lemOc3.52} imply that 
after replacing $F$ by a suitable finite extension field
of it,
there exists a linear element 
$g= a_0 + \sum_{i=1}^{r} a_i x_i \in \Lambda$
which is not contained in any $\mf{a} \in \mf{I}$.
By Proposition \ref{lemafftrans},
Moreover, Lemma \ref{lemOc3.51} 
implies that we may assume that $a_r \in \mca{O}^\times$.

For any  
pseudo-isomorphism
$\iota \colon M_1 \longrightarrow M_2$
of $\Lambda$-modules, 
we consider the following condition $P_\iota (h)$ for 
a linear element $h \in \Lambda$.
\begin{itemize}
\item[$P_\iota(h)$] \textit{``For 
any height two prime ideal $\mf{a} \in 
\mathrm{Assoc}_{\Lambda}(\Ker \iota) \cup 
\mathrm{Assoc}_{\Lambda}(\Coker \iota)$, 
the ideal $h \Lambda$ is not contained in $\mf{a}$."}
\end{itemize}
By the definition of $\mf{I}$, 
the linear element $g$ satisfies 
the following properties:
\begin{enumerate}[{\rm (LE1)}]
\item \textit{The ideal $g\Lambda$ does not contain the ideal 
$\cha_{\Lambda}(X)\cha_{\Lambda}
(\Lambda/\mathrm{Ind}_{\Lambda}(\boldsymbol{c}))$.}
\item \textit{The linear element $g \in \Lambda$ satisfies 
the conditions $P(\iota_X)$ and $P(\iota_C)$.}
\item \textit{For any $a \in \bb{Z}$ with $2 \le a \le s$, 
the height of the ideal $\mf{p}+\mf{p}_a+g\Lambda$
of $\Lambda$ is three.}
\end{enumerate}
Let $\pi_g \colon \Lambda \longrightarrow \Lambda/g\Lambda 
= \overline{\Lambda}$
be the reduction map. 
It follows from the property (LE1) that   
$\pi_g(\mf{p})$ becomes a non-zero principal ideal 
of the UFD $\overline{\Lambda}$.
We fix a height one prime $\bar{\mf{p}}$ of $\overline{\Lambda}$
containing $\pi_g(\mf{p})$, and define a positive integer $m$
by $\bar{\mf{p}}^m\overline{\Lambda}_{\bar{\mf{p}}}=
\pi_g(\mf{p})\overline{\Lambda}_{\bar{\mf{p}}}$.
Note that by the above property (LE2),
the maps
\[
\bar{\iota}_X= \iota_X \otimes \pi_g 
\colon X(\bb{T}) \otimes_{\Lambda,\pi_g} 
\overline{\Lambda}
\longrightarrow
\bigoplus_{a=1}^s \bigoplus_{b=1}^{s_a}
\overline{\Lambda}/\pi_g(\mf{p}_a)^{e_{ab}(X)}
\]
and 
\[
\bar{\iota}_C =\iota_C\otimes \pi_g
\colon \overline{\Lambda}/
\pi_{g}(\mf{C}_i(\boldsymbol{c})) \longrightarrow
\bigoplus_{a=1}^s 
\overline{\Lambda}/\pi_g(\mf{p}_a)^{e_{a}(C)},
\]
induced by $\pi_g$
are pseudo-isomorphisms of $\overline{\Lambda}$-modules.
By Corollary \ref{corcontthm}, 
we have natural isomorphism
$X(\bb{T}) \otimes_{\Lambda,\pi_g}\overline{\Lambda} 
\simeq X(\pi_g^*\bb{T})$.
Since $g$ satisfies (LE3), and since $\bar{\iota}_X$ is a pseudo-isomorphism, 
we obtain
\[
\Fitt_{\overline{\Lambda}_{\bar{\mf{p}}},i}(
X(\pi_g^*(\bb{T}))_{\bar{\mf{p}}})
=\bar{\mf{p}}^{m\delta}\overline{\Lambda}_{\bar{\mf{p}}}
\]
By Proposition \ref{propredonevar}, we have
$\pi_g(\mf{C}_i(\boldsymbol{c})) \subseteq
\mf{C}_i(\pi_g^*\boldsymbol{c})$.
So, the property (LE3) and 
the pseudo-isomorphism $\bar{\iota}_C$ imply that
\[
\pi_g(\mf{C}_i(\boldsymbol{c}))
\overline{\Lambda}_{\bar{\mf{p}}}
= \bar{\mf{p}}^{me_1(C)} 
\overline{\Lambda}_{\bar{p}}
\subseteq \mf{C}_i(\pi_g^*\boldsymbol{c})
\overline{\Lambda}_{\bar{p}}.
\]
Hence the induction hypothesis $(I_r)$ implies that we have
\[
\bar{\mf{p}}^{m\delta}\overline{\Lambda}_{\bar{\mf{p}}}
=
\Fitt_{\overline{\Lambda}_{\bar{\mf{p}}},i}(
X(\pi_g^*(\bb{T}))_{\bar{\mf{p}}})
\supseteq \mf{C}_i(\pi_g^*\boldsymbol{c})
\supseteq
\bar{\mf{p}}^{me_1(C)} 
\overline{\Lambda}_{\bar{p}},
\]
and we obtain the inequality (\ref{ineqlocpiv2}), namely $\delta \le e_1(C)$.
\end{proof}

\subsection{Proof of Theorem \ref{thmcondresults2}}\label{sspfthmcond2}

Here, let us prove Theorem \ref{thmcondresults2}.

First, we recall some notation. 
We denote the cyclotomic $\bb{Z}_p$-extension 
by $\bb{Q}_\infty/\bb{Q}$, and 
put $\Gamma:=\Gal(\bb{Q}_\infty/\bb{Q})$.
We fix a topological generator $\gamma \in \Gamma$.
We put $\Lambda_0:=\Lambda^{(1)}=\mca{O}[[x_1]]$ and 
identify the completed group ring $\Lambda_0[[\Gamma]]$
with $\Lambda:=\Lambda^{(2)}=\mca{O}[[x_1,x_2]]$
via the isomorphism $\Lambda \simeq \Lambda[[\Gamma]]$
of $\Lambda_0$-algebras defined by $1+x_2 \longmapsto \gamma$.
Let $\bb{T}_0$ be a free $\Lambda_0$-module
of finite rank $d$ with 
a continuous $\Lambda_0$-linear action of 
$G_{\bb{Q},\Sigma}$
satisfying the conditions (A1)--(A8). 
We denote by $\bb{T}$ the cyclotomic deformation of $\bb{T}_0$.
Then, the $\Lambda$-module $\bb{T}$ also satisfies 
the conditions conditions (A1)--(A8). 
Let $\boldsymbol{c}$ be an Euler system for 
the $\Lambda$-module $\bb{T}$ satisfying 
the condition (MC).
Note that since $\bb{T}$ is a cyclotomic deformation,
the Euler system $\boldsymbol{c}$ can be extended to
the cyclotomic direction. (See Lemma \ref{lempropcycext}.)

\begin{proof}[{\bf Proof of Theorem \ref{thmcondresults2}}]
In order to prove Theorem \ref{thmcondresults2},
it suffices to show that 
for any $i \in \bb{Z}_{\ge 0}$ and 
any height one prime $\mf{p}$ of $\Lambda$,
we have
\[
\Fitt_{\Lambda_{\mf{p}},i}(X(\bb{T}))
=\mf{C}_i(\boldsymbol{c}).
\]

As in \S \ref{ssredtoonevar}, 
we take pseudo-isomorphisms
\[
\iota_X \colon X:=X(\bb{T}) \longrightarrow
\bigoplus_{a=1}^s \bigoplus_{b=1}^{s_a}
\Lambda/\mf{p}_a^{e_{ab}(X)}
\]
and 
\[
\iota_C \colon \Lambda/\mf{C}_i(\boldsymbol{c}) \longrightarrow
\bigoplus_{a=1}^s 
\Lambda/\mf{p}_a^{e_{a}(C)}.
\]
We may assume $\mf{p}=\mf{p}_1$.
Put $\delta:=\sum_{a=1}^{t_1-a} e_{1a}(X)$.
In order to show Theorem \ref{thmcondresults2},
it suffices to show the equality
\begin{equation}\label{eqd=e}
\delta =e_1(C).
\end{equation}

Let $\mf{I}$ be the finite set of prime
ideals of $\Lambda$ defined in \S \ref{ssredtoonevar}.
Let $\mathrm{pInd}_{\Lambda}(\boldsymbol{c})$ be 
the minimal prime ideal of $\Lambda$ 
containing $\mathrm{Ind}_{\Lambda}(\boldsymbol{c})$,
and $\mf{A}_{\mathrm{Ind}}$ the ideal of $\Lambda$
satisfying 
\[
\mathrm{Ind}_{\Lambda}(\boldsymbol{c})
=\mf{A}_{\mathrm{Ind}} \cdot 
\mathrm{pInd}_{\Lambda}(\boldsymbol{c}).
\]
Note that since
$\Lambda$ is a UFD, the ideals 
$\mathrm{pInd}_{\Lambda}(\boldsymbol{c})$
and $\mf{A}_{\mathrm{Ind}}$ do exist.
We denote by $\mf{I}_{\mathrm{Ind}}^1$ 
the set of height one prime ideals of $\Lambda$
containing $\mathrm{pInd}_{\Lambda}(\boldsymbol{c})$, 
and by $\mf{I}_{\mathrm{Ind}}^2$
the set of height two prime ideals of $\Lambda$
containing $\mf{A}_{\mathrm{Ind}}$.
Let $X_{\mathrm{pn}}$ be the maximal pseudo-null
$\Lambda$-submodule of $X(\bb{T})$.
We define a finite set $\widetilde{\mf{I}}$ of 
prime ideals of $\Lambda$ by
\begin{align*}
\widetilde{\mf{I}}:=  \mf{I}_{\mathrm{Ind}}^1
\cup \mf{I}_{\mathrm{Ind}}^2 \cup 
\mathrm{Assoc}^2_{\Lambda}(X_{\mathrm{pn}}) 
\cup \mf{I}. 
\end{align*}
By Lemma \ref{lemOc3.51} and Lemma \ref{lemOc3.52}, 
after replacing $F$ by a suitable finite extension field
of it,
we can take a linear element 
$g= a_0 +  a_1 x_1+ x_2 \in \Lambda$
which is not contained in any $\mf{a} \in \mf{I}$.
Since $\widetilde{\mf{I}}$ contains $\mf{I}$,
the linear element $g$ satisfies 
the properties (LE1)--(LE3) 
for our $(\bb{T},\boldsymbol{c})$.
Let us show that the linear element $g$ satisfies 
the following additional property:
\begin{itemize}
\item[(LE4)] \textit{Let 
$\pi_g \colon \Lambda \longrightarrow \Lambda/(g)
\simeq \Lambda_0$ be the projection.
Then, we have 
\[
\mathrm{pInd}_{\Lambda_0}(\pi_g^*\boldsymbol{c})
=\pi_g(\mathrm{pInd}_{\Lambda}(\boldsymbol{c})).
\]
}
\end{itemize}
First, we shall prove
\begin{equation}\label{pindsupset}
\pi_g(\mathrm{pInd}_{\Lambda}(\boldsymbol{c}))
\supseteq \mathrm{pInd}_{\Lambda}(\pi_g^*\boldsymbol{c}).
\end{equation}
Since the pair $(\bb{T},\boldsymbol{c})$ satisfies (MC),
we have
\[
\pi_g(\mathrm{pInd}_{\Lambda}(\boldsymbol{c}))
=\pi_g(\cha_{\Lambda} (X(\bb{T}))).
\]
By the properties (LE1) and (LE2), 
the $\Lambda_0$-module $X(\pi_g^*\bb{T})$ is torsion, 
and we have 
\[
\pi_g(\cha_{\Lambda} (X(\bb{T})))=
\cha_{\Lambda_0}(X(\pi_g^*\bb{T}) ).
\]
By Ochiai's results on Euler systems for Galois deformations
(see \cite{Oc2} Theorem 2.4), 
it holds that
\[
\cha_{\Lambda_0}(X(\pi_g^*\bb{T}) )
\supseteq \mathrm{pInd}_{\Lambda}(\pi_g^*\boldsymbol{c}).
\]
Hence we obtain (\ref{pindsupset}).
Next, let us show
\begin{equation}\label{pindsubset}
\pi_g(\mathrm{pInd}_{\Lambda}(\boldsymbol{c}))
\subseteq \mathrm{pInd}_{\Lambda}(\pi_g^*\boldsymbol{c}).
\end{equation}
Let $\psi \in \Hom_{\Lambda}(\bb{H}^1_{\Sigma}(\bb{T}),\Lambda)$
be any homomorphism.
We denote by 
\[
\bar{\psi}\colon \pi_g^* H^1_\Sigma(\bb{T})
\longrightarrow \Lambda/(g) \simeq \Lambda_0
\]
the homomorphism induced by $\psi$.
By the cohomological exact sequence induced by
\[
0 \longrightarrow \bb{T} \xrightarrow{\ \times g \ }
\bb{T} \longrightarrow \bb{T}/g\bb{T} 
\longrightarrow 0,
\] 
we deduce that the natural map
\(
\pi_g^*H^1_{\Sigma}(\bb{T})
\longrightarrow H^1(\pi_g^*\bb{T})
\)
is injective, and its cokernel is annihilated 
by $\ann_{\Lambda_0}(X(\bb{T})[g])$.
So, for any $\bar{h} \in \ann_{\Lambda_0}(X(\bb{T})[g])$,
there exists a homomorphism $\phi_{\bar{h}}
\in \Hom_{\Lambda_0}(\bb{H}^1_{\Sigma}(\pi_g^*\bb{T}),\Lambda_0)$
which makes the diagram
\[
\xymatrix{
\pi_g^*H^1_{\Sigma}(\bb{T})
\ar[r]^(0.6){\bar{h} \cdot \psi} \ar[d] & \Lambda_0 \\
H^1_\Sigma(\pi_g^*\bb{T}) \ar@{-->}[ur]_{\phi_{\bar{h}}} &
}
\]
commutes.
This implies that we have 
\[
\ann_{\Lambda_0}(X(\bb{T})[g])\cdot
\pi_g(\mathrm{Ind}_{\Lambda}(\boldsymbol{c}))
\subseteq \mathrm{Ind}_{\Lambda}(\pi_g^*\boldsymbol{c}).
\]
In order to show (\ref{pindsubset}), 
it suffices to prove that the height of the ideal 
$\ann_{\Lambda_0}(X(\bb{T})[g])$ is at least two.
Since $g$ is prime to $\cha_{\Lambda}(X(\bb{T}))$, 
the $\Lambda$-module $X(\bb{T})[g]$ is a pseudo-null.
Namely, we have $X(\bb{T})[g] \subseteq X_{\mathrm{pn}}$.
Moreover, since $g \not\in \mf{a}$ for any   
$\mf{a} \in \mathrm{Assoc}^2_{\Lambda}(X_{\mathrm{pn}})$,
we deduce that $X(\bb{T})[g]$ is also pseudo-null 
as $\Lambda_0$-module.
So the height of $\ann_{\Lambda_0}(X(\bb{T})[g])$
is at least two, and we obtain (\ref{pindsubset}).
Hence we deduce that
the linear element $g$ satisfies the property (LE4).

Let $\bar{\mf{p}}$ be a height one prime ideal of $\Lambda_0$
containing $\pi_g(\mf{p})$,
and $m$ an integer satisfying 
$\bar{\mf{p}}^m=
\pi_g(\mf{p})\Lambda_{0,\bar{\mf{p}}}$.
It The property (LE3) and Corollary \ref{corcontthm} 
imply that we have
\[
\Fitt_{\Lambda_{0,\bar{\mf{p}}},i}(X(\pi_g^*\bb{T}))
=\Fitt_{\Lambda_{0,\bar{\mf{p}}},i}(\pi_g^*X(\bb{T}))
=\bar{\mf{p}}^{m \delta}\Lambda_{0,\bar{\mf{p}}},
\]
and Theorem \ref{thmcyclotcompletered} implies 
\[
\pi_g(\mf{C}_i(\boldsymbol{c}))\Lambda_{0,\bar{\mf{p}}}=
\mf{C}_i(\pi_g^*\boldsymbol{c})\Lambda_{0,\bar{\mf{p}}}=
\bar{\mf{p}}^{m e_1(C)}\Lambda_{0,\bar{\mf{p}}}.
\]
By the assumption (MC) for the pair 
$(\bb{T},\boldsymbol{c})$ and (LE4),
the pair $(\pi_g^*\bb{T}, \pi_g^*\boldsymbol{c})$ 
satisfies (MC).
So, we can apply Theorem \ref{thmcondresults1} for
the pair
$(\pi_g^*\bb{T}, \pi_g^*\boldsymbol{c})$, and we obtain
\[
\bar{\mf{p}}^{m \delta}\Lambda_{0,\bar{\mf{p}}}
=\Fitt_{\Lambda_{0,\bar{\mf{p}}},i}(
X(\pi_g^*(\bb{T}))_{\bar{\mf{p}}})
= \mf{C}_i(\pi_g^*\boldsymbol{c}) \Lambda_{0,\bar{\mf{p}}}
=\bar{\mf{p}}^{m e_1(C)}\Lambda_{0,\bar{\mf{p}}}.
\]
Hence we obtain the equality (\ref{eqd=e}), that is, 
$\delta=e_1(C)$. 
This completes the proof of Theorem \ref{thmcondresults2}.
\end{proof}

\section{Application to (nearly) ordinary Hida deformations}\label{ssHida}

Here, we apply our main results
to ordinary and nearly ordinary
Hida deformations.

First, Let us fix our notation.
Here, suppose that $p \ge 5$, and fix 
an isomorphism $\overline{\bb{Q}}_p \simeq \bb{C}$.
As in \S \ref{ssintro}, 
let $F$ be a finite extension field, 
and $\mca{O}:=\mca{O}_F$ the ring of integers of $F$.
Fix a positive integer $N$ prime to $p$. 
We put $\Sigma:=\mathrm{Prime}(pN)$, 
namely the set of all prime divisors of $pN$.
Let $D_\infty$ be a pro-finite group equipped 
with an isomorphism
\(
1+p\bb{Z}_p \xrightarrow{\ \simeq \ }D_\infty;\ 
a \longrightarrow \langle a \rangle 
\).
Note that $D_\infty$ is regarded as 
the projective limit of 
the direct factor 
\(
D_m \simeq (\bb{Z}/p^{m+1}\bb{Z})^\times
\otimes_{\bb{Z}} \bb{Z}_p
\) 
of the group naturally
isomorphic to $(\bb{Z}/Np^{m+1} \bb{Z})^\times$
consisting of the 
diamond operators
acting on the modular curve  
$Y_1(N p^{m+1})$.
We denote by 
$\chi_D \colon D_\infty \longrightarrow 1+ p\bb{Z}_p$
the inverse of $\langle \cdot \rangle$.
In \S \ref{ssHida}, 
we set 
$\Lambda:=\mca{O}[[D_\infty]]
\simeq \Lambda^{(1)}$.
Let $\mf{m}_{\Lambda}$ be the maximal ideal of $\Lambda$. 
Then, we have 
$k:= \mca{O}/\varpi \mca{O} \simeq 
\Lambda/\mf{m}_{\Lambda}$.

Let
$\omega \colon G_{\bb{Q},\Sigma}
\longrightarrow \mu_{p-1} \subseteq \bb{Z}_p^\times$
be the Teichm\"uller character.
We regard $\omega$ as a Dirichlet character modulo $pN$
defined by $\omega(\ell):=\omega (\Frob_\ell^{-1} )$
for each prime number $\ell$ not dividing $p$. 
Let 
$\psi=\psi_0\omega^j$ be an $\mca{O}^\times$-valued 
Dirichlet character modulo $pN$, 
where $\psi_0$ denotes a character modulo $N$, 
and $j$ is an integer satisfying $0 \le j \le p-2$.
We take a Hida family
\[
\mca{F}=\sum_{n=1}^\infty A(n,\mca{F}) 
q^n \in \Lambda [[q]]
\]
of cuspforms of tame level $N$
with Dirichlet character $\psi$.
For any $k \in \bb{Z}_{\ge 2}$ and 
any character 
$\varepsilon \colon D_\infty  \longrightarrow
\overline{\bb{Q}}_p^\times$ of finite order, 
the power series 
\[
\mca{F}_{\chi_D^{k-2}\varepsilon} := 
\sum_{n=1}^\infty \chi_D^{k-2}\varepsilon(A(n,\mca{F})) 
q^n \in \overline{\bb{Q}}_p [[q]] 
\]
becomes the $q$-expansion of 
a $p$-ordinary Hecke eigen cuspform of weight $k$
at the cusp $\infty$.
We say that $\mca{F}_{\chi_D^{k-2}\varepsilon}$
is the specialization of $\mca{F}$ at 
the arithmetic point $\chi_D^{k-2}\varepsilon$.
Note that here, we assume that the coefficients
$A(n, \mca{F})$ are contained in $\Lambda$.

\begin{rem}
Let $k$ be an integer with $k \ge 2$, 
and 
$f \in S_k(\Gamma_1(Np),\psi\omega^{2-k}; \overline{\bb{Q}}_p)$
be a $p$-stabilized newform  of tame level $N$.
Then, we have a Hida family
\[
\mca{F}'=
\sum_{n=1}^\infty A(n,\mca{F}') 
q^n 
\in \bb{I}[[q]]
\]
of tame level $N$
with Dirichlet character $\psi$
such that $f$ becomes a specialization of $\mca{F}'$,
where $\bb{I}$ is a  domain finite flat 
over $\bb{Z}_p[[D_\infty]]$.
(See \cite{Hi1} Corollary 3.2 and Corollary 3.7.)
A sufficient condition on $(p,f')$
to ensure that $\mca{F}' \in \mca{O}'[[D_\infty]][[q]]$
for some DVR $\mca{O}'$ finite flat over $\bb{Z}_p$
is studied in \cite{Go}. 
See \cite{Go} Proposition 8 and Corollary 9.
\end{rem}

By \cite{Hi2} Theorem 2.1, 
we have a free $\Lambda$-module $\bb{T}(\mca{F})$  
of rank $2$ with a 
continuous $\Lambda$-linear action $\rho_{\bb{T}(\mca{F})}$ 
of $G_{\bb{Q}, \Sigma}$
satisfying 
\[
\det (1-x \Frob_\ell^{-1} \vert \bb{V}(\mca{F}))=
1-A(\ell,\mca{F}) x +\ell \cdot \psi( \ell )
\langle \mathrm{pr}(\ell) \rangle
\Lambda [x]
\]
for any prime number $\ell$ not dividing $pN$, 
where $\Frob_\ell$ denotes the arithmetic Frobenius at $\ell$, 
and $\mathrm{pr} \colon \bb{Z}_p^\times 
=\mu_{p-1} \times 1+p\bb{Z}_p
\longrightarrow 1+p\bb{Z}_p$
denotes the projection.
(In this article, for the description of $\bb{T}(\mca{F})$, 
we use the cohomological convention.) 
We have an exact sequence  
\[
0 \longrightarrow F^+ \bb{T}(\mca{F})
\longrightarrow 
\bb{T}(\mca{F})
\longrightarrow 
F^- \bb{T}(\mca{F})
\longrightarrow 0
\]
of $\Lambda [G_{\bb{Q}_p}]$-modules, 
where $F^+ \bb{T}(\mca{F})$ 
and $F^- \bb{T}(\mca{F})$
are free $\Lambda$-modules of 
rank one, and 
the $G_{\bb{Q}_p}$-action on 
$F^+ \bb{T}(\mca{F})$ is unramified.
%(See \cite{Wi} Theorem 2.2.2.)
We put  
\[
\bb{T}'_{\mca{F}}:=\Hom_{\Lambda}(\bb{T}(\mca{F}),\Lambda)
\otimes_{\bb{Z}_p} \varprojlim_{m} \mu_{p^{m}}, 
\]
and denote by $\rho_{\bb{T}'_{\mca{F}}} 
\colon G_{\bb{Q},\Sigma}
\longrightarrow \mathrm{Aut}_{\Lambda}(\bb{T}'_{\mca{F}}) 
\simeq \mathrm{GL}_2(\Lambda)$ 
the Galois action on $\bb{T}'_{\mca{F}}$.
Note that $\bb{T}'_{\mca{F}}$
satisfies the assumption (A8).
Moreover, since we assume that $p \ge 5$,
the assumption (A4) for $\bb{T}'_{\mca{F}}$ is satisfied.
Here, we assume the following hypothesis: 
\begin{itemize}
\item[(Full)] The image of  $\rho_{\bb{T}(\mca{F})}$
contains the special linear subgroup 
$\mathrm{SL}_{\Lambda}(\bb{T}(\mca{F}))
\simeq \mathrm{SL}_2(\Lambda)$.
\end{itemize}
Since the commutator subgroup 
$\mathrm{Aut}_{\Lambda}(\bb{T}(\mca{F}))$
is $\mathrm{SL}_{\Lambda}(\bb{T}(\mca{F}))$,
we have 
\[
\rho_{\bb{T}(\mca{F})}( 
\Gal(\bb{Q}_{\Sigma}/\bb{Q}(\mu_p^{\infty})) ) \supseteq 
\mathrm{SL}_2(\Lambda)
\]
if $\bb{T}(\mca{F})$ satisfies (Full).
So, under the assumption (Full),
The conditions (A1)--(A3) for $\bb{T}'_{\mca{F}}$ 
are satisfied obviously.
Moreover, by Proposition \ref{propH1vanish} below,
we can deduce  that
$\bb{T}'_{\mca{F}}$ also satisfies
the condition (A5) 
under the assumption (Full).
(Note that Proposition \ref{propH1vanish}
easily follows from the induction on $i$.)

\begin{prop}\label{propH1vanish}
For any $i \in \bb{Z}_{>0} $, 
we have 
$H^1(\mathrm{SL}_2(\Lambda/\mf{m}_{\Lambda}^i), k ^2)=0$, 
where $k ^2$ 
is regarded as 
a $\Lambda[\mathrm{SL}_2(\Lambda/\mf{m}_{\Lambda}^i)]$-module
equipped with 
the standard matrix action of 
$\mathrm{SL}_2(\Lambda/\mf{m}_{\Lambda}^i)$.
\end{prop}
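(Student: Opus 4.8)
The plan is to argue by induction on $i$. Throughout write $R_i:=\Lambda/\mf{m}_{\Lambda}^i$ and $G_i:=\mathrm{SL}_2(R_i)$, so that $R_i$ is a finite local ring with residue field $k$, the reduction $R_i\twoheadrightarrow k$ induces a surjection $G_i\twoheadrightarrow\mathrm{SL}_2(k)$, and the module $k^2$ in the statement is the inflation of the natural two-dimensional $\mathrm{SL}_2(k)$-module along this surjection. The base case $i=1$ asks for $H^1(\mathrm{SL}_2(k),k^2)=0$ for the natural module. This is the classical vanishing of the first cohomology of $\mathrm{SL}_2$ over a finite field with coefficients in the standard module, and it is exactly here that the running hypothesis $p\ge 5$ of this section (i.e.\ (A4)(a)) is used: the statement fails for $p\in\{2,3\}$. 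One may either quote the known computations of $H^1$ for finite groups of Lie type in the defining characteristic, or argue directly: the upper-triangular unipotent subgroup $U\simeq(k,+)$ is a Sylow $p$-subgroup of $\mathrm{SL}_2(k)$ and $k^2$ is killed by $p$, so restriction gives an injection $H^1(\mathrm{SL}_2(k),k^2)\hookrightarrow H^1(U,k^2)$ whose image consists of classes stable under the diagonal torus; a weight computation then shows this space of stable classes vanishes once $p\ge 5$.

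For the inductive step, fix $i\ge 2$ and assume $H^1(G_{i-1},k^2)=0$. Put $K:=\ker(G_i\twoheadrightarrow G_{i-1})$. Since $i\ge 2$ forces $\mf{m}_{\Lambda}^{2(i-1)}\subseteq\mf{m}_{\Lambda}^i$, the assignment $1+A\mapsto A$ identifies $K$ with $\{A\in M_2(\mf{m}_{\Lambda}^{i-1}/\mf{m}_{\Lambda}^i)\mathrel{\vert}\mathrm{tr}(A)=0\}\simeq\mf{sl}_2(k)\otimes_k(\mf{m}_{\Lambda}^{i-1}/\mf{m}_{\Lambda}^i)$; in particular $K$ is an elementary abelian $p$-group, it acts trivially on $k^2$ (as $K$ lies in the kernel of $G_i\to\mathrm{SL}_2(k)$), and $G_i$ acts on it by conjugation through its quotient $\mathrm{SL}_2(k)$, namely by the adjoint action on $\mf{sl}_2(k)$ tensored with the trivial action on the $k$-space $\mf{m}_{\Lambda}^{i-1}/\mf{m}_{\Lambda}^i$. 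Hence the inflation--restriction sequence for $1\to K\to G_i\to G_{i-1}\to 1$ reads
\[
0\longrightarrow H^1(G_{i-1},k^2)\longrightarrow H^1(G_i,k^2)\longrightarrow\Hom_{G_{i-1}}(K,k^2),
\]
and, the left-hand term being zero by the inductive hypothesis, it suffices to prove $\Hom_{\mathrm{SL}_2(k)}(\mf{sl}_2(k),k^2)=0$.

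To see this last vanishing, recall that for $p\ge 3$ the adjoint module $\mf{sl}_2(k)$ is isomorphic to the simple $\mathrm{SL}_2(k)$-module of highest weight $2$ (equivalently $\mathrm{Sym}^2$ of the natural module), which is irreducible of dimension three and hence not isomorphic to the natural module $k^2$, of dimension two; so the $\Hom$-space vanishes by Schur's lemma. When $k=\bb{F}_{p^f}$ with $f>1$ one checks, via Steinberg's tensor product theorem, that passing from $k$-linear to $\bb{F}_p$-linear maps introduces no common composition factors, which is all that is required. This closes the induction. I expect the main obstacle to be the base case $i=1$: the identity $H^1(\mathrm{SL}_2(k),k^2)=0$ is the only genuinely delicate input, and it is precisely the point at which $p\ge 5$ is indispensable, whereas the inductive step is a formal five-term-sequence argument together with the elementary characteristic-$p$ representation theory of $\mathrm{SL}_2$ for $p\ge 3$.
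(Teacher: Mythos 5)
Your proof is correct and takes exactly the approach the paper indicates: the paper's entire proof is the one-line remark that the proposition ``easily follows from the induction on $i$,'' and your argument carries out precisely that induction, with the base case $H^1(\mathrm{SL}_2(k),k^2)=0$ handled by Sylow restriction plus a torus-weight computation and the inductive step handled by the inflation--restriction sequence for the congruence kernel $K\simeq\mf{sl}_2(k)\otimes_k(\mf{m}_\Lambda^{i-1}/\mf{m}_\Lambda^i)$ together with the disjointness of composition factors of $\mf{sl}_2(k)$ and $k^2$. Since the paper supplies no details beyond naming the induction, your writeup is a faithful expansion of it rather than an alternative route.
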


Let $\widetilde{\bb{T}}'_{\mca{F}}
:=\bb{T}'_{\mca{F}}\otimes_{\Lambda} \Lambda[[ \Gamma ]]$ 
be the cyclotomic deformation of $\bb{T}'_{\mca{F}}$.
We fix a basis $d$ of 
the free $\Lambda$-module 
$\mca{D}:=
(F^+ \bb{T}(\mca{F}) \hat{\otimes}_{\bb{Z}_p} 
W(\overline{\bb{F}}_p))^{G_{\bb{Q}_p}}$
of rank one, and a basis
$b$ of the free $\Lambda$-module 
$\mca{B}^{(-1)^j}$ of rank one 
consisting of $\Lambda$-adic modular symbols
in the sense of \cite{Ki}.
Then, we  have an Euler system 
$\mca{Z}^{\mathrm{Ki}}_{b,d}
= \{ \mca{Z}^{\mathrm{Ki}}_{b,d}(n) \}_{n \in \mca{N}_{\Sigma}}$
for $\widetilde{\bb{T}}'_{\mca{F}}$ 
such that 
$\mca{Z}^{\mathrm{Ki}}_{b,d}(1)$ maps to the two-variable
$p$-adic $L$-function $L^{\mathrm{Ki}}_{p,b}$ 
attached to $\mca{F}$
constructed by Kitagawa in \cite{Ki} 
via the generalized Coleman map 
\(
\Xi_d \colon 
H^1(\bb{Q}_p, \widetilde{\bb{T}}'_{\mca{F}})/
H^1_f(\bb{Q}_p, \widetilde{\bb{T}}'_{\mca{F}})
\longrightarrow 
\Lambda[[\Gamma]]
\)
constructed by Ochiai in \cite{Oc1} Theorem 3.13.
(For details on $\mca{Z}$, see \cite{Oc3} Theorem 6.11.)
Note that $\mca{Z}$ is constructed by gluing 
the Euler system of Beilinson--Kato elements
defined in \cite{Ka}.)
Let $\boldsymbol{c}_{\mca{F}}$ be the Euler system 
for $\widetilde{\bb{T}}'_{\mca{F}}$ 
corresponding to $\mca{Z}^{\mathrm{Ki}}_{b,d}$
in the sense of Proposition \ref{propESmodify}.
The Euler system $\boldsymbol{c}_{\mca{F}}$ 
satisfies the condition (NV).
Moreover,  
the pair 
$(\widetilde{\bb{T}}'_{\mca{F}},
\boldsymbol{c}_{\mca{F}})$ satisfies the condition (MC)
if and only if 
the (two-variable) Iwasawa--Greenberg main conjecture for 
nearly ordinary Hida deformation $\widetilde{\bb{T}}(\mca{F})$
proposed by Greenberg \cite{Gr} holds.
(For the precise statement of the two-variable 
Iwasawa--Greenberg main conjecture for 
$\widetilde{\bb{T}}(\mca{F})$, 
see also \cite{Oc3} Conjecture 2.4.)
In our setting, the two-variable 
Iwasawa--Greenberg main conjecture for 
$\widetilde{\bb{T}}(\mca{F})$
also implies 
$(\bb{T}'_{\mca{F}},
\mathrm{aug}^*\boldsymbol{c}_{\mca{F}})$ 
satisfies the condition (MC).
(See \cite{Oc3} Corollary 7.5.)

\begin{thm}
Suppose that $\bb{T}'_{\mca{F}}$
satisfies the hypotheses {\rm (A6), (A7)} and {\rm (Full)}.
We denote by $\mathrm{aug}_{\Gamma}\colon 
\Lambda[[\Gamma]] \longrightarrow \Lambda$
the augmentation map.
Take any $i \in \bb{Z}_{\ge 0}$.
Let $\widetilde{\mf{p}}$
be a height one prime ideal
of $\Lambda[[\Gamma]]$, and 
$\mf{p}$ a height one prime ideal of $\Lambda$. 
Then, we have 
\(
\Fitt_{\Lambda[[\Gamma]]_{\widetilde{\mf{p}}},i}
(X(\widetilde{\bb{T}}'_{\mca{F}}))
\supseteq \mf{C}_i(\boldsymbol{c}_{\mca{F}})
\Lambda[[\Gamma]]_{\widetilde{\mf{p}}}
\), 
and 
\(
\Fitt_{\Lambda_{\mf{p}},i}
(X(\bb{T}'_{\mca{F}}))
\supseteq \mf{C}_i(\mathrm{aug}^*
\boldsymbol{c}_{\mca{F}})
\Lambda_{\mf{p}}
\).
Moreover, if the two-variable Iwasawa--Greenberg main conjecture for 
nearly ordinary Hida deformation $\widetilde{\bb{T}}(\mca{F})$ holds, 
we have the equalities
\(
\Fitt_{\Lambda[[\Gamma]]_{\widetilde{\mf{p}}},i}
(X(\widetilde{\bb{T}}'_{\mca{F}}))
= \mf{C}_i(\boldsymbol{c}_{\mca{F}})
\Lambda[[\Gamma]]_{\widetilde{\mf{p}}}
\), and 
\(
\Fitt_{\Lambda_{\mf{p}},i}
(X(\bb{T}'_{\mca{F}}))
= \mf{C}_i(\mathrm{aug}^*
\boldsymbol{c}_{\mca{F}})
\Lambda_{\mf{p}}
\).
\end{thm}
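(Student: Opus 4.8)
The plan is to deduce the theorem directly from Theorems \ref{thmuncondresults}, \ref{thmcondresults1} and \ref{thmcondresults2} by checking that the two pairs $(\widetilde{\bb{T}}'_{\mca{F}},\boldsymbol{c}_{\mca{F}})$ over $\Lambda^{(2)}=\Lambda[[\Gamma]]$ and $(\bb{T}'_{\mca{F}},\mathrm{aug}^*\boldsymbol{c}_{\mca{F}})$ over $\Lambda^{(1)}=\mca{O}[[D_\infty]]$ satisfy all the hypotheses required by those theorems.

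First I would collect the verification of (A1)--(A8) for $\bb{T}'_{\mca{F}}$. Conditions (A1)--(A5) and (A8) have already been established above: (A8) and (A4)(a) (using $p\ge 5$) were recorded directly, while (A1)--(A3) and (A5) follow from the hypothesis (Full), the last via Proposition \ref{propH1vanish} together with the inflation--restriction sequence and the fact that under (Full) the group $\Gal(\Omega/\bb{Q}(\mu_{p^\infty}))$ surjects onto each $\mathrm{SL}_2(\Lambda/\mf{m}_\Lambda^i)$ acting on the relevant $k$-form of the standard representation. Conditions (A6) and (A7) are hypotheses of the theorem. By the remark following Theorem \ref{thmcondresults2}, the conditions (A1)--(A8) for the one-variable deformation $\bb{T}'_{\mca{F}}$ imply (A1)--(A8) for its cyclotomic deformation $\widetilde{\bb{T}}'_{\mca{F}}$.

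Next I would note that the Euler system inputs are in place. The condition (NV) for $\boldsymbol{c}_{\mca{F}}$ was recorded above; since $\widetilde{\bb{T}}'_{\mca{F}}$ is a cyclotomic deformation, Lemma \ref{lempropcycext}(ii) shows that $\boldsymbol{c}_{\mca{F}}$ extends to the cyclotomic direction, and because $\boldsymbol{c}_{\mca{F}}$ is itself an extension of $\mathrm{aug}^*\boldsymbol{c}_{\mca{F}}$ to the cyclotomic direction, the latter Euler system also extends to the cyclotomic direction and satisfies (NV) (its image under the generalized Coleman map being $\mathrm{aug}_{\Gamma}(L^{\mathrm{Ki}}_{p,b})$, which is nonzero). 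Applying Theorem \ref{thmuncondresults} to $(\widetilde{\bb{T}}'_{\mca{F}},\boldsymbol{c}_{\mca{F}})$ over $\Lambda^{(2)}$ and to $(\bb{T}'_{\mca{F}},\mathrm{aug}^*\boldsymbol{c}_{\mca{F}})$ over $\Lambda^{(1)}$ then yields the two asserted inclusions at every height one prime $\widetilde{\mf{p}}$, resp.\ $\mf{p}$.

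For the final assertion, the two-variable Iwasawa--Greenberg main conjecture for $\widetilde{\bb{T}}(\mca{F})$ is equivalent to (MC) for $(\widetilde{\bb{T}}'_{\mca{F}},\boldsymbol{c}_{\mca{F}})$ and, by \cite{Oc3} Corollary 7.5, implies (MC) for $(\bb{T}'_{\mca{F}},\mathrm{aug}^*\boldsymbol{c}_{\mca{F}})$. Since $\widetilde{\bb{T}}'_{\mca{F}}=(\bb{T}'_{\mca{F}})^{\mathrm{cyc}}$ is the cyclotomic deformation of the one-variable deformation $\bb{T}'_{\mca{F}}$, Theorem \ref{thmcondresults2} applies to $(\widetilde{\bb{T}}'_{\mca{F}},\boldsymbol{c}_{\mca{F}})$ and Theorem \ref{thmcondresults1} applies to the one-variable pair $(\bb{T}'_{\mca{F}},\mathrm{aug}^*\boldsymbol{c}_{\mca{F}})$; each gives the corresponding equality of ideals. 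The only genuinely technical ingredient used here is the $\mathrm{SL}_2$-cohomology vanishing of Proposition \ref{propH1vanish} feeding into (A5); the remainder of the argument is bookkeeping and appeal to the structural results already established.
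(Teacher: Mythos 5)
Your proposal is correct and follows the paper's intended proof exactly: the paper omits an explicit proof block for this theorem, but the paragraphs immediately preceding it verify (A1)--(A8) for $\bb{T}'_{\mca{F}}$ under the stated hypotheses (using (Full) and Proposition \ref{propH1vanish} for (A1)--(A3), (A5), the $p\ge 5$ hypothesis for (A4), and (A6), (A7) by assumption), note (NV) and cyclotomic extendability for $\boldsymbol{c}_{\mca{F}}$, and translate the Iwasawa--Greenberg main conjecture into (MC) for both pairs via \cite{Oc3}, so that Theorems \ref{thmuncondresults}, \ref{thmcondresults1}, \ref{thmcondresults2} apply directly. Your bookkeeping reproduces this argument faithfully.
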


\begin{rem}
Some sufficient conditions
for (Full) are studied, for instance, 
in  \cite{Bo} and \cite{MW}.
In our setting, the condition (Full) 
for $\bb{T}(\mca{F})$ is satisfied 
if $\mca{O}=\bb{Z}_p$, and 
if the residual representation 
$\bb{T}(\mca{F}) \otimes_{\Lambda} k$
contains is $\mathrm{SL}_2(k)$.
(For details, see \cite{MW} \S 10 Proposition.)
\end{rem}

\begin{rem}\label{remA67}
Here, we give some remarks on the conditions
(A6) and (A7) on $\bb{T}'_{\mca{F}}$.
Let $k \in \bb{Z}_{\ge 2}$, and 
suppose that the conductor of 
$\psi_0:=\psi\omega^{2-k}$ divides $N$.
Let $f = \sum_{n=1} a(n,f)q^n 
\in S_k(\Gamma_1(N), \varepsilon ; \overline{\bb{Q}}_p)$ 
be a $p$-ordinary 
normalized Hecke eigen newform with $r \in \bb{Z}_{\ge 0}$.
We denote by $f^*$ the $p$-stabilization ordinary newform 
of tame level $N$ attached to $f$. 
Suppose that $f^*$ is a specialization
of $\mca{F}$ at an arithmetic point $\eta$. 
We denote by $\mca{O}_\eta$ 
the normalization of $\eta(\Lambda)$. 
Note that the residue field of $\mca{O}_\eta$ 
is naturally isomorphic to $k=\Lambda/\mf{m}_{\Lambda}$.
We fix an $\mca{O}_\eta$-lattice $T(f)$ of 
the $p$-adic Galois representation $V(f)$
attached to $f$.
\begin{enumerate}[(i)]
\item Obviously, the condition (A6) on $\bb{T}(\mca{F})$
holds if and only if the following  property
$(\mathrm{A6})_{f,\ell}$
holds for for any prime divisor $\ell$ of $N$. 
\begin{itemize} \vspace{3mm}
\setlength{\leftskip}{0.8cm}
\item[$(\mathrm{A6})_{f,\ell}$] We have 
$H^0(I_\ell, T(f) \otimes_{\mca{O}}k)=0$. 
\end{itemize} \vspace{3mm}
A sufficient condition for $(\mathrm{A6})_{f,\ell}$.
is given in Proposition \ref{propsuffcond} below.
\item By Deligne's unpublished work, 
the continuous modulo $p$ Galois representation 
$(T(f) \otimes_{\mca{O}_\eta} k, 
\bar{\rho}_{T(f)}  \vert_{G_{\bb{Q}_p}} )$
is given by 
\[
\bar{\rho}_{T(f)} \vert_{G_{\bb{Q}_p}} \simeq 
\begin{pmatrix}
\lambda \left( \bar{a}(p,f) \right) 
 &* \\
0 &  
\bar{\chi}_{\mathrm{cyc}}^{k} \cdot 
\lambda \left( 
\overline{\psi_0(p)} / \bar{a}(p,f)
\right)
\end{pmatrix}
\]
where we denote the image of an element $x \in \mca{O}_\eta$ 
in $k$ by $\bar{x}$, and
for any $\bar{a} \in k^\times$, 
we define an unramified character 
$\lambda(\bar{a}) \colon 
G_{\bb{Q}_p} \longrightarrow k^\times$ by 
\(
\lambda(\bar{a})(\Frob_p^{-1}):=\bar{a}
\).
(For the proof of this fact under assumption $k \le p+1$, 
see \cite{Gro} Proposition 12.1. 
Note that Galois representations in \cite{Gro} 
are written in the homological convention.)
In particular, if we have
$A(\mca{F},p) \not\equiv \psi_0^{-1}(p) \mod \mf{m}_{\Lambda}$, 
then $\bb{T}'_{\mca{F}}$ satisfies (A7).
\item Recall that here, we assume that $p \ge 5$. 
For any Hida family $\mca{F}$ of ordinary cuspidal  
Hecke eigen
newforms of tame level $N$
with Dirichlet character $\psi$, 
there exists an integer $i \in \bb{Z}$ with $0 \le i \le p-2$
such that the representation 
$\bb{T}(\mca{F}\otimes \omega^i) \simeq 
\bb{T}(\mca{F}) \otimes \omega^i$
corresponding to the Hida family 
\[
\mca{F}\otimes \omega^i=
\sum_{n=1}^\infty A(n,\mca{F}) \omega^i(\ell)
q^n \in \Lambda [[q]]
\]
of cuspforms of tame level $N$
with Dirichlet character $\psi\omega^{2i}$
satisfies (A7).
\end{enumerate}
\end{rem}

\begin{prop}\label{propsuffcond}
Let $f = \sum_{n=1} a(n,f)q^n 
\in S_k(\Gamma_1(N); \mca{O}_\eta)$ 
be as in Remark \ref{remA67}. 
Suppose that $\mca{O}_\eta$ 
is unramified over $\bb{Z}_p$.
Let $\ell$ be a prime divisor of $N$. 
If we have $a(\ell,f) \ne 0$, and if 
$\ell^2-1$ is prime to $\# k^\times$,
then $f$ satisfies
$(\mathrm{A6})_{f,\ell}$.
\end{prop}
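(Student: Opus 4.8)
The plan is to reduce the statement to a purely local computation at $\ell$ and then to a computation with the residual representation. First I would observe that $(\mathrm{A6})_{f,\ell}$ only concerns the restriction of $\rho_{T(f)}$ to the decomposition group at $\ell$, and recall the explicit shape of this local representation. Since $\ell \mid N$ and $a(\ell,f)\neq 0$, Atkin--Lehner theory pins down the conductor of $f$ at $\ell$ and hence, via local--global compatibility for modular forms (Deligne, Langlands, Carayol), the local automorphic type $\pi_\ell$; up to an unramified twist, $\rho_{T(f)}|_{G_{\bb{Q}_\ell}}$ is then of one of finitely many explicit forms (special/Steinberg type, ramified principal series, or dihedral induced from a character $\theta$ of the Weil group of a quadratic extension $E/\bb{Q}_\ell$). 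In each case one writes $\rho_{T(f)}|_{I_\ell}$ explicitly in terms of the ramified characters so produced.

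The second step is to pass to the mod-$\varpi$ reduction $\bar\rho:=T(f)\otimes_{\mca{O}}k$ and control $\bar\rho|_{I_\ell}$. Here the hypothesis that $\mca{O}_\eta$ is unramified over $\bb{Z}_p$ enters: the kernel $1+\varpi\mca{O}_\eta$ of $\mca{O}_\eta^\times\to k^\times$ is pro-$p$, so reduction is injective on prime-to-$p$ torsion. Since every character of $\bb{Q}_\ell^\times$ is, on $\mathcal{O}_{\bb{Q}_\ell}^\times$, of order prime to $p$ (because $\mathcal{O}_{\bb{Q}_\ell}^\times$ is an extension of the cyclic group $\mathbf{F}_\ell^\times$ of order $\ell-1$ by a pro-$\ell$ group, and $p\ne\ell$), every ramified character occurring in $\rho_{T(f)}|_{I_\ell}$ stays ramified, of the same order, in $\bar\rho|_{I_\ell}$. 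Consequently $H^0(I_\ell,\bar\rho)$ can be nonzero only if $\bar\rho|_{I_\ell}$ acquires the trivial character as a subrepresentation, which forces one of the characters appearing to become trivial modulo $\varpi$, or forces two distinct characters appearing over $\overline{k}$ to become $\mathrm{Gal}(\overline{k}/k)$-conjugate over $k$ --- equivalently forces a nontrivial character of a cyclic group of order dividing $\ell^2-1$ to have order dividing $\#k^\times$.

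The third step is to rule out exactly these degenerations using $\gcd(\ell^2-1,\#k^\times)=1$: no nontrivial character whose order divides $\ell^2-1$ can have order dividing $\#k^\times$, so none of the bad configurations can occur and $H^0(I_\ell,\bar\rho)=0$. I expect the main obstacle to be the bookkeeping in the case analysis on $\pi_\ell$ --- in particular the dihedral supercuspidal case, where one must decide whether $\rho_{T(f)}|_{I_\ell}$ is irreducible or splits, track the quadratic character of $\mathrm{Gal}(E/\bb{Q}_\ell)$ (of order $2$, hence harmless since $p\ge 5$), and in the split subcase check that the resulting characters of $I_\ell$ neither coincide nor become trivial after reduction. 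This is precisely where the two numerical hypotheses ($\mca{O}_\eta/\bb{Z}_p$ unramified and $\ell^2-1$ prime to $\#k^\times$) are genuinely used; the remaining (non-dihedral) cases are then routine from the explicit local descriptions.
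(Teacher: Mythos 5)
Your overall strategy --- classify $\pi_{f,\ell}$ by local--global compatibility, write $\rho_{T(f)}|_{I_\ell}$ explicitly in each case, and then argue that the ramification survives reduction mod $\varpi$ --- is in essence the strategy of the paper, and your case division matches its cases (I)--(III). However, the central lemma you invoke is false as stated: you claim every character of $\bb{Q}_\ell^\times$ has, on $\bb{Z}_\ell^\times$, order prime to $p$ ``because $\bb{Z}_\ell^\times$ is an extension of the cyclic group $\bb{F}_\ell^\times$ of order $\ell-1$ by a pro-$\ell$ group and $p\ne\ell$,'' but nothing prevents $p\mid\ell-1$, so tamely ramified characters can perfectly well have $p$-power order. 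What actually rescues the step --- and what the paper uses --- is that the characters in question take values in $\mca{O}_\eta^\times$ (or its unramified quadratic extension), which has no $p$-torsion because $\mca{O}_\eta$ is unramified over $\bb{Z}_p$ and $p\ge 5$; the paper phrases this as ``$\mathrm{GL}_2(\mca{O}_\eta)$ has no element of order a positive power of $p$.''

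Beyond this, your handling of the supercuspidal case is genuinely different from the paper's, and I think more fragile. You propose unwinding $\rho|_{I_\ell}$ as $\theta\oplus\theta^\sigma$ over a quadratic extension and then ruling out ``degenerations'' with $\gcd(\ell^2-1,\#k^\times)=1$. But as you state it, the gcd condition would push $\bar\theta$ toward being \emph{trivial} --- the opposite of what you need --- and $\bar\theta,\bar\theta^\sigma$ take values in $(k')^\times$ of order $\#k^\times\cdot(\#k+1)$, which the hypothesis does not control. In fact the paper's proof never invokes the gcd condition at all: in the supercuspidal case it lifts a putative $I_\ell$-fixed vector $\bar v$ of $\bar\rho$ to $v\in T(f)^{I_\ell^w}$ (possible since $\ker(\mathrm{GL}_2(\mca{O}_\eta)\to\mathrm{GL}_2(k))$ is pro-$p$ while $I_\ell^w$ is pro-$\ell$), observes the tame action on $v$ is by a character of $p$-power order, hence trivial by the previous paragraph, and so $T(f)^{I_\ell}\ne 0$, contradicting irreducibility of $\rho_{T(f)}|_{G_{\bb{Q}_\ell}}$. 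One further flag: the printed hypothesis $a(\ell,f)\ne 0$ that you take at face value appears to be a typo for $a(\ell,f)=0$ --- the paper's own proof repeatedly assumes the latter, and under $a(\ell,f)\ne 0$ with $\ell\mid N$ the local representation has an unramified subquotient (Atkin--Lehner), so $H^0(I_\ell,\bar\rho)\ne 0$ and the conclusion would fail.
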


\begin{proof}
We denote by
$\pi_f= \bigotimes'_v \pi_{f,v}$ 
the automorphic representation of 
$\mathrm{GL}_2(\bb{A}_\bb{Q})$
attached to $\ell$. 
We put $G_\ell:= \mathrm{GL}(\bb{Q}_\ell)$.
Let $T_\ell$ be the maximal torus of $G_\ell$
consisting of diagonal matrices, 
and $B_\ell$ the Borel subgroup of $G_\ell$
consisting of upper triangle matrices.
In order to prove Proposition \ref{propsuffcond},
we treat the following three cases.
\begin{enumerate}[(I)]
\item The representation 
$\pi_{f,\ell}$ of $G_\ell$ is principal series, 
but not special in the sense of 
\cite{BH} \S 9.11 Classification Theorem.
\item The representation 
$\pi_{f,\ell}$ of $G_\ell$ is special. 
\item The representation 
$\pi_{f,\ell}$ of $G_\ell$ is supercuspidal.
\end{enumerate}

First, let us consider the case (I). 
Since we assume that $a(\ell,f)=0$, 
the representation 
$\pi_{f,\ell}$ of $G_\ell$ is not 
$p$-primitive 
in the sense of \cite{AL} p.\ 236.
(See also \cite{LW} Proposition 2.8.)
This fact and (the proof of) \cite{BH} \S 33.3 Theorem
imply that  
there exists two ramified characters
$\chi_1$ and $\chi_2$ of $G_{\bb{Q}_\ell}$
such that 
we have $\rho_{T(f)} \vert_{G_{\bb{Q}_\ell}} 
=\chi_1 \oplus \chi_2$.
This implies that 
$f$ satisfies
$(\mathrm{A6})_{f,\ell}$.

Next, let us consider the case (II).
The assumption that $a(\ell,f)$ that is equal to $0$
implies that $\pi_{f,\ell}$ is 
a twist of the Steinberg representation by a ramified character.
From this fact and \cite{BH} \S 33.3 Theorem, 
it follows that 
$f$ satisfies
$(\mathrm{A6})_{f,\ell}$.

Let us consider the case (III).
In this case, the representation 
$\rho_{V(f)} \vert_{G_{\bb{Q}_\ell}}$
is irreducible.
(See \cite{BH} \S 33.4 Theorem.)
Suppose that $T(f) \otimes_{\mca{O}_\eta} k$ 
has a non-zero fixed vector 
$\bar{v}$ under the action of $I_\ell$.
We denote by $I_\ell^w$
the wild inertia subgroup of $G_{\bb{Q}_\ell}$, 
and put $I_w^{t}:=I_\ell/I_\ell^w 
\simeq \varprojlim_n \bb{F}_{\ell^n}^\times$.
Since the kernel of the reduction map
$\mathrm{GL}_2(\mca{O}_\eta) 
\longrightarrow \mathrm{GL}_2(k)$
is pro-$p$, and since $I_\ell^w$
is a pro-$\ell$ group with $\ell \ne p$,
we deduce that 
there exists a lift $v \in T(f)$ of $\bar{v}$
fixed by $I_\ell^w$.
The group $I_\ell$ acts on $v$
via a character $\chi$ of $I_\ell^t$.
Since $I_\ell$ fixes $\bar{v}$, 
the order of $\chi$ is a power of $p$.
By the assumption that $p \ge 5$, 
and that $\mca{O}_\eta$ is unramified over $\bb{Z}_p$, 
we do not have an element of 
$\mathrm{GL}(\mca{O}_\eta)$
whose order is a positive power of $p$.
So, the character $\chi$ is trivial.
This contradicts the assertion that 
$\rho_{V(f)} \vert_{G_{\bb{Q}_\ell}}$
is irreducible.
Hence $T(f) \otimes_{\mca{O}_\eta} k$ 
does not have a non-zero fixed vector 
under the action of $I_\ell$.
\end{proof}

\begin{rem}
By Skinner and Urban, 
For the precise statement of the two-variable 
Iwasawa--Greenberg main conjecture for 
nearly ordinary deformations are proved in many cases.
Indeed, 
the ``three-variable" conjectures
which imply two--variable ones (after combined 
with \cite{Oc3} \S 2 Theorem 3 under suitable assumptions)  
are also proved under certain hypotheses. 
(See \cite{SU} Theorem 3.6.6.) 
However, in order to apply Skinner--Urban's work, 
we need to assume that the tame level $n$ 
has a prime divisor whose square does not divide $d$.
Under this assumption, the condition (A6)
$\widetilde{\bb{T}}'_{\mca{F}}$ does not hold.
So, we cannot apply the results in \cite{SU}
in our setting.
\end{rem}

\end{document}